\documentclass[11pt]{article}
\usepackage[utf8]{inputenc}

\usepackage{amsmath}
\usepackage{amsthm}
\usepackage{mathrsfs,bbm}
\usepackage{amssymb}
\usepackage{paralist}
\usepackage{appendix}
\usepackage{color}
\usepackage[top=1in,bottom=1.2in,left=1in,right=1in,xetex]{geometry}

\usepackage[unicode=true,pdfusetitle,
bookmarks=true,bookmarksnumbered=true,bookmarksopen=true,bookmarksopenlevel=3,
breaklinks=false,pdfborder={0 0 1},backref=false,colorlinks=false]
{hyperref}

\numberwithin{equation}{section}

\newtheorem{theorem}{Theorem}[section]

\newtheorem{lemma}[theorem]{Lemma}
\newtheorem{proposition}[theorem]{Proposition}

\newcommand{\br}{\mathbb{R}}
\newcommand{\brn}{{\mathbb{R}^n}}
\newcommand{\brnn}{{\mathbb{R}^{n\times n}}}
\newcommand{\brd}{{\mathbb{R}^d}}
\newcommand{\dd}{\partial}
\newcommand{\de}{\Delta}

\newcommand{\e}{\mathbb{E}}
\newcommand{\he}{\widehat{\mathbb{E}}}
\newcommand{\f}{\mathcal{F}}
\newcommand{\lr}{\mathcal{L}}
\newcommand{\sr}{\mathcal{S}}
\newcommand{\pr}{\mathcal{P}}

\newcommand{\hv}{\widehat{v}}
\newcommand{\hr}{\mathcal{H}}
\newcommand{\x}{\mathscr{X}}
\newcommand{\y}{\mathscr{Y}}
\newcommand{\p}{\mathscr{P}}
\newcommand{\R}{\mathscr{R}}
\newcommand{\U}{\mathscr{U}}
\newcommand{\ir}{\mathcal{I}}

\newcommand{\argmin}{\mathop{\rm argmin}}

\allowdisplaybreaks[3]

\title{Optimal Control of Mean-field FBSDEs and Classical Solutions of the Related Generalized HJB Equations}

\usepackage{authblk}

\author{Ziyu Huang\footnote{E-mail: ziyu.huang@hu-berlin.de}}

\affil{\small\it Department of Mathematics, Humboldt University Berlin}

\begin{document}

\maketitle

\begin{abstract}

In this article, we study optimal control problems driven by mean-field forward-backward stochastic differential equations (FBSDEs). The aim is two-fold: (i) to provide the unique optimal control; (ii) to establish classical solutions of the associated generalized Hamilton-Jacobi-Bellman (HJB) equations on the space of probability measures. For (i), we study the coupled system arising from the maximum principle, which involves both the state FBSDEs and the adjoint FBSDEs. This system is non-standard and more complicated than that in standard mean-field type control (MFTC) problems. By extending the method of continuation in the coefficients to our framework, we establish the well-posedness result, and subsequently obtain the differentiability of the value functional with respect to the distribution argument --- thereby extending existing results for MFTC problems. For (ii), since the flow property does not generally hold for control problems of FBSDEs, standard dynamic-programming arguments cannot be applied directly. We therefore develop probabilistic approaches to the classical solvability of generalized HJB equations under three distinct sets of conditions. The auxiliary processes arising in these arguments differ significantly from those in standard MFTC problems. We establish new connections between the value function and the adjoint processes, derive equations for the relevant decoupling field, and obtain first- and second-order generalized HJB equations, including a coupled system on the space of probability measures. The standard HJB equations for MFTC problems are recovered as special cases. Our approach yields classical solutions without requiring non-degeneracy of the diffusion. \\

\noindent{\textbf{Keywords:}} Controlled mean-field FBSDE; HJB equation; Optimal control; Value function; Mean-field type control; Maximum principle; Flow property; Derivative in distribution

\noindent {\bf Mathematics Subject Classification (2020):} 60H30; 60H10; 93E20. %35R15. %49N80; 91A16.

\end{abstract}

\tableofcontents

\section{Introduction}\label{sec:intro}

% State the control problem (open-loop) for MF-FBSDEs, aim: 1. optilam control (SMP+FBSDEs), and study the regularity of the value function 2. HJB equation. We give both answers. 
In this article, we study optimal control problems driven by distribution-dependent forward-backward stochastic differential equations (FBSDEs), together with the regularity of their value functions and the classical solvability of the associated generalized Hamilton--Jacobi--Bellman (HJB) equations.
%and also the classical solutions to their related Hamilton–Jacobi–Bellman (HJB) equations. 
Let $(\Omega,\f,\{\f_t\}_{0\le t\le T},\mathbb{P})$ be a complete filtered probability space on which an $n$-dimensional Brownian motion $\{B_t\}_{0\le t\le T}$ is defined. For an initial time $t\in[0,T]$ and initial condition $\xi\in \lr_{\f_t}^2$, the state process associated with a control $v\in \lr_\f^2(t,T)$ is given by the following mean-field FBSDE:
\begin{align}\label{FBSDE^v(Introduction)}
    \left\{
    \begin{aligned}
        X^v_s=\ & \xi+\int_t^s b \left(r,X^v_r, \lr\left(X^v_r\right), v_r \right)dr + \int_t^s \sigma\left(r,X^v_r, \lr\left(X^v_r\right), v_r \right)dB_r,\\
        Y^v_s=\ & g\left(X^v_T, \lr\left(X^v_T\right) \right) + \int_s^T f\left(r,X^v_r, Y^v_r,Z^v_r, \lr\left(X^v_r, Y^v_r,Z^v_r\right), v_r \right)dr -\int_s^T Z^v_r dB_r,
    \end{aligned}
    \right.
\end{align}
where $\lr(\cdot)$ denotes the law of a random variable. The associated cost functional is
\begin{equation}\label{cost^v}
\begin{aligned}
    J^{t,\xi}(v):= \e\bigg[&\int_t^T h\left(s,X^v_s, Y^v_s,Z^v_s, \lr\left(X^v_s, Y^v_s,Z^v_s\right), v_s \right) ds + \Phi\left(X_T^v,\lr\left(X_T^v\right)\right)+\varphi\left(Y^v_t\right)\bigg].
\end{aligned}
\end{equation}
%where we use the superscript $t,\xi$ to denote the dependence of the cost on the initial condition. 
We denote by $(\mathsf{P}_{t,\xi})$ the (open-loop) optimal control problem with state dynamics \eqref{FBSDE^v(Introduction)} and cost \eqref{cost^v}. Its value function $V:[0,T]\times\pr_2(\brn)\to\br$ is defined by
\begin{equation}\label{problem}
    V(t,\mu):= \inf_{v\in\lr_\f^2(t,T)} J^{t,\xi}(v),
\end{equation}
which depends on $\xi$ only through its law $\mu$ (see Section~\ref{sec:value}). The main objectives of this paper are twofold: \vspace{-0.5em}
\begin{enumerate}[(I)]
    \item Establish the well-posedness of the coupled forward-backward system (comprising both state and adjoint FBSDEs) arising from the maximum principle, thereby obtaining the unique optimal control for $(\mathsf{P}_{t,\xi})$ and deriving the differentiability of the value function $V$; \vspace{-0.5em}
    \item Derive the associated generalized HJB equation under three distinct sets of conditions, and prove the existence of classical solutions in each case.
\end{enumerate}

%standard MFTC, novelty, and also an extension of my precious work. FBSDEs as state processes, motivation for mean-field FBSDEs, examples, literature. Mean-field FBSDEs as state process
In standard stochastic control problems for stochastic differential equations (SDEs), backward stochastic differential equations (BSDEs) naturally arise as adjoint processes via the stochastic maximum principle, a line of work initiated by Bismut \cite{MR453161,MR469466} and subsequently developed by Pardoux and Peng \cite{MR1037747,MR1202528}. In mean-field type control (MFTC), where the state process is a McKean-Vlasov SDE (also called mean-field SDE), the adjoint BSDEs involve derivatives with respect to the distribution of the state; see, e.g., \cite{AB10,CR} and the references therein. Our problem $(\mathsf{P}_{t,\xi})$ differs from standard MFTC in that the controlled state includes both a McKean-Vlasov SDE and a dependent BSDE (not as an adjoint, but as part of the state), with an additional $\varphi(Y^{t,\xi}_t)$ term in the cost. This structure precludes the direct application or straightforward extension of standard MFTC methods, and makes the problem significantly more challenging. Examples from financial mathematics that motivate optimal control problems of FBSDEs include the non-Markovian Merton problem \cite[Section 3]{Oksendal2016} and social planner problems with heterogeneous Epstein--Zin preferences \cite[Section 5]{MR4781585}, highlighting the importance of adopting FBSDEs as the underlying state processes. Optimal control problems for FBSDEs have been studied under various settings: existence of optimal controls via approximation \cite{Bahlali18082018,Buckdahn01062010}, dynamic programming principle (DPP) and viscosity solutions to HJB equations \cite{MR3205734}, maximum principles for jump-diffusion cases \cite{oksendal2010maximum}, and problems with partially observable information \cite{wang2018introduction}. With the development of mean-field theory, mean-field FBSDEs as state processes have found applications in financial mathematics; see \cite[Section 6]{MR4199898} for mean-variance portfolio selection with mean-field recursive utility, and \cite[Section 4]{Agram04032021} for optimal portfolio with mean-field risk minimization. On the theoretical side, maximum principles for optimal control problems driven by mean-field FBSDEs have been established under various settings: fully coupled systems with applications to Stackelberg games \cite{MR4587511}, regime-switching systems with singular controls \cite{SHI2025106216}, models involving Teugels martingales and L\'evy processes \cite{MR3952459}, and cases with heterogeneous mean-field interactions \cite{SZ}. The existing literature has primarily concentrated on maximum principles under diverse settings, while the well-posedness and regularity of the resulting coupled state-adjoint system, as well as the classical solvability of the associated HJB equations, remain largely unexplored. This paper fills this gap by providing a complete well-posedness result and establishing classical solvability of the HJB equation in distinct cases.

%For Aim 1, SMP, adjoint process, the system contains four equations. well-posedness (new), extend the continuation in method method. we allow diffusion to be control-dependent. our linear setting can be further relaxed by using the cone property. the value function 
For Aim (I), we apply the stochastic maximum principle to derive the optimality condition for $(\mathsf{P}_{t,\xi})$ via the adjoint processes associated with the state FBSDEs \eqref{FBSDE^v(Introduction)}. In standard stochastic control problems for SDEs, the adjoint process is characterized as the solution of a BSDE \cite{MR1696772}. In our setting, the adjoint process $P^v$ for $X^v$ satisfies a BSDE, while the adjoint process $R^v$ for $Y^v$ satisfies an SDE, and together they form a coupled FBSDE system (see \eqref{adjoint^v(Introduction)}). This system, together with state equations and the optimality condition, constitutes a forward-backward system consisting of two SDEs and two BSDEs (see System \eqref{FBSDE:MFTC}), which is significantly more complex than coupled systems arising in standard MFTC problems. We establish the well-posedness of this coupled optimality system, whose solution is denoted by
\begin{align*}
    \Pi^{t,\xi}_s:=\left(X^{t,\xi}_s,Y^{t,\xi}_s,Z^{t,\xi}_s,P^{t,\xi}_s,Q^{t,\xi}_s,R^{t,\xi}_s,u^{t,\xi}_s\right),\quad s\in[t,T],
\end{align*}
under a linear setting for $(b,\sigma,f)$ by extending the continuation method in coefficients \cite{YH2} to our framework. For this part, our assumptions allow the diffusion to be control-dependent and unbounded, and require no non-degeneracy condition. To the best of our knowledge, this well-posedness result is new. Our linearity assumptions on the state coefficients are likely to be further relaxed by following the approaches developed in \cite{AB14}. The well-posedness and stability estimates for $\Pi^{t,\xi}$ further allow us to establish the differentiability of the value functional $V$ defined in \eqref{problem} with respect to the distribution argument, extending the corresponding regularity results for standard MFTC problems.

%For Aim 2, we still use the approach for the value function to give the HJB euqtion (cite our previous works). But here, due to the lack of flow property, the problem is time-inconsistent, and the DPP does not generally hold. Also literature for the time-consistent problem. We still focus on the time-inconsistent case, and propose new approaches to the calssical solutions to the HJB equations under assumptions. Our results for HJB divided into three parts, (1) HJB equation for the first-order problem, (2) for the second-order HJB, (2a) consider the feedback cases (special structrue of $g$), and (2b) study the special conditions to ensure the flow property (special structrue of $h$ in $(y,z)$).
For Aim (II), a natural approach is to derive the HJB equation by studying the time regularity of $V$ via $\Pi^{t,\xi}$. This probabilistic viewpoint together with the associated DPP, has been widely used in standard MFTC problems, where the flow property plays a central role; see \cite{AB10,BR,DMF,PH}. However, for control problems of mean-field FBSDEs, the flow property generally fails (see Section~\ref{sec:flow}), so the same approach does not directly apply. Indeed, the open-loop problem driven by mean-field FBSDEs \eqref{FBSDE^v(Introduction)} is time-inconsistent, and the DPP does not generally hold; see \cite{MR4781585} for a study of equilibrium strategies and related equilibrium HJB equations for control problems driven by FBSDEs, and \cite{MR3639569,MR4805756,MR3646768} for related linear-quadratic problems. In this paper, we focus on the original open-loop problem $(\mathsf{P}_{t,\xi})$ as defined in \eqref{FBSDE^v(Introduction)}-\eqref{problem}, rather than reformulating it in terms of equilibrium solutions. For this time-inconsistent problem, we develop new probabilistic approaches to establish classical solutions to the associated HJB equations under the following three distinct scenarios: \vspace{-0.5em}
\begin{enumerate}[(1)]
    \item the first-order case (i.e., without Brownian motion); \vspace{-0.5em}
    \item the second-order case (with Brownian motion) and a feedback structure on $Y^{t,\xi}$; \vspace{-0.5em}
    \item the second-order case under conditions ensuring the flow property.
\end{enumerate} \vspace{-0.6em}
To the best of our knowledge, these results are the first in the literature --- even in the first-order setting --- concerning classical solutions to HJB equations corresponding to optimal control problems of mean-field FBSDEs.

%HJB equation for the first-order problem. We skip the DPP
For (1), to circumvent the lack of the flow property, we propose a direct study of the time regularity of the map $t\mapsto \Pi^{t,\xi}$ under $C^2$ assumptions on the cost coefficients. The limit of the difference quotient $\frac{1}{\epsilon}\left(\Pi^{t+\epsilon,\xi}_s-\Pi^{t,\xi}_s\right),\ s\in[t+\epsilon,T]$ as $\epsilon\to0$ is characterized as the solution of a new forward-backward system (see \eqref{FBSDEs:high}), whose initial conditions make it significantly different from FBSDEs arising from the standard MFTC problems. With the regularity of these auxiliary processes, we show that $V$ is the classical solution of the following HJB equation:
\begin{equation}\label{HJB(Introduction)}
    \left\{
    \begin{aligned}
        &\dd_t V(t,\mu)+\int_\brn \inf_{v\in\brd}\bigg\{ h\left(t,x,Y^{t,x,\mu}_t,\left(I_n\otimes Y^{t,\cdot,\mu}_t\right) \#\mu,v\right) + b(t,x,\mu,v)^\top D\frac{dV}{d\nu} (t,\mu)(x) \\
        &\quad\qquad\qquad\qquad\qquad +f\left(t,x,Y^{t,x,\mu}_t,\left(I_n\otimes Y^{t,\cdot,\mu}_t\right) \#\mu,v\right)^\top  \varphi_y \left(Y^{t,x,\mu}_t\right) \bigg\}\mu(dx) = 0,\\
        &V(T,\mu)= \int_\brn  \left[\Phi(x,\mu)+\varphi\left(g(x,\mu)\right)\right] \mu(dx),
    \end{aligned}
    \right.
\end{equation}
where $\left(I_n\otimes Y^{t,\cdot,\mu}_t\right) \#\mu$ denotes the push-forward of $\mu$ by the map $x\mapsto (x,Y^{t,x,\mu}_t)$. Here, $Y^{t,x,\mu}$ is the component of the solution of a forward-backward system with deterministic initial condition $x$, while still depending on the law of the original system (see \eqref{FBSDEs:x}). The integral term in the first equation of \eqref{HJB(Introduction)} can be expressed in terms of the Hamiltonian function (see Equation \eqref{HJB}). This HJB equation generalizes the standard first-order mean-field type HJB equation (recovered when $f$, $g$, and $\varphi$ vanish); and extends the corresponding results for controlled FBSDEs to the mean-field setting, with the equation now defined on the infinite-dimensional space $\pr_2(\brn)$. Furthermore, we also introduce the map 
\begin{align}\label{def:U(introduction)}
    U(t,x,\mu):=Y^{t,x,\mu}_t,
\end{align}
and establish its regularity by studying higher-order derivatives of $Y^{t,x,\lr(\xi)}_s$ with respect to $t$, $x$, and $\xi$ via the lifting method. These derivatives are characterized as components of solutions of several auxiliary FBSDE systems, whose well-posedness yields the differentiability of $U$ in $x,\mu$ and $t$. 

%the feedback section, we transform the problem into MFTC, and see the relations 
For (2), we return to the second-order problems and consider the case where the BSDE component admits a feedback representation $Y^v_s= F\left(s,X^v_s\right)$ for some suitable map $F$. This requires a structural assumption on $f$ (Assumption (C) in Section~\ref{sec:feedback}), under which $(\mathsf{P}_{t,\xi})$ can be transformed into a standard MFTC problem with a generalized cost. We establish a novel connection between the adjoint processes of the transformed and original problems (see \eqref{relation_PQR-PQ}), which yields the corresponding HJB equation. When the diffusion is independent of the control, $V$ is shown to be the classical solution to the second-order HJB equation \eqref{HJB:second-order}, which generalizes the standard MFTC HJB equation (recovered when $F\equiv0$ and $\varphi\equiv0$) and provides a characterization of $Z^{t,\xi}_t$ and $Q^{t,\xi}_t$ (see \eqref{feedback:ZQ}). We note that \eqref{HJB:second-order} and \eqref{HJB(Introduction)} are derived under different assumptions: the former relies on the feedback structure and allows $\sigma\not\equiv 0$, while the latter requires no structural assumption on the backward state equation but is obtained in the first-order setting. Thus, neither equation is a special case of the other.

%flow property, HJB coupled with decoupling field
For (3), we establish a necessary and sufficient condition for the flow property (see \eqref{flow_property}), and also provide workable conditions on $h$ in $(y,z)$ (Assumption (D) in Section~\ref{sec:flow}) that ensure it holds. This characterization also expresses $R^{t,\xi}_s$ as a function of $Y^{t,\xi}_s$ and $P^{t,\xi}_s$ as a function of $X^{t,\xi}_s$, which facilitates the DPP and the time expansion of the value function. The latter involves $(X^{t,\xi},Y^{t,\xi},Z^{t,\xi})$ and motivates us to establish the continuity of $Z^{t,\xi}_s$ in $s$, which in turn yields the differentiability of $V$ in time and the associated HJB equation. To this end, we study the map $U$ in \eqref{def:U(introduction)} as a decoupling field for the forward-backward state processes (when the flow property holds), and characterize $(Y^{t,\xi}_s,Z^{t,\xi}_s)$ as functions of $X^{t,\xi}_s$ when the diffusion is control-independent. Under $C^3$ regularity assumptions on the cost coefficients, we prove that the pair
\begin{align*}
    \Big(V(t,\mu),\  U(t,x,\mu)\Big),\qquad t\in[0,T],\ x\in\brn,\ \mu\in\pr_2(\brn)
\end{align*}
forms a classical solution to a coupled system of two nonlinear second-order backward PDEs on the measure space (see \eqref{discuss:Bell_main}). This system is analytically challenging, but our probabilistic approach yields its classical solvability without requiring non-degeneracy of $\left(\sigma\sigma^\top\right)(t,x,\mu)$. To the best of our knowledge, the classical solvability results for the HJB equations in all three cases above are new to the literature.

The rest of the paper is organized as follows. In Section~\ref{sec:notation}, we introduce the notations, problem formulation and assumptions. In Section~\ref{sec:MFTC}, we introduce the adjoint FBSDEs \eqref{adjoint^v(Introduction)}, prove the maximum principle, and give the well-posedness of the resulting coupled forward-backward system. In Section~\ref{sec:value}, we give the regularity of the value functional $V$ defined in \eqref{problem}. In Section~\ref{sec:HJB:1st}, we establish the classical solution to the HJB equation \eqref{HJB(Introduction)} corresponding to the first-order problem. In Section~\ref{sec:feedback}, we study the feedback case of the second-order problem, and establish the classical solution to the second-order HJB equation. In Section~\ref{sec:flow}, we provide conditions to ensure the flow property, and eventually establish the classical solution of the coupled PDE system for $(V,U)$. Some statements in Sections~\ref{sec:MFTC}, \ref{sec:HJB:1st}, \ref{sec:feedback} and \ref{sec:flow} are proven in Appendices~\ref{App:sec:MFTC}, \ref{App:sec:HJB:1st}, \ref{App:sec:feedback} and \ref{App:sec:flow} respectively.

\section{Preliminaries}\label{sec:notation}

\subsection{Probability spaces, Wasserstein spaces and notations}

Let $(\Omega,\f,\{\f_t,0\le t\le T\},\mathbb{P})$ be a completed filtered probability space (with the filtration being  augmented by all the $\mathbb{P}$-null sets) on which an $n$-dimensional Brownian motion $\{B_t,\ 0\le t\le T\}$ is defined. For any $\xi\in L^2(\Omega,\f,\mathbb{P};\brn)$, we denote by $\lr(\xi)$ its law (we also use the notation $\xi\sim\mu$); and denote by $\|\xi\|_2$ its $L^2$-norm. For every $t\in[0,T]$, we denote by $L^2_{\f_t}$ the set of all $\f_t$-measurable square-integrable $\brn$-valued random vectors. For $0\le t\le T$, we denote by $\lr^2_{\f}(t,T)$ the family of all $\f$-progressively measurable processes $\alpha_\cdot=\{\alpha_s,\ t\le s\le T\}$ such that $\|\alpha\|_{\lr^2_\f}^2:=\e\left[\int_t^T |\alpha_s|^2ds\right]<\infty$; and we denote by $\sr^2_{\f}(t,T)$ the family of all $\f$-adapted c\`adl\`ag processes $\beta_\cdot=\{\beta_s,\ t\le s\le T\}$ such that $\|\beta\|_{\sr^2_{\f}}^2:=\e\left[\sup_{t\le s\le T} |\beta_s|^2\right]<\infty$. We refer to \cite{MR1696772} for more details for these spaces. 

We denote by $\pr_{2}(\brn)$ the space of all probability measures on $\brn$ with finite second-order moments, equipped with the following Wasserstein metric: 
\begin{align*}
    W_2\left(m,m'\right):=\inf_{\pi\in\Pi\left(m,m'\right)}\left(\int_{\brn\times\brn}\left|x-x'\right|^2\pi\left(dx,dx'\right)\right)^{\frac{1}{2}},
\end{align*}
where $\Pi\left(m,m'\right)$ contains all joint probability measures with respective marginals $m$ and $m'$. For $m\in\pr_2(\brn)$, we denote by $\overline{m}:=\int_\brn xm(dx)$ and $|m|_2:=\sqrt{\int_\brn |x|^2m(dx)}$. See \cite{AL} for more results about Wasserstein spaces. For a map $\phi\in L_m^2(\brn;\brn)$, we denote by $\phi\#m$ the push-forward of $m$ by the map $\phi$. For a test function $f$, we have
\begin{align*}
    \int_\brn f(x') (\phi\#m) (dx') = \int_\brn f(\phi(x')) m (dx').
\end{align*}
Specifically, for a random variable $\xi\sim m$, we know that $\lr(\phi(\xi))=\phi\# m$, and that $\lr(\xi,\phi(\xi))=(I_n\otimes\phi)\# m$, where $I_n$
is the $n$-order identity matrix. See \cite{bogachev2007measure} for detailed discussions on the push-forward measures. 

In this article, we use the concept of the linear functional-derivative. Consider a functional $k:\pr_{2}(\brn)\to\br$, its linear functional-derivative at $m$ can be viewed as another continuous functional $\pr_{2}(\brn)\times \brn\ni(m,x')\mapsto\dfrac{dk}{d\nu}(m)(x')$ satisfying  $\Big|\dfrac{dk}{d\nu}(m)(x')\Big|\leq c(m)(1+|x'|^2)$ for some positive constant $c(m)$, which is bounded on any bounded subsets of $\pr_2(\brn)$ such that 
\begin{equation*}
    \lim_{\epsilon\to0}\dfrac{k((1-\epsilon)m+\epsilon m')-k(m)}{\epsilon}=\int_\brn\dfrac{dk}{d\nu}(m)(x')\left(m'(dx')-m(dx')\right), \quad \forall m'\in\pr_{2}(\brn).
\end{equation*}
See \cite{AB,book_mfg} for more details about the linear functional-derivatives. Particularly, the linear functional-derivatives are connected with the G\^ateaux derivatives in $L^2(\Omega,\f,\mathbb{P};\brn)$ in the following manner. For a functional $k:\pr_2(\brn)\to\br$ such that the derivative $D_{x'}\frac{d k}{d\nu}(m)(x')$ is continuous in $(m,x')$ and satisfies 
\begin{align*}
    D_{x'}\frac{d k}{d\nu}(m)(x')\le c(m)(1+|x'|),\quad (m,x')\in\pr_2(\brn)\times\brn, 
\end{align*}
then, the functional $K(\xi):=k(\lr(\xi)), \  \xi\in L^2(\Omega,\f,\mathbb{P};\brn)$ has the following G\^ateaux derivative:
\begin{align}\label{lem01_1}
	D_\xi K(\xi)(\omega)=D_{x'}\frac{d k}{d\nu}(\lr(\xi))(\xi(\omega)).
\end{align}
We also refer to \cite{book_mfg} for the relations of the linear functional-derivatives and the Lions derivatives (or $L$-derivative, see Lions \cite{CP} and Cardaliaguet \cite{cardaliaguet2015weak}).

We also introduce the following notations in this article. For a distribution $\varrho\in \pr_2(\brn\times\brn\times\brnn)$, we denote by its marginals $\varrho_x$, $\varrho_y$ and $\varrho_{z^i}$ as 
\begin{equation}\label{marginals}
\begin{aligned}
    &\varrho_x \left(dx'\right) := \int_{y'\in\brn,z'\in\brnn} d\varrho\left(x',y',z'\right),\quad \varrho_y\left(dy'\right) := \int_{x'\in\brn,z'\in\brnn}  d\varrho\left(x',y',z'\right),\\
    &\varrho_{z^i} \left(d {z'}^i\right) := \int_{x',y',{z'}^j\in\brn,j\neq i} d\varrho\left(x',y',z'\right),\quad 1\le i\le n. 
\end{aligned}
\end{equation}
Here, the subscripts $x$ in $\varrho_x$ (and $y$ in $\varrho_y$ and $z^i$ in $\varrho_{z^i}$) are only notations, not variables. For a linear functional differentiable functional $k:\pr_2(\brn\times\brn\times\brnn)\to\br$, we denote its linear functional-derivative at $\rho$ as 
\begin{align*}
    \frac{dk}{d\nu}\left(\rho\right)(x',y',z'),\quad x',y',{z'}^j\in\brn,\quad 1\le j\le n, 
\end{align*}
where ${z'}^j$ is the $j$-th column vector of $z'$; and the derivatives $D_{x'}\frac{dk}{d\nu},\ D_{y'}\frac{dk}{d\nu},\ D_{{z'}^j}\frac{dk}{d\nu}$ can be defined accordingly. Here and in the rest of this article, we write $D_{x'}\frac{dk}{d\nu}$ rather than $D_{x}\frac{dk}{d\nu}$ to emphasize that the differentiation is performed with respect to the variable that originates from the derivative in distribution. For a random variable $\zeta\in L^2(\Omega,\f,\mathbb{P};\brn\times\brn\times\brnn)$, we denote it by $\zeta=\left(\zeta_x,\zeta_y,\zeta_z\right)$, with its components $\zeta_x,\zeta_y,\zeta_{z^j}\in L^2(\Omega,\f,\mathbb{P};\brn)$ for $1\le j\le n$. The subscripts $x$, $y$ and $z^j$ are only notations, not variables. 

From here onward, for any random variable $\xi$, we use $\widetilde{\xi}$ to denote an independent copy of $\xi$, and $\widetilde{\e}\left[\widetilde{\xi}\right]$ for the corresponding expectation; we also denote by $\widehat{\xi}$ another independent copy, and by $\he$ the expectation with respect to it. For the sake of convenience, we use the notation $f|_{a}^b:=f(b)-f(a)$ for the difference of a  function $f$ between two points $b$ and $a$. For any symmetric matrix $Q$ and vector $x$, we denote by $Qx^{\otimes 2}:=x^\top Qx$. In all the following statements and proofs, we mean by $C(\alpha_1,\dots, \alpha_k)$  a constant depending only on parameters $(\alpha_1,\dots, \alpha_k)$, which might be different in different lines.

\subsection{Problem formulation}

Let coefficients for the McKean-Vlasov SDE in \eqref{FBSDE^v(Introduction)} 
\begin{align*}
    &b:[0,T]\times\brn\times\pr_2(\brn)\times\brd\to\brn,\qquad \sigma:[0,T]\times\brn\times\pr_2(\brn)\times\brd\to\brnn;
\end{align*}
and the coefficients for the BSDE in \eqref{FBSDE^v(Introduction)} 
\begin{align*}
    &f:[0,T]\times\brn\times\brn\times\brnn\times\pr_2( \brn\times\brn\times\brnn)\times\brd\to\brn,\qquad g:\brn\times\pr_2(\brn)\to\brn;
\end{align*}
and the coefficients for the cost functional defined in \eqref{cost^v}
\begin{align*}
    &h:[0,T]\times\brn\times\brn\times\brnn\times\pr_2( \brn\times\brn\times\brnn)\times\brd\to\br,\\
    &\Phi:\brn\times\pr_2(\brn)\to\br,\qquad \varphi:\brn\to\br.
\end{align*}
The assumptions on these coefficients are as follows. 

{\bf (A1)} The coefficients $b$, $\sigma$, $f$ and $g$ are linear in the following way: for $s\in[0,T]$, $v\in\brd$, $x,y\in\brn$, $z\in\brnn$, $m\in\pr_2(\brn)$ and $\varrho\in\pr_2(\brn\times\brn\times\brnn)$ (with its marginals $\varrho_x$, $\varrho_y$ and $\varrho_z$),
\begin{align*}
    b(s,x,m,v)=\ & b_0(s)+b_1(s)x+b_2(s)\overline{m}+b_3(s)v,\\
    \sigma^j(s,x,m,v)=\ & \sigma^j_0(s)+\sigma^j_1(s)x+\sigma^j_2(s)\overline{m}+\sigma^j_3(s)v,\quad 1\le j\le n,\\
    f(s,x,y,z,{ \varrho},v)=\ & f_0(s)+f_1(s)x+f_2(s)y+\sum_{j=1}^n f^j_3(s)z^j{ +f_4(s)\overline{\varrho_x}+ f_5(s)\overline{\varrho_y}+\sum_{j=1}^n f_6^j(s)\overline{\varrho_{z^j}}}+f_7(s)v,\\
    g(x,m)=\ & g_1x+g_2 \overline{m}.
\end{align*}
The functions $b_0,\sigma^j_0,f_0:[0,T]\to\brn$ and $b_1,b_2,\sigma^j_1,\sigma^j_2,f_1,f_2,f^j_3,f_4,f_5,f^j_6:[0,T]\to\brnn$ and  $b_3,\sigma^j_3,f_7:[0,T]\to\br^{n\times d}$ are continuous and are all bounded in norms by $L$; and the matrices $g_1,g_2\in\brnn$ are bounded in norms by $L$.

\textbf{(A2)} The function $h$ is continuous and has a quadratic growth in { $(x,y,z,\varrho,v)$}, and is differentiable in $(x,y,z,v)$ and also functional differentiable in { $\varrho\in\pr_2(\brn\times\brn\times\brnn)$}, with the derivatives satisfying: 
\begin{align*}
    &\left|(h_x,h_v,h_y,h_z)(s,x,y,z,{ \varrho},v)\right|\le L\left(1+|x|+|y|+|z|+{ |\varrho|_2}+|v|\right),\\
    & \left|\left(D_{x'}\frac{dh}{d\nu},\ D_{y'}\frac{dh}{d\nu},\ D_{{z'}^j}\frac{dh}{d\nu}\right)(s,x,y,z,{\varrho},v)(x',y',z')\right|\\
    &\qquad \le L\left(1+|x|+|y|+|z|+{ |\varrho|_2}+|v|{ +|x'|+|y'|+|z'|}\right), \quad 1\le j\le n,
\end{align*}
and the derivatives are $L$-Lipschitz continuous in $(x,y,z,{ \varrho},v,{ x',y',z'})$. The function $\varphi$ has a quadratic growth and is differentiable in $y$; the function $\Phi$ has a quadratic growth in $(x,m)$ and is differentiable in $x$ and also functional differentiable in $m\in\pr_2(\brn)$; and the derivatives satisfy: 
\begin{align*}
    &|\Phi_x(x,m)|\le L\left(1+|x|+|m|_2\right),\qquad \left|D_{x'}\frac{d\Phi}{d\nu}(x,m)(x')\right|\le L\left(1+|x|+|m|_2+|x'|\right),\\
    &|\varphi_y(y)|\le L(1+|y|)
\end{align*}
and the derivatives are $L$-Lipschitz continuous in $(x,m,v,y,z,x')$. 

{\bf (A3)} The running cost function $h$ is convex in the following way: there exists $\lambda>0$, such that for any $s\in[0,T]$, $x^1,x^2,y^1,y^2\in\brn$, $v^1,v^2\in\brd$, $z^1,z^2\in\brnn$ and $ \zeta^1,\zeta^2\in L^2(\Omega,\f,\mathbb{P};\brn\times\brn\times\brnn)$, 
\begin{align*}
    &h\left(s,x^2,y^2,z^2,{ \lr\left(\zeta^2\right)},v^2\right)-h\left(s,x^1,y^1,z^1,{ \lr\left(\zeta^1\right)},v^1\right)\\
    \geq\ & h_x\left(s,x^1,y^1,z^1,{ \lr\left(\zeta^1\right)},v^1\right)^\top \left(x^2-x^1\right) +h_y\left(s,x^1,y^1,z^1,{ \lr\left(\zeta^1\right)},v^1\right)^\top \left(y^2-y^1\right) \\
    &+\sum_{j=1}^n h_{z^j}\left(s,x^1,y^1,z^1,{ \lr\left(\zeta^1\right)},v^1\right)^\top \left(z^{2,j}-z^{1,j}\right) +h_v\left(s,x^1,y^1,z^1,{ \lr\left(\zeta^1\right)},v^1\right)^\top \left(v^2-v^1\right)\\
    &+\e\bigg[\left(D_{x'}\frac{dh}{d\nu}\left(s,x^1,y^1,z^1,{ \lr\left(\zeta^1\right)},v^1\right)\left({ \zeta^1}\right)\right)^\top \left({ \zeta_x^2-\zeta_x^1 }\right) \\
    &\quad\qquad + \left(D_{y'}\frac{dh}{d\nu}\left(s,x^1,y^1,z^1,{ \lr\left(\zeta^1\right)},v^1\right)\left({ \zeta^1}\right)\right)^\top \left({ \zeta_y^2-\zeta_y^1 }\right)\\
    &\quad\qquad + \sum_{j=1}^n \left(D_{{z'}^j}\frac{dh}{d\nu}\left(s,x^1,y^1,z^1,{ \lr\left(\zeta^1\right)},v^1\right)\left({ \zeta^1}\right)\right)^\top \left({ \zeta_{z^j}^2-\zeta_{z^j}^1 }\right) \bigg] +\lambda\left|v^2-v^1\right|^2;
\end{align*}
and the function $\frac{dh}{d\nu}(s,x,y,z,{ \varrho},v)(\cdot):{ \brn\times\brn\times\brnn}\to\br$ is convex. The terminal cost function $\Phi$ is convex in $(x,m)$ in a similar way, and the function $\varphi$ is convex in $y$. 

We define the Lagrangian $L:[0,T]\times\brn\times\brn\times\brnn\times\pr_2({ \brn\times\brn\times\brnn})\times\brd\times\brn\times\brnn\times\brn\to\br$ as 
\begin{equation}\label{Lagrangian}
\begin{aligned}
    L(s,x,y,z,{ \varrho},v,p,q,r):= \ & b\left(s,x,{ \varrho_x},v\right)^\top p + \sum_{j=1}^n\sigma^j\left(s,x,{ \varrho_x},v\right)^\top q^j  \\
    & -f(s,x,y,z,{ \varrho},v)^\top r + h(s,x,y,z,{ \varrho},v), 
\end{aligned}
\end{equation}
and the minimizer in $v$ of $L$ as 
\begin{align}\label{minimizer}
    \hv(s,x,y,z,{ \varrho},p,q,r):=\argmin_{v\in\brd} L(s,x,y,z,{ \varrho},v,p,q,r),
\end{align}
and then define the Hamiltonian $H$ as
\begin{align}\label{def:Ham}
    H(s,x,y,z,{ \varrho},p,q,r):=\inf_{v\in\brd} L(s,x,y,z,{ \varrho},v,p,q,r)=L\left(s,x,y,z,{ \varrho},\hv(s,x,y,z,{ \varrho},p,q,r),p,q,r\right).
\end{align}
From the convexity assumption on $h$ in (A3), we know that \eqref{minimizer} is well-defined. In the rest of this article, for notational convenience, we simply denote by 
\begin{align*}
    \pi:=(x,y,z,{ \varrho},p,q,r)\in \brn\times\brn\times\brnn\times\pr_2(\brn\times\brn\times\brnn)\times\brn\times\brnn\times\brn. 
\end{align*}
By simple computation, we know that 
\begin{align*}
    %&H_x(s,\pi)=b_1(s)^\top p + \sum_{j=1}^n \sigma^j_1(s)^\top q^j-f_1(s)^\top r +h_x\left(s,x,y,z,{ \varrho},\hv(s,\pi)\right),\\
    %&H_y(s,\pi)=-f_2(s)^\top r +h_y\left(s,x,y,z,{ \varrho},\hv(s,\pi)\right),\quad H_{z^j}(s,\pi)=-f^j_3(s)^\top r +h_{z^j}\left(s,x,y,z,{ \varrho},\hv(s,\pi)\right),\\
    &H_p(s,\pi)=b\left(s,x,{ \varrho_x},\hv(s,\pi)\right),\  H_{q^j}(s,\pi)=\sigma^j\left(s,x,{ \varrho_x},\hv(s,\pi)\right),\  H_r(s,\pi)=-f\left(s,x,y,z,{ \varrho},\hv(s,\pi)\right),\\
    &D_{x'}\frac{dH}{d\nu}(s,\pi)({ x',y',z'})=b_2(s)^\top p + \sum_{j=1}^n \sigma^j_2(s)^\top q^j-f_4(s)^\top r +D_{x'}\frac{dh}{d\nu}\left(s,x,y,z,{ \varrho},\hv(s,\pi)\right)(x',y',z'),\\
    & D_{y'}\frac{dH}{d\nu}(s,\pi)({ x',y',z'})=-f_5(s)^\top r +D_{y'}\frac{dh}{d\nu}\left(s,x,y,z,{ \varrho},\hv(s,\pi)\right)(x',y',z'),\\
    & D_{{z'}^j}\frac{dH}{d\nu}(s,\pi)({ x',y',z'})=-f^j_6(s)^\top r +D_{{z'}^j}\frac{dh}{d\nu}\left(s,x,y,z,{ \varrho},\hv(s,\pi)\right)(x',y',z').
\end{align*}

For any initial time $t\in[0,T]$ and initial condition $\xi\in \lr_{\f_t}^2$ with its law $\lr(\xi)=\mu\in\pr_2(\brn)$, we denote by $\left(X^{t,\xi,v}, Y^{t,\xi,v}, Z^{t,\xi,v}\right)$ the state processes corresponding to a control $v\in \lr_\f^2(t,T)$, which satisfy the distribution-dependent FBSDEs \eqref{FBSDE^v(Introduction)}. Here, we use the superscript $t,\xi,v$ to emphasize their dependence on the initial condition and the control. Before studying the optimal control problem, we first give the following estimates for the controlled state processes. From \cite{MR1696772}, the forward process $X^{t,\xi,v}$ of \eqref{FBSDE^v(Introduction)} is well-defined under the conditions on the coefficients $b$ and $\sigma$ in Assumption (A1), and it is easy to check that it satisfies 
\begin{align}\label{SDE:bound}
    \left\|X^{t,\xi,v}\right\|_{\sr_\f^2}^2\le C(L,T) \left(1+\|\xi\|_2^2+ \|v\|_{\lr_\f^2}^2 \right).
\end{align}
Then, given $v_s$ and $X^{t,\xi,v}_s$ for $s\in[t,T]$, the backward processes $\left(Y^{t,\xi,v},Z^{t,\xi,v}\right)$ are well-defined under the condition on the coefficients $f$ and $g$ in Assumption (A2), and by using \eqref{SDE:bound}, we have 
\begin{align}
    \left\|Y^{t,\xi,v}\right\|_{\sr_\f^2}^2 + \left\|Z^{t,\xi,v}\right\|_{\lr_\f^2}^2 \le\ & C(L,T) \e\left[1+|\xi|^2 +\left|X^{t,\xi,v}_T\right|^2 +\int_t^T \left(|v_s|^2 + \left|X^{t,\xi,v}_s\right|^2\right)ds \right] \notag\\
    \le\ & C(L,T) \left(1+\|\xi\|_2^2+ \|v\|_{\lr_\f^2}^2 \right). \label{BSDE:bound}
\end{align}
In a similar way, we also have the following continuity of the processes $\left(X^{t,\xi,v},Y^{t,\xi,v},Z^{t,\xi,v}\right)$ in $v$:
\begin{align}
    &\left\|X^{t,\xi,v'}-X^{t,\xi,v}\right\|_{\sr_\f^2}^2+\left\|Y^{t,\xi,v'}-Y^{t,\xi,v}\right\|_{\sr_\f^2}^2 + \left\|Z^{t,\xi,v'}-Z^{t,\xi,v}\right\|_{\lr_\f^2}^2 \le C(L,T) \left\|v'-v\right\|_{\lr_\f^2}^2. \label{FBSDE:conti_v}
\end{align}
For standard estimates on SDEs and BSDEs, see \cite{MR1696772}; for the regularity of mean-field SDEs, see \cite{BR}; and for the study on mean-field BSDEs and related PDEs, see \cite{BUCKDAHN20093133}. We refer the readers to \cite{MR4124528,HZ,TSE2021134} for the study on the Fokker–Planck equations corresponding to McKean-Vlasov SDEs. 

\section{Optimal controls for mean-field FBSDEs}\label{sec:MFTC}

In this section, we first introduce the adjoint FBSDEs corresponding to Problem $\left(\mathsf{P}_{t,\xi}\right)$, and then give the optimality condition for the optimal control in Subsection~\ref{subsec:maxi}, which derives the associated coupled system of FBSDEs for the control problem. Then, we prove the well-posedness of this system in Subsection~\ref{subsec:FBSDE}, which together with the sufficient maximum principle gives the unique optimal control for Problem $\left(\mathsf{P}_{t,\xi}\right)$. For notational convenience, in this section, we temporarily omit the superscript $t,\xi$ in the processes. 

\subsection{Maximum principle and adjoint FBSDEs}\label{subsec:maxi}

Under Assumptions (A1) and (A2), suppose that $u\in\lr^2_\f(t,T)$ is an optimal control for $\left(\mathsf{P}_{t,\xi}\right)$. We first introduce the following adjoint distribution-dependent FBSDEs: 
\begin{align}\label{adjoint^v(Introduction)}
    \left\{
    \begin{aligned}
        P^u_s=\ & - g_1^\top  R^u_T - g_2^\top \e\left[R^u_T\right] +\Phi_x\left(X^u_T, \lr\left(X^u_T\right) \right) +\widetilde{\e}\left[D_{x'}\frac{d\Phi}{d\nu} \left(\widetilde{X^u_T}, \lr\left(X^u_T\right)\right)\left(X^u_T\right) \right] \\
        &+ \int_s^T \bigg\{L_x\left(r,\pi^u_r, u_r\right)  + \widetilde{\e} \left[D_{x'}\frac{dL}{d\nu} \left(r,\widetilde{\pi^u_r}, \widetilde{u_r} \right)\left(\theta^u_r\right) \right] \bigg\} dr -\int_s^T Q^u_r dB_r, \\
        R^u_s=\ & -\varphi_y \left(Y^u_t\right) -\int_t^s \bigg\{ L_y \left(r,\pi^u_r, u_r \right) { + \widetilde{\e} \left[D_{y'}\frac{dL}{d\nu} \left(r,\widetilde{\pi^u_r}, \widetilde{u_r} \right)\left(\theta^u_r\right) \right]} \bigg\} dr \\
        &-\sum_{j=1}^n \int_t^s \bigg\{ L_{z^j}\left(r,\pi^u_r, u_r \right) { + \widetilde{\e} \left[D_{{z'}^j}\frac{dL}{d\nu} \left(r,\widetilde{\pi^u_r}, \widetilde{u_r} \right)\left(\theta^u_r\right) \right]} \bigg\} dB^j_r,
    \end{aligned}
    \right.
\end{align}
where $\theta^u_r:=\left(X^u_r, Y^u_r,Z^u_r\right)$ and $\pi^u_r:= \left(\theta^u_r, \lr\left(\theta^u_r\right),P^u_r,Q^u_r,R^u_r\right)$. Given the control $u$ together with the state processes $\theta^{u}$, System \eqref{adjoint^v(Introduction)} is a system of  distribution-dependent FBSDEs for $\left(P^{u},Q^{u},R^{u}\right)$. We have the following well-posedness result, whose proof is given in Appendix~\ref{App:lem:wellposedness_adjoint}; and we also refer the readers to \cite[Equation (10)]{MR4587511} and \cite[Equation (5.8)]{MR4199898} for the adjoint mean-field FBSDEs under different settings. 

\begin{lemma}\label{lem:wellposedness_adjoint}
    Under Assumptions (A1) and (A2), for the control $u$, the adjoint system \eqref{adjoint^v(Introduction)} has a unique solution $\left(P^{u},Q^{u},R^{u}\right)\in\left(\sr_\f^2\times\lr_\f^2\times\sr_\f^2\right)(t,T)$, and it satisfies 
    \begin{equation}
        \left\|P^{u}\right\|_{\sr^2_\f}^2+\left\|Q^{u}\right\|_{\lr^2_\f}^2+\left\|R^{u}\right\|_{\sr^2_\f}^2\le C(L,T) \left(1+\|\xi\|_2^2+ \|u\|_{\lr^2_\f}^2 \right). \label{lem:wellposedness_adjoint_0}
    \end{equation}
\end{lemma}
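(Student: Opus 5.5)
The plan is to use the fact that, once $u$ and $\theta^u=(X^u,Y^u,Z^u)$ are fixed, system \eqref{adjoint^v(Introduction)} is \emph{linear} in $(P^u,Q^u,R^u)$ with bounded coefficients. By (A1), the $(p,q,r)$-dependence of $L$ and of its derivatives enters only through the matrices $b_i,\sigma_i^j,f_i$. For instance,
\[
L_x=b_1^\top p+\sum_j(\sigma_1^j)^\top q^j-f_1^\top r+h_x,
\]
and $\widetilde{\e}[D_{x'}\frac{dL}{d\nu}(\widetilde{\pi^u_r},\widetilde{u_r})(\theta^u_r)]$ contributes $b_2^\top\e[P^u_r]+\sum_j(\sigma_2^j)^\top\e[Q^{u,j}_r]-f_4^\top\e[R^u_r]$ plus an $h$-term. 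Similarly, $L_y=-f_2^\top r+h_y$ and $L_{z^j}=-(f^j_3)^\top r+h_{z^j}$, with mean-field parts $-f_5^\top\e[R]$ and $-(f_6^j)^\top\e[R]$. The remaining source terms depend only on the given $(\theta^u,u)$: the $h$-derivatives, $\Phi_x$, $\widetilde{\e}[D_{x'}\frac{d\Phi}{d\nu}]$ and $\varphi_y(Y^u_t)$. By the growth bounds in (A2) together with \eqref{SDE:bound}--\eqref{BSDE:bound}, these sources are in $\lr^2_\f$, respectively $L^2$, with norm squared bounded by $C(L,T)(1+\|\xi\|_2^2+\|u\|^2_{\lr^2_\f})$.

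The key structural observation is that the forward equation for $R$ has drift and diffusion depending only on $R$ and $\e[R]$, not on $(P,Q)$. Its initial value $-\varphi_y(Y^u_t)$ is also given, since $Y^u$ is already known. So the system is only \emph{triangularly} coupled, and I will solve it sequentially.

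\textbf{Step 1.} Solve the linear mean-field SDE for $R^u$ on $[t,T]$. Taking expectations gives a linear ODE for $\e[R_s]$ with bounded coefficients, which is uniquely solvable. Substituting its solution back yields a standard linear SDE. Existence and uniqueness in $\sr^2_\f$ then follow from the classical theory, or from a Picard contraction on $\sr^2_\f$. Burkholder--Davis--Gundy and Gronwall give $\|R^u\|^2_{\sr^2_\f}\le C(\|\varphi_y(Y^u_t)\|_2^2+\|h_y,h_z\text{ terms}\|^2_{\lr^2_\f})$.

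\textbf{Step 2.} With $R^u$ known, the equation for $(P^u,Q^u)$ is a linear mean-field BSDE. Its terminal value $-g_1^\top R^u_T-g_2^\top\e[R^u_T]+\dots$ lies in $L^2_{\f_T}$, and its driver is Lipschitz in $(P,Q,\e P,\e Q)$. I will get existence and uniqueness either by citing mean-field BSDE theory \cite{BUCKDAHN20093133}, or directly: take expectations to obtain a linear backward ODE for $(\e P,\e Q)$, noting that $\e[Q]$ only enters as a coefficient that is uniquely determined by a contraction on a weighted $\lr^2$ space. The standard a priori estimate, namely Itô on $|P|^2$, Gronwall and BDG, then bounds $\|P^u\|^2_{\sr^2_\f}+\|Q^u\|^2_{\lr^2_\f}$ by the data, including $\|R^u\|^2_{\sr^2_\f}$.

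Combining the two steps gives \eqref{lem:wellposedness_adjoint_0}. The main obstacle, which is mild, is handling the mean-field terms $\e[Q]$ in the BSDE driver, together with checking carefully that the $\widetilde{\e}[D\frac{dh}{d\nu}(\dots)(\theta^u_r)]$ terms are square-integrable via the linear growth in (A2). I would handle $\e[Q]$ by a Picard iteration on $\lr^2_\f\times\lr^2_\f$ with the $e^{\beta s}$-weighted norm, choosing $\beta$ large to obtain a contraction.
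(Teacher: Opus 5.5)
Your proposal is correct and follows the same route as the paper. Both exploit the triangular structure: first solve the linear mean-field SDE for $R^u$, whose coefficients involve only $R^u$ and $\e[R^u]$. Then treat $R^u$ as data in the linear mean-field BSDE for $(P^u,Q^u)$, and close the estimate with \eqref{SDE:bound}--\eqref{BSDE:bound}. The only slip is your remark about a ``linear backward ODE for $(\e P,\e Q)$'': taking expectations does not give a closed equation for $\e[Q]$. Your weighted-norm Picard contraction, or citing mean-field BSDE theory, handles that term correctly, and the paper simply treats this step as standard.
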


We now give the necessary part of the maximum principle for Problem $\left(\mathsf{P}_{t,\xi}\right)$, using the adjoint FBSDE system.

\begin{theorem}
    Under Assumptions (A1) and (A2), suppose that $u\in\lr_\f^2(t,T)$ is an optimal control for Problem $\left(\mathsf{P}_{t,\xi}\right)$. Let $\theta^u$ be the associated state processes and $\left(P^u,Q^u,R^u\right)$ be the corresponding adjoint processes. Then, the following condition holds:
    \begin{equation}\label{optimal_condition}
        L_v\left(s,\theta^u_s,\lr(\theta^u_s),u_s,P^u_s,Q^u_s,R^u_s\right)=0,\quad  s\in[t,T],\ a.e., \quad \mathbb{P}-a.s..
    \end{equation}
\end{theorem}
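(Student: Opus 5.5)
The plan is the standard convex-perturbation argument: since the control set is all of $\brd$, we can use spike-free variations. Fix an arbitrary $v\in\lr^2_\f(t,T)$ and set $u^\epsilon:=u+\epsilon(v-u)$. By linearity of $(b,\sigma,f,g)$ in Assumption (A1), the differences $\delta X:=\frac1\epsilon(X^{u^\epsilon}-X^u)$ and $(\delta Y,\delta Z):=\frac1\epsilon(Y^{u^\epsilon}-Y^u,Z^{u^\epsilon}-Z^u)$ do not depend on $\epsilon$. They solve the linear variational system
\begin{align*}
d\delta X_s=\ &\big(b_1\delta X_s+b_2\e[\delta X_s]+b_3(v_s-u_s)\big)ds+\sum_j\big(\sigma^j_1\delta X_s+\sigma^j_2\e[\delta X_s]+\sigma^j_3(v_s-u_s)\big)dB^j_s,\\
-d\delta Y_s=\ &\big(f_1\delta X_s+f_2\delta Y_s+\textstyle\sum_j f^j_3\delta Z^j_s+f_4\e[\delta X_s]+f_5\e[\delta Y_s]+\sum_jf^j_6\e[\delta Z^j_s]+f_7(v_s-u_s)\big)ds-\delta Z_sdB_s,
\end{align*}
with $\delta X_t=0$ and $\delta Y_T=g_1\delta X_T+g_2\e[\delta X_T]$. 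These processes are bounded in $\sr^2\times\sr^2\times\lr^2$ by $C\|v-u\|_{\lr^2_\f}$, by the same estimates as \eqref{SDE:bound}--\eqref{FBSDE:conti_v}.

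\textbf{Step 1: first-order expansion of the cost.} Using the differentiability and Lipschitz-derivative assumptions in (A2), together with the identification \eqref{lem01_1} of $D_\xi$ with $D_{x'}\frac{d}{d\nu}$, we compute
\begin{align*}
\lim_{\epsilon\to0}\tfrac1\epsilon\big(J(u^\epsilon)-J(u)\big)=\ &\e\Big[\int_t^T\big(h_x^\top\delta X+h_y^\top\delta Y+\textstyle\sum_jh_{z^j}^\top\delta Z^j+h_v^\top(v-u)+\widetilde\e[\,\text{measure terms}\,]\big)ds\\
&\quad+\big(\Phi_x+\widetilde\e[D_{x'}\tfrac{d\Phi}{d\nu}]\big)^\top\delta X_T+\varphi_y(Y^u_t)^\top\delta Y_t\Big].
\end{align*}
The remainder is controlled by the Lipschitz continuity of the derivatives and the $L^2$ bounds, giving an $O(\epsilon)$ error. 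The measure terms are rewritten by Fubini, exchanging the roles of the original variables and their independent copies; this produces exactly the $\widetilde\e[D\frac{dL}{d\nu}(\widetilde{\pi^u},\widetilde u)(\theta^u)]$ structure appearing in \eqref{adjoint^v(Introduction)}. Optimality of $u$ makes this derivative $\ge0$.

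\textbf{Step 2: duality.} Take $(P^u,Q^u,R^u)$ from Lemma~\ref{lem:wellposedness_adjoint}. Apply It\^o's formula to $P^{u\top}_s\delta X_s$ on $[t,T]$ and to $R^{u\top}_s\delta Y_s$ on $[t,T]$, and take expectations. The stochastic integrals are true martingales by the $\sr^2/\lr^2$ bounds.
\begin{itemize}
\item The terminal value $P^u_T$ contains $-g_1^\top R^u_T-g_2^\top\e[R^u_T]$. This cancels $\e[R^{u\top}_T\delta Y_T]$, after using $\e[R^\top g_2\e[\delta X]]=\e[(g_2^\top\e[R])^\top\delta X]$.
\item The initial value $R^u_t=-\varphi_y(Y^u_t)$ cancels the $\varphi_y$ term.
\item The drift of $P^u$ and the drift and diffusion of $R^u$ are built from $L_x,L_y,L_{z^j}$ and their mean-field counterparts. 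They therefore cancel all $\delta X,\delta Y,\delta Z$ terms, including the expectation terms, again via the independent-copy swap.
\end{itemize}
What remains is
\[
\e\int_t^T\big(b_3^\top P^u+\textstyle\sum_j\sigma^{j\top}_3Q^{u,j}-f_7^\top R^u+h_v\big)^\top(v_s-u_s)\,ds=\e\int_t^TL_v(s,\theta^u_s,\lr(\theta^u_s),u_s,P^u_s,Q^u_s,R^u_s)^\top(v_s-u_s)\,ds\ge0 .
\]

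\textbf{Step 3: conclusion.} Since $v$ is arbitrary, choose $v=u-L_v(\cdots)$. This is in $\lr^2_\f$ by the linear growth of $h_v$ and the bounds on $\theta^u$ and on $(P^u,Q^u,R^u)$. The inequality then gives $-\e\int|L_v|^2ds\ge0$, hence \eqref{optimal_condition} holds $ds\otimes d\mathbb P$-a.e.

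The main obstacle is the bookkeeping in Step 2. The mean-field terms in $(x',y',z')$ for $h$, and the expectation terms $f_5\e[\delta Y]$ and $f^j_6\e[\delta Z^j]$, must be matched precisely to the $D_{y'}\frac{dL}{d\nu}$ and $D_{z'^j}\frac{dL}{d\nu}$ terms in the forward equation for $R^u$. In particular, the $\delta Z$ cross-variation terms from $d\langle R,\delta Y\rangle$ must be paired with the $dB^j$ coefficients of $R^u$, with the correct signs given that $R$ enters $L$ as $-f^\top r$.
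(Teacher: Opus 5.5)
Your proposal is correct and takes essentially the same route as the paper: the convex perturbation $u+\epsilon(v-u)$, the first-order expansion of $J^{t,\xi}$, and the duality step via It\^o's formula applied to $(P^u)^\top\delta X$ and $(\delta Y)^\top R^u$, which yields $\frac{d}{d\epsilon}J^{t,\xi}(u+\epsilon(v-u))\big|_{\epsilon=0}=\e\int_t^T L_v(s,\theta^u_s,\lr(\theta^u_s),u_s,P^u_s,Q^u_s,R^u_s)^\top(v_s-u_s)\,ds$. Your Step~3, taking $v=u-L_v$, makes explicit the final deduction that the paper leaves at ``from which we obtain'' the optimality condition.
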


\begin{proof}
For any other control $v\in \lr_\f^2(t,T)$ for Problem $\left(\mathsf{P}_{t,\xi}\right)$, we denote by $\de v:=v-u\in\lr_\f^2(t,T)$. For any $\epsilon\in(0,1)$, we introduce $v^\epsilon:=u+\epsilon\de v$ which is also an admissible control. We define by the process $\de X^\epsilon:=\frac{1}{\epsilon}\left(X^{v^\epsilon}-X^{u}\right)$ (and also $Y^\epsilon$ and $Z^\epsilon$ in a similar way), and denote by $\de \theta^\epsilon:=\left(\de X^\epsilon,\de Y^\epsilon,\de Z^\epsilon \right)$. From Assumption (A1), we know that the processes $\de \theta^\epsilon$ satisfy a linear system of FBSDEs. By using the regularity conditions on $h$ in Assumption (A2), we can compute that 
\begin{align}
    &\frac{d}{d\epsilon}J^{t,\xi}(u+\epsilon\de v)\bigg|_{\epsilon=0} \notag \\
    =\ & \e\bigg\{\int_t^T \bigg[ h_x\left(s,\theta^u_s,{ \lr\left(\theta^u_s\right)},u_s\right)^\top \de X^\epsilon_s+h_y\left(s,\theta^u_s,{ \lr\left(\theta^u_s\right)},u_s\right)^\top \de Y^\epsilon_s \notag \\
    &\qquad\qquad +\sum_{j=1}^n h_{z^j}\left(s,\theta^u_s,{ \lr\left(\theta^u_s\right)}, u_s\right)^\top \de Z^{\epsilon,j}_s +h_v\left(s,\theta^u_s,{ \lr\left(\theta^u_s\right)},u_s\right)^\top \de v_s \notag \\
    &\qquad\qquad +\widetilde{\e}\bigg[\left(D_{x'} \frac{dh}{d\nu}\left(s,\theta^u_s,{ \lr\left(\theta^u_s\right)},u_s\right)\left({ \widetilde{\theta^u_s}}\right)\right)^\top \widetilde{\de X^\epsilon_s}  \bigg] \notag \\
    &\qquad\qquad +\widetilde{\e}\bigg[\left(D_{y'} \frac{dh}{d\nu}\left(s,\theta^u_s,\lr\left(\theta^u_s\right),u_s\right)\left(\widetilde{\theta^u_s}\right)\right)^\top \widetilde{\de Y^\epsilon_s}  \bigg] \notag\\
    &\qquad\qquad +\sum_{j=1}^n \widetilde{\e}\bigg[\left(D_{{z'}^j} \frac{dh}{d\nu}\left(s,\theta^u_s, \lr\left(\theta^u_s\right),u_s\right)\left( \widetilde{\theta^u_s}\right)\right)^\top \widetilde{\de Z^{\epsilon,j}_s} \bigg]  \bigg] ds \notag \\
    &\qquad +\Phi_x \left(X^u_T,\lr\left(X^u_T\right)\right)^\top \de X^\epsilon_T +\widetilde{\e}\bigg[\left(D_{x'} \frac{d\Phi}{d\nu}\left(X^u_T,\lr\left(X^u_T\right)\right)\left(\widetilde{X^u_T}\right)\right)^\top \widetilde{\de X^\epsilon_T} \bigg]  + \varphi_y \left(Y^u_t\right)^\top \de Y^\epsilon_t  \bigg\}. \label{lem:dJ_1}
\end{align}
By applying It\^o's formula for $\left(P^u_s\right)^\top \de X^\epsilon_s$, taking expectation and using Fubini's theorem, we have
\begin{align}
    &\e\left[\left(P^u_T\right)^\top \de X^\epsilon_T \right] \notag \\
    =\ & \e \int_t^T \bigg[ b_3(s)^\top P^u_s+ \sum_{j=1}^n\sigma^j_3(s)^\top Q^{u,j}_s \bigg]^\top \de v_s \ ds \notag \\
    & + \e\int_t^T \bigg\{ f_1(s)^\top R^u_s + f_4(s)^\top\e\left[ R^u_s \right] - h_x\left(s,\theta^u_s,{ \lr\left(\theta^u_s\right)}, u_s\right)  \notag \\
    &\qquad\qquad - \widetilde{\e} \left[D_{x'}\frac{dh}{d\nu} \left(s,\widetilde{\theta^u_s}, { \lr\left(\theta^u_s\right)}, \widetilde{u_s} \right)\left({ \theta^u_s}\right) \right] \bigg\}^\top \de X^\epsilon_s \ ds  \notag \\
    =\ & \e \int_t^T \bigg\{ \bigg[ b_3(s)^\top P^u_s+ \sum_{j=1}^n\sigma^j_3(s)^\top Q^{u,j}_s \bigg]^\top \de v_s  +\left(R^u_s\right)^\top f_1(s) \de X^\epsilon_s  \notag \\
    &\quad\qquad + \left(R^u_s \right)^\top f_4(s) \e\left[\de X^\epsilon_s\right] - h_x\left(s,\theta^u_s,{ \lr\left(\theta^u_s\right)}, u_s\right)^\top \de X^\epsilon_s   \notag \\
    &\quad\qquad - \widetilde{\e} \bigg[\left(D_{x'}\frac{dh}{d\nu} \left(s,\theta^u_s,{ \lr\left(\theta^u_s\right)}, u_s \right)\left({ \widetilde{\theta^u_s}}\right)\right)^\top  \widetilde{\de X^\epsilon_s}\bigg] \bigg\}   ds. \label{lem:dJ_2-1}
\end{align}
Similarly, by applying It\^o's formula for $\left(\de Y^\epsilon_s\right)^\top R^u_s$ and taking expectation and using Fubini's theorem, we have 
\begin{align}
    &\e\left[\left(\de Y^\epsilon_T\right)^\top  R^u_T-\left(\de Y^\epsilon_t\right)^\top  R^u_t\right] \notag \\
    =\ & -\e\int_t^T \bigg\{ \left(R^u_s\right)^\top f_1(s) \de X^\epsilon_s + \left(R^u_s\right)^\top f_4(s)\e\left[\de X^\epsilon_s\right] + \left(R^u_s\right)^\top f_7(s) \de v_s \notag\\
    &\qquad\qquad + h_y \left(s,\theta^u_s,{ \lr\left(\theta^u_s\right)}, u_s \right)^\top  \de Y^\epsilon_s +\sum_{j=1}^n h_{z^j}\left(s,\theta^u_s,{ \lr\left(\theta^u_s\right)}, u_s \right)^\top \de Z^{\epsilon,j}_s \notag \\
    & \qquad\qquad + \widetilde{\e} \bigg[\left(D_{y'}\frac{dh}{d\nu} \left(s,\theta^u_s, \lr\left(\theta^u_s\right), u_s \right)\left( \widetilde{\theta^u_s}\right)\right)^\top  \widetilde{\de Y^\epsilon_s}\bigg] \notag\\
    & \qquad\qquad +\sum_{j=1}^n \widetilde{\e} \bigg[\left(D_{{z'}^j}\frac{dh}{d\nu} \left(s,\theta^u_s, \lr\left(\theta^u_s\right), u_s \right)\left(\widetilde{\theta^u_s}\right)\right)^\top  \widetilde{\de Z^{\epsilon,j}_s}\bigg] \bigg\} ds. \label{lem:dJ_2-2}
\end{align}
By using the terminal conditions for $P^u_T$ and $\de Y^\epsilon_T$, and the initial condition for $R^u_t$ as well as Fubini's theorem (similar as in \eqref{lem:dJ_2-1}), we have
\begin{align}
    &\e\left[\left(P^u_T\right)^\top \de X^\epsilon_T + \left(\de Y^\epsilon_T\right)^\top R^u_T-\left(\de Y^\epsilon_t\right)^\top R^u_t \right] \notag \\
    =\ & \e\bigg\{\Phi_x\left(X^u_T, \lr\left(X^u_T\right) \right)^\top \de X^\epsilon_T +\widetilde{\e}\left[D_{x'}\frac{d\Phi}{d\nu} \left({X^u_T}, \lr\left(X^u_T\right)\right)\left(\widetilde{X^u_T}\right)^\top \widetilde{\de X^\epsilon_T} \right]  + \varphi_y \left(Y^u_t\right)^\top \de Y^\epsilon_t \bigg\}. \notag
\end{align}
By substituting \eqref{lem:dJ_2-1}, \eqref{lem:dJ_2-2} and the last equality into \eqref{lem:dJ_1}, we have  
\begin{align*}
        &\frac{d}{d\epsilon}J^{t,\xi}(u+\epsilon\de v)\bigg|_{\epsilon=0} \notag \\
        =\ & \e\int_t^T \left[ b_3(s)^\top P^u_s+ \sigma_3(s)^\top Q^u_s -f_7(s)^\top R^u_s + h_v\left(s,\theta^u_s,{ \lr\left(\theta^u_s\right)},u_s\right) \right]^\top \de v_s ds, 
\end{align*}
from which we obtain \eqref{optimal_condition}. 
\end{proof}

For the study of necessary maximum principles in the literature, we also refer the readers to \cite{MR4587511} for fully coupled mean-field controlled FBSDEs, to \cite{SHI2025106216} for mean-field forward-backward regime-switching systems with singular controls, and to \cite{MR3952459} for problems involving Teugels martingales and associated Lévy processes. The controlled FBSDEs \eqref{FBSDE^v(Introduction)} for $(X,Y,Z)$ associated with the optimal control $u$, the corresponding FBSDEs \eqref{adjoint^v(Introduction)} for $(P,Q,R)$, and the optimality condition \eqref{optimal_condition} altogether give the following coupled system:
\begin{equation}\label{FBSDE:MFTC}
    \left\{
        \begin{aligned}
            &X_s= \xi+\int_t^s b \left(r,X_r, \lr\left(X_r\right), u_r \right)dr + \int_t^s \sigma\left(r,X_r, \lr\left(X_r\right), u_r \right)dB_r,\\
            &Y_s= g\left(X_T, \lr\left(X_T\right) \right) + \int_s^T f\left(r,\theta_r,\lr\left({ \theta_r}\right), u_r \right)dr -\int_s^T Z_r dB_r,\\
            &P_s= - g_1^\top  R_T - g_2^\top \e\left[R_T\right]  +\Phi_x\left(X_T, \lr\left(X_T\right) \right) +\widetilde{\e}\left[D_{x'}\frac{d\Phi}{d\nu} \left(\widetilde{X_T}, \lr\left(X_T\right)\right)\left(X_T\right) \right] \\
            &\quad\qquad + \int_s^T \bigg\{L_x\left(r,\pi_r, u_r\right)  + \widetilde{\e} \left[D_{x'}\frac{dL}{d\nu} \left(r,\widetilde{\pi_r}, \widetilde{u_r} \right)\left({ \theta_r}\right) \right] \bigg\} dr -\int_s^T Q_r dB_r, \\
            &R_s= -\varphi_y \left(Y_t\right) -\int_t^s \bigg\{ L_y \left(r,\pi_r, u_r \right) { + \widetilde{\e} \left[D_{y'}\frac{dL}{d\nu} \left(r,\widetilde{\pi_r}, \widetilde{u_r} \right)\left(\theta_r\right) \right]} \bigg\} dr \\
            &\quad\qquad  -\sum_{j=1}^n \int_t^s \bigg\{ L_{z^j}\left(r,\pi_r, u_r \right) { + \widetilde{\e} \left[D_{{z'}^j}\frac{dL}{d\nu} \left(r,\widetilde{\pi_r}, \widetilde{u_r} \right)\left(\theta_r\right) \right]} \bigg\} dB^j_r,\\
            &b_3(s)^\top P_s+ \sum_{j=1}^n \sigma^j_3(s)^\top Q^j_s -f_7(s)^\top R_s + h_v\left(s,\theta_s,\lr\left({ \theta_s}\right),u_s\right) =0,\quad s\in[t,T],
        \end{aligned}
    \right.
\end{equation}
where $\theta_r:=\left(X_r,Y_r,Z_r\right)$ and $\pi_r:= \left(\theta_r, \lr\left({ \theta_r}\right), P_r,Q_r,R_r\right)$. This system generalizes the forward-backward systems arising from the maximum principle associated with MFTC problems; see \cite[Equation (6.5)]{AB10} for instance. We next give the sufficient condition for Problem $\left(\mathsf{P}_{t,\xi}\right)$ under the additional convexity assumption (A3).

\begin{theorem}\label{thm:suff}
    Under Assumptions (A1)-(A3), 
    if System \eqref{FBSDE:MFTC} has a unique solution 
    \begin{align*}
        (X,Y,Z,P,Q,R,u)\in \left(\sr_\f^2\times\sr_\f^2\times\lr_\f^2\times\sr_\f^2\times\lr_\f^2\times\sr_\f^2\times\lr_\f^2\right)(t,T),
    \end{align*}
    then, $u$ is the unique optimal control for Problem $\left(\mathsf{P}_{t,\xi}\right)$, and for any control $v\in\lr_\f^2(t,T)$, we have
    \begin{align}\label{thm:suff_0}
        J^{t,\xi}(v)-J^{t,\xi}(u) \geq \lambda \int_t^T \left\|v_s-u_s\right\|_2^2 ds.
    \end{align}
\end{theorem}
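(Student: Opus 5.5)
We would prove the inequality by a direct convexity-and-duality computation, mirroring the proof of the necessary condition. The difference is that we work with the finite difference $v-u$ instead of a directional derivative. Fix an arbitrary $v\in\lr_\f^2(t,T)$. Write $\theta^v=(X^v,Y^v,Z^v)$ for its state processes and $\theta=(X,Y,Z)$ for those of $u$, and set $\de X:=X^v-X$, $\de Y:=Y^v-Y$, $\de Z:=Z^v-Z$, $\de v:=v-u$. By the linearity in (A1), $(\de X,\de Y,\de Z)$ solves exactly the linear FBSDE satisfied by $\de\theta^\epsilon$ in the previous proof, with $\de v$ as input and no $\epsilon$. Its terminal condition is $\de Y_T=g_1\de X_T+g_2\e[\de X_T]$ and its initial condition is $\de X_t=0$.

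\emph{Step 1 (convexity).} Apply (A3) to the running cost $h$ at the point $(s,\theta_s,\lr(\theta_s),u_s)$, taking $\zeta^1=\theta_s$ and $\zeta^2=\theta^v_s$. Apply the analogous convexity of $\Phi$ at $(X_T,\lr(X_T))$ and of $\varphi$ at $Y_t$. Taking expectations and using Fubini's theorem to move the $\widetilde{\e}$-terms onto the copies, $J^{t,\xi}(v)-J^{t,\xi}(u)$ is bounded below by the right-hand side of \eqref{lem:dJ_1}, with $\de\theta^\epsilon$ replaced by $(\de X,\de Y,\de Z)$ and $\de v$ by $v-u$, plus $\lambda\,\e\int_t^T|v_s-u_s|^2ds$.

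\emph{Step 2 (duality).} Apply It\^o's formula to $P_s^\top\de X_s$ and to $\de Y_s^\top R_s$ exactly as in \eqref{lem:dJ_2-1}--\eqref{lem:dJ_2-2}. These identities are linear in the variations and used only the equations for $(P,Q,R)$ in \eqref{FBSDE:MFTC}, so they hold verbatim. Summing them, and using the terminal condition of $P_T$, the terminal condition of $\de Y_T$, $\de X_t=0$ and $R_t=-\varphi_y(Y_t)$, every term in $(\de X,\de Y,\de Z)$ cancels. The first-order part then reduces to
\[
\e\int_t^T\Big[b_3^\top P_s+\sum_j\sigma_3^{j\top}Q^j_s-f_7^\top R_s+h_v(s,\theta_s,\lr(\theta_s),u_s)\Big]^\top(v_s-u_s)\,ds .
\]
This vanishes by the last line of \eqref{FBSDE:MFTC}. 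Hence $J^{t,\xi}(v)-J^{t,\xi}(u)\ge\lambda\int_t^T\|v_s-u_s\|_2^2ds$, which is \eqref{thm:suff_0}. It shows $u$ is optimal, and that it is unique: any other optimal $v$ gives $0\ge\lambda\|v-u\|^2_{\lr^2_\f}$, so $v=u$ in $\lr_\f^2$.

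\emph{Main obstacle.} The delicate point is justifying the It\^o product computations rigorously. We need the stochastic integrals to be true martingales, which follows from the $\sr^2/\lr^2$ integrability of all processes together with \eqref{SDE:bound}, \eqref{BSDE:bound} and Lemma~\ref{lem:wellposedness_adjoint}, via a standard BDG argument. We also need the Fubini swaps between the mean-field derivative terms, $\widetilde{\e}[D_{x'}\frac{dh}{d\nu}(\cdot,\widetilde{\theta},\dots)(\theta)]$ paired with $\de X$ versus the $\widetilde{\de X}$ form. These require the linear growth bounds in (A2) to ensure integrability. We must also check that the convexity inequality for $\Phi$ in the measure argument is used in the same lifted form as for $h$, so that its derivative term matches the $\widetilde{\e}[D_{x'}\frac{d\Phi}{d\nu}]$ term appearing in $P_T$.
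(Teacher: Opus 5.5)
Your proof is correct and follows the same route as the paper. Convexity from (A3) gives the lower bound $\mathcal{K}(u,v)+\e\int_t^T(h_v^\top\de v_s+\lambda|\de v_s|^2)ds$, and It\^o's formula applied to $P_s^\top\de X_s$ and $\de Y_s^\top R_s$ reduces $\mathcal{K}(u,v)$ to $\e\int_t^T[b_3^\top P_s+\sum_j\sigma_3^{j\top}Q^j_s-f_7^\top R_s]^\top\de v_s\,ds$, which cancels the $h_v$ term by the optimality condition. Your explicit uniqueness deduction and your integrability remarks (true martingales, Fubini swaps) are details the paper leaves implicit.
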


\begin{proof}
We denote by $\de X_s:=X^{v}_s-X_s$, and define $\de Y_s$ and $\de Z_s$ in a similar way. From \eqref{cost^v} and the convexity conditions in Assumption (A3), we know that 
\begin{align}
    &J^{t,\xi}(v)-J^{t,\xi}(u) \notag \\
    \geq\ & \e\bigg\{\int_t^T \bigg[ h_x\left(s,\theta_s,\lr\left({ \theta_s}\right),u_s\right)^\top \de X_s+h_y\left(s,\theta_s,\lr\left({ \theta_s}\right),u_s\right)^\top \de Y_s\notag\\
    &\quad\qquad +\sum_{j=1}^n h_{z^j}\left(s,\theta_s,\lr\left({ \theta_s}\right),u_s\right)^\top \de Z^j_s +h_v\left(s,\theta_s,\lr\left({ \theta_s}\right),u_s\right)^\top \de v_s \notag \\
    &\quad\qquad +\widetilde{\e}\bigg[\left(D_{x'} \frac{dh}{d\nu}\left(s,\theta_s,\lr\left({ \theta_s}\right),u_s\right)\left(\widetilde{ \theta_s}\right)\right)^\top \widetilde{\de X_s} { +\left(D_{y'} \frac{dh}{d\nu}\left(s,\theta_s,\lr\left(\theta_s\right),u_s\right)\left(\widetilde{\theta_s}\right)\right)^\top \widetilde{\de Y_s} } \notag \\
    &\quad\qquad\qquad { +\sum_{j=1}^n \left(D_{{z'}^j} \frac{dh}{d\nu}\left(s,\theta_s,\lr\left(\theta_s\right),u_s\right)\left(\widetilde{\theta_s}\right)\right)^\top \widetilde{\de Z^j_s} }\bigg] + \lambda |v_s-u_s|^2\bigg] ds \notag \\
    &\qquad +\Phi_x \left(X_T,\lr\left(X_T\right)\right)^\top \de X_T +\widetilde{\e}\bigg[\left(D_{x'} \frac{d\Phi}{d\nu}\left(X_T,\lr\left(X_T\right)\right)\left(\widetilde{X_T}\right)\right)^\top \widetilde{\de X_T} \bigg]  + \varphi_y \left(Y_t\right)^\top \de Y_t  \bigg\} \notag\\
    :=\ & \mathcal{K}(u,v) + \e \int_t^T \left(h_v\left(s,\theta_s,\lr\left({ \theta_s}\right),u_s\right)^\top \de v_s + \lambda |v_s-u_s|^2\right) ds. \label{thm:suff_1}
\end{align}
As in \eqref{lem:dJ_2-1}-\eqref{lem:dJ_2-2}, by applying It\^o's formula for $P_s^\top \de X^\epsilon_s$ and $\de Y_s^\top R_s$, taking expectation and using Fubini's theorem, and then by using the terminal conditions for $P_T$ and $\de Y_T$ and the initial condition for $R_t$, we have 
\begin{align}
    \mathcal{K}(u,v) =\ & \e \int_t^T \bigg[ b_3(s)^\top P_s+ \sum_{j=1}^n \sigma^j_3(s)^\top Q^j_s -f_7(s)^\top R_s \bigg]^\top \de v_s ds. \label{thm:suff_4}
\end{align}
From \eqref{thm:suff_1}, \eqref{thm:suff_4} and the optimality condition in \eqref{FBSDE:MFTC}, we obtain \eqref{thm:suff_0}. 
\end{proof}

In view of Theorem~\ref{thm:suff}, we know that when System \eqref{FBSDE:MFTC} has a solution $\pi_s$, the optimal control for the (open-loop) problem $\left(\mathsf{P}_{t,\xi}\right)$  exists and is of the feedback form: $u_s=\hv(s,\pi_s)$. Therefore, the next step is to study the well-posedness of the system \eqref{FBSDE:MFTC}.

\subsection{Well-posedness of the system}\label{subsec:FBSDE}

We next give the well-posedness of System \eqref{FBSDE:MFTC} and also the regularity of the solutions with respect to the initial condition $\xi$. In this section, we denote by $\mathbb{S}$ the space of processes 
\begin{align*}
    \Pi_s:=(\pi_s,u_s):= \left(\theta_s, \lr\left({ \theta_s}\right),P_s,Q_s,R_s,u_s\right):= \left(X_s,Y_s,Z_s, \lr\left({ X_s,Y_s,Z_s}\right),P_s,Q_s,R_s,u_s\right),\  s\in[t,T],
\end{align*}
with the norm
\begin{equation*}
    \|\Pi\|_{\mathbb{S}}^2:=\|X\|_{\sr_\f^2}^2+\|Y\|_{\sr_\f^2}^2+\|Z\|_{\lr_\f^2}^2 +\|P\|_{\sr_\f^2}^2+ \|Q\|_{\lr_\f^2}^2 +\|R\|_{\sr_\f^2}^2+\|u\|_{\lr_\f^2}^2   <\infty.
\end{equation*}
The strategy is to prove that the solvability of System \eqref{FBSDE:MFTC} is preserved when the coefficients are slightly perturbed. We denote by 
\begin{align*}
    \ir=\left(\ir^b,\ir^\sigma,\ir^f,\ir^g,\ir^x,\ir^y,\ir^z,\ir^\Phi,\ir^\varphi\right)
\end{align*}
an input for System \eqref{FBSDE:MFTC} (designed to be injected into the system); and denote by $\mathbb{I}$ the space of all inputs, endowed with the squared norm
\begin{equation*}
	\begin{split}
		\|\ir\|_{\mathbb{I}}^2:=\e\bigg[&\left|\ir^g\right|^2 + \left|\ir^\Phi\right|^2 + \left|\ir^\varphi\right|^2 +\int_t^T\left(\left|\ir^b_s\right|^2+\left|\ir^\sigma_s\right|^2 + \left|\ir^f_s\right|^2 + \left|\ir^x_s\right|^2 + \left|\ir^y_s\right|^2 + \left|\ir^z_s\right|^2\right)ds\bigg]<\infty.
	\end{split}
\end{equation*}
For any $(\gamma,\xi,\ir)\in [0,1]\times\lr_{\f_t}^2\times\mathbb{I}$, we denote by $\mathcal{E}(\gamma,\xi,\ir)$ the following forward-backward equations:
\begin{equation}\label{FBSDE:MFTC=gamma}
    \left\{
        \begin{aligned}
            &X_s= \xi+\int_t^s \left[\gamma b \left(r,X_r, \lr\left(X_r\right), u_r \right) + \ir^b_r\right] dr + \int_t^s \left[\gamma\sigma\left(r,X_r, \lr\left(X_r\right), u_r \right) + \ir^\sigma_r\right] dB_r,\\
            &Y_s= \gamma g\left(X_T, \lr\left(X_T\right) \right) +\ir^g + \int_s^T \left[\gamma f\left(r,\theta_r,\lr({ \theta_r}), u_r \right) + \ir^f_r\right]dr -\int_s^T Z_r dB_r,\\
            &P_s= \gamma\left\{- g_1^\top  R_T - g_2^\top \e\left[R_T\right]  +\Phi_x\left(X_T, \lr\left(X_T\right) \right) +\widetilde{\e}\left[D_{x'}\frac{d\Phi}{d\nu} \left(\widetilde{X_T}, \lr\left(X_T\right)\right)\left(X_T\right) \right] \right\} + \ir^\Phi \\
            &\quad\qquad + \int_s^T \bigg\{\gamma L_x\left(r,\Pi_r\right)  + \gamma \widetilde{\e} \left[D_{x'}\frac{dL}{d\nu} \left(r,\widetilde{\Pi_r} \right)\left({ \theta_r}\right) \right] + \ir^x_r \bigg\} dr -\int_s^T Q_r dB_r, \\
            &R_s= -\gamma \varphi_y \left(Y_t\right) - \ir^\varphi -\int_t^s \left\{\gamma L_y \left(r,\Pi_r\right) { + \gamma \widetilde{\e} \left[D_{y'}\frac{dL}{d\nu} \left(r,\widetilde{\Pi_r} \right)\left(\theta_r\right) \right]} +\ir^y_r\right\} dr \\
            &\quad\qquad -\sum_{j=1}^n \int_t^s \left\{ \gamma L_{z^j}\left(r,\Pi_r\right)  { + \gamma \widetilde{\e} \left[D_{{z'}^j}\frac{dL}{d\nu} \left(r,\widetilde{\Pi_r} \right)\left(\theta_r\right) \right]}+ \ir^{z^j}_r\right\} dB^j_r,\\
            &b_3(s)^\top P_s+ \sum_{j=1}^n \sigma^j_3(s)^\top Q^j_s -f_7(s)^\top R_s + h_v\left(s,\theta_s,\lr({ \theta_s}),u_s\right) =0,\quad s\in[t,T].
        \end{aligned}
    \right.
\end{equation}
Note that $\mathcal{E}(1,\xi,0)$ is exactly System \eqref{FBSDE:MFTC}. For $\gamma\in[0,1]$, we say that property $(S_\gamma)$ holds true if, for any $(\xi,\ir)\in L^2_{\f_t}\times\mathbb{I}$, System $\mathcal{E}(\gamma,\xi,\ir)$ has a unique solution in $\mathbb{S}$. Note the fact that $(S_0)$ holds true; indeed, there is on coupling in the system \eqref{FBSDE:MFTC=gamma} when $\gamma=0$, so the processes $\pi$ are defined directly by $\ir$, and then $u$ is defined by $\pi$ through the last equation in \eqref{FBSDE:MFTC=gamma} under the convexity assumption on $h$. Our aim is to show $(S_1)$ holds true. We first have the following estimate, whose proof is given in Appendix~\ref{App:lem:estimate}. 

\begin{lemma}\label{lem:estimate}
    Under Assumptions (A1)-(A3), suppose that $(S_\gamma)$ holds true for some $\gamma\in[0,1]$. Then, there exists a positive constant depending only on $(L, T,\lambda)$, such that for any $\xi^1,\xi^2\in L^2_{\f_t}$ and $\ir^1,\ir^2\in\mathbb{I}$, the corresponding solutions $\Pi^1,\Pi^2\in\mathbb{S}$ of $\mathcal{E}\left(\gamma,\xi^1,\ir^1\right)$ and $\mathcal{E}\left(\gamma,\xi^2,\ir^2\right)$ satisfy
    \begin{align}\label{lem:estimate_0}
        \left\|\Pi^2-\Pi^1\right\|^2_{\mathbb{S}}\le C(L, T,\lambda) \left(\left\|\xi^2-\xi^1\right\|_2^2 + \left\|\ir^2-\ir^1\right\|^2_{\mathbb{I}}\right).
    \end{align}
\end{lemma}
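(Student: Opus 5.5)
Set $\de X:=X^2-X^1$, and define $\de Y,\de Z,\de P,\de Q,\de R,\de u$ and $\de\ir$ in the same way. The argument is the standard monotonicity (duality) estimate for the continuation method, in three steps.

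\emph{Step 1: duality identity.} Apply It\^o's formula to $\de P_s^\top\de X_s$ on $[t,T]$ and to $\de Y_s^\top\de R_s$ on $[t,T]$, take expectations, and use Fubini's theorem for the mean-field terms. This is the same computation as in \eqref{lem:dJ_2-1}--\eqref{lem:dJ_2-2}, with the coefficient $\gamma$ and the inputs added. The linear terms in (A1) coming from $b,\sigma,f,g$ (through $L_x,L_y,L_z$ and their lifts) then cancel in pairs, which is exactly why the adjoint system was built this way. Inserting the optimality relation (the last line of \eqref{FBSDE:MFTC=gamma}), and using that $b_3^\top P+\sum_j\sigma^{j\top}_3Q^j-f_7^\top R=-h_v$, I expect to arrive at
\begin{align*}
&\gamma\,\e\Big[\big(\Phi_x|^{2}_{1}+\widetilde{\e}[\cdots]\big)^\top\de X_T+\varphi_y|^2_1{}^\top\de Y_t\Big]+\gamma\,\e\int_t^T\Big[\de h_\theta\cdot\de\theta+\de h_v^\top\de u+(\text{lifted }\tfrac{dh}{d\nu}\text{ terms})\Big]ds\\
&\qquad=\e\big[\de P_t^\top\de\xi\big]+(\text{bilinear terms pairing }\de\ir\text{ with }\de\Pi).
\end{align*}

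\emph{Step 2: lower bound from convexity.} Sum the convexity inequality (A3) with the roles of the two solutions exchanged. This shows that the left-hand side is at least $2\gamma\lambda\,\e\int_t^T|\de u_s|^2ds$, because the convexity of $\Phi$ and $\varphi$ makes their monotone terms nonnegative. Hence
\[
\gamma\lambda\|\de u\|^2_{\lr^2_\f}\le C\big(\|\de\xi\|_2+\|\de\ir\|_{\mathbb{I}}\big)\|\de\Pi\|_{\mathbb{S}}.
\]

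\emph{Step 3: bound the states by the control.} Standard SDE estimates for $X$, and BSDE estimates for $(Y,Z)$ and $(P,Q)$, give
\[
\|\de\Pi\|^2_{\mathbb{S}}\le C\big(\|\de\xi\|^2_2+\|\de\ir\|^2_{\mathbb{I}}+\|\de u\|^2_{\lr^2_\f}\big).
\]
The pair $(P,R)$ is coupled through the terminal value $R_T$ and the initial value $\varphi_y(Y_t)$, so I would first estimate $(X,Y,Z)$, then $R$ forward (its coefficients involve $P,Q,R$ linearly), and then $(P,Q)$. A Gronwall argument may be needed here, or else $(\de P,\de Q,\de R)$ can be estimated directly from the linear adjoint FBSDE through Lemma~\ref{lem:wellposedness_adjoint}-type bounds applied to the difference.

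\emph{Conclusion and main obstacle.} Combining Steps 2 and 3 with Young's inequality yields the claim, with a constant independent of $\gamma$ when $\gamma$ is bounded away from $0$. The main obstacle is small $\gamma$, where the factor $\gamma\lambda$ degenerates. There I would use the last equation of \eqref{FBSDE:MFTC=gamma} together with the $\lambda$-strong convexity of $h$ in $v$: this gives $\lambda|\de u|\le C(|\de\theta|+|\de P|+|\de Q|+|\de R|+W_2)$ uniformly in $\gamma$. On the other hand, the coupling feeding $\de u$ back into the states carries a factor $\gamma$. Hence for $\gamma\le\gamma_0(L,T,\lambda)$ a direct contraction estimate closes, and for $\gamma\ge\gamma_0$ the duality estimate above closes.
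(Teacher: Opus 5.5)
Your proposal is correct and uses the same ingredients as the paper: the duality identity from It\^o's formula on $\de P^\top\de X$ and $\de R^\top\de Y$, the monotonicity coming from (A3), the cascade of estimates $X\to(Y,Z)\to R\to(P,Q)$, and the pointwise bound on $\de u$ from strong convexity in $v$. The one difference is that the paper needs no case split at $\gamma_0$. The bilinear input terms in your Step 2 do not involve $\de u$, so $\|(\de\pi,0)\|_{\mathbb{S}}$ (the norm without the $u$-component) can replace $\|\de\Pi\|_{\mathbb{S}}$ on the right. The states see $\de u$ only through $\gamma\,\de u$, so Step 3 sharpens to $\|(\de\pi,0)\|^2_{\mathbb{S}}\le C(\|\de\xi\|_2^2+\|\de\ir\|^2_{\mathbb{I}}+\gamma^2\|\de u\|^2_{\lr^2_\f})$. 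Since $\gamma^2\le\gamma$, the duality bound on $\gamma\|\de u\|^2_{\lr^2_\f}$ then controls the states uniformly for $\gamma\in[0,1]$, and $\de u$ is recovered afterwards from the pointwise optimality bound.
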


The next result is crucial in the proof of our main result.
\begin{lemma}\label{lem:extend}
    Under Assumptions (A1)-(A3), suppose that $(S_\gamma)$ holds true for some $\gamma\in[0,1)$. Then, there exists a positive constant $\eta_0$ depending only on $(L, T,\lambda)$, such that $(S_{\gamma+\eta})$ holds true for any $\eta\in [0, \eta_0 \wedge (1-\gamma )]$.
\end{lemma}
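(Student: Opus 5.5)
This is the standard continuation step of Peng--Wu / Bensoussan--Yam type, adapted to the seven-component system. I will build a Picard (contraction) map on $\mathbb{S}$ using the solvability $(S_\gamma)$ and the uniform estimate of Lemma~\ref{lem:estimate}.

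\textbf{Step 1: the map.} Fix $(\xi,\ir)\in L^2_{\f_t}\times\mathbb{I}$ and $\eta\in[0,1-\gamma]$. For a given $\Pi'=(\theta',\lr(\theta'),P',Q',R',u')\in\mathbb{S}$, define the augmented input $\overline{\ir}(\Pi')$ by moving the extra $\eta$-part of every coefficient into the input slot:
\begin{align*}
\overline{\ir}^b_r &:= \eta\, b\left(r,X'_r,\lr(X'_r),u'_r\right)+\ir^b_r, \qquad \overline{\ir}^\sigma_r := \eta\, \sigma\left(r,X'_r,\lr(X'_r),u'_r\right)+\ir^\sigma_r,\\
\overline{\ir}^f_r &:= \eta\, f\left(r,\theta'_r,\lr(\theta'_r),u'_r\right)+\ir^f_r, \qquad \overline{\ir}^g := \eta\, g\left(X'_T,\lr(X'_T)\right)+\ir^g .
\end{align*}
The terms $\overline{\ir}^\Phi,\overline{\ir}^x,\overline{\ir}^y,\overline{\ir}^z,\overline{\ir}^\varphi$ are defined analogously from the corresponding $\eta$-multiples of the terminal, drift, initial and diffusion terms of the $P$ and $R$ equations, all evaluated at $\Pi'$. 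The optimality condition (last line of \eqref{FBSDE:MFTC=gamma}) carries no $\gamma$, so it stays intact. By (A1)--(A2) (linear growth of $b,\sigma,f,g,L_x,L_y,L_z,\Phi_x,\varphi_y$ and their measure derivatives), $\overline{\ir}(\Pi')\in\mathbb{I}$. By $(S_\gamma)$, $\mathcal{E}(\gamma,\xi,\overline{\ir}(\Pi'))$ has a unique solution $\Pi=:\Gamma(\Pi')\in\mathbb{S}$. A fixed point of $\Gamma$ is exactly a solution of $\mathcal{E}(\gamma+\eta,\xi,\ir)$, and conversely. Uniqueness for $\mathcal{E}(\gamma+\eta,\xi,\ir)$ therefore follows once $\Gamma$ is shown to be a contraction, since any two solutions are fixed points.

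\textbf{Step 2: contraction.} For $\Pi'^1,\Pi'^2\in\mathbb{S}$, Lemma~\ref{lem:estimate} (applied with the same $\xi$) gives
\[
\left\|\Gamma(\Pi'^2)-\Gamma(\Pi'^1)\right\|_{\mathbb{S}}^2\le C(L,T,\lambda)\left\|\overline{\ir}(\Pi'^2)-\overline{\ir}(\Pi'^1)\right\|_{\mathbb{I}}^2 .
\]
Using the Lipschitz continuity in (A1)--(A2), I will bound the input difference by $\eta^2 C(L,T)\|\Pi'^2-\Pi'^1\|_{\mathbb{S}}^2$, and this bound must cover every term. First, the distributional arguments satisfy $W_2(\lr(\theta'^1_r),\lr(\theta'^2_r))\le\|\theta'^1_r-\theta'^2_r\|_2$. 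Second, the $\widetilde{\e}[D_{x'}\frac{dL}{d\nu}(\cdots)]$ terms are Lipschitz in both the copy and the evaluation point. Third, the linear terms in $(P,Q,R)$ appearing in $L_x$, $L_y$, $L_z$ and in $g_1^\top R_T$ are controlled by the $\sr^2$ and $\lr^2$ norms. Fourth, the terminal and initial quantities $X'_T$, $R'_T$, $Y'_t$ are controlled by the $\sr^2$ norms. Combining these gives a contraction constant $C(L,T,\lambda)\eta^2$.

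\textbf{Step 3: conclusion.} Choose $\eta_0$ with $C(L,T,\lambda)\eta_0^2\le\frac14$. For $\eta\le\eta_0\wedge(1-\gamma)$, $\Gamma$ is a contraction on the Banach space $\mathbb{S}$; its unique fixed point gives existence and uniqueness, so $(S_{\gamma+\eta})$ holds.

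\textbf{Main obstacle.} The constant in Lemma~\ref{lem:estimate} must be independent of $\gamma$, so that $\eta_0$ is uniform. That independence is what the lemma supplies, relying on the $\lambda$-convexity in (A3) to absorb the $u$-difference. Within this proof, the delicate point is that the Lipschitz bound for the $L_x$, $L_y$, $L_z$ terms involves $u'$ and $h$'s derivatives in $v$. I will handle it using the Lipschitz continuity of $h_x,h_y,h_z$ in $v$ together with $\|u'\|_{\lr^2_\f}\le\|\Pi'\|_{\mathbb{S}}$, which is included in the $\mathbb{S}$-norm, so no use of $\hv$ is needed.
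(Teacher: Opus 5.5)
Your proposal is correct and follows the paper's proof. Like the paper, you define the map $\Pi\mapsto$ (solution of $\mathcal{E}(\gamma,\xi,{\ir'})$), with the $\eta$-parts of all coefficients evaluated at $\Pi$ moved into the input. You then combine Lemma~\ref{lem:estimate} with the Lipschitz bound $\|{\ir'}^2-{\ir'}^1\|_{\mathbb{I}}\le C(L,T)\,\eta\,\|\Pi^2-\Pi^1\|_{\mathbb{S}}$ to obtain a contraction for small $\eta$; your remarks that ${\ir'}\in\mathbb{I}$ and that the constant is uniform in $\gamma$ supply details the paper leaves implicit.
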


\begin{proof}
For any $(\xi,\ir)\in L^2_{\f_t}\times\mathbb{I}$ and $\eta>0$, we need to show that the system $\mathcal{E}(\gamma+\eta,\xi,\ir)$ has a unique solution in $\mathbb{S}$. To this end, we define a map $\Psi:\mathbb{S}\to \mathbb{S}$ as follows. For any $\Pi\in\mathbb{S}$, let $\Psi(\Pi)$ be the unique solution of $\mathcal{E}(\gamma,\xi,\ir')$, where 
\begin{align*}
    &{\ir'_s}^b:= \eta b \left(s,X_s, \lr\left(X_s\right), u_s \right) + \ir^b_s,\qquad {\ir'_s}^\sigma:= \eta \sigma \left(s,X_s, \lr\left(X_s\right), u_s \right) + \ir^\sigma_s,\\
    &{\ir'_s}^f:= \eta f\left(s,\theta_s,\lr({ \theta_s}), u_s \right) + \ir^f_s,\qquad {\ir'}^g:= \eta g\left(X_T, \lr\left(X_T\right) \right) +\ir^g,\\
    &{\ir'}^\Phi:= \eta\left\{- g_1^\top  R_T - g_2^\top \e\left[R_T\right]  +\Phi_x\left(X_T, \lr\left(X_T\right) \right) +\widetilde{\e}\left[D_{x'}\frac{d\Phi}{d\nu} \left(\widetilde{X_T}, \lr\left(X_T\right)\right)\left(X_T\right) \right] \right\} + \ir^\Phi,\\
    &{\ir'_s}^x:=\eta L_x\left(s,\Pi_s\right)  + \eta \widetilde{\e} \left[D_{x'}\frac{dL}{d\nu} \left(s,\widetilde{\Pi_s}\right)\left({ \theta_s}\right) \right] + \ir^x_s,\\
    &{\ir'}^\varphi:= \eta \varphi_y \left(Y_t\right) + \ir^\varphi, \qquad {\ir'_s}^y:= \eta L_y \left(s,\Pi_s\right) { + \eta \widetilde{\e} \left[D_{y'}\frac{dL}{d\nu} \left(s,\widetilde{\Pi_s}\right)\left({ \theta_s}\right) \right]} +\ir^y_s,\\
    &{\ir'_s}^{z^j}:= \eta L_{z^j}\left(s,\Pi_s \right) { + \eta \widetilde{\e} \left[D_{{z'}^j}\frac{dL}{d\nu} \left(s,\widetilde{\Pi_s}\right)\left({ \theta_s}\right) \right]} + \ir^{z^j}_s.
\end{align*}
Here, $\Psi(\Pi)$ is well-defined due to the fact that $(S_\gamma)$ holds true. It is obvious that $\Pi$ is a fixed point of the map $\Psi$ if and only if it is a solution
of $\mathcal{E}(\gamma+\eta,\xi,\ir)$. Therefore, we only need to show that $\Psi$ is a contraction when
$\eta$ is small enough. In fact, we know from Lemma~\ref{lem:estimate} that, there exists a positive constant depending only on $(L,T,\lambda)$ such that
\begin{align*}
    \left\| \Psi\left(\Pi^2\right)- \Psi\left(\Pi^1\right)\right\|_{\mathbb{S}} \le C(L,T,\lambda) \left\| {\ir'}^2 -{\ir'}^1 \right\|_{\mathbb{I}}.
\end{align*}
Besides, from the Lipschitz-continuous conditions of the coefficients in Assumptions (A1) and (A2), we also see that 
\begin{align*}
    \left\| {\ir'}^2 -{\ir'}^1 \right\|_{\mathbb{I}} \le C(L,T)\eta \left\|\Pi^2-\Pi^1\right\|_{\mathbb{S}}.
\end{align*}
From the last two inequalities, we know that when $\eta$ is small enough, $\Psi$ is indeed a contraction, which concludes the proof.
\end{proof}

We now give the well-posedness result of System \eqref{FBSDE:MFTC}, and the boundedness and continuity of the solution with respect to the initial condition.

\begin{theorem}\label{main2_thm}
	Under Assumptions (A1)-(A3), System \eqref{FBSDE:MFTC} has a unique solution $\Pi\in\mathbb{S}$. There exists a positive constant $C$ depending only on $(L,T,\lambda)$, such that for any initial conditions $\xi^1,\xi^2\in \lr_{\f_t}^2$, the corresponding solutions $\Pi^1$ and $\Pi^2$ of System \eqref{FBSDE:MFTC} satisfy
    \begin{align}
        \left\|\Pi^1\right\|_{\mathbb{S}}\le C \left( 1+\left\|\xi^1\right\|_2\right), \quad \left\|\Pi^2-\Pi^1 \right\|_{\mathbb{S}} \le C\left\|\xi^2-\xi^1\right\|_2.\label{lem:kappa_0}
    \end{align}   
\end{theorem}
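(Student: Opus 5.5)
The proof is the standard continuation argument, built from Lemmas~\ref{lem:estimate} and \ref{lem:extend}. As observed before Lemma~\ref{lem:estimate}, property $(S_0)$ holds. When $\gamma=0$ there is no coupling: $X,Y,Z,P,Q,R$ are determined directly by $(\xi,\ir)$ through explicit stochastic integrals and martingale representation. The control $u$ is then recovered pointwise from the last equation of \eqref{FBSDE:MFTC=gamma}. This is possible because, by the $\lambda$-strong convexity in (A3), $v\mapsto h_v(s,\theta_s,\lr(\theta_s),v)$ is strongly monotone and Lipschitz, hence a bijection of $\brd$ with Lipschitz inverse, so $u\in\lr^2_\f$.

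Next, apply Lemma~\ref{lem:extend} iteratively. The step size $\eta_0$ depends only on $(L,T,\lambda)$ and not on $\gamma$. Starting from $(S_0)$, we obtain $(S_{\eta_0})$, then $(S_{2\eta_0})$, and so on. After at most $\lceil 1/\eta_0\rceil$ steps this gives $(S_1)$. Since $\mathcal{E}(1,\xi,0)$ is exactly System \eqref{FBSDE:MFTC}, taking $\ir=0$ gives existence and uniqueness of $\Pi\in\mathbb{S}$.

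For the estimates, apply Lemma~\ref{lem:estimate} with $\gamma=1$, which is now allowed since $(S_1)$ holds. Taking $\ir^1=\ir^2=0$ immediately gives $\|\Pi^2-\Pi^1\|_{\mathbb{S}}\le C\|\xi^2-\xi^1\|_2$. For the bound $\|\Pi^1\|_{\mathbb{S}}\le C(1+\|\xi^1\|_2)$, compare with the solution $\Pi^0$ of $\mathcal{E}(1,0,0)$, i.e. the initial condition $\xi^2=0$. This gives $\|\Pi^1\|_{\mathbb{S}}\le\|\Pi^0\|_{\mathbb{S}}+C\|\xi^1\|_2$, and $\|\Pi^0\|_{\mathbb{S}}$ is a finite constant depending only on the coefficients.

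The main obstacle is making that last constant depend only on $(L,T,\lambda)$. I would handle it by comparing $\mathcal{E}(1,0,0)$ with the trivial system $\mathcal{E}(1,0,\ir^0)$, where $\ir^0$ is chosen to cancel the inhomogeneous terms: $b_0,\sigma_0,f_0$, the values $h_x,h_y,h_z,h_v$ at the origin and at $\delta_0$, and $\Phi_x(0,\delta_0)$, $\varphi_y(0)$. After this cancellation the solution is identically zero. The growth bounds in (A1)--(A2) give $\|\ir^0\|_{\mathbb{I}}\le C(L,T)$, so Lemma~\ref{lem:estimate} yields $\|\Pi^0\|_{\mathbb{S}}\le C(L,T,\lambda)$. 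One detail remains: the terms $\widetilde{\e}[D_{x'}\frac{dh}{d\nu}(\cdot)]$ evaluated at zero must also be absorbed into $\ir^0$. This is possible because they are bounded by $L$ via (A2).
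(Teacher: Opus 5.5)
Your existence and uniqueness argument is correct: you verify $(S_0)$ through the strong monotonicity of $v\mapsto h_v$ and then apply Lemma~\ref{lem:extend} finitely many times with the $\gamma$-independent step $\eta_0$. Your Lipschitz bound is also correct: Lemma~\ref{lem:estimate} at $\gamma=1$ with $\ir^1=\ir^2=0$ gives the second estimate in \eqref{lem:kappa_0} directly.

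The growth bound, however, fails at the reference system. The zero process is \emph{not} a solution of $\mathcal{E}(1,0,\ir^0)$ for any $\ir^0\in\mathbb{I}$. The last equation of \eqref{FBSDE:MFTC=gamma}, the optimality condition, carries neither the factor $\gamma$ nor an input component; the only inputs are $\ir^b,\ir^\sigma,\ir^f,\ir^g,\ir^x,\ir^y,\ir^z,\ir^\Phi,\ir^\varphi$. So at $\Pi\equiv0$ that equation reads $h_v(s,0,0,0,\delta_0,0)=0$. This is false in general, e.g.\ for $h=\lambda|v-a|^2$ with $a\neq0$. You list $h_v$ at the origin among the terms to be cancelled, but $\mathbb{I}$ has no slot for it. You used this same feature (no $\gamma$ in the last equation) correctly when checking $(S_0)$, but overlooked it here.

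The repair is to put a nonzero control into the reference solution. Let $u^0_s$ be the unique root of $v\mapsto h_v(s,0,0,0,\delta_0,v)$; it is deterministic. Strong monotonicity gives $2\lambda|u^0_s|\le|h_v(s,0,0,0,\delta_0,0)|\le L$. Set $\Pi^{\mathrm{ref}}:=(0,0,0,0,0,0,u^0)$. Choose $\ir^0$ to cancel the following terms:
\begin{itemize}
\item $b_0+b_3u^0$, $\sigma^j_0+\sigma^j_3u^0$ and $f_0+f_7u^0$;
\item the values of $h_x,h_y,h_{z^j}$ and of $D_{x'}\frac{dh}{d\nu},D_{y'}\frac{dh}{d\nu},D_{{z'}^j}\frac{dh}{d\nu}$ at $(s,0,0,0,\delta_0,u^0_s)$, the latter evaluated at $(0,0,0)$;
\item $\Phi_x(0,\delta_0)+D_{x'}\frac{d\Phi}{d\nu}(0,\delta_0)(0)$ and $\varphi_y(0)$.
\end{itemize}
By (A2) all of these are bounded by $C(L,\lambda)$, so $\|\ir^0\|_{\mathbb{I}}+\|\Pi^{\mathrm{ref}}\|_{\mathbb{S}}\le C(L,T,\lambda)$. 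By $(S_1)$, $\Pi^{\mathrm{ref}}$ is the unique solution of $\mathcal{E}(1,0,\ir^0)$. Lemma~\ref{lem:estimate} then yields $\|\Pi^1-\Pi^{\mathrm{ref}}\|^2_{\mathbb{S}}\le C(\|\xi^1\|_2^2+\|\ir^0\|^2_{\mathbb{I}})$, which is the first bound in \eqref{lem:kappa_0}, uniformly in $t$.

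With this correction your route differs from the paper's. The paper only asserts that the estimates follow by redoing the It\^o/convexity argument of Lemma~\ref{lem:estimate}, keeping the constant terms as sources. Your route uses Lemma~\ref{lem:estimate} as a black box and needs no new computation.
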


\begin{proof}
    The well-posedness result is a direct consequence of Lemma~\ref{lem:extend} and the fact that $(S_0)$ holds true. The proof of the estimates in \eqref{lem:kappa_0} is similar to that for \eqref{lem:estimate_0}, and is omitted here.
\end{proof}

As a direct consequence of the well-posedness of System \eqref{FBSDE:MFTC} in Theorem \ref{main2_thm} and the sufficiency maximum principle in Theorem~\ref{thm:suff}, we can now obtain the optimal control of Problem $\left(\mathsf{P}_{t,\xi}\right)$.

\begin{theorem}\label{thm:solvability}
    Under Assumptions (A1)-(A3), $u\in \lr_\f^2(t,T)$ (the solution of System \eqref{FBSDE:MFTC}) is the unique optimal control for Problem $\left(\mathsf{P}_{t,\xi}\right)$.
\end{theorem}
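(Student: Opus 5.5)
The statement follows by combining Theorem~\ref{main2_thm} with Theorem~\ref{thm:suff}. I would carry it out in three steps.

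\emph{Step 1: existence and uniqueness of the coupled system.} Under (A1)--(A3), Theorem~\ref{main2_thm} gives that System \eqref{FBSDE:MFTC} has a unique solution $\Pi=(X,Y,Z,\lr(\theta),P,Q,R,u)\in\mathbb{S}$ for the fixed $(t,\xi)$. This rests on the continuation argument: $(S_0)$ holds, and Lemma~\ref{lem:extend} extends $(S_\gamma)$ to $(S_{\gamma+\eta})$ with a step $\eta_0$ depending only on $(L,T,\lambda)$. Finitely many steps therefore reach $\gamma=1$. In particular $u\in\lr^2_\f(t,T)$, and its state $\theta=(X,Y,Z)$ coincides with the state process $\theta^u$ of \eqref{FBSDE^v(Introduction)} driven by $u$. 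The coincidence holds because the first two equations of \eqref{FBSDE:MFTC} are exactly \eqref{FBSDE^v(Introduction)} with $v=u$, and that system is uniquely solvable for a given control.

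\emph{Step 2: the adjoint components are the adjoint processes of $u$.} The components $(P,Q,R)$ solve \eqref{adjoint^v(Introduction)} for the control $u$. By Lemma~\ref{lem:wellposedness_adjoint}, they are therefore the adjoint processes $(P^u,Q^u,R^u)$. Consequently the last line of \eqref{FBSDE:MFTC} is precisely the first-order condition $L_v=0$ along $(\theta^u,P^u,Q^u,R^u,u)$.

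\emph{Step 3: optimality and uniqueness.} All hypotheses of Theorem~\ref{thm:suff} now hold, so for every $v\in\lr^2_\f(t,T)$,
\[
J^{t,\xi}(v)-J^{t,\xi}(u)\ge \lambda\int_t^T\|v_s-u_s\|_2^2\,ds .
\]
This shows that $u$ is optimal. For uniqueness, suppose $v$ is another optimal control. Then $J^{t,\xi}(v)=J^{t,\xi}(u)$, so the integral above vanishes and $v=u$ in $\lr^2_\f(t,T)$.

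The main obstacle would be the uniform a priori estimate of Lemma~\ref{lem:estimate}, on which the continuation step depends. Since that lemma is taken as given here, the only point needing care is the identification in Steps 1 and 2 between the solution components and the state and adjoint processes associated with $u$. That identification is immediate from the uniqueness results already established.
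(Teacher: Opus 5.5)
Your proposal is correct and matches the paper's argument, which obtains the theorem directly by combining the well-posedness result of Theorem~\ref{main2_thm} with the sufficient maximum principle of Theorem~\ref{thm:suff}, including the uniqueness deduction from $\lambda>0$ in \eqref{thm:suff_0}. Your identification in Steps 1--2 is harmless but not strictly needed, since Theorem~\ref{thm:suff} already takes the solution of \eqref{FBSDE:MFTC} as its input.
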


Theorem~\ref{thm:solvability} establishes the solvability of the optimal control problem of mean-field FBSDEs, which generalizes the standard MFTC problems (see \cite{AB10} for instance), and also extends the maximum principle results for controlled distribution-dependent FBSDEs in the literature (see \cite{MR4587511} for instance). Our approach allows the diffusion to depend on distribution and control, and does not require nondegeneracy. Moreover, the linearity conditions in Assumption (A1) can likely be relaxed using the cone-property-based approach developed in \cite{AB14}. Furthermore, our results are expected to be extended naturally to the jump-diffusion setting, as pursued in \cite{AB14}.

\section{Regularity of the value function}\label{sec:value}

In this section, we consider the value function $V$ defined in \eqref{problem} for Problem $\left(\mathsf{P}_{t,\xi}\right)$. From Theorem~\ref{thm:solvability}, we see that $V$ satisfies
\begin{align}\label{value_equation}
    V(t,\lr(\xi))=J^{t,\xi}\left(u^{t,\xi}\right)=\e\left[\int_t^T h\left(s,\theta_s^{t,\xi},\lr\left({ \theta_s^{t,\xi}}\right),u^{t,\xi}_s\right) ds + \Phi\left(X_T^{t,\xi},\lr\left(X_T^{t,\xi}\right)\right)+\varphi\left(Y_t^{t,\xi}\right)\right].
\end{align}
Here, $\theta^{t,\xi}_s:= \left(X_s^{t,\xi},Y^{t,\xi}_s,Z^{t,\xi}_s\right)$, and  $\left(\theta^{t,\xi},P^{t,\xi},Q^{t,\xi},R^{t,\xi},u^{t,\xi}\right)$ is the unique solution of System \eqref{FBSDE:MFTC} in $\mathbb{S}$. From \eqref{value_equation}, the optimality condition for $u^{t,\xi}$ and the independence of $\xi$ and the Brownian motion, we know that $V$ depends on $\xi$ only through its law $\lr(\xi)$, therefore, the value function $V:[0,T]\times\pr_2(\brn)\to\br$ is well-defined. We also refer the readers to \cite{AB10} for more discussions. We now give the regularity of the map $\xi\mapsto V(t,\lr(\xi))$.

\begin{theorem}\label{thm:value_regu_mu}
    Under Assumptions (A1)-(A3), the map $\xi\mapsto V (t,\lr(\xi))$ is G\^ateaux differentiable with the derivative
	\begin{align}
		D_\xi  V(t,\lr(\xi))=P^{t,\xi}_t, \label{DV_P}
	\end{align}
	and we have the following estimates
	\begin{align}
		|V(t,\lr(\xi))|\le C(L,T,\lambda)\left(1+\|\xi\|_2^2\right);\label{V:growth}
	\end{align}
    and 
    \begin{equation}\label{DV:growth}
    \begin{aligned}
        \left\|D_\xi  V(t,\lr(\xi))\right\|_2\le\ & C(L,T,\lambda)(1+\|\xi\|_2),\\
        \left\|D_\xi  V(t,\lr(\xi'))-D_\xi  V(t,\lr(\xi))\right\|_2\le\ & C(L,T,\lambda)\|\xi'-\xi\|_2. 
    \end{aligned}
	\end{equation}
    Moreover, the map $\pr_2(\brn)\ni\mu\mapsto V(t,\mu)$ is linearly functional-differentiable, and the linear functional-derivative satisfies the relation
    \begin{equation}\label{relation:DV-dV}
    \begin{aligned}
        D_\xi  V(t,\lr(\xi))&=D_x \frac{dV}{d\nu}(t,\lr(\xi))(\xi).
    \end{aligned}
    \end{equation}   
\end{theorem}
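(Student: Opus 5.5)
The plan is to write $\mathcal{V}(\xi):=V(t,\lr(\xi))=J^{t,\xi}(u^{t,\xi})$ and compute the Gâteaux derivative by a sandwich argument that uses optimality, convexity, and the stability estimate \eqref{lem:kappa_0}.

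\textbf{Growth bounds.} The bound \eqref{V:growth} follows from the quadratic growth of $h,\Phi,\varphi$ in (A2) together with $\|\Pi^{t,\xi}\|_{\mathbb{S}}\le C(1+\|\xi\|_2)$ from Theorem~\ref{main2_thm}. Once \eqref{DV_P} is proved, the two estimates in \eqref{DV:growth} also follow from Theorem~\ref{main2_thm}, because $\|P^{t,\xi'}_t-P^{t,\xi}_t\|_2\le\|\Pi^{t,\xi'}-\Pi^{t,\xi}\|_{\mathbb{S}}\le C\|\xi'-\xi\|_2$.

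\textbf{Gâteaux derivative.} Fix a direction $\eta\in L^2_{\f_t}$ and set $\xi^\epsilon:=\xi+\epsilon\eta$.

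For the upper bound, use $u:=u^{t,\xi}$, which is admissible for the initial condition $\xi^\epsilon$. This gives
\[
\mathcal{V}(\xi^\epsilon)-\mathcal{V}(\xi)\le J^{t,\xi^\epsilon}(u)-J^{t,\xi}(u).
\]
Because of the linearity in (A1), the difference of the states driven by the same control $u$ is $\epsilon\,\de\theta$. Here $\de\theta$ solves the linear mean-field FBSDE with $\de X_t=\eta$, no control term, $\de Y_T=g_1\de X_T+g_2\e[\de X_T]$, and driver linear in $(\de\theta,\e[\de\theta])$. Expanding $h,\Phi,\varphi$ to first order and using the Lipschitz derivatives in (A2) gives
\[
J^{t,\xi^\epsilon}(u)-J^{t,\xi}(u)=\epsilon\,\mathcal{K}+O(\epsilon^2).
\]
Here $\mathcal{K}$ has the same shape as the linear functional in \eqref{lem:dJ_1}, but with $\de v=0$ and with the forward increment starting at $\eta$.

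Next, repeat the duality computation from the proof of the necessary maximum principle: apply Itô's formula to $P_s^\top\de X_s$ and to $\de Y_s^\top R_s$, then use Fubini. The only new term is the initial boundary term $\e[(P_t)^\top\de X_t]=\e[P^{t,\xi}_t\cdot\eta]$. The $R$-terms cancel as before, since $R_t=-\varphi_y(Y_t)$ absorbs the $\varphi_y(Y_t)^\top\de Y_t$ term. This yields $\mathcal{K}=\e[P^{t,\xi}_t\cdot\eta]$, and hence
\[
\limsup_{\epsilon\to0}\frac{\mathcal{V}(\xi^\epsilon)-\mathcal{V}(\xi)}{\epsilon}\le\e\bigl[P^{t,\xi}_t\cdot\eta\bigr].
\]

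For the lower bound, reverse the roles. Use the optimal control $u^\epsilon:=u^{t,\xi^\epsilon}$ for both initial conditions:
\[
\mathcal{V}(\xi^\epsilon)-\mathcal{V}(\xi)\ge J^{t,\xi^\epsilon}(u^\epsilon)-J^{t,\xi}(u^\epsilon).
\]
Expand this around the base point $(\xi^\epsilon,u^\epsilon)$, which gives $\epsilon\,\e[P^{t,\xi^\epsilon}_t\cdot\eta]+O(\epsilon^2)$. The constant in the $O(\epsilon^2)$ remainder is uniform in $\epsilon$ because $\|u^\epsilon\|_{\lr^2_\f}$ is bounded by Theorem~\ref{main2_thm}. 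Since $P^{t,\xi^\epsilon}_t\to P^{t,\xi}_t$ in $L^2$ by \eqref{lem:kappa_0}, we get the matching $\liminf$, which proves \eqref{DV_P}.

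\textbf{Functional derivative.} To obtain \eqref{relation:DV-dV}, I would first check that $P^{t,\xi}_t=\mathcal{P}(\xi)$ for a deterministic function $\mathcal{P}=\mathcal{P}(\cdot,\lr(\xi))$ that is Lipschitz in $x$ and in the law. To get this, solve System~\eqref{FBSDE:MFTC} with deterministic initial data $x$, using the law generated by $\xi$, as is standard in MFTC. Lipschitz continuity then follows from the same continuation-method estimates. Define
\[
\frac{dV}{d\nu}(t,\mu)(x):=\int_0^1 x\cdot \mathcal{P}(sx,\lr(s\xi))\,ds+\text{const},
\]
or more simply invoke the standard correspondence between Lions derivatives and linear functional derivatives from \cite{book_mfg}. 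For this we need a jointly continuous $L$-derivative of linear growth, and $\mathcal{P}$ provides one.

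The main obstacle is the lower bound, together with justifying the uniformity of the second-order remainder. This requires the Lipschitz continuity of the derivatives of $h,\Phi,\varphi$ and the $L^2$ bounds on the linearized processes. A second delicate point is showing that $P^{t,\xi}_t$ is a function of $\xi$ only. This needs the decoupled-field construction, in which the $\varphi_y(Y_t)$ initial condition of $R$ depends on the individual path, and it must be handled carefully.
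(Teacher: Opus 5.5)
Your proof of \eqref{DV_P}, \eqref{V:growth} and \eqref{DV:growth} is the paper's proof. It uses the same sandwich between $J^{t,\xi'}(u^{t,\xi'})-J^{t,\xi}(u^{t,\xi'})$ and $J^{t,\xi'}(u^{t,\xi})-J^{t,\xi}(u^{t,\xi})$. It uses the same first-order expansion, with the remainder controlled by the Lipschitz derivatives in (A2). It uses the same duality, via It\^o's formula on $P^\top\de X$ and $R^\top\de Y$, with $R_t=-\varphi_y(Y_t)$ absorbing the $\varphi_y$ term.

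The only difference is in the last step. You let $\epsilon\to0$ using $P^{t,\xi^\epsilon}_t\to P^{t,\xi}_t$. The paper instead uses the Lipschitz bound in \eqref{lem:kappa_0} to get $|V(t,\lr(\xi'))-V(t,\lr(\xi))-\e[(P^{t,\xi}_t)^\top(\xi'-\xi)]|\le C\|\xi'-\xi\|_2^2$, which is a Fr\'echet-type estimate. Also, the remainder constant does not need the bound on $\|u^\epsilon\|$. Under a common control the state difference is control-free by (A1), and the derivatives are globally Lipschitz.

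The step that is actually wrong is your explicit candidate $\frac{dV}{d\nu}(t,\mu)(x)=\int_0^1 x\cdot\mathcal{P}(sx,\lr(s\xi))\,ds$. Letting the measure move along $\lr(s\xi)$ mixes the variation in $\mu$ into what must be a primitive in $x$ at fixed $\mu$. For example, take $n=1$ and $V(\mu)=\frac{1}{2}\overline{\mu}^2$. Then $\mathcal{P}(x,\mu)=\overline{\mu}$, and your formula returns $\frac{1}{2}x\overline{\mu}$, whose $x$-derivative is $\frac{1}{2}\overline{\mu}$. The correct derivative is $\frac{dV}{d\nu}(\mu)(x)=\overline{\mu}\,x$.

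Freezing the measure, $\int_0^1 x\cdot\mathcal{P}(sx,\mu)\,ds$, gives the right candidate. However, the fact that it is a primitive of $\mathcal{P}(\cdot,\mu)$ (i.e.\ that the $L$-derivative is a gradient field in $x$) is exactly the content of the Lions/linear-derivative correspondence. So it is not a shortcut. Your fallback of invoking that correspondence is what the paper does, via \cite[Theorem 7.1]{AB10} and \eqref{lem01_1}.

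If you do build $\mathcal{P}$ from the system with deterministic initial datum $x$, as in Lemma~\ref{lem:tmux}, note that its Lipschitz estimate in $x$ is not covered by Lemma~\ref{lem:estimate} alone. The terms $\e[D_{x'}\frac{dh}{d\nu}(\cdots)(\theta^{t,x,\mu}_r)]$ require the convexity of $x'\mapsto\frac{dh}{d\nu}(\cdots)(x')$ assumed in (A3), exactly as in the proof of Lemma~\ref{lem:tmux}.
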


\begin{proof}
The growth estimate \eqref{V:growth} is a direct consequence of \eqref{value_equation}, the quadratic growth conditions on the coefficients $(h,\Phi,\varphi)$ in Assumption (A2), and the first estimate in \eqref{lem:kappa_0}. We now prove \eqref{DV_P}. For $\xi,\xi'\in L^2_{\f_t}$, from the definition of the value function, we have
\begin{equation}\label{thm:value_regu_mu_1}
	  J^{t,\xi'}\left(u^{t,\xi'}\right)-J^{t,\xi}\left(u^{t,\xi'}\right)\le V\left(t,\lr(\xi')\right)-V(t,\lr(\xi))\le J^{t,\xi'}\left(u^{t,\xi}\right)-J^{t,\xi}\left(u^{t,\xi}\right).
\end{equation}	
For the upper bound, from Assumption (A2), we have
\begin{align}
	&J^{t,\xi'}\left(u^{t,\xi}\right)-J^{t,\xi}\left(u^{t,\xi}\right) \notag \\
	=\ &\e\bigg\{\int_t^T\left[h\left(s,\theta^{t,\xi',u^{t,\xi}}_s,\lr\left({ \theta^{t,\xi',u^{t,\xi}}_s}\right),u^{t,\xi}_s\right)-h\left(s,\theta^{t,\xi}_s,\lr\left({ \theta^{t,\xi}_s}\right),u^{t,\xi}_s\right)\right] ds \notag \\
    &\quad + \Phi\left(X^{t,\xi',u^{t,\xi}}_T,\lr\left(X^{t,\xi',u^{t,\xi}}_T\right)\right)-\Phi\left(X^{t,\xi}_T,\lr\left(X^{t,\xi}_T\right)\right) + \varphi\left(Y^{t,\xi',u^{t,\xi}}_t\right)-\varphi\left(Y^{t,\xi}_t\right) \bigg\} \notag \\
	\le\ & \e\Bigg\{\int_t^T  \bigg\{ h_x\left(s,\theta^{t,\xi}_s,\lr\left({ \theta^{t,\xi}_s}\right),u^{t,\xi}_s\right)^\top \left(X^{t,\xi',u^{t,\xi}}_s-X^{t,\xi}_s\right)\notag\\
    &\qquad\qquad + h_y\left(s,\theta^{t,\xi}_s,\lr\left({ \theta^{t,\xi}_s}\right),u^{t,\xi}_s\right)^\top \left(Y^{t,\xi',u^{t,\xi}}_s-Y^{t,\xi}_s\right)  \notag\\
    &\qquad\qquad +\sum_{j=1}^n h_{z^j}\left(s,\theta^{t,\xi}_s,\lr\left({ \theta^{t,\xi}_s}\right),u^{t,\xi}_s\right)^\top \left(Z^{t,\xi',u^{t,\xi},j}_s-Z^{t,\xi,j}_s\right)  \notag \\
    &\qquad\qquad +\widetilde{\e}\bigg[\left(D_{x'}\frac{dh}{d\nu}\left(s,\theta_s^{t,\xi},\lr\left({ \theta^{t,\xi}_s}\right),u_s^{t,\xi}\right)\left(\widetilde{ \theta_s^{t,\xi}}\right)\right)^\top \left(\widetilde{X^{t,\xi',u^{t,\xi}}_s}-\widetilde{X^{t,\xi}_s}\right) \bigg] \notag \\
    & \qquad\qquad +\widetilde{\e}\bigg[\left(D_{y'}\frac{dh}{d\nu}\left(s,\theta_s^{t,\xi},\lr\left({ \theta^{t,\xi}_s}\right),u_s^{t,\xi}\right)\left(\widetilde{ \theta_s^{t,\xi}}\right)\right)^\top \left(\widetilde{Y^{t,\xi',u^{t,\xi}}_s}-\widetilde{Y^{t,\xi}_s}\right) \bigg] \notag \\
    & \qquad\qquad +\sum_{j=1}^n \widetilde{\e}\bigg[\left(D_{{z'}^j}\frac{dh}{d\nu}\left(s,\theta_s^{t,\xi},\lr\left({ \theta^{t,\xi}_s}\right),u_s^{t,\xi}\right)\left(\widetilde{ \theta_s^{t,\xi}}\right)\right)^\top \left(\widetilde{Z^{t,\xi',u^{t,\xi},j}_s}-\widetilde{Z^{t,\xi,j}_s}\right) \bigg] \bigg\} ds \notag \\
	&\quad + \varphi_y\left(Y^{t,\xi}_t\right)^\top\left(Y^{t,\xi',u^{t,\xi}}_t-Y^{t,\xi}_t\right)+ \Phi_x\left(X^{t,\xi}_T,\lr\left(X^{t,\xi}_T\right)\right)^\top\left(X^{t,\xi',u^{t,\xi}}_T-X^{t,\xi}_T\right) \notag \\
    &\quad +\widetilde{\e}\bigg[\left(D_{x'}\frac{d\Phi}{d\nu}\left(X^{t,\xi}_T,\lr\left(X^{t,\xi}_T\right)\right)\left(\widetilde{X_T^{t,\xi}}\right)\right)^\top \left(\widetilde{X^{t,\xi',u^{t,\xi}}_T}-\widetilde{X^{t,\xi}_T}\right)\bigg]\Bigg\} \notag \\
	&+C(L,T)\bigg(\left\|X^{t,\xi',u^{t,\xi}}-X^{t,\xi}\right\|_{\sr_\f^2}^2+\left\|Y^{t,\xi',u^{t,\xi}}-Y^{t,\xi}\right\|_{\sr_\f^2}^2+ \left\|Z^{t,\xi',u^{t,\xi}}-Z^{t,\xi}\right\|_{\lr_\f^2}^2 \bigg), \label{thm:value_regu_mu_2}
\end{align}
where $\theta^{t,\xi',u^{t,\xi}}$ denotes the state processes corresponding to the initial condition $\xi'$ and the control $u^{t,\xi}$. By applying It\^o's formula on $\left(P^{t,\xi}_s\right)^\top \left(X^{t,\xi',u^{t,\xi}}_s-X^{t,\xi}_s\right)$ and taking expectation, and then using Fubini's theorem (similar as in \eqref{lem:dJ_2-1}), we know that 
\begin{align}
    &\e\left[\left(P^{t,\xi}_T\right)^\top \left(X^{t,\xi',u^{t,\xi}}_T-X^{t,\xi}_T\right)-\left(P^{t,\xi}_t\right)^\top \left(\xi'-\xi\right)\right] \notag \\
    =\ & \e\int_t^T \bigg\{ \left(R^{t,\xi}_s\right)^\top  f_1(s) \left(X^{t,\xi',u^{t,\xi}}_s-X^{t,\xi}_s\right) + \e\left[\left(R^{t,\xi}_s\right)\right]^\top f_4(s)\left(X^{t,\xi',u^{t,\xi}}_s-X^{t,\xi}_s\right)  \notag \\
    &\quad\qquad - h_x\left(s,\theta^{t,\xi}_s,\lr\left({ \theta^{t,\xi}_s}\right),u^{t,\xi}_s\right)^\top \left(X^{t,\xi',u^{t,\xi}}_s-X^{t,\xi}_s\right) \notag\\
    &\quad\qquad  - \widetilde{\e} \bigg[D_{x'}\frac{dh}{d\nu} \left(s,\widetilde{\theta^{t,\xi}_s},\lr\left({ \theta^{t,\xi}_s}\right),\widetilde{u^{t,\xi}_s}\right)\left({ \theta^{t,\xi}_s}\right) \bigg]^\top \left(X^{t,\xi',u^{t,\xi}}_s-X^{t,\xi}_s\right) \bigg\} ds \notag \\
    =\ & \e\int_t^T \bigg\{ \left(R^{t,\xi}_s\right)^\top  \Big[f_1(s) \left(X^{t,\xi',u^{t,\xi}}_s-X^{t,\xi}_s\right) + f_4(s)\e\left[X^{t,\xi',u^{t,\xi}}_s-X^{t,\xi}_s\right] \Big]  \notag \\
    &\quad\qquad - h_x\left(s,\theta^{t,\xi}_s,\lr\left({ \theta^{t,\xi}_s}\right),u^{t,\xi}_s\right)^\top \left(X^{t,\xi',u^{t,\xi}}_s-X^{t,\xi}_s\right) \notag\\
    &\quad\qquad  - \widetilde{\e} \bigg[D_{x'}\frac{dh}{d\nu} \left(s,\theta^{t,\xi}_s,\lr\left({ \theta^{t,\xi}_s}\right),u^{t,\xi}_s\right)\left(\widetilde{ \theta^{t,\xi}_s}\right)^\top \left(\widetilde{X^{t,\xi',u^{t,\xi}}_s}-\widetilde{X^{t,\xi}_s}\right) \bigg]   \bigg\} ds. \label{thm:value_regu_mu_3}
\end{align}
Similarly, by using It\^o's formula and Fubini's theorem for $\left(R^{t,\xi}_s\right)^\top \left(Y^{t,\xi',u^{t,\xi}}_s-Y^{t,\xi}_s\right)$, we have 
\begin{align*}
    &\e\left[\left(R^{t,\xi}_T\right)^\top \left(Y^{t,\xi',u^{t,\xi}}_T-Y^{t,\xi}_T\right) - \left(R^{t,\xi}_t\right)^\top \left(Y^{t,\xi',u^{t,\xi}}_t-Y^{t,\xi}_t \right)\right] \notag \\
    =\ & \e\int_t^T \bigg\{ - \left(R^{t,\xi}_s\right)^\top \Big[f_1(s) \left(X^{t,\xi',u^{t,\xi}}_s-X^{t,\xi}_s\right) + f_4(s)\e\left[X^{t,\xi',u^{t,\xi}}_s-X^{t,\xi}_s\right] \Big] \notag\\
    &\quad\qquad  - h_y \left(s,\theta^{t,\xi}_s,\lr\left({ \theta^{t,\xi}_s}\right),u^{t,\xi}_s\right)^\top \left(Y^{t,\xi',u^{t,\xi}}_s-Y^{t,\xi}_s\right)\notag\\
    &\quad\qquad - \sum_{j=1}^n   h_{z^j}\left(s,\theta^{t,\xi}_s,\lr\left({ \theta^{t,\xi}_s}\right),u^{t,\xi}_s\right)^\top  \left(Z^{t,\xi',u^{t,\xi},j}_s-Z^{t,\xi,j}_s\right) \notag\\
    & \quad\qquad - \widetilde{\e} \bigg[D_{y'}\frac{dh}{d\nu} \left(s,\theta^{t,\xi}_s,\lr\left({ \theta^{t,\xi}_s}\right),u^{t,\xi}_s\right)\left(\widetilde{ \theta^{t,\xi}_s}\right)^\top \left(\widetilde{Y^{t,\xi',u^{t,\xi}}_s}-\widetilde{Y^{t,\xi}_s}\right) \bigg] \notag\\
    & \quad\qquad - \widetilde{\e} \bigg[\sum_{j=1}^n D_{{z'}^j}\frac{dh}{d\nu} \left(s,\theta^{t,\xi}_s,\lr\left({ \theta^{t,\xi}_s}\right),u^{t,\xi}_s\right)\left(\widetilde{ \theta^{t,\xi}_s}\right)^\top \left(\widetilde{Z^{t,\xi',u^{t,\xi},j}_s}-\widetilde{Z^{t,\xi,j}_s}\right) \bigg] \bigg\} ds. 
\end{align*}
Plugging the last equality with \eqref{thm:value_regu_mu_3}, we deduce that
\begin{align}
    &\e\bigg[\left(P^{t,\xi}_T\right)^\top \left(X^{t,\xi',u^{t,\xi}}_T-X^{t,\xi}_T\right) + \left(R^{t,\xi}_T\right)^\top \left(Y^{t,\xi',u^{t,\xi}}_T-Y^{t,\xi}_T\right) \notag \\
    &\qquad - \left(R^{t,\xi}_t\right)^\top \left(Y^{t,\xi',u^{t,\xi}}_t-Y^{t,\xi}_t \right)-\left(P^{t,\xi}_t\right)^\top \left(\xi'-\xi\right)\bigg] \notag \\
    =\ & \e\int_t^T \bigg\{ - h_x\left(s,\theta^{t,\xi}_s,\lr\left({ \theta^{t,\xi}_s}\right),u^{t,\xi}_s\right)^\top \left(X^{t,\xi',u^{t,\xi}}_s-X^{t,\xi}_s\right) \notag\\
    &\quad\qquad - h_y \left(s,\theta^{t,\xi}_s,\lr\left({ \theta^{t,\xi}_s}\right),u^{t,\xi}_s\right)^\top \left(Y^{t,\xi',u^{t,\xi}}_s-Y^{t,\xi}_s\right) \notag\\
    &\quad\qquad - \sum_{j=1}^n   h_{z^j}\left(s,\theta^{t,\xi}_s,\lr\left({ \theta^{t,\xi}_s}\right),u^{t,\xi}_s\right)^\top  \left(Z^{t,\xi',u^{t,\xi},j}_s-Z^{t,\xi,j}_s\right) \notag\\
    &\quad\qquad  - \widetilde{\e} \bigg[D_{x'}\frac{dh}{d\nu} \left(s,\theta^{t,\xi}_s,\lr\left({ \theta^{t,\xi}_s}\right),u^{t,\xi}_s\right)\left(\widetilde{ \theta^{t,\xi}_s}\right)^\top \left(\widetilde{X^{t,\xi',u^{t,\xi}}_s}-\widetilde{X^{t,\xi}_s}\right) \bigg]  \notag\\
    & \quad\qquad - \widetilde{\e} \bigg[D_{y'}\frac{dh}{d\nu} \left(s,\theta^{t,\xi}_s,\lr\left({ \theta^{t,\xi}_s}\right),u^{t,\xi}_s\right)\left(\widetilde{ \theta^{t,\xi}_s}\right)^\top \left(\widetilde{Y^{t,\xi',u^{t,\xi}}_s}-\widetilde{Y^{t,\xi}_s}\right) \bigg] \notag\\
    & \quad\qquad - \widetilde{\e} \bigg[\sum_{j=1}^n D_{{z'}^j}\frac{dh}{d\nu} \left(s,\theta^{t,\xi}_s,\lr\left({ \theta^{t,\xi}_s}\right),u^{t,\xi}_s\right)\left(\widetilde{ \theta^{t,\xi}_s}\right)^\top \left(\widetilde{Z^{t,\xi',u^{t,\xi},j}_s}-\widetilde{Z^{t,\xi,j}_s}\right) \bigg] \bigg\} ds. \notag
\end{align}
By using the terminal conditions for $P^{t,\xi}_T$ and $Y^{t,\xi}_T$, the initial condition for $R^{t,\xi}_t$, and then applying Fubini's theorem, we have
\begin{align}
    &\e\bigg[\left(P^{t,\xi}_T\right)^\top \left(X^{t,\xi',u^{t,\xi}}_T-X^{t,\xi}_T\right) + \left(R^{t,\xi}_T\right)^\top \left(Y^{t,\xi',u^{t,\xi}}_T-Y^{t,\xi}_T\right) - \left(R^{t,\xi}_t\right)^\top \left(Y^{t,\xi',u^{t,\xi}}_t-Y^{t,\xi}_t \right)\bigg] \notag \\
    =\ & \e\bigg\{ \varphi_y \left(Y^{t,\xi}_t\right)^\top \left(Y^{t,\xi',u^{t,\xi}}_t-Y^{t,\xi}_t\right) + \Phi_x\left(X^{t,\xi}_T, \lr\left(X^{t,\xi}_T\right) \right)^\top \left(X^{t,\xi',u^{t,\xi}}_T-X^{t,\xi}_T\right)\notag \\
    &\quad +\widetilde{\e}\bigg[ D_{x'}\frac{d\Phi}{d\nu} \left(X^{t,\xi}_T, \lr\left(X^{t,\xi}_T\right)\right)\left(\widetilde{X^{t,\xi}_T}\right)^\top \left(\widetilde{X^{t,\xi',u^{t,\xi}}_T} - \widetilde{X^{t,\xi}_T} \right) \bigg]  \bigg\}. \notag
\end{align}
Substituting the last two equalities back into \eqref{thm:value_regu_mu_2}, we have
\begin{align}
	&J^{t,\xi'}\left(u^{t,\xi}\right)-J^{t,\xi}\left(u^{t,\xi}\right) \notag \\
	\le\ & \e \left[ \left(P^{t,\xi}_t\right)^\top \left(\xi'-\xi\right) \right] +C(L,T)\bigg(\left\|X^{t,\xi',u^{t,\xi}}-X^{t,\xi}\right\|^2_{\sr_\f^2} + \left\|Y^{t,\xi',u^{t,\xi}}-Y^{t,\xi}\right\|^2_{\sr_\f^2} \notag\\
    &\qquad\qquad\qquad\qquad\qquad\qquad\qquad + \left\|Z^{t,\xi',u^{t,\xi}}-Z^{t,\xi}\right\|^2_{\lr_\f^2}  \bigg). \label{thm:value_regu_mu_7}
\end{align}
From Assumption (A1) and the Gr\"onwall's inequality, by using standard estimate for the SDE of $X^{t,\xi',u^{t,\xi}}_s-X^{t,\xi}$, we have the following estimate
\begin{equation}\label{thm:value_regu_mu_8}
	\begin{split}
		\left\|X^{t,\xi',u^{t,\xi}}-X^{t,\xi}\right\|^2_{\sr_\f^2} \le C(L,T)\left\|\xi'-\xi\right\|^2_2;
	\end{split}
\end{equation}
then, by using standard estimate for the BSDE of $(\de Y,\de Z)$ and using \eqref{thm:value_regu_mu_8}, we have
\begin{align}
    &\left\|Y^{t,\xi',u^{t,\xi}}-Y^{t,\xi}\right\|^2_{\sr_\f^2} +  \left\|Z^{t,\xi',u^{t,\xi}}-Z^{t,\xi}\right\|^2_{\lr_\f^2}  \le C(L,T)\left\|X^{t,\xi',u^{t,\xi}}-X^{t,\xi}\right\|^2_{\sr_\f^2} \le C(L,T)\left\|\xi'-\xi\right\|^2_2. \label{thm:value_regu_mu_9}
\end{align}
Substituting \eqref{thm:value_regu_mu_8} and \eqref{thm:value_regu_mu_9} into \eqref{thm:value_regu_mu_7}, we deduce that
\begin{align}
	&J^{t,\xi'}\left(u^{t,\xi}\right)-J^{t,\xi}\left(u^{t,\xi}\right)\le \e \left[ \left(P^{t,\xi}_t\right)^\top \left(\xi'-\xi\right) \right] +C(L,T)\left\|\xi'-\xi\right\|_2^2. \label{thm:value_regu_mu_10}
\end{align}
In a similar way, we can also obtain the following lower bound:
\begin{equation}\label{thm:value_regu_mu_11}
	\begin{split}
		&J^{t,\xi'}\left(u^{t,\xi'}\right)-J^{t,\xi}\left(u^{t,\xi'}\right)\geq \e\left[\left(P^{t,\xi'}_t\right)^\top \left(\xi'-\xi\right)\right] -C(L,T)\left\|\xi'-\xi\right\|_2^2.
	\end{split}
\end{equation}
From the second estimate of \eqref{lem:kappa_0}, we know that 
\begin{align}\label{thm:value_regu_mu_12}
    \left\|P^{t,\xi'}_t-P^{t,\xi}_t \right\|\le C(L,T,\lambda)\left\|\xi'-\xi\right\|_2.
\end{align}
From \eqref{thm:value_regu_mu_10}-\eqref{thm:value_regu_mu_12} and \eqref{thm:value_regu_mu_1}, we conclude that 
\begin{align*}
    &\left|V\left(t,\lr(\xi')\right)-V(t,\lr(\xi))-\e\left[\left(P^{t,\xi}_t\right)^\top \left(\xi'-\xi\right)\right]\right|\le C(L,T,\lambda) \left\|\xi'-\xi\right\|_2^2,
\end{align*}
which implies that $D_\xi  V(t,\lr(\xi))=P^{t,\xi}_t$. Then, the estimate \eqref{DV:growth} is a direct consequence of Estimate \eqref{lem:kappa_0}. Besides, from the G\^ateaux differentiability of $V$ in $\xi$ and Estimates \eqref{V:growth}-\eqref{DV:growth}, following similar arguments as in \cite[Theorem 7.1]{AB10}, we can show that $V$ is linearly functional-differentiable; and from standard relations between the G\^ateaux derivatives and linear functional-derivatives as in \eqref{lem01_1} (also see \cite{AB10,AB11}), we obtain \eqref{relation:DV-dV}.
\end{proof}

Theorem~\ref{thm:value_regu_mu} provides the differentiability of the value function with respect to the distribution variable. In the following sections, we shall study the regularity of the value function in the time variable, and also establish the classical solvability of the HJB equations, under three different kinds of settings. 

\section{HJB equation for the first-order problem}\label{sec:HJB:1st}

In this section, we aim to give the related HJB equation for the value function $V$ corresponding to the first-order optimal control problem for mean-field type forward-backward equations. That is, the system does not include the Brownian motion (i.e., $\sigma \equiv 0$), and all the randomness comes from the initial condition. For initial time $t$ and initial condition $\xi\in L_{\f_t}^2$, the admissible control set for Problem $\left(\mathsf{P}_{t,\xi}\right)$ is defined as $\left\{(v_s)_{t\le s\le T},\ :v_s\in L_{\f_t}^2,\ \e\int_t^T |v_s|^2 ds <\infty\right\}$. Since no additional Brownian randomness is injected into the system beyond the initial state, the backward equation for $Y$ is well-posed without the stochastic integral term; we have $Z \equiv 0$. For this case, the maps $(f,h)$ are functions for $(s,x,y,\varrho,v)$ (without the $z$ variable), and $\varrho\in \pr_2(\brn\times\brn)$ with its marginals $\varrho_x$ and $\varrho_y$. In this degenerate case, in the adjoint FBSDEs, the absence of Brownian randomness also implies $Q \equiv 0$. 

In this section, we still assume Assumptions (A1)-(A3) hold, with the only difference being that all coefficients involving $z$ or $\varrho_z$ are set to zero. We denote by $\Theta^{t,\xi}:=\left(X^{t,\xi},Y^{t,\xi},P^{t,\xi},R^{t,\xi},u^{t,\xi}\right)$ the solution of System \eqref{FBSDE:MFTC} corresponding to the first-order case, and use the notation $\theta_s^{t,\xi}:=\left(X_s^{t,\xi},Y_s^{t,\xi}\right)$. The optimality condition for the first-order problem reads
\begin{equation}\label{FBSDEs:first}
    b_3(s)^\top  P^{t,\xi}_s -f_7(s)^\top  R^{t,\xi}_s + h_v\left(s,\theta^{t,\xi}_s,\lr\left(\theta^{t,\xi}_s\right),u^{t,\xi}_s\right) =0,\quad s\in[t,T],\ a.e., \quad \mathbb{P}-a.s..
\end{equation}
Since our aim is to derive the HJB equation, we also need the following assumption on the continuity of coefficients in time.

{\bf (B)} The coefficient $\sigma\equiv0$. The function $h$ is continuous in the time variable, and we have the following continuity conditions in the time variable:
\begin{align*}
    &\left|b_k(s')-b_k(s)\right| \le L |s'-s|^{\frac{1}{2}},\quad 0\le k\le 3;\qquad \left|f_k(s')-f_k(s)\right| \le L |s'-s|^{\frac{1}{2}},\quad 0\le k\le 7;\\
    & \left|(h_y,h_v)(\cdot,x,y,\varrho,v)\big|^{s'}_s \right| \le L\left(1+|x|+|y|+|\varrho|_2+|v|\right) |s'-s|^{\frac{1}{2}};\\
    & \left|D_{y'}\frac{dh}{d\nu}(\cdot,x,y,\varrho,v)(x',y')\Big|^{s'}_s \right|\le L\left(1+|x|+|y|+|\varrho|_2+|v|+|x'|+|y'|\right) |s'-s|^{\frac{1}{2}}.
\end{align*}

%In this section, for any $t\in[0,T]$, we denote by $S^2_{\f_t}(t,T)$ the family of all processes $\beta_\cdot=\{\beta_s,\ t\le s\le T\}$ such that $\beta_s$ is $\f_t$-measurable and $\|\beta\|_{S^2_{\f_t}}^2:=\e\left[\sup_{t\le s\le T} |\beta_s|^2\right]<\infty$. 

\subsection{Auxiliary processes}\label{subsec:auxiliary}

We begin by giving the following boundedness and H\"older continuity results of $\Theta_s^{t,\xi}$ in $s$ under Assumption (B), whose proof is provided in Appendix~\ref{App:lem:estimate_s}. 

\begin{lemma}\label{lem:estimate_s}
    Under Assumptions (A1)-(A3) and (B), we have
    \begin{align}
        &\e\bigg[\sup_{t\le s\le T}\left|X^{t,\xi}_s\right|^2+\sup_{t\le s\le T}\left|Y^{t,\xi}_s\right|^2+\sup_{t\le s\le T}\left|P^{t,\xi}_s\right|^2+\sup_{t\le s\le T}\left|R^{t,\xi}_s\right|^2+\sup_{t\le s\le T}\left|u^{t,\xi}_s\right|^2\bigg] \notag\\
        \le\ & C(L,T,\lambda) \left( 1+\left\|\xi\right\|^2_2\right); \label{1order-FBSDE-0}
    \end{align}
    and for $t\le s\le s'\le T$, we have
    \begin{align}
        &\e\left[\left|X_{s'}-X_s\right|^2 + \left|Y_{s'}-Y_s\right|^2 + \left|P_{s'}-P_s\right|^2 + \left|R_{s'}-R_s\right|^2\right]\le C(L,T,\lambda) (s'-s)^2, \label{lem:estimate_s_1}\\
        &\e\left[\left|u_{s'}-u_s\right|^2\right]\le C(L,T,\lambda)\left(1+\|\xi\|_2^2\right)(s'-s). \label{lem:estimate_s_2}
    \end{align}
\end{lemma}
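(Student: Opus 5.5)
The plan is to exploit that, in the first-order setting of Assumption (B), every component of $\Theta^{t,\xi}$ is $\f_t$-measurable and solves a pathwise (random) ODE. The time-regularity and supremum estimates then reduce to integrating the drivers in time. The only genuinely new ingredient is a Lipschitz and time-H\"older bound on the control $u$ obtained from the optimality condition \eqref{FBSDEs:first}. Throughout I take Theorem~\ref{main2_thm} as the starting point: it gives $\|X\|_{\sr_\f^2}^2+\|Y\|_{\sr_\f^2}^2+\|P\|_{\sr_\f^2}^2+\|R\|_{\sr_\f^2}^2+\|u\|_{\lr^2_\f}^2\le C(L,T,\lambda)(1+\|\xi\|_2^2)$.

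\emph{Step 1: pointwise bound on $u$.} Subtract \eqref{FBSDEs:first} at two different arguments (or compare with $v=0$) and take the inner product with the difference of the controls. By (A3), taking the convexity inequality in both directions, $h_v$ is strongly monotone in $v$:
\begin{equation*}
\big(h_v(s,x,y,\varrho,v^2)-h_v(s,x,y,\varrho,v^1)\big)^\top(v^2-v^1)\ge 2\lambda|v^2-v^1|^2 .
\end{equation*}
Combined with the $L$-Lipschitz continuity of $h_v$ in $(x,y,\varrho)$ from (A2) and the boundedness of $b_3,f_7$, this gives
\begin{equation*}
|u_s|\le C(L,\lambda)\big(1+|X_s|+|Y_s|+|P_s|+|R_s|+\|X_s\|_2+\|Y_s\|_2\big).
\end{equation*}
Here I use that the second moment of $\lr(\theta_s)$ is controlled by $\|\theta_s\|_2$. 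Taking the supremum over $s$ and expectations, and using the $\sr^2_\f$ bounds above, yields \eqref{1order-FBSDE-0}.

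\emph{Step 2: H\"older estimates for the state and adjoint components.} Write $X_{s'}-X_s=\int_s^{s'}b(r,X_r,\lr(X_r),u_r)\,dr$, and similarly for $Y$, $P$ and $R$ using their drivers ($f$, and $L_x,L_y$ plus the mean-field terms). All of these drivers have linear growth in $(\theta_r,\lr(\theta_r),P_r,R_r,u_r)$ by (A1)--(A2). Cauchy--Schwarz gives
\begin{equation*}
|X_{s'}-X_s|^2\le (s'-s)^2\sup_r|b(r,\cdot)|^2 .
\end{equation*}
Taking expectations and applying Step 1 gives \eqref{lem:estimate_s_1}. The constant produced this way carries a factor $1+\|\xi\|_2^2$; the stated bound omits it, so either it is absorbed into the constant for bounded $\xi$, or the statement should read $C(L,T,\lambda)(1+\|\xi\|_2^2)(s'-s)^2$. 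I would prove the version with the factor.

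\emph{Step 3: H\"older estimate for $u$.} This is the main obstacle. Subtract \eqref{FBSDEs:first} at times $s'$ and $s$ and test with $u_{s'}-u_s$. Strong monotonicity controls $2\lambda|u_{s'}-u_s|^2$ from below. The remaining terms split into two kinds:
\begin{itemize}
\item differences in the state arguments, $|b_3(s')|\,|P_{s'}-P_s|$ and the analogous terms for $R$, $\theta$ and $W_2(\lr(\theta_{s'}),\lr(\theta_s))\le\|\theta_{s'}-\theta_s\|_2$;
\item pure time differences $|b_3(s')-b_3(s)|\,|P_s|$, $|f_7(s')-f_7(s)|\,|R_s|$ and $|h_v(\cdot,\theta_s,\lr(\theta_s),u_s)|_s^{s'}|$.
\end{itemize}
By Assumption (B) each term of the second kind is bounded by $L(1+|\theta_s|+\|\theta_s\|_2+|u_s|+|P_s|+|R_s|)\,|s'-s|^{1/2}$. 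Young's inequality then gives
\begin{equation*}
|u_{s'}-u_s|^2\le C\big(|\Delta\theta|^2+\|\Delta\theta\|_2^2+|\Delta P|^2+|\Delta R|^2\big)+C\big(1+|\theta_s|^2+\|\theta_s\|_2^2+|u_s|^2+|P_s|^2+|R_s|^2\big)(s'-s),
\end{equation*}
where $\Delta$ denotes the increment from $s$ to $s'$. Taking expectations and using Steps 1 and 2, the first group is $O((s'-s)^2)$ and the second is $O((1+\|\xi\|_2^2)(s'-s))$, which gives \eqref{lem:estimate_s_2}. The delicate points are making sure the mean-field arguments of $h_v$ are handled through $W_2$ bounds by $L^2$ differences, and that only the $\tfrac12$-H\"older time regularity from (B) enters. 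This is what makes the rate for $u$ linear in $s'-s$ rather than quadratic.
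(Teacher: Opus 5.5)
Your proposal is correct and follows essentially the same route as the paper. The paper also proceeds in three steps: a pointwise bound on $u$ from the optimality condition \eqref{FBSDEs:first} via strong monotonicity of $h_v$; a Cauchy--Schwarz bound on the integrated ODE drivers for \eqref{lem:estimate_s_1}; and, for \eqref{lem:estimate_s_2}, differencing the optimality condition at $s'$ and $s$ and using the $\tfrac12$-H\"older bounds of (B). Your remark about the missing factor $1+\|\xi\|_2^2$ in \eqref{lem:estimate_s_1} is also right: the paper's own argument invokes \eqref{1order-FBSDE-0} and so produces that factor too, which makes the version you prove the accurate one.
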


The aim of this section is to study the differentiability of the value function $V$ in $t$, and this will also require the following assumption, which is the regularity-enhanced version of Assumption (A2). Here, we emphasize that Assumption (A2') does not require the first-order assumption $\sigma\equiv 0$; it includes the full set of conditions with respect to the $z$ variable, as it will also be used later for the second-order problems in the following sections.

\textbf{(A2')} The functionals $h$, $\Phi$ and $\varphi$ satisfy (A2). Moreover, the following derivatives exist, and they are uniformly continuous in $\left(x,y,z,{ \varrho},v,{ x',y',z',x'',y'',z''}\right)$, and are globally bounded by $L$ in norm:
\begin{align*}
    &h_{xx},\ h_{xy}, \ h_{xz},\  h_{xv},\ h_{yy},\ h_{yz},\ h_{yv},\ h_{zz},\ h_{zv},\ h_{vv},\\
    & \left(D_{x'},D_{y'},D_{z'}\right) \frac{d\left(h_x,h_y,h_z,h_v\right)}{d\nu},\quad \left(D_{x''},D_{y''},D_{z''}\right)\left(D_{x'},D_{y'},D_{z'}\right) \frac{d^2h}{d\nu^2},\quad \left(D_{x'},D_{y'},D_{z'}\right)^2 \frac{dh}{d\nu};\\
    &\Phi_{xx},\  D_{x'} \frac{d\Phi_x}{d\nu},\ D_{x''}D_{x'} \frac{d^2\Phi}{d\nu^2},\ D_{x'}^2 \frac{d\Phi}{d\nu};\quad \varphi_{yy}.
\end{align*}

Under Assumptions (A2') and (B), the first convexity condition of $h$ in (A3) can also be written as, for any $s\in[0,T]$ and $(x,y,v,{ \zeta}),\ (\de x,\de y,\de v,\de { \zeta})\in\brn\times \brn\times \brd\times L^2(\Omega,\f,\mathbb{P};\brn{ \times\brn})$,
\begin{align}
    & \left[\left(
		\begin{array}{ccc}
			h_{xx} & h_{xy} & h_{xv}\\
			h_{yx} & h_{yy} & h_{yv}\\
            h_{vx} & h_{vy} & h_{vv}\\
		\end{array}
	\right)(s,x,y,\lr({ \zeta}),v) \right] \left(
		\begin{array}{c}
			\de x\\
			\de y\\
            \de v
		\end{array}
	\right)^{\otimes 2} \notag \\
    &+2\e \left\{\left(
		\begin{array}{c}
			\de x\\
			\de y\\
            \de v
		\end{array}
	\right)^\top \left[\left(
		\begin{array}{cc}
			D_{x'}\frac{dh_x}{d\nu} & D_{y'}\frac{dh_x}{d\nu} \\
			D_{x'}\frac{dh_y}{d\nu} & D_{y'}\frac{dh_y}{d\nu}\\
            D_{x'}\frac{dh_v}{d\nu} & D_{y'}\frac{dh_v}{d\nu} 
		\end{array}
	\right)(s,x,y,\lr({ \zeta}),v)({ \zeta})\right] \left(
		\begin{array}{c}
			\de { \zeta_x}\\
			\de { \zeta_y}
		\end{array}
	\right)\right\} \notag \\
    &+ \e \left\{\left[\left(
		\begin{array}{cc}
			D^2_{x'}\frac{dh}{d\nu} & D_{y'}D_{x'}\frac{dh}{d\nu} \\
			D_{x'}D_{y'}\frac{dh}{d\nu} & D^2_{y'}\frac{dh}{d\nu} \\
		\end{array}
	\right)\left(s,x,y,\lr({ \zeta}),v\right)({ \zeta}) \right] \left(
		\begin{array}{c}
			\de { \zeta_x}\\
			\de { \zeta_y}
		\end{array}
	\right)^{\otimes 2} \right\} \notag \\
    &+ \e\widetilde{\e} \left\{ \left(
		\begin{array}{c}
			\de { \zeta_x}\\
			\de { \zeta_y}
		\end{array}
	\right)^\top \left[\left(
		\begin{array}{cc}
			D_{x''}D_{x'}\frac{d^2h}{d\nu^2} & D_{y''}D_{x'}\frac{d^2h}{d\nu^2} \\
			D_{x''}D_{y'}\frac{d^2h}{d\nu^2} & D_{y''}D_{y'}\frac{d^2h}{d\nu^2} \\
		\end{array}
	\right)\left(s,x,y,\lr({ \zeta}),v\right)\left({ \zeta},\widetilde{{ \zeta}}\right) \right] \left(
		\begin{array}{c}
			\widetilde{\de{ \zeta_x}} \\
			\widetilde{\de{ \zeta_y}} 
		\end{array}
	\right) \right\} \notag \\
    \geq\ & 2\lambda |\de v|^2. \label{def:convex_h}
\end{align}
Similarly, the convexity of $\Phi$ and $\varphi$ in (A3) also gives, for any $(x,\xi),\ (\de x,\de \xi)\in\brn\times L^2(\Omega,\f,\mathbb{P};\brn)$,
\begin{align}
    & \de x^\top \Phi_{xx}(x,\lr(\xi)) \de x +2\e\bigg[\de \xi^\top D_{x'}\frac{d\Phi_x}{d\nu}(x,\lr(\xi))(\xi)\bigg] \de x\notag \\
    &+\e\left[\de \xi^\top D_{x'}^2\frac{d\Phi}{d\nu}(x,\lr(\xi))\left(\xi\right) \de \xi\right] +\e\widetilde{\e}\left[\de \xi^\top D_{x''}D_{x'}\frac{d^2\Phi}{d\nu^2}(x,\lr(\xi))\left(\xi,\widetilde{\xi}\right) \widetilde{\de \xi}\right] \geq  0; \label{def:convex_Phi} \\
    & \de y^\top \varphi_{yy}(y) \de y \geq 0. \label{def:convex_varphi}
\end{align}

The proof of the differentiability of $V$ in time will be based on the study of the differentiability of $\Theta_s^{t,\xi}$ in $t$. For notational convenience, we drop the superscript $t,\xi$ in $\Theta^{t,\xi}_s$, and simply use the notation $\Theta_s^{\epsilon}$ to denote by $\Theta_s^{t+\epsilon,\xi}$ with $\epsilon\in(0,1)$; we also define 
\begin{align}\label{def:x_s^epsilon}
    &\x_s^\epsilon:= \frac{X_s^\epsilon -X_s}{\epsilon},\quad \y_s^\epsilon:= \frac{Y_s^\epsilon -Y_s}{\epsilon},\quad \p_s^\epsilon:= \frac{P_s^\epsilon -P_s}{\epsilon},\quad \R_s^\epsilon:= \frac{R_s^\epsilon -R_s}{\epsilon},\quad \U_s^\epsilon:= \frac{u_s^\epsilon -u_s}{\epsilon}. 
\end{align}
The following result gives the estimate for the uniform boundedness of the processes defined in \eqref{def:x_s^epsilon}, whose proof is given in Appendix~\ref{App:lem:de_bound}.

\begin{lemma}\label{lem:de_bound}
    Under Assumptions (A1), (A2'), (A3) and (B), there exists a positive constant $C$ depending only on $(L,T,\lambda)$, such that 
    \begin{align}
        &\e\bigg[\sup_{t+\epsilon\le s\le T}\left|\x^\epsilon_s\right|^2+\sup_{t+\epsilon\le s\le T}\left|\y^\epsilon_s\right|^2+\sup_{t+\epsilon\le s\le T}\left|\p^\epsilon_s\right|^2+\sup_{t+\epsilon\le s\le T}\left|\R^\epsilon_s\right|^2+\sup_{t+\epsilon\le s\le T}\left|\U^\epsilon_s\right|^2\bigg] \notag\\
        \le\ & C \left( 1+\left\|\xi\right\|_2^2\right). \label{lem:xypru_0'}
    \end{align}       
\end{lemma}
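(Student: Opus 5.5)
The bound \eqref{lem:xypru_0'} reduces to the stability estimate of Lemma~\ref{lem:estimate} (equivalently the second estimate in \eqref{lem:kappa_0}). The idea is to view $\Theta^\epsilon$ and the restriction of $\Theta$ to $[t+\epsilon,T]$ as solutions of the same system $\mathcal{E}(1,\cdot,\cdot)$ posed on $[t+\epsilon,T]$, with different initial data and inputs. The restriction of $\Theta=\Theta^{t,\xi}$ to $[t+\epsilon,T]$ solves the first-order version of \eqref{FBSDE:MFTC} started at time $t+\epsilon$ from $X_{t+\epsilon}$, with two defects:
\begin{itemize}
\item the forward component $R$ starts from $R_{t+\epsilon}$ rather than from $-\varphi_y(Y_{t+\epsilon})$;
\item the right-hand side of the $R$ equation contains no additional terms.
\end{itemize}
So I write $\Theta|_{[t+\epsilon,T]}$ as the solution of $\mathcal{E}(1,X_{t+\epsilon},\ir)$ on $[t+\epsilon,T]$. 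All inputs vanish except
\[
\ir^\varphi:=-R_{t+\epsilon}-\varphi_y(Y_{t+\epsilon}),
\]
with the sign convention in \eqref{FBSDE:MFTC=gamma}. Meanwhile $\Theta^\epsilon$ solves $\mathcal{E}(1,\xi,0)$ on the same interval. Lemma~\ref{lem:estimate} and Theorem~\ref{main2_thm} hold on any subinterval, since the constants depend only on $(L,T,\lambda)$. They give
\[
\big\|\Theta^\epsilon-\Theta\big\|^2_{\mathbb S[t+\epsilon,T]}\le C\Big(\|\xi-X_{t+\epsilon}\|_2^2+\big\|R_{t+\epsilon}+\varphi_y(Y_{t+\epsilon})\big\|_2^2\Big).
\]
Both defects must therefore be shown to be $O(\epsilon)$ in $L^2$ with constant $C(1+\|\xi\|_2)$.

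For the first defect, $X_{t+\epsilon}-\xi=\int_t^{t+\epsilon}b(r,X_r,\lr(X_r),u_r)\,dr$, because $\sigma\equiv0$. Combining Cauchy--Schwarz with the sup bounds \eqref{1order-FBSDE-0} of Lemma~\ref{lem:estimate_s} gives $\|X_{t+\epsilon}-\xi\|_2^2\le C\epsilon^2(1+\|\xi\|_2^2)$. For the second, $R_t=-\varphi_y(Y_t)$ lets me write
\[
R_{t+\epsilon}+\varphi_y(Y_{t+\epsilon})=(R_{t+\epsilon}-R_t)+\big(\varphi_y(Y_{t+\epsilon})-\varphi_y(Y_t)\big).
\]
The second term is bounded by $L|Y_{t+\epsilon}-Y_t|$, since $\varphi_y$ is Lipschitz. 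Estimate \eqref{lem:estimate_s_1} bounds both pieces by $C\epsilon^2$ in squared $L^2$ norm. To get the $(1+\|\xi\|_2^2)$ dependence rather than a bare constant, I would instead bound the integrand $L_y+\widetilde{\e}[D_{y'}dL/d\nu]$ directly by \eqref{1order-FBSDE-0}, which is the same computation as for $X$. Dividing by $\epsilon^2$ then yields the bound \eqref{lem:xypru_0'} for the processes $\x^\epsilon,\y^\epsilon,\p^\epsilon,\R^\epsilon,\U^\epsilon$ in the $\mathbb S$-norm.

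The main obstacle is that $\|\cdot\|_{\mathbb S}$ controls $u$ only in $\lr^2_\f$, whereas \eqref{lem:xypru_0'} requires $\e\sup_s|\U^\epsilon_s|^2$. I would handle this through the optimality condition \eqref{FBSDEs:first}. By (A2') and (A3), $h_{vv}\ge 2\lambda$ in the appropriate sense, so $v\mapsto h_v$ is strongly monotone. Subtracting the optimality conditions for $\Theta^\epsilon$ and $\Theta$ at each fixed $s$ and pairing with $u^\epsilon_s-u_s$ gives the pointwise estimate
\[
|u^\epsilon_s-u_s|\le C\Big(|\de P_s|+|\de R_s|+|\de X_s|+|\de Y_s|+\|\de X_s\|_2+\|\de Y_s\|_2\Big),
\]
where $\de$ denotes the difference between the two systems and the norm terms come from the measure argument via the Lipschitz bound in $W_2$. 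Taking $\sup_s$ and expectations, the sup norms of the other components are already controlled in $\sr^2_\f$ by the stability estimate, which closes the bound for $\U^\epsilon$. A further subtlety is the hidden dependence of the measure argument on $u$: since $Z\equiv0$, the measure argument is $\lr(X,Y)$ only, so no circularity arises.
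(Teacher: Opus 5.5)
Your argument is correct, and it takes a genuinely different route from the paper's.

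\textbf{The paper's route.} The paper does not invoke Lemma~\ref{lem:estimate}. It writes down the linearized system \eqref{lem:xypru_11'} satisfied by the difference quotients themselves, with mean-value coefficients $\int_0^1(\cdot)\,d\delta$ built from the second derivatives in (A2'). It then reruns the duality argument on that system:
\begin{itemize}
\item the product rule for $(\p^\epsilon)^\top\x^\epsilon$ and $(\R^\epsilon)^\top\y^\epsilon$;
\item the second-order convexity \eqref{def:convex_h}--\eqref{def:convex_varphi}, which bounds $\int_{t+\epsilon}^T\|\U^\epsilon_s\|_2^2\,ds$ by the $O(1)$ initial data $\x^\epsilon_{t+\epsilon}$ and $\R^\epsilon_{t+\epsilon}$;
\item forward/backward estimates, and finally the pointwise bound \eqref{lem:xypru_10'}.
\end{itemize}

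\textbf{Your route.} You observe that this computation is already contained in Lemma~\ref{lem:estimate}, once the time shift is encoded as two perturbations. One is of the initial state ($\xi$ versus $X_{t+\epsilon}$). The other is of the adjoint initial value, through the input $\ir^\varphi=-R_{t+\epsilon}-\varphi_y(Y_{t+\epsilon})$. Both are of order $\epsilon(1+\|\xi\|_2)$ by \eqref{1order-FBSDE-0}. Your added pointwise monotonicity step is exactly \eqref{lem:estimate_11}, and it correctly upgrades the $\lr^2$ control of $u$ to a supremum bound.

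\textbf{Comparison.} Your route is shorter and uses less. Besides $\sigma\equiv 0$, it needs only (A1)--(A3), since the proof of \eqref{1order-FBSDE-0} does not use the H\"older conditions in (B). It does not need the second-order derivatives of (A2'). It also makes transparent that the mismatch $R_{t+\epsilon}+\varphi_y(Y_{t+\epsilon})$ is precisely the quantity whose vanishing characterizes the flow property in Section~\ref{sec:flow}.

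What the paper's route buys is the explicit difference-quotient system \eqref{lem:xypru_11'}. That system is needed anyway in Step~2 of the proof of Theorem~\ref{thm:convergence}: there the same duality argument, applied to $\de\x^\epsilon,\dots,\de\U^\epsilon$, yields convergence to \eqref{FBSDEs:high}. Your stability argument gives the bound but not that convergence.

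Your closing remark about circularity is unnecessary. The measure argument is $\lr(\theta_s)$ and never involves $u$, in either the first- or second-order setting.
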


To give the limit of the processes $\frac{\Theta_s^{t+\epsilon,\xi}-\Theta_s^{t,\xi}}{\epsilon}$ for $s\in(t+\epsilon,T)$ as $\epsilon\to 0$, we  consider the following system of FBSDEs for $\left(\x^{t,\xi}_s,\y^{t,\xi}_s,\p^{t,\xi}_s,\R^{t,\xi}_s,\U^{t,\xi}_s\right)$ with $s\in[t,T]$:
\begin{align}
    &\x^{t,\xi}_s=-\left\{b_0(t)+b_1(t) \xi + b_2(t)\e\left[\xi\right]+b_3(t)u^{t,\xi}_t\right\} \notag \\
    &\quad\qquad +\int_t^s \left\{ b_1(r) \x^{t,\xi}_r + b_2(r)\e\left[\x^{t,\xi}_r\right]+b_3(r)\U^{t,\xi}_r\right\}   dr, \notag \\
    &\y^{t,\xi}_s= g_1 \x^{t,\xi}_T+ g_2 \e\left[ \x^{t,\xi}_T\right]  + \int_s^T  \Big[f_1(r) \x^{t,\xi}_r + f_2(r) \y^{t,\xi}_r + f_4(r) \e\left[ \x^{t,\xi}_r\right] \notag\\
    &\qquad\qquad\qquad\qquad\qquad\qquad\qquad\qquad + f_5(r) \e\left[ \y^{t,\xi}_r\right] + f_7(r) \U^{t,\xi}_r  \Big] dr, \notag \\
    & \p^{t,\xi}_s= - g_1^\top \R^{t,\xi}_T - g_2^\top \e\left[ \R^{t,\xi}_T\right]  +\Phi_{xx}\left(X^{t,\xi}_T, \lr\left(X^{t,\xi}_T\right) \right) \x^{t,\xi}_T \notag \\
    &\quad\qquad +\widetilde{\e}\bigg[D_{x'}\frac{d\Phi_x}{d\nu}\left(X^{t,\xi}_T, \lr\left(X^{t,\xi}_T\right) \right)\left(\widetilde{X^{t,\xi}_T}\right) \widetilde{\x^{t,\xi}_T}+\left(D_{x'}\frac{d\Phi_x}{d\nu} \left(\widetilde{X^{t,\xi}_T}, \lr\left(X^{t,\xi}_T\right)\right)\left(X^{t,\xi}_T\right)\right)^\top \widetilde{\x^{t,\xi}_T}  \bigg] \notag \\
    &\quad\qquad +\he\widetilde{\e}\left[D_{x''}D_{x'}\frac{d^2\Phi}{d\nu^2} \left(\widetilde{X^{t,\xi}_T}, \lr\left(X^{t,\xi}_T\right)\right)\left(X^{t,\xi}_T,\widehat{X^{t,\xi}_T}\right) \widehat{\x^{t,\xi}_T} \right] \notag \\
    &\quad\qquad +\widetilde{\e}\left[D_{x'}^2\frac{d\Phi}{d\nu} \left(\widetilde{X^{t,\xi}_T}, \lr\left(X^{t,\xi}_T\right)\right)\left(X^{t,\xi}_T\right) \x^{t,\xi}_T \right] \notag \\
    &\quad\qquad + \int_s^T \Bigg\{  b_1(r)^\top  \p^{t,\xi}_r + b_2(r)^\top \e\left[\p^{t,\xi}_r\right] - f_1(r)^\top  \R^{t,\xi}_r - f_4(r)^\top \e\left[\R^{t,\xi}_r\right] \notag \\
    &\qquad\qquad +\left[\left(\begin{array}{ccc}
         h_{xx} & h_{xy} & h_{xv}
    \end{array} \right)\left(r,\theta^{t,\xi}_r,\lr\left(\theta^{t,\xi}_r\right), u^{t,\xi}_r\right) \right] \left(\text{\small$\begin{array}{c}
         \x^{t,\xi}_r  \\
         \y^{t,\xi}_r  \\
         \U^{t,\xi}_r
    \end{array}$}\right) \notag \\
    & \qquad\qquad +\widetilde{\e}\left\{\left[\left(\begin{array}{cc}
         D_{x'}\frac{dh_x}{d\nu} &  D_{y'}\frac{dh_x}{d\nu}
    \end{array}\right)\left(r,\theta^{t,\xi}_r,\lr\left(\theta^{t,\xi}_r\right),u^{t,\xi}_r\right)\left(\widetilde{\theta^{t,\xi}_r}\right) \right] \left(\text{\small $\begin{array}{cc}
         \widetilde{\x^{t,\xi}_r}  \\
         \widetilde{\y^{t,\xi}_r} 
    \end{array}$}\right) \right\} \notag \\
    & \qquad\qquad +\widetilde{\e}\left\{\left[\left(\begin{array}{c}
         D_{x'}\frac{dh_x}{d\nu} \\
         D_{x'}\frac{dh_y}{d\nu} \\
         D_{x'}\frac{dh_v}{d\nu}
    \end{array}\right)\left(r,\widetilde{\theta^{t,\xi}_r},\lr\left(\theta^{t,\xi}_r\right),\widetilde{u^{t,\xi}_r}\right)\left(\theta^{t,\xi}_r\right) \right] \left(\text{\small $\begin{array}{cc}
         \widetilde{\x^{t,\xi}_r}  \\
         \widetilde{\y^{t,\xi}_r}  \\
         \widetilde{\U^{t,\xi}_r}
    \end{array}$}\right) \right\} \notag \\
    & \qquad\qquad  +  \he\widetilde{\e} \left\{ \left[\left(\begin{array}{cc}
         D_{x''}D_{x'}\frac{d^2h}{d\nu^2} & D_{y''}D_{x'}\frac{d^2h}{d\nu^2}  
    \end{array}\right) \left(r,\widetilde{\theta^{t,\xi}_r},\lr\left(\theta^{t,\xi}_r\right),\widetilde{u^{t,\xi}_r}\right)\left(\theta^{t,\xi}_r,\widehat{\theta^{t,\xi}_r}\right) \right]  \left(\text{\small $\begin{array}{cc}
         \widehat{\x^{t,\xi}_r}   \\
         \widehat{\y^{t,\xi}_r}  
    \end{array}$} \right) \right\} \notag \\
    & \qquad\qquad  +  \widetilde{\e} \left\{ \left[\left(\begin{array}{cc}
         D^2_{x'}\frac{dh}{d\nu} & D_{y'}D_{x'}\frac{dh}{d\nu}  
    \end{array}\right) \left(r,\widetilde{\theta^{t,\xi}_r},\lr\left(\theta^{t,\xi}_r\right),\widetilde{u^{t,\xi}_r}\right)\left(\theta^{t,\xi}_r\right) \right]  \left(\text{\small $\begin{array}{cc}
         \x^{t,\xi}_r   \\
         \y^{t,\xi}_r  
    \end{array}$} \right) \right\} \Bigg\} dr, \notag \\
    & \R^{t,\xi}_s= - \varphi_{yy} \left(Y^{t,\xi}_t\right) \left(\y^{t,\xi}_t - f\left(t,\xi,Y^{t,\xi}_t,\lr\left(\xi,Y^{t,\xi}_t\right),u^{t,\xi}_t\right) \right) - f_2(t)^\top  R^{t,\xi}_t  { - f_5(t)^\top  \e\left[R^{t,\xi}_t\right]}\notag \\
    &\quad\qquad + h_y \left(t,\xi,Y^{t,\xi}_t,\lr\left({ \xi, Y^{t,\xi}_t}\right),u^{t,\xi}_t\right) { +\widetilde{\e}\left[D_{y'} \frac{dh}{d\nu}\left(t,\widetilde{\xi},\widetilde{Y^{t,\xi}_t},\lr\left(\xi,Y^{t,\xi}_t\right),\widetilde{u^{t,\xi}_t} \right)\left(\xi,Y^{t,\xi}_t\right) \right]} \notag \\
    &\quad\qquad +\int_t^s \Bigg\{ f_2(r)^\top  \R^{t,\xi}_r { + f_5(r)^\top  \e\left[\R^{t,\xi}_r\right]} - \left[\left( \begin{array}{ccc}
         h_{yx} & h_{yy} & h_{yv}
    \end{array}\right)\left(r,\theta^{t,\xi}_r,\lr\left({ \theta^{t,\xi}_r}\right),u^{t,\xi}_r\right)\right] \left(\text{\small $\begin{array}{c}
         \x^{t,\xi}_r \\
         \y^{t,\xi}_r \\
         \U^{t,\xi}_r
    \end{array}$}\right) \notag \\
    & \qquad\qquad -\widetilde{\e}\left\{ \left[\left(\begin{array}{cc}
         D_{x'}\frac{dh_{y}}{d\nu} & D_{y'}\frac{dh_{y}}{d\nu} 
    \end{array} \right) \left(r,\theta^{t,\xi}_r,\lr\left(\theta^{t,\xi}_r\right),u^{t,\xi}_r\right)\left(\widetilde{\theta^{t,\xi}_r}\right) \right] \left( \text{\small $\begin{array}{cc}
         \widetilde{\x^{t,\xi}_r}  \\
         \widetilde{\y^{t,\xi}_r} 
    \end{array}$}\right) \right\}  \notag\\
    & \qquad\qquad - \widetilde{\e}\left\{\left[\left(\begin{array}{ccc}
        D_{y'}\frac{dh_x}{d\nu} \\ 
        D_{y'}\frac{dh_y}{d\nu} \\
        D_{y'}\frac{dh_v}{d\nu}
    \end{array}\right)\left(r,\widetilde{\theta^{t,\xi}_r},\lr\left({ \theta^{t,\xi}_r}\right),\widetilde{u^{t,\xi}_r}\right)\left({ \theta^{t,\xi}_r}\right)\right]^\top  \left(\text{\small $\begin{array}{c}
        \widetilde{\x^{t,\xi}_r}  \\
        \widetilde{\y^{t,\xi}_r}  \\
        \widetilde{\U^{t,\xi}_r} 
    \end{array}$}
    \right) \right\}  \notag\\
    & \qquad\qquad  - \he\widetilde{\e} \left\{ \left[\left( \begin{array}{cc}
         D_{x''}D_{y'}\frac{d^2h}{d\nu^2} & D_{y''}D_{y'}\frac{d^2h}{d\nu^2} \\
    \end{array}\right) \left(r,\widetilde{\theta^{t,\xi}_r},\lr\left({ \theta^{t,\xi}_r}\right),\widetilde{u^{t,\xi}_r}\right)\left({ \theta^{t,\xi}_r},\widehat{{ \theta^{t,\xi}_r}}\right) \right] \left(\text{\small $\begin{array}{c}
         \widehat{\x^{t,\xi}_r}   \\
         \widehat{\y^{t,\xi}_r} 
    \end{array}$}\right) \right\}  \notag\\
    & \qquad\qquad  - \widetilde{\e} \left\{ \left[\left( \begin{array}{cc}
         D_{x'}D_{y'}\frac{dh}{d\nu} & D_{y'}^2\frac{dh}{d\nu} \\
    \end{array}\right) \left(r,\widetilde{\theta^{t,\xi}_r},\lr\left({ \theta^{t,\xi}_r}\right),\widetilde{u^{t,\xi}_r}\right)\left({ \theta^{t,\xi}_r}\right) \right] \left(\text{\small $\begin{array}{c}
         \x^{t,\xi}_r   \\
         \y^{t,\xi}_r 
    \end{array}$}\right) \right\} \Bigg\} dr, \notag \\
    &0=b_3(s)^\top  \p^{t,\xi}_s -f_7(s)^\top  \R^{t,\xi}_s + \left[\left(\begin{array}{ccc}
         h_{vx} & h_{vy} & h_{vv} 
    \end{array} \right)\left(s,\theta^{t,\xi}_s,\lr\left({ \theta^{t,\xi}_s}\right),u^{t,\xi}_s\right) \right] \left(\text{\small $\begin{array}{c}
         \x^{t,\xi}_s \\
         \y^{t,\xi}_s \\
         \U^{t,\xi}_s 
    \end{array}$}\right) \notag \\
    & \qquad +\widetilde{\e}\left\{ \left[\left(\begin{array}{cc}
         D_{x'}\frac{dh_{v}}{d\nu} & D_{y'}\frac{dh_{v}}{d\nu} 
    \end{array} \right) \left(s,\theta^{t,\xi}_s,\lr\left(\theta^{t,\xi}_s\right),u^{t,\xi}_s\right)\left(\widetilde{\theta^{t,\xi}_s}\right) \right] \left(\text{\small $\begin{array}{cc}
         \widetilde{\x^{t,\xi}_s}  \\
         \widetilde{\y^{t,\xi}_s} 
    \end{array}$}\right) \right\}. \label{FBSDEs:high}
\end{align}
System \eqref{FBSDEs:high} seems to be complicated, but with $\Theta^{t,\xi}$ being given, it is indeed a linear system for $\left(\x^{t,\xi}_s,\y^{t,\xi}_s,\p^{t,\xi}_s,\R^{t,\xi}_s,\U^{t,\xi}_s\right)$. The equations for $\left(\x^{t,\xi}_s,\y^{t,\xi}_s\right)$ and equations for $\left(\p^{t,\xi}_s,\R^{t,\xi}_s\right)$ are coupled through $\U^{t,\xi}_s$, which is a feedback function of $\left(\x^{t,\xi}_s,\y^{t,\xi}_s,\p^{t,\xi}_s,\R^{t,\xi}_s,\lr\left(\x^{t,\xi}_s\right),\lr\left(\y^{t,\xi}_s\right)\right)$ in view of the last equality of \eqref{FBSDEs:high} and the strong convexity of $h$ in $v$. 

The following theorem provides the well-posedness of System \eqref{FBSDEs:high} and establishes the differentiability of $\Theta_s^{t,\xi}$ with respect to $t$. Its proof is given in Appendix~\ref{App:thm:convergence}. 

\begin{theorem}\label{thm:convergence}
    Under Assumptions (A1), (A2'), (A3) and (B), System \eqref{FBSDEs:high} has a unique $\f_t$-measurable solution $\left(\x^{t,\xi},\y^{t,\xi},\p^{t,\xi},\R^{t,\xi},\U^{t,\xi}\right)$, such that 
    \begin{align}
        &\e\bigg[\sup_{t\le s\le T}\left|\x^{t,\xi}_s\right|^2+\sup_{t\le s\le T}\left|\y^{t,\xi}_s\right|^2+\sup_{t\le s\le T}\left|\p^{t,\xi}_s\right|^2+\sup_{t\le s\le T}\left|\R^{t,\xi}_s\right|^2+\sup_{t\le s\le T}\left|\U^{t,\xi}_s\right|^2\bigg]\notag\\
        \le\ & C(L,T,\lambda) \left( 1+\left\|\xi\right\|_2^2\right). \label{lem:xypru_0}
    \end{align}   
    We define 
    \begin{equation}\label{def:de x^epsilon}
    \begin{aligned}
        &\de \x^\epsilon_s:=\x^\epsilon_s-\x^{t,\xi}_s,\quad \de \y^\epsilon_s:=\y^\epsilon_s-\y^{t,\xi}_s,\\
        &\de \p^\epsilon_s:=\p^\epsilon_s-\p^{t,\xi}_s,\quad \de \R^\epsilon_s:=\R^\epsilon_s-\R^{t,\xi}_s,\quad \de \U^\epsilon_s:=\U^\epsilon_s-\U^{t,\xi}_s,\quad s\in[t+\epsilon,T],
    \end{aligned}
    \end{equation}
    and we have
    \begin{equation}\label{derivative_t}
    \begin{aligned}
        \lim_{\epsilon\to 0}\ \bigg[&\sup_{t+\epsilon\le s\le T}\left\|\de\x_s^\epsilon\right\|_2^2+\sup_{t+\epsilon\le s\le T}\left\|\de\y_s^\epsilon\right\|_2^2\\
        &+\sup_{t+\epsilon\le s\le T}\left\|\de\p_s^\epsilon\right\|_2^2+\sup_{t+\epsilon\le s\le T}\left\|\de\R_s^\epsilon\right\|_2^2+\sup_{t+\epsilon\le s\le T}\left\|\de\U_s^\epsilon\right\|_2^2 \bigg] =0. 
    \end{aligned}
    \end{equation}
    That is, the solution of System \eqref{FBSDEs:high} gives the derivative of the map $t\mapsto\Theta^{t,\xi}$. 
\end{theorem}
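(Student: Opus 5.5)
We prove Theorem~\ref{thm:convergence} in two stages: first well-posedness of System~\eqref{FBSDEs:high}, then convergence of the difference quotients.

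\textbf{Well-posedness.} System~\eqref{FBSDEs:high} is linear in $(\x,\y,\p,\R,\U)$ once $\Theta^{t,\xi}$ is given. Since $\sigma\equiv0$, all processes are $\f_t$-measurable and the equations are pathwise ODEs with mean-field terms. The only non-standard features are the inhomogeneous initial and terminal data: the initial value of $\x$, the $f$- and $h_y$-terms in $\R_t$, and the coupling through $\varphi_{yy}(Y_t)\y_t$.

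We repeat the continuation method of Subsection~\ref{subsec:FBSDE}. Embed~\eqref{FBSDEs:high} into a family $\mathcal{E}'(\gamma,\ir)$ that multiplies all coupling coefficients by $\gamma$ and adds inputs $\ir$; at $\gamma=0$ the system decouples, using $h_{vv}\ge 2\lambda I$ to solve for $\U$.

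The key a priori estimate is the analogue of Lemma~\ref{lem:estimate}. For two solutions with differences $(\de\x,\de\y,\de\p,\de\R,\de\U)$:
\begin{itemize}
\item compute $\frac{d}{ds}\e[\de\p_s^\top\de\x_s+\de\R_s^\top\de\y_s]$ on $[t,T]$;
\item use the terminal coupling $\de\p_T=-g_1^\top\de\R_T-g_2^\top\e[\de\R_T]+(\text{second derivatives of }\Phi)\,\de\x_T$ and the initial coupling $\de\R_t=-\varphi_{yy}\de\y_t+\dots$;
\item the boundary terms then become the nonnegative quadratic forms~\eqref{def:convex_Phi} and~\eqref{def:convex_varphi};
\item the running terms, combined with the last equation of~\eqref{FBSDEs:high}, produce exactly the Hessian form~\eqref{def:convex_h}, which is bounded below by $2\lambda\e|\de\U_s|^2$.
\end{itemize}
This yields $\lambda\|\de\U\|^2\le C\|\de\ir\|\,\|\de\Pi\|$. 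Standard ODE/Gr\"onwall estimates for $(\x,\y)$ in terms of $\U$, and then for $(\p,\R)$, close the bound; the contraction step of Lemma~\ref{lem:extend} carries over verbatim.

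The bound~\eqref{lem:xypru_0} follows by comparing with the zero solution. The inhomogeneous data are controlled by $C(1+\|\xi\|_2)$ via~\eqref{1order-FBSDE-0}, and in particular by the bound on $\e|u_t|^2$.

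\textbf{Convergence.} Write the equations satisfied by $(\x^\epsilon,\y^\epsilon,\p^\epsilon,\R^\epsilon,\U^\epsilon)$ on $[t+\epsilon,T]$. Subtracting the systems for $\Theta^{t+\epsilon,\xi}$ and $\Theta^{t,\xi}$ and dividing by $\epsilon$ gives linear equations whose coefficients are integrated second derivatives $\int_0^1 h_{xx}(\dots\lambda\Theta^\epsilon+(1-\lambda)\Theta\dots)\,d\lambda$, and similarly for the other derivatives.

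The initial and boundary data at $s=t+\epsilon$ need separate treatment:
\begin{itemize}
\item $\x^\epsilon_{t+\epsilon}=(\xi-X_{t+\epsilon})/\epsilon=-\frac1\epsilon\int_t^{t+\epsilon}b(r,X_r,\lr(X_r),u_r)\,dr$;
\item $\R^\epsilon_{t+\epsilon}=\frac1\epsilon\big(-\varphi_y(Y^\epsilon_{t+\epsilon})+\varphi_y(Y_t)\big)-\frac1\epsilon\big(R_{t+\epsilon}-R_t\big)$;
\item in the latter, $Y^\epsilon_{t+\epsilon}-Y_t=(Y^\epsilon_{t+\epsilon}-Y_{t+\epsilon})+(Y_{t+\epsilon}-Y_t)$, which produces $\y^\epsilon$ plus an integral of $f$.
\end{itemize}

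Then $\de\x^\epsilon,\dots,\de\U^\epsilon$ satisfy the linear system~\eqref{FBSDEs:high} restricted to $[t+\epsilon,T]$, with inputs consisting of:
\begin{itemize}
\item coefficient errors: differences between the averaged and the exact second derivatives, multiplied by the processes $\x^\epsilon,\dots$, which are uniformly bounded by Lemma~\ref{lem:de_bound};
\item boundary errors, for example $\frac1\epsilon\int_t^{t+\epsilon}\big(b(r,\dots)-b(t,\xi,\dots,u_t)\big)\,dr$;
\item the mismatch between $\x^{t,\xi}_{t+\epsilon}$ and $\x^{t,\xi}_t$.
\end{itemize}
Applying the stability estimate from the first stage on $[t+\epsilon,T]$, with constants uniform in $\epsilon$, reduces~\eqref{derivative_t} to showing that these inputs vanish in $L^2$.

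The main obstacle is the coefficient errors. The second derivatives are only uniformly continuous, not Lipschitz, so we argue as follows:
\begin{itemize}
\item since $\Theta^\epsilon\to\Theta$ in $L^2$ by~\eqref{lem:kappa_0}, the averaged second derivatives converge in probability to the exact ones;
\item they are bounded by $L$, but the products with $\x^\epsilon$ etc.\ are only bounded in $L^2$, so dominated convergence does not apply directly;
\item we obtain uniform integrability from~\eqref{lem:xypru_0'}, possibly after a truncation $\{|\x^\epsilon|\le K\}$ and letting $K\to\infty$.
\end{itemize}
The boundary errors are handled by the time continuity in Lemma~\ref{lem:estimate_s}, in particular~\eqref{lem:estimate_s_2} for $u$, together with Assumption (B).
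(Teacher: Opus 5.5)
Your route is the paper's: the continuation method plus the duality $\e[\p^\top\x+\R^\top\y]$ with \eqref{def:convex_h}--\eqref{def:convex_varphi} gives well-posedness and \eqref{lem:xypru_0}. Then a linear system for $(\de\x^\epsilon,\dots,\de\U^\epsilon)$ on $[t+\epsilon,T]$, driven by boundary and coefficient errors, is closed by an $\epsilon$-uniform stability estimate. However, the step you yourself single out as the main obstacle is not resolved by your argument. Let $A^\epsilon$ denote the $\delta$-averaged and $A$ the exact second derivatives. The coefficient error is $(A^\epsilon-A)\,\x^\epsilon$, and you need $\e\big[|A^\epsilon-A|^2|\x^\epsilon|^2\big]\to0$. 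After truncating on $\{|\x^\epsilon|\le K\}$, the remainder is only bounded by $4L^2\sup_\epsilon\e\big[|\x^\epsilon|^2\mathbf{1}_{\{|\x^\epsilon|>K\}}\big]$. This vanishes as $K\to\infty$ only if $\{|\x^\epsilon|^2\}_\epsilon$ is uniformly integrable, and \eqref{lem:xypru_0'} is just a second-moment bound, which does not give that. The same problem sits in boundary errors such as $\int_0^1\big[\varphi_{yy}(Y^{\epsilon,\delta}_{t+\epsilon})-\varphi_{yy}(Y_{t+\epsilon})\big]\y^\epsilon_{t+\epsilon}\,d\delta$ and the analogous $\Phi$-terms. (The paper's Step 2(a) appeals to dominated convergence at the corresponding point, \eqref{zeta_R-3}, without exhibiting a dominating variable, so an extra argument is needed either way.)

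A clean repair is to let the $\epsilon$-dependent coefficients act on the unknowns instead of on the limit. Since $A^\epsilon\x^\epsilon-A\x=A^\epsilon\,\de\x^\epsilon+(A^\epsilon-A)\,\x$, the differences solve the linear system with the averaged coefficients $A^\epsilon$. The boundary couplings then carry the averaged, still nonnegative, $\varphi_{yy}$ and $\Phi$-Hessian forms. The driving inputs are $(A^\epsilon-A)$ applied to the fixed solution $(\x,\y,\U)$ of \eqref{FBSDEs:high}. The stability estimate for this averaged system is uniform in $\epsilon$ by exactly the convexity computation \eqref{lem:xypru_3'} already performed in Lemma~\ref{lem:de_bound}. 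The new inputs do vanish: $\e\big[|A^\epsilon_s-A_s|^2\sup_r|\x_r|^2\big]\to0$ uniformly in $s$. This follows from the uniform continuity in (A2'), Chebyshev's inequality $\mathbb{P}(|\Theta^\epsilon_s-\Theta_s|>\rho)\le C\epsilon^2/\rho^2$, and integrability of the single random variable $\sup_r|\x_r|^2$.

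Relatedly, $\Theta^\epsilon\to\Theta$ does not follow from \eqref{lem:kappa_0}. That estimate compares initial conditions at a fixed initial time and, without the flow property, says nothing about initial times $t$ versus $t+\epsilon$. The rate $\sup_s\|\Theta^\epsilon_s-\Theta_s\|_2\le C\epsilon(1+\|\xi\|_2)$ comes from Lemma~\ref{lem:de_bound}.
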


\subsection{The HJB equation}

With the formula \eqref{value_equation} and the differentiability of the map $t\mapsto\Theta_s^{t,\xi}$, we now establish the differentiability of $V$ in $t$, which yields the HJB equation for Problem $\left(\mathsf{P}_{t,\xi}\right)$. Here, for the first-order problem, the associated Lagrangian functional $L$ (defined in \eqref{Lagrangian}) and the Hamiltonian function $H$ (defined in \eqref{def:Ham}) do not depend on the $z$ or $q$ arguments. For notational convenience, from now on, in this section, we drop the superscript $t,\xi$ in $\left(\x^{t,\xi},\y^{t,\xi},\p^{t,\xi},\R^{t,\xi},\U^{t,\xi}\right)$.

\begin{theorem}\label{thm:HJB}
    Under Assumptions (A1), (A2'), (A3) and (B), the value function $V$ is differentiable in time, and it satisfies the following HJB equation:
    \begin{equation}\label{HJB'}
    \left\{
    \begin{aligned}
        &\dd_t V(t,\lr(\xi))+\e \left[H\left(t,\xi,Y^{t,\xi}_t,\lr\left({ \xi,Y^{t,\xi}_t}\right),D_\xi V(t,\lr(\xi)),- \varphi_y \left(Y^{t,\xi}_t\right) \right) \right]=0, \quad t\in[0,T),\\
        &V(T,\lr(\xi))= \e\left[\Phi(\xi,\lr(\xi))+\varphi\left(g_1\xi+g_2\e[\xi]\right)\right].
    \end{aligned}
    \right.
    \end{equation}
\end{theorem}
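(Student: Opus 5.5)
The plan is to compute the right derivative of $t\mapsto V(t,\lr(\xi))$ directly from the representation \eqref{value_equation}, using the time-derivative processes of Theorem~\ref{thm:convergence}. We then identify the result with the Hamiltonian through the duality relations already used in the proof of Theorem~\ref{thm:value_regu_mu}. The terminal condition is immediate: at $t=T$ the control interval is empty, $X_T=\xi$ and $Y_T=g_1\xi+g_2\e[\xi]$, so $V(T,\lr(\xi))=\e[\Phi(\xi,\lr(\xi))+\varphi(g_1\xi+g_2\e[\xi])]$.

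For $\epsilon>0$ we split
\begin{align*}
V(t+\epsilon,\lr(\xi))-V(t,\lr(\xi)) = -\e\int_t^{t+\epsilon} h\big(s,\theta_s,\lr(\theta_s),u_s\big)ds + \Delta_1^\epsilon+\Delta_2^\epsilon+\Delta_3^\epsilon,
\end{align*}
where the pieces are defined as follows:
\begin{itemize}
\item $\Delta_1^\epsilon:=\e\int_{t+\epsilon}^T [h(s,\theta^\epsilon_s,\cdot,u^\epsilon_s)-h(s,\theta_s,\cdot,u_s)]ds$ is the running-cost difference;
\item $\Delta_2^\epsilon$ is the difference of the $\Phi$ terms;
\item $\Delta_3^\epsilon:=\e[\varphi(Y^\epsilon_{t+\epsilon})-\varphi(Y_t)]$ is the difference of the $\varphi$ terms.
\end{itemize}
By Lemma~\ref{lem:estimate_s} (continuity in $s$) and Assumption (B), the first term divided by $\epsilon$ tends to $-\e[h(t,\xi,Y_t,\lr(\xi,Y_t),u_t)]$. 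We write $\varphi(Y^\epsilon_{t+\epsilon})-\varphi(Y_t)=[\varphi(Y^\epsilon_{t+\epsilon})-\varphi(Y_{t+\epsilon})]+[\varphi(Y_{t+\epsilon})-\varphi(Y_t)]$. The second bracket, divided by $\epsilon$, tends to $-\e[\varphi_y(Y_t)^\top f(t,\xi,Y_t,\lr(\xi,Y_t),u_t)]$ from the backward equation, since $Z\equiv0$. For the remaining pieces we Taylor expand using (A2') and Lemma~\ref{lem:de_bound}; second-order remainders are $O(\epsilon^2)$ because the quotients are uniformly bounded in $L^2$. Theorem~\ref{thm:convergence} then lets us replace the quotients $(\x^\epsilon,\y^\epsilon,\U^\epsilon)$ by their limits $(\x,\y,\U)$. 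This gives $\frac1\epsilon(\Delta_1^\epsilon+\Delta_2^\epsilon+[\varphi\text{-part}])\to$ a first-order expression linear in $(\x,\y,\U)$, with coefficients $h_x,h_y,h_v$, $D_{x'}\frac{dh}{d\nu}$, $\Phi_x$, etc.

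The key step is to evaluate this linear expression by duality, exactly as in \eqref{thm:value_regu_mu_3}. We apply the product rule (no Itô correction, since $\sigma\equiv0$) to $P_s^\top\x_s$ and $R_s^\top\y_s$ on $[t,T]$, use the terminal conditions for $P_T$ and $\y_T$, and use Fubini. All the $h$-derivative terms and the $\Phi$ terms then cancel. What remains is:
\begin{itemize}
\item the boundary term $-\e[P_t^\top \x_t]$;
\item the term $\e[R_t^\top\y_t]+\e[\varphi_y(Y_t)^\top\y_t]$, which vanishes since $R_t=-\varphi_y(Y_t)$;
\item the control terms $\e\int (b_3^\top P-f_7^\top R+h_v)^\top\U\,ds$, which vanish by the optimality condition \eqref{FBSDEs:first}.
\end{itemize}
With $\x_t=-b(t,\xi,\lr(\xi),u_t)$ and $P_t=D_\xi V$ from Theorem~\ref{thm:value_regu_mu}, the boundary term equals $\e[b(t,\xi,\lr(\xi),u_t)^\top D_\xi V(t,\lr(\xi))]$.

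Summing up, the right derivative equals $-\e[h+b^\top P_t - f^\top R_t]$, all evaluated at $(t,\xi,Y_t,\lr(\xi,Y_t),u_t)$, which is $-\e[L(t,\ldots,u_t,P_t,R_t)]$. Since $u_t=\hv(t,\ldots,P_t,R_t)$ by the optimality condition and strict convexity, this equals $-\e[H(t,\xi,Y_t,\lr(\xi,Y_t),D_\xi V,-\varphi_y(Y_t))]$. The same argument with $t-\epsilon$ in place of $t+\epsilon$, or continuity of this right derivative in $t$ (via \eqref{lem:kappa_0} and Lemma~\ref{lem:estimate_s}), gives two-sided differentiability. The main obstacle is bookkeeping in the duality step, in particular matching the mean-field cross terms $D_{x'}\frac{dh}{d\nu}$ through Fubini, and justifying the passage to the limit uniformly on $[t+\epsilon,T]$ rather than $[t,T]$. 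I would handle the latter by bounding the integrals over $[t,t+\epsilon]$ via \eqref{lem:xypru_0}.
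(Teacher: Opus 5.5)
Your proposal is correct and follows the paper's proof essentially step by step: the same splitting of the difference quotient obtained from \eqref{value_equation}, the limit \eqref{D_tV-6} via Lemma~\ref{lem:de_bound} and Theorem~\ref{thm:convergence}, and then the duality with $P_s^\top\x_s$ and $R_s^\top\y_s$ together with \eqref{FBSDEs:first} and $R_t=-\varphi_y(Y_t)$. Fix one sign in your bookkeeping: the duality shows that the $h$- and $\Phi$-terms sum to $\e[P_t^\top\x_t+R_t^\top\y_t]$, so the surviving boundary term is $+\e[P_t^\top\x_t]=-\e[P_t^\top b(t,\xi,\lr(\xi),u_t)]$, which is the sign your final formula actually uses; your remark on the left derivative is a welcome addition, since the paper only computes the limit for $\epsilon>0$.
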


\begin{proof}
In the proof, we simply drop the superscript $t,\xi$ of $\Theta^{t,\xi}$ whenever no ambiguity arises. For $\epsilon\in(0,1)$, from \eqref{value_equation} and Assumption (A2), with the notations in \eqref{def:x_s^epsilon}, we can write that
\begin{align}
    &\frac{V(t+\epsilon,\lr(\xi))-V(t,\lr(\xi))}{\epsilon} \notag\\
    =\ & \frac{1}{\epsilon}\ \e\bigg\{\int_{t+\epsilon}^T \left[h\left(s,\cdot\right)\Big|^{\left(\theta_s^{\epsilon},\lr\left( \theta_s^{\epsilon}\right),u^{\epsilon}_s\right)}_{\left(\theta_s,\lr\left( \theta_s\right),u_s\right)}\right] ds + \Phi \big|^{\left(X_T^{\epsilon},\lr\left(X_T^{\epsilon}\right)\right)}_{\left(X_T,\lr\left(X_T\right)\right)} +\varphi\big|^{Y_{t+\epsilon}^{\epsilon}}_{Y_{t+\epsilon}} + \varphi\left(Y_{t+\epsilon}\right)- \varphi\left(Y_t\right)  \bigg\}  \notag \\
    & -\frac{1}{\epsilon}\e\left[\int_t^{t+\epsilon} h\left(s,\theta_s,\lr\left({ \theta_s}\right),u_s\right) ds \right] \notag \\
    =\ & \e \int_{t+\epsilon}^T \int_0^1 \bigg\{ h_x\left(s,\theta_s^{\epsilon,\delta},\lr\left({ \theta_s^{\epsilon,\delta}}\right),u_s^{\epsilon,\delta}\right)^\top \x^\epsilon_s +h_y\left(s,\theta_s^{\epsilon,\delta},\lr\left({ \theta_s^{\epsilon,\delta}}\right),u_s^{\epsilon,\delta}\right)^\top \y^\epsilon_s \notag\\
    &\quad  +h_v\left(s,\theta_s^{\epsilon,\delta},\lr\left({ \theta_s^{\epsilon,\delta}}\right),u_s^{\epsilon,\delta}\right)^\top \U^\epsilon_s + \widetilde{\e}\bigg[D_{x'}\frac{dh}{d\nu}\left(s,\theta_s^{\epsilon,\delta},\lr\left({ \theta_s^{\epsilon,\delta}}\right),u_s^{\epsilon,\delta}\right)\left(\widetilde{ \theta_s^{\epsilon,\delta}}\right)^\top \widetilde{\x^\epsilon_s} \bigg] \notag\\
    &\quad + \widetilde{\e}\bigg[D_{y'}\frac{dh}{d\nu}\left(s,\theta_s^{\epsilon,\delta},\lr\left( \theta_s^{\epsilon,\delta}\right),u_s^{\epsilon,\delta}\right)\left(\widetilde{\theta_s^{\epsilon,\delta}}\right)^\top \widetilde{\y^\epsilon_s} \bigg]\bigg\} d\delta ds  \notag \\
    &+\e \int_0^1 \bigg\{ \Phi_x\left(X_T^{\epsilon,\delta},\lr\left(X_T^{\epsilon,\delta}\right)\right)^\top \x^\epsilon_T + \widetilde{\e}\bigg[D_{x'}\frac{d\Phi}{d\nu}\left(X_T^{\epsilon,\delta},\lr\left(X_T^{\epsilon,\delta}\right)\right)\left(\widetilde{X_T^{\epsilon,\delta}}\right)^\top \widetilde{\x^\epsilon_T} \bigg] \bigg\} d\delta \notag \\
    &- \frac{1}{\epsilon}\e\int_t^{t+\epsilon}  \varphi_y(Y_r)^\top f\left(r,\theta_r,\lr({ \theta_r}),u_r\right) dr  \notag\\
    &+ \e \int_0^1 \varphi_y\left(Y_{t+\epsilon}^{\epsilon,\delta}\right)^\top \y^\epsilon_{t+\epsilon} d\delta -\frac{1}{\epsilon}\e\left[\int_t^{t+\epsilon} h\left(s,\theta_s,\lr({ \theta_s}),u_s\right) ds \right], \notag 
\end{align}
where 
\begin{align*}
    &X^{\epsilon,\delta}_s:=X_s + \delta \left(X^{\epsilon}_s-X_s\right),\quad Y^{\epsilon,\delta}_s:=Y_s + \delta \left(Y^{\epsilon}_s-Y_s\right),\quad u^{\epsilon,\delta}_s:=u_s + \delta \left(u^{\epsilon}_s-u_s\right),
\end{align*}
and $\theta^{\epsilon,\delta}_s:=\left( X^{\epsilon,\delta}_s,Y^{\epsilon,\delta}_s\right)$. From Lemma~\ref{lem:de_bound},  Theorem~\ref{thm:convergence} and Estimates \eqref{lem:estimate_s_1}, \eqref{lem:estimate_s_2} and \eqref{zeta_R-2}, we know that the right-hand side of the last equality has a limit as $\epsilon$ goes to $0$, and it is 
\begin{align}
    &\lim_{\epsilon\to 0}\frac{V(t+\epsilon,\lr(\xi))-V(t,\lr(\xi))}{\epsilon} \notag\\
    =\ & \e \int_t^T \bigg\{ h_x\left(s,\theta_s,\lr({ \theta_s}),u_s\right)^\top \x_s  +h_y\left(s,\theta_s,\lr({ \theta_s}),u_s\right)^\top \y_s +h_v\left(s,\theta_s,\lr({ \theta_s}),u_s\right)^\top \U_s  \notag\\
    &\quad\qquad + \widetilde{\e}\bigg[D_{x'}\frac{dh}{d\nu}\left(s,\theta_s,\lr({ \theta_s}),u_s\right)\left(\widetilde{ \theta_s}\right)^\top \widetilde{\x_s} { + D_{y'}\frac{dh}{d\nu}\left(s,\theta_s,\lr( \theta_s),u_s\right)\left(\widetilde{\theta_s}\right)^\top \widetilde{\y_s}} \bigg] \bigg\}  ds \notag \\
    &+\e \bigg\{ \Phi_x\left(X_T,\lr\left(X_T\right)\right)^\top \x_T + \widetilde{\e}\bigg[D_{x'}\frac{d\Phi}{d\nu}\left(X_T,\lr\left(X_T\right)\right)\left(\widetilde{X_T}\right)^\top \widetilde{\x_T} \bigg] \bigg\} \notag \\
    &+\e \big[ \varphi_y(Y_t)^\top \left(\y_t - f\left(t,\xi,Y_t,\lr({ \xi,Y_t}),u_t\right) \right) - h\left(t,\xi,Y_t,\lr({ \xi,Y_t}),u_t \right)\big]. \label{D_tV-6}
\end{align}
Now we apply It\^o's formula on $P_s^\top \x_s$, take expectation and use Fubini's theorem to obtain
\begin{align}
    &\e\left[P_T^\top \x_T-P_t^\top \x_t\right]\notag\\
    =\ & \e\int_t^T \bigg\{ P_s^\top b_3(s)\U_s +\x_s^\top f_1(s)^\top  R_s + \x_s^\top f_4(s)^\top\e\left[R_s\right] -h_x\left(s,\theta_s,\lr\left({ \theta_s}\right),u_s\right)^\top \x_s \notag \\
    &\qquad\qquad  -\widetilde{\e} \bigg[D_{x'}\frac{dh}{d\nu} \left(s,\widetilde{\theta_s},\lr\left({ \theta_s}\right),\widetilde{u_s}\right)\left({ \theta_s}\right)^\top \x_s \bigg]  \bigg\} ds \notag\\
    =\ & \e\int_t^T \bigg\{ P_s^\top b_3(s)\U_s +R_s^\top f_1(s)\x_s + R_s^\top f_4(s)\e\left[\x_s\right] -h_x\left(s,\theta_s,\lr\left({ \theta_s}\right),u_s\right)^\top \x_s \notag \\
    &\qquad\qquad  -\widetilde{\e} \bigg[D_{x'}\frac{dh}{d\nu} \left(s,\theta_s,\lr\left({ \theta_s}\right),u_s\right)\left(\widetilde{ \theta_s}\right)^\top \widetilde{\x_s} \bigg]  \bigg\} ds. \label{D_tV-1}
\end{align}
Similarly, by again using It\^o's formula and Fubini's theorem on the term $R_s^\top \y_s$, we have
\begin{align}
    &\e\left[R_T^\top \y_T-R_t^\top \y_t\right]\notag\\
    =\ & \e\int_t^T \bigg\{ - R_s^\top f_1(s) \x_s - R_s^\top f_4(s) \e\left[ \x_s\right] - R_s^\top f_7(s) \U_s -h_y\left(s,\theta_s,\lr\left({ \theta_s}\right),u_s\right)^\top \y_s \notag\\
    & \qquad\qquad  -\widetilde{\e} \bigg[D_{y'}\frac{dh}{d\nu} \left(s,\theta_s,\lr\left( \theta_s\right),u_s\right)\left(\widetilde{ \theta_s}\right)^\top \widetilde{\y_s} \bigg] \bigg\} ds.  \label{D_tV-2}
\end{align}
Plugging \eqref{D_tV-1} with \eqref{D_tV-2} and using \eqref{FBSDEs:first}, we know that 
\begin{align}
    &\e\left[P_T^\top \x_T-P_t^\top \x_t + R_T^\top \y_T-R_t^\top \y_t \right]\notag\\
    =\ & \e\int_t^T \bigg\{ \left(P_s^\top b_3(s) - R_s^\top f_7(s)\right) \U_s -h_x\left(s,\theta_s,\lr\left({ \theta_s}\right),u_s\right)^\top \x_s -h_y \left(s,\theta_s,\lr\left({ \theta_s}\right),u_s\right)^\top \y_s \notag \\
    &\qquad\qquad  -\widetilde{\e} \bigg[D_{x'}\frac{dh}{d\nu} \left(s,\theta_s,\lr\left({ \theta_s}\right),u_s\right)\left(\widetilde{ \theta_s}\right)^\top \widetilde{\x_s} { + D_{y'}\frac{dh}{d\nu} \left(s,\theta_s,\lr\left( \theta_s\right),u_s\right)\left(\widetilde{ \theta_s}\right)^\top \widetilde{\y_s}}  \bigg]  \bigg\} ds \notag\\
    =\ & \e\int_t^T \bigg\{ -h_v\left(s,\theta_s,\lr\left({ \theta_s}\right),u_s\right)^\top \U_s-h_x\left(s,\theta_s,\lr\left({ \theta_s}\right),u_s\right)^\top \x_s -h_y \left(s,\theta_s,\lr\left({ \theta_s}\right),u_s\right)^\top \y_s \notag \\
    &\qquad\qquad   -\widetilde{\e} \bigg[D_{x'}\frac{dh}{d\nu} \left(s,\theta_s,\lr\left({ \theta_s}\right),u_s\right)\left(\widetilde{ \theta_s}\right)^\top \widetilde{\x_s} { + D_{y'}\frac{dh}{d\nu} \left(s,\theta_s,\lr\left( \theta_s\right),u_s\right)\left(\widetilde{ \theta_s}\right)^\top \widetilde{\y_s}}  \bigg]  \bigg\} ds. \label{D_tV-3}
\end{align}
On the other hand, by using the terminal conditions for $P_T$ and $\y_T$ and the initial conditions for $\x_t$ and $R_t$, and then using Fubini's theorem again, we know that 
\begin{align}
    &\e\left[P_T^\top \x_T-P_t^\top \x_t + R_T^\top \y_T-R_t^\top \y_t \right]\notag\\
    =\ &\e\bigg\{ \Phi_x\left(X_T, \lr\left(X_T\right) \right)^\top \x_T +\widetilde{\e}\left[D_{x'}\frac{d\Phi}{d\nu} \left(\widetilde{X_T}, \lr\left(X_T\right)\right)\left(X_T\right)^\top \x_T \right] \notag \\
    &\quad +P_t^\top \left[b_0(t)+b_1(t) \xi + b_2(t)\e\left[\xi\right]+b_3(t)u_t\right] +\y_t^\top  \varphi_y \left(Y_t\right)\bigg\}\notag\\
    =\ & \e\bigg\{ \Phi_x\left(X_T, \lr\left(X_T\right) \right)^\top \x_T +\widetilde{\e}\left[D_{x'}\frac{d\Phi}{d\nu} \left(X_T, \lr\left(X_T\right)\right)\left(\widetilde{X_T}\right)^\top \widetilde{\x_T} \right] \notag \\
    &\quad +P_t^\top \left[b_0(t)+b_1(t) \xi + b_2(t)\e\left[\xi\right]+b_3(t)u_t\right] + \varphi_y \left(Y_t\right)^\top \y_t \bigg\}. \label{D_tV-4}
\end{align}
Substituting \eqref{D_tV-3} and \eqref{D_tV-4} into \eqref{D_tV-6}, we have
\begin{align*}
    &\lim_{\epsilon\to 0}\frac{V(t+\epsilon,\lr(\xi))-V(t,\lr(\xi))}{\epsilon} \notag\\
    =\ & \e \big[ -P_t^\top b\left(t,\xi,\lr(\xi),u_t\right) - \varphi_y(Y_t)^\top f\left(t,\xi,Y_t,\lr({ \xi,Y_t}),u_t\right) - h\left(t,\xi,Y_t,\lr({ \xi,Y_t}),u_t\right)\big] \\
    =\ & \e \big[ -P_t^\top b\left(t,\xi,\lr(\xi),u_t\right) + R_t^\top f\left(t,\xi,Y_t,\lr({ \xi,Y_t}), u_t\right) - h\left(t,\xi,Y_t,\lr({ \xi,Y_t}),u_t\right)\big].
\end{align*}
Then, by the definition of the Lagrangian functional $L$ in \eqref{Lagrangian}, the definition of the Hamiltonian function $H$, and the optimality condition of $u_t$, we know that 
\begin{align*}
    \dd_t V(t,\lr(\xi))= -\e \big[L\left(t,\xi,Y_t,\lr({ \xi,Y_t}),u_t,P_t,R_t\right) \big]=-\e \big[H\left(t,\xi,Y_t,\lr({ \xi,Y_t}),P_t,R_t\right) \big].
\end{align*}
Using the initial condition of $R_t$ and the relation \eqref{DV_P}, we can also write 
\begin{align}
    \dd_t V(t,\lr(\xi))=-\e \left[H\left(t,\xi,Y_t,\lr({ \xi,Y_t}),D_\xi V(t,\lr(\xi)),- \varphi_y \left(Y_t\right) \right) \right]. \label{D_tV-9}
\end{align}
For the terminal condition for $V$, from \eqref{value_equation} and the terminal condition for $Y^{T,\xi}_T$, we see that 
\begin{align*}
    V(T,\lr(\xi))=\ & \e\left[\Phi(\xi,\lr(\xi))+\varphi\left(Y_T^{T,\xi}\right)\right] =\e\left[\Phi(\xi,\lr(\xi))+\varphi\left(g_1\xi+g_2\e[\xi]\right)\right],
\end{align*}
with which we obtain \eqref{HJB'}. 
\end{proof}

Now we denote by $\mu:=\lr(\xi)$, and want to write the HJB equation \eqref{HJB'} with $\mu$ rather that $\xi$. With the first-order setting (B), in the system for $\Theta^{t,\xi}_s$, all the randomness comes from the initial condition $\xi$. Therefore, the processes $\Theta^{t,\xi}_s$ can be viewed as some deterministic functions of $\xi$, which may depend on the law $\mu$. We denote by these deterministic functions as 
\begin{align*}
    \Theta^{t,\cdot,\mu}_s:x\mapsto \Theta^{t,x,\mu}_s,\quad x\in\brn,
\end{align*}
which should satisfy $\Theta^{t,x,\mu}_s \big|_{x=\xi}=\Theta^{t,\xi}_s$. Indeed, from the forward-backward equation for $\Theta^{t,\xi}$, the deterministic processes $\Theta^{t,x,\mu}_s=\left(X^{t,x,\mu}_s,Y^{t,x,\mu}_s,P^{t,x,\mu}_s,R^{t,x,\mu}_s,u^{t,x,\mu}_s\right)$ can be characterized as the solutions of the following forward-backward deterministic ODEs:
\begin{equation}\label{FBSDEs:x}
    \left\{
        \begin{aligned}
            &X^{t,x,\mu}_s= x+\int_t^s \left[b_0(r)+ b_1(r) X^{t,x,\mu}_r + b_2(r)\e\left[X^{t,\xi}_r\right]+b_3(r)u^{t,x,\mu}_r\right]   dr,\\
            &Y^{t,x,\mu}_s= g_1 X^{t,x,\mu}_T+ g_2 \e\left[ X^{t,\xi}_T\right]  + \int_s^T  f\left(r, X^{t,x,\mu}_r, Y^{t,x,\mu}_r , \lr\left({ X^{t,\xi}_r,Y^{t,\xi}_r}\right),  u^{t,x,\mu}_r \right) dr,\\
            & P^{t,x,\mu}_s= - g_1^\top R^{t,x,\mu}_T - g_2^\top \e\left[ R^{t,\xi}_T\right]  +\Phi_x\left(X^{t,x,\mu}_T, \lr\left(X^{t,\xi}_T\right) \right) +{\e}\left[D_{x'}\frac{d\Phi}{d\nu} \left({X^{t,\xi}_T}, \lr\left(X^{t,\xi}_T\right)\right)\left(X^{t,x,\mu}_T\right)  \right]  \\
            &\quad\qquad + \int_s^T \bigg\{  b_1(r)^\top  P^{t,x,\mu}_r - f_1(r)^\top  R^{t,x,\mu}_r +h_x\left(r, \theta^{t,x,\mu}_r, \lr\left( \theta^{t,\xi}_r\right),  u^{t,x,\mu}_r  \right) \\
            &\qquad\qquad\qquad  + {\e} \bigg[b_2(r)^\top {P^{t,\xi}_r} - f_4(r)^\top {R^{t,\xi}_r} +D_{x'}\frac{dh}{d\nu} \left(r, \theta^{t,\xi}_r, \lr\left(\theta^{t,\xi}_r\right),  u^{t,\xi}_r  \right)\left({ \theta^{t,x,\mu}_r}\right) \bigg]  \bigg\} dr, \\
            & R^{t,x,\mu}_s= - \varphi_y \left(Y^{t,x,\mu}_t\right) +\int_t^s \bigg\{ f_2(r)^\top  R^{t,x,\mu}_r  -h_y \left(r, \theta^{t,x,\mu}_r, \lr\left( \theta^{t,\xi}_r\right),  u^{t,x,\mu}_r  \right) \\
            &\qquad\qquad\qquad\qquad\qquad\qquad  +\e\left[f_5(r)^\top R^{t,\xi}_r-D_{y'}\frac{dh}{d\nu} \left(r, \theta^{t,\xi}_r, \lr\left(\theta^{t,\xi}_r\right),  u^{t,\xi}_r  \right)\left(\theta^{t,x,\mu}_r\right)\right]\bigg\} dr, \\
            &b_3(s)^\top  P^{t,x,\mu}_s -f_7(s)^\top  R^{t,x,\mu}_s + h_v\left(s, \theta^{t,x,\mu}_s, \lr\left(\theta^{t,\xi}_s\right),  u^{t,x,\mu}_s \right) =0,\quad s\in[t,T],
        \end{aligned}
    \right.
\end{equation}
where $\theta^{t,x,\mu}_s:=\left(X^{t,x,\mu}_s,Y^{t,x,\mu}_s\right)$. System \eqref{FBSDEs:x} is a deterministic system with initial condition $(t,x)$, and its coefficients also depend on $\Theta^{t,\xi}$; moreover, we see that \eqref{FBSDEs:x} depends on $\xi$ only through the law $\mu$. The following result shows that the processes $\Theta^{t,x,\mu}$ are well-defined, whose proof is given in Appendix~\ref{App:lem:tmux}.

\begin{lemma}\label{lem:tmux}
    Under Assumptions (A1), (A2'), (A3) and (B), there is a unique solution $\Theta^{t,x,\mu}$ of System \eqref{FBSDEs:x} satisfying 
    \begin{align}\label{tmux-0'}
        &\sup_{t\le s\le T} \left|\left(X^{t,x,\mu}_s,Y^{t,x,\mu}_s,P^{t,x,\mu}_s,R^{t,x,\mu}_s,u^{t,x,\mu}_s\right)\right|^2 \le  C(L,T,\lambda)\left(1+|x|^2+|\mu|_2^2\right); 
    \end{align}    
    and for any $x_1,x_2\in\brn$,
    \begin{align}\label{tmux-0}
        &\sup_{t\le s\le T}\left|X^{t,x_2,\mu}_s-X^{t,x_1,\mu}_s\right|^2 + \sup_{t\le s\le T}\left|Y^{t,x_2,\mu}_s-Y^{t,x_1,\mu}_s\right|^2 + \sup_{t\le s\le T}\left|P^{t,x_2,\mu}_s-P^{t,x_1,\mu}_s\right|^2 \notag \\
        &+ \sup_{t\le s\le T}\left|R^{t,x_2,\mu}_s-R^{t,x_1,\mu}_s\right|^2 + \sup_{t\le s\le T}\left|u^{t,x_2,\mu}_s-u^{t,x_1,\mu}_s\right|^2 \le  C(L,T,\lambda)  |x_2-x_1|^2. 
    \end{align}
    And we have $\Theta^{t,x,\mu}\Big|_{x=\xi}=\Theta^{t,\xi}$. As a corollary, we have $D_x\frac{dV}{d\nu}(t,\mu)(\cdot)=P^{t,\cdot,\mu}_t,\ \mu-a.e.$.
\end{lemma}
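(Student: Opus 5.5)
The plan is to freeze the mean-field inputs and treat System \eqref{FBSDEs:x} as a finite-dimensional deterministic forward-backward ODE system. Once $\xi$ (hence $\mu$) is fixed, the solution $\Theta^{t,\xi}$ of \eqref{FBSDE:MFTC} in the first-order case is given by Theorem~\ref{main2_thm}. All terms $\e[X^{t,\xi}_r]$, $\lr(\theta^{t,\xi}_r)$, $\e[R^{t,\xi}_T]$, $\e[P^{t,\xi}_r]$, $\e[R^{t,\xi}_r]$ and the expectations of $D_{x'}\frac{dh}{d\nu}$ evaluated along $\theta^{t,\xi}$ are then known deterministic data, bounded by $C(1+|\mu|_2)$ thanks to \eqref{1order-FBSDE-0}. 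The only nonlocal dependence on the unknown $\theta^{t,x,\mu}$ is through the function $x'\mapsto \e[D_{x'}\frac{d\Phi}{d\nu}(X^{t,\xi}_T,\cdot)(x')]$ and its $h$-analogues. These are Lipschitz in $x'$ by (A2), and they are gradients (in the primed variable) of the convex functions $x'\mapsto\e[\frac{d\Phi}{d\nu}(X^{t,\xi}_T,\lr(X^{t,\xi}_T))(x')]$ and $\e[\frac{dh}{d\nu}(\dots)(x',y')]$, by (A3).

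\textbf{Existence and uniqueness.} I would run the continuation method of Lemmas~\ref{lem:estimate}--\ref{lem:extend} verbatim on the deterministic system, with $\gamma$ multiplying the $x$-dependent coefficients and the frozen data treated as inputs. The a priori estimate comes from the duality computation used in Theorem~\ref{thm:suff}. For two solutions with data differences, compute $\frac{d}{ds}[\de P^\top\de X+\de R^\top\de Y]$. The linear coefficient terms cancel. What remains is the monotonicity of $(x,y,v)\mapsto(h_x,h_y,h_v)$, together with the monotone nonlocal gradient terms and $\varphi_y,\Phi_x$ plus the convex $\Phi$-gradient at the endpoints. This gives $\lambda\int|\de u|^2\le$ (cross terms with inputs). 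Gronwall on the forward ODE and the backward ODEs then controls $\de X,\de Y,\de P,\de R$ by $\de u$ and the inputs, which yields the stability estimate and hence the contraction.

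Convexity must be used in the form available here. The relevant convexity in (A3) is the joint convexity of $h$ in $(x,y,v)$ at a fixed measure, together with the convexity of $\frac{dh}{d\nu}(\cdot)$. Both are stated, so the monotonicity inequality holds pointwise in $x$ without integrating over $\mu$. The main obstacle I anticipate is checking this sign bookkeeping: the coupling $-\varphi_y(Y_t)$ at the initial time of $R$ and the terminal condition $-g_1^\top R_T$ of $P$ must produce terms of the right sign. As in Theorem~\ref{thm:suff}, they combine into $\varphi_y$-monotonicity, since $\de Y_T=g_1\de X_T$ here, the $g_2$ part being frozen.

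\textbf{Estimates and identification.} Taking the inputs to be $x$ and the frozen data, the a priori estimate gives \eqref{tmux-0'}. Comparing $x_1$ and $x_2$ with identical frozen data gives \eqref{tmux-0}. Next, $\Theta^{t,x,\mu}|_{x=\xi}$ is $\f_t$-measurable, and substituting $x=\xi$ shows it solves the first-order version of \eqref{FBSDE:MFTC}. In particular, replacing $\e[D_{x'}\frac{d\Phi}{d\nu}(X^{t,\xi}_T,\cdot)(X^{t,x,\mu}_T)]$ at $x=\xi$ by $\widetilde{\e}[\,\cdot\,(\widetilde{X_T},\cdot)(X_T)]$ is legitimate by independence/Fubini. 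Uniqueness in Theorem~\ref{main2_thm} then gives $\Theta^{t,x,\mu}|_{x=\xi}=\Theta^{t,\xi}$. Measurability of $x\mapsto\Theta^{t,x,\mu}$ follows from its Lipschitz continuity. Finally, combining \eqref{DV_P} and \eqref{relation:DV-dV} gives $D_x\frac{dV}{d\nu}(t,\mu)(\xi)=P^{t,\xi}_t=P^{t,\xi,\mu}_t$ a.s. Since $\xi\sim\mu$, this is $D_x\frac{dV}{d\nu}(t,\mu)(\cdot)=P^{t,\cdot,\mu}_t$ $\mu$-a.e.
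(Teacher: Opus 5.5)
Your overall route is the paper's. You freeze the mean-field data coming from $\Theta^{t,\xi}$ and run the continuation method on the resulting deterministic system. You obtain \eqref{tmux-0'} and \eqref{tmux-0} from the duality computation for $\de P^\top\de X+\de R^\top\de Y$, using:
\begin{itemize}
\item convexity of $h$ in $(x,y,v)$ at a frozen measure;
\item convexity of $\frac{dh}{d\nu}$ and $\frac{d\Phi}{d\nu}$ in the primed variables;
\item monotonicity of $\varphi_y$;
\item the cancellation of $\de Y_T=g_1\de X_T$ against $-g_1^\top\de R_T$.
\end{itemize}
You then read the corollary off \eqref{DV_P} and \eqref{relation:DV-dV}. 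All of this matches the paper, and your sign bookkeeping is correct.

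The identification step is circular as written. Set $\widehat\Theta:=\Theta^{t,x,\mu}|_{x=\xi}$. It satisfies a system whose mean-field terms are computed from $\Theta^{t,\xi}$: $\e[X^{t,\xi}_r]$, $\lr(\theta^{t,\xi}_r)$, $\e[R^{t,\xi}_T]$, $\e[D_{x'}\frac{d\Phi}{d\nu}(X^{t,\xi}_T,\lr(X^{t,\xi}_T))(\cdot)]$, and so on. System \eqref{FBSDE:MFTC}, by contrast, requires them to be computed from $\widehat\Theta$ itself: $\e[\widehat X_r]$, $\lr(\widehat\theta_r)$, and an independent copy of $\widehat X_T$ inside $\widetilde\e$. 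Fubini only rewrites $\e[D_{x'}\frac{d\Phi}{d\nu}(X^{t,\xi}_T,\cdot)(\widehat X_T)]$ with an independent copy of $X^{t,\xi}_T$. It does not give $\lr(\widehat X_T)=\lr(X^{t,\xi}_T)$, which is essentially what you are trying to prove. So $\widehat\Theta$ is not yet known to solve \eqref{FBSDE:MFTC}, and you cannot invoke the uniqueness in Theorem~\ref{main2_thm}.

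The fix is to run the argument in the other direction, using only what you have already proved.
\begin{itemize}
\item For a.e.\ $\omega$, the path $s\mapsto\Theta^{t,\xi}_s(\omega)$ solves \eqref{FBSDEs:x} with $x=\xi(\omega)$, because its own mean-field terms are exactly the frozen data. This is where your Fubini remark belongs: $\widetilde\e[D_{x'}\frac{d\Phi}{d\nu}(\widetilde{X_T},\lr(X_T))(X_T)](\omega)$ equals $\e[D_{x'}\frac{d\Phi}{d\nu}(X^{t,\xi}_T,\lr(X^{t,\xi}_T))(x')]$ evaluated at $x'=X_T(\omega)$, and similarly for the $h$-terms.
\item Uniqueness for the deterministic system \eqref{FBSDEs:x} then gives $\Theta^{t,\xi}(\omega)=\Theta^{t,\xi(\omega),\mu}$ for a.e.\ $\omega$.
\end{itemize}
Alternatively, prove uniqueness for the frozen system with random initial datum $\xi$ by the same monotonicity argument taken in expectation, and note that both $\Theta^{t,\xi}$ and $\widehat\Theta$ solve it. The paper's one-line justification at this point should be read in this frozen sense. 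With this correction the rest of your proof, including the corollary, goes through.
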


With Lemma~\ref{lem:tmux}, we know that $Y^{t,\xi}_t=Y^{t,x,\mu}_t\big|_{x=\xi}$, and therefore
\begin{align*}
    \lr\left(\xi, Y^{t,\xi}_t\right)= \left(I_n\otimes Y^{t, \cdot,\mu}_t\right) \#\mu,
\end{align*}
where $\left(I_n\otimes Y^{t, \cdot,\mu}_t\right) \#\mu$ represents the push-forward of $\mu$ by the map $x\mapsto (x,Y^{t,x,\mu}_t)$. As a direct consequence of Theorem~\ref{thm:HJB}, Lemma~\ref{lem:tmux} and the relation \eqref{relation:DV-dV}, we have the following result. 

\begin{theorem}\label{thm:HJB'}
    Under Assumptions (A1), (A2'), (A3) and (B), the value function $V$ is a classical solution of the following HJB equation on $[0,T]\times\pr_2(\brn)$: 
    \begin{equation}\label{HJB}
    \left\{
    \begin{aligned}
        &\dd_t V(t,\mu)+\int_\brn H\left(t,x,Y^{t,x,\mu}_t,\left(I_n\otimes Y^{t, \cdot,\mu}_t\right) \#\mu,D_{x}\frac{dV}{d\nu} (t,\mu)(x),- \varphi_y \left(Y^{t,x,\mu}_t\right) \right)\mu(dx)=0, \\
        &V(T,\mu)= \int_\brn  \left[\Phi(x,\mu)+\varphi\left(g(x,\mu)\right)\right] \mu(dx),
    \end{aligned}
    \right.
    \end{equation}
    in the sense that $V$ is differentiable in $t$, linearly functionally differentiable in $\mu$, with $x\mapsto \frac{dV}{d\nu} (t,\mu)(x)$ differentiable, and Estimates \eqref{V:growth} and \eqref{DV:growth} hold. 
\end{theorem}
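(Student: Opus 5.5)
The plan is to rewrite Theorem~\ref{thm:HJB} in measure variables, using Lemma~\ref{lem:tmux} and the relations in Theorem~\ref{thm:value_regu_mu}. Fix $t\in[0,T)$, $\mu\in\pr_2(\brn)$, and choose any $\xi\in L^2_{\f_t}$ with $\lr(\xi)=\mu$. This is possible since the filtration is rich enough, or by enlarging $\f_t$ via the independence of $\xi$ and $B$. The value $V(t,\mu)$ and all objects below depend only on $\mu$, so the choice of $\xi$ does not matter. The regularity part of the claim follows from earlier results. Theorem~\ref{thm:HJB} gives differentiability of $V$ in $t$. Theorem~\ref{thm:value_regu_mu} gives linear functional differentiability in $\mu$, the estimates \eqref{V:growth}--\eqref{DV:growth}, and the identity $D_\xi V(t,\lr(\xi))=D_x\frac{dV}{d\nu}(t,\mu)(\xi)$. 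The corollary of Lemma~\ref{lem:tmux} identifies $D_x\frac{dV}{d\nu}(t,\mu)(x)=P^{t,x,\mu}_t$ for $\mu$-a.e.\ $x$. By \eqref{tmux-0}, $x\mapsto P^{t,x,\mu}_t$ is Lipschitz, so we may take this version; it is the meaning of differentiability of $x\mapsto\frac{dV}{d\nu}(t,\mu)(x)$ intended in the statement.

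For the equation, start from \eqref{HJB'}. Substitute $Y^{t,\xi}_t=Y^{t,x,\mu}_t|_{x=\xi}$ (Lemma~\ref{lem:tmux}) and $\lr(\xi,Y^{t,\xi}_t)=(I_n\otimes Y^{t,\cdot,\mu}_t)\#\mu$. Also substitute $D_\xi V(t,\lr(\xi))=D_x\frac{dV}{d\nu}(t,\mu)(\xi)$ from \eqref{relation:DV-dV}. The expectation in \eqref{HJB'} then becomes $\e[G(\xi)]$ with
\[
G(x):=H\Big(t,x,Y^{t,x,\mu}_t,(I_n\otimes Y^{t,\cdot,\mu}_t)\#\mu,D_x\tfrac{dV}{d\nu}(t,\mu)(x),-\varphi_y(Y^{t,x,\mu}_t)\Big),
\]
a deterministic function of $x$. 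Since $\xi\sim\mu$, $\e[G(\xi)]=\int_\brn G(x)\mu(dx)$.

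This step is where the only real care is needed. We must justify that $G$ is measurable and $\mu$-integrable, so that the change of variables is legitimate. Measurability follows from the continuity in $x$ of $Y^{t,x,\mu}_t$ and $P^{t,x,\mu}_t$ given by \eqref{tmux-0}, together with the continuity of $H$. Integrability follows from the quadratic growth of $H$: $\hv$ grows linearly by the strong convexity in (A3) and the linear bounds in (A2). Combined with \eqref{tmux-0'}, this gives $|G(x)|\le C(1+|x|^2+|\mu|_2^2)$.

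For the terminal condition, write $V(T,\mu)=\e[\Phi(\xi,\mu)+\varphi(g_1\xi+g_2\e[\xi])]$ and note that $g_1\xi+g_2\e[\xi]=g(\xi,\mu)$. The same change of variables then gives $\int_\brn[\Phi(x,\mu)+\varphi(g(x,\mu))]\mu(dx)$. This establishes \eqref{HJB}.
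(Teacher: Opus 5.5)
Your proposal is correct and follows the paper's route: the paper obtains Theorem~\ref{thm:HJB'} as a direct rewriting of \eqref{HJB'} from Theorem~\ref{thm:HJB}, using $\Theta^{t,x,\mu}|_{x=\xi}=\Theta^{t,\xi}$ from Lemma~\ref{lem:tmux} and the identity \eqref{relation:DV-dV}. Your measurability/integrability check and your handling of the terminal condition make explicit what the paper leaves implicit. One small remark: differentiability of $x\mapsto\frac{dV}{d\nu}(t,\mu)(x)$ is already part of Theorem~\ref{thm:value_regu_mu}, and since the Hamiltonian term is integrated against $\mu$, only the $\mu$-a.e.\ identification $D_x\frac{dV}{d\nu}(t,\mu)(\cdot)=P^{t,\cdot,\mu}_t$ is needed, so you do not need to reinterpret the statement through a Lipschitz version.
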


The HJB equation \eqref{HJB} includes the HJB equation for first-order MFTC problems (see, e.g., \cite{AB6}) as a special case; indeed, \eqref{HJB} reduces to the standard mean-field type HJB equation when $f$, $g$, and $\varphi$ vanish.  

\subsection{Higher-order regularity of $Y^{t,x,\mu}_t$}\label{subsec:U}

The HJB equation \eqref{HJB} inspires us to study the map $U:[0,T]\times\brn\times\pr_2(\brn)\to\brn$ defined as 
\begin{align}\label{def:map_U}
    U(t,x,\mu):= Y^{t,x,\mu}_t,
\end{align}
with which the HJB equation \eqref{HJB} can also be written as
\begin{equation*}\label{HJB(U)}
    \left\{
    \begin{aligned}
        &\dd_t V(t,\mu)+\int_\brn H\left(t,x,U(t,x,\mu),\left(I_n\otimes U(t,\cdot,\mu)\right) \#\mu,D_{x}\frac{dV}{d\nu} (t,\mu)(x),- \varphi_y \circ U (t,x,\mu)\right)\mu(dx)=0, \\
        &V(T,\mu)= \int_\brn  \left[\Phi(x,\mu)+\varphi\left(g(x,\mu)\right)\right] \mu(dx).
    \end{aligned}
    \right.
\end{equation*}
From the well-posedness of System \eqref{FBSDEs:x} for $\Theta^{t,x,\mu}$, we know that the map $U$ is well-defined; and Estimate \eqref{tmux-0} shows that $U$ is Lipschitz-continuous in $x$. Moreover, we can also give the differentiability of the map $U$ by studying that of the map $(x,\mu)\mapsto\Theta^{t,x,\mu}$. We first give the derivatives in $x$. 

\begin{lemma}\label{lem:D_x}
    Under Assumptions (A1), (A2'), (A3) and (B), the map $\brn\ni x\mapsto\Theta^{t,x,\mu}_s\in(\brn)^4\times\brd$ is differentiable, with the derivatives $D_x\Theta^{t,x,\mu}=\left(D_x X^{t,x,\mu},D_x Y^{t,x,\mu},D_x P^{t,x,\mu},D_x R^{t,x,\mu},D_x u^{t,x,\mu}\right)$ satisfying
    \begin{align}\label{tmux_Dx}
        &\sup_{(s,x)\in[t,T]\times\brn} \left|\left(D_x X^{t,x,\mu}_s,D_x Y^{t,x,\mu}_s,D_x P^{t,x,\mu}_s,D_x R^{t,x,\mu}_s,D_x u^{t,x,\mu}_s\right)\right|^2 \le  C(L,T,\lambda). 
    \end{align}
\end{lemma}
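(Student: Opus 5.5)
The plan is to write down the formal linearization of System \eqref{FBSDEs:x} with respect to $x$, prove that it is well posed with bounds independent of $x$, and then show that difference quotients converge to its solution. The key structural fact is that in \eqref{FBSDEs:x} the law-dependent quantities ($\e[X^{t,\xi}_r]$, $\lr(\theta^{t,\xi}_r)$, $\e[R^{t,\xi}_r]$, etc.) do not depend on $x$. They are frozen inputs determined by $\mu$. Consequently, differentiating in $x$ produces a deterministic linear forward-backward ODE system without mean-field terms in the unknowns.

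\textbf{Step 1: the candidate system.} For a direction $e\in\brn$, consider $(\mathcal{X}_s,\mathcal{Y}_s,\mathcal{P}_s,\mathcal{R}_s,\mathcal{V}_s)$ solving the following system.
\begin{itemize}
\item Forward equation: $\mathcal{X}_s=e+\int_t^s(b_1\mathcal{X}_r+b_3\mathcal{V}_r)dr$.
\item Backward equation for $\mathcal{Y}$: $\mathcal{Y}_s=g_1\mathcal{X}_T+\int_s^T(f_1\mathcal{X}_r+f_2\mathcal{Y}_r+f_7\mathcal{V}_r)dr$.
\item Equation for $\mathcal{P}$: the terminal value is $-g_1^\top\mathcal{R}_T+\Phi_{xx}\mathcal{X}_T+\e[D_{x'}^2\frac{d\Phi}{d\nu}(X^{t,\xi}_T,\mu_T)(X^{t,x,\mu}_T)]\mathcal{X}_T$. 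The drift is $b_1^\top\mathcal{P}-f_1^\top\mathcal{R}+(h_{xx},h_{xy},h_{xv})(\mathcal{X},\mathcal{Y},\mathcal{V})+\e[(D^2_{x'},D_{y'}D_{x'})\frac{dh}{d\nu}(\cdots)(\theta^{t,x,\mu}_r)](\mathcal{X},\mathcal{Y})$.
\item Equation for $\mathcal{R}$: the initial value is $-\varphi_{yy}(Y^{t,x,\mu}_t)\mathcal{Y}_t$, with the analogous $y$-drift.
\item Algebraic condition: $b_3^\top\mathcal{P}-f_7^\top\mathcal{R}+(h_{vx},h_{vy},h_{vv})(\mathcal{X},\mathcal{Y},\mathcal{V})=0$.
\end{itemize}
All coefficients are continuous in $r$ and bounded by $L$, by (A2'). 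Because $h_{vv}\ge 2\lambda I$ (from \eqref{def:convex_h} with $\de\zeta=0$), the last equation expresses $\mathcal{V}$ as a Lipschitz linear feedback of $(\mathcal{X},\mathcal{Y},\mathcal{P},\mathcal{R})$.

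\textbf{Step 2: well-posedness with uniform bounds.} I would prove well-posedness by the continuation method of Lemmas~\ref{lem:estimate}--\ref{lem:extend}, applied to this deterministic linear system. The crucial a priori estimate comes from the duality pairing $\frac{d}{ds}(\mathcal{P}^\top\mathcal{X}+\mathcal{R}^\top\mathcal{Y})$, computed exactly as in \eqref{D_tV-1}--\eqref{D_tV-4}.

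After integration, the pairing gives a quadratic form. It consists of the Hessian of $h$ in $(x,y,v)$, the term $D^2_{x'}\frac{dh}{d\nu}$, the Hessian of $\Phi$ together with $D_{x'}^2\frac{d\Phi}{d\nu}$, and $\varphi_{yy}$.

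This quadratic form should be nonnegative up to $2\lambda|\mathcal{V}|^2$, by the convexity of $h$ and of $\frac{dh}{d\nu}(\cdot)$ in (A3), together with \eqref{def:convex_Phi}--\eqref{def:convex_varphi}. The result is a bound $\int_t^T|\mathcal{V}|^2\le C|e|(\ldots)$. Gronwall on the forward/backward equations then closes the estimate, giving $\sup_s|(\mathcal{X},\mathcal{Y},\mathcal{P},\mathcal{R},\mathcal{V})|\le C(L,T,\lambda)|e|$ uniformly in $x$ and $s$. This is the bound \eqref{tmux_Dx}.

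\textbf{Step 3: convergence of difference quotients.} Set $\Theta^{\epsilon}:=(\Theta^{t,x+\epsilon e,\mu}-\Theta^{t,x,\mu})/\epsilon$. This is bounded by $C|e|$ by \eqref{tmux-0}. It satisfies the same linear system, except that the coefficients are averaged derivatives $\int_0^1 h_{\cdot\cdot}(\theta^{\delta})d\delta$ along the segment between the two solutions.

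The difference $\Theta^\epsilon-(\mathcal{X},\dots,\mathcal{V})$ then solves the linear system of Step 1 with a source. The source equals the coefficient discrepancy times $\Theta^\epsilon$. By the uniform continuity of the second derivatives in (A2') and by \eqref{tmux-0}, this source tends to $0$ uniformly in $s$. The stability estimate from Step 2 then yields convergence, hence differentiability, and linearity in $e$ identifies $D_x\Theta^{t,x,\mu}$.

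\textbf{Main obstacle.} I expect the main obstacle to be the monotonicity estimate in Step 2. The convexity assumptions in (A3) are stated for the mean-field quantity; here the $x$-perturbation does not move the law, so only the "pointwise" part of \eqref{def:convex_h} survives. The point to check is that the terms $D^2_{x'}\frac{dh}{d\nu}$ and $D^2_{x'}\frac{d\Phi}{d\nu}$ evaluated at $\theta^{t,x,\mu}$ are nonnegative, which should follow from the assumed convexity of $\frac{dh}{d\nu}(\cdots)(\cdot)$ and the analogous convexity of $\Phi$.

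The sign of the $\varphi_{yy}$ term also needs verification: $\mathcal{R}_t=-\varphi_{yy}\mathcal{Y}_t$ contributes $+\mathcal{Y}_t^\top\varphi_{yy}\mathcal{Y}_t\ge0$. If the $\frac{dh}{d\nu}$ convexity were insufficient, I would fall back on a small-time argument combined with backward induction, but I expect the convexity to suffice.
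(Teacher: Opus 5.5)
Your proposal is correct and follows the paper's proof essentially step for step. The paper likewise characterizes $D_x\Theta^{t,x,\mu}$ as the solution of the linearized deterministic system \eqref{FBSDEs:Dx} (your Step 1 with $e$ replaced by $I_n$), gets well-posedness by the continuation method of Subsection~\ref{subsec:FBSDE}, derives \eqref{tmux_Dx} from the pairing $\dd_s\big((D_xP)^\top D_xX\big)+\dd_s\big((D_xR)^\top D_xY\big)$ together with the pointwise convexity of $h$, $\frac{dh}{d\nu}(\cdots)(\cdot)$, $\Phi$, $\frac{d\Phi}{d\nu}(\cdots)(\cdot)$ and $\varphi$, and proves convergence of the difference quotients as in Theorem~\ref{thm:convergence}; the only small inaccuracy is that the second derivatives of $h$ are not assumed continuous in time, so the linearized coefficients are merely bounded and measurable in $r$, which is harmless for this linear ODE argument.
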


The idea of the proof is to characterize $D_x\Theta^{t,x,\mu}$ as the solution of a new system of forward-backward ODEs, obtained by formally differentiating System \eqref{FBSDEs:x} with respect to $x$. A rigorous proof requires establishing the well-posedness of this new system and proving the convergence of the difference quotient to its solution. A sketch of the proof is provided in Appendix~\ref{App:lem:D_x}.

To establish the differentiability of the map $\mu\mapsto \Theta^{t,x,\mu}$, the idea is to  first use the lifting method (see \cite{CP,cardaliaguet2015weak}) to study the G\^ateaux derivatives of $\xi\mapsto \Theta^{t,x,\lr(\xi)}$ along some direction $\eta$; these derivatives are then used to obtain the linear functional derivatives of $\Theta^{t,x,\mu}$ with respect to $\mu$.

\begin{lemma}\label{Lem:D_xi}
    Under Assumptions (A1), (A2'), (A3) and (B), the map $\mu\mapsto \Theta^{t,x,\mu}$ is linearly functionally differentiable, with its derivative $\frac{d\Theta^{t,x,\mu}}{d\nu}(x')$ being differentiable in $x'$ and satisfying 
    \begin{align}
        \sup_{(s,x')\in[t,T]\times\brn} \bigg|\bigg(& D_{x'}\frac{dX^{t,x,\mu}_s}{d\nu}(x'),D_{x'}\frac{dY^{t,x,\mu}_s}{d\nu}(x'),D_{x'}\frac{dP^{t,x,\mu}_s}{d\nu}(x'),D_{x'}\frac{dR^{t,x,\mu}_s}{d\nu}(x'), \notag \\
        & D_{x'}\frac{du^{t,x,\mu}_s}{d\nu}(x')\bigg)\bigg|^2 \le  C(L,T,\lambda).  \label{tmux_Dmu}
    \end{align}
    For any $\eta\in L_{\f_t}^2$ independent of $\xi$, the directional derivative of $\xi\mapsto \Theta^{t,x,\lr(\xi)}$ along $\eta$, denoted by $D_\eta  \Theta^{t,x,\xi}$, satisfies
    \begin{align}\label{relation:D_eta-D_mu}
        D_\eta \Theta_s^{t,x,\xi}=\e\left[D_{x'}\frac{d\Theta^{t,x,\mu}_s}{d\nu}\left({\xi}\right) {\eta} \right].
    \end{align}  
\end{lemma}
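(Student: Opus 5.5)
The plan is to differentiate the pair $(\Theta^{t,\xi},\Theta^{t,x,\lr(\xi)})$ jointly along a direction $\eta$, using the lifting method, and then to identify the resulting G\^ateaux derivative with a linear functional derivative.

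\textbf{Step 1: derivative of the mean-field system.} Fix $\eta\in L^2_{\f_t}$ independent of $\xi$, and set $\xi^\epsilon:=\xi+\epsilon\eta$. Since System \eqref{FBSDEs:x} depends on $\mu$ only through $\Theta^{t,\xi}$, I first differentiate $\epsilon\mapsto\Theta^{t,\xi^\epsilon}$. In the first-order setting, $D_\eta\Theta^{t,\xi}$ is the solution of the linear forward-backward system obtained by formally differentiating System \eqref{FBSDE:MFTC} (with $\sigma\equiv0$, $Z\equiv Q\equiv0$). Its structure is that of \eqref{FBSDEs:high}, but with initial data $\eta$ for the forward component, initial data $-\varphi_{yy}(Y_t)\,\cdot$ (derivative of $Y_t$) for $R$, and no $t$-derivative terms.

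This system is well-posed by the continuation method of Lemma~\ref{lem:extend}. The monotonicity needed in Lemma~\ref{lem:estimate} now comes from the second-order convexity conditions \eqref{def:convex_h}--\eqref{def:convex_varphi}. Convergence of the difference quotients follows as in Theorem~\ref{thm:convergence}: Lemma~\ref{lem:de_bound}-type bounds from the Lipschitz estimate \eqref{lem:kappa_0}, then uniform continuity of the second derivatives in (A2') and dominated convergence.

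\textbf{Step 2: derivative of the decoupled system.} Next I differentiate System \eqref{FBSDEs:x} in the direction $\eta$. For fixed $x$, $\Theta^{t,x,\lr(\xi^\epsilon)}$ solves a deterministic forward-backward ODE whose $\mu$-dependent coefficients are expectations of $\Theta^{t,\xi^\epsilon}$. The quotient therefore converges to the solution $D_\eta\Theta^{t,x,\xi}$ of a linear ODE system. Its source terms are expectations such as $\e[D_\eta X^{t,\xi}_r]$ and $\e[D_{x'}\frac{dh}{d\nu}(\cdots)(\theta^{t,x,\mu}_r)$ differentiated along $\Theta^{t,\xi}]$. Existence, uniqueness and the bound $|D_\eta\Theta^{t,x,\xi}_s|\le C\|\eta\|_2$ follow as in Lemma~\ref{lem:tmux}, via the same continuation argument at the deterministic level and using the strong convexity in $v$.

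\textbf{Step 3: identification as a functional derivative.} This is the main obstacle. Since all randomness sits in $\xi$ and $\eta$, and Step 1 is linear, the source terms are linear continuous functionals of $\eta$ of the form $\e[K_s(\xi)\eta]$. Here $K_s(\cdot)$ is obtained by disintegrating the Step 1 system conditionally on $\xi=x'$: that is, by solving a "$(t,x')$-lifted" version of the linearized system, exactly as System \eqref{FBSDEs:x} is derived from System \eqref{FBSDE:MFTC}. I would therefore define $D_{x'}\frac{d\Theta^{t,x,\mu}_s}{d\nu}(x')$ directly as the solution of the linear ODE system in which $\eta$ is replaced by the Dirac direction at $x'$. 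Concretely, $D_{x'}\frac{d\Theta^{t,x,\mu}_s}{d\nu}(x')$ solves a linear system combining $D_x\Theta^{t,x',\mu}$ from Lemma~\ref{lem:D_x} with a mean-field correction.

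With this definition, I would then prove three things:
\begin{itemize}
\item it is bounded uniformly in $x'$ and $s$, giving \eqref{tmux_Dmu}, using the bound \eqref{tmux_Dx} and the boundedness of the second derivatives;
\item it is continuous in $(\mu,x')$, via stability estimates;
\item by linearity and uniqueness, $\e[D_{x'}\frac{d\Theta}{d\nu}(\xi)\eta]$ solves the Step 2 system, which gives \eqref{relation:D_eta-D_mu}.
\end{itemize}

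Finally, linear functional differentiability follows by the standard argument from the proof of Theorem~\ref{thm:value_regu_mu} (as in \cite[Theorem 7.1]{AB10}). One integrates $D_{x'}\frac{d\Theta}{d\nu}$ along the segment between coupled random variables, uses the relation \eqref{lem01_1} and the continuity in $x'$, and recovers $\frac{d\Theta^{t,x,\mu}_s}{d\nu}$ up to a normalizing constant. The delicate point throughout is the continuity of the kernel in $(\mu,x')$, which I expect to obtain from the uniform continuity assumptions in (A2') and the Lipschitz estimate \eqref{tmux-0}.
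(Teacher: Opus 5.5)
Your proposal is correct and follows essentially the same route as the paper: the linearized systems for $D_\eta\Theta^{t,\xi}$ and $D_\eta\Theta^{t,x,\xi}$, then a kernel system for $D_{x'}\frac{d\Theta^{t,x,\mu}}{d\nu}(x')$ driven by $D_x\Theta^{t,x',\mu}$ plus a mean-field correction, then \eqref{relation:D_eta-D_mu} via Fubini and uniqueness of the $D_\eta\Theta^{t,x,\xi}$ system, and finally \eqref{tmux_Dmu} from \eqref{tmux_Dx}. What you call ``disintegration conditionally on $\xi=x'$'' appears in the paper as the explicit splitting identity \eqref{relation:D_xi}, $D_\eta\Theta^{t,\xi}=D_x\Theta^{t,x,\lr(\xi)}\big|_{x=\xi}\eta+D_\eta\Theta^{t,x,\xi}\big|_{x=\xi}$, proved by uniqueness of the $D_\eta\Theta^{t,\xi}$ system; this identity is exactly what puts the source terms of your Step 2 system into the form $\e[K_s(\xi)\eta]$, so it is worth stating and proving as a separate step.
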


A sketch of proof is given in Appendix~\ref{App:Lem:D_xi}. The idea is to characterize $D_\eta  \Theta^{t,\xi}$ (the derivative of the map $\xi\mapsto \Theta^{t,\xi}$ along the direction $\eta$) and $D_\eta  \Theta^{t,x,\xi}$ as the solutions of two auxiliary systems of forward-backward equations, respectively. The key step in passing from directional derivative to the linear functional derivative is the identity
\begin{align}\label{relation:D_xi}
    D_\eta \Theta^{t,\xi} = \dd_\epsilon \left(\Theta^{t,x,\lr\left(\xi+\epsilon\eta\right)}\big|_{x=\xi+\epsilon\eta}\right) \Big|_{\epsilon=0} = D_x \Theta^{t,x,\lr(\xi)} \big|_{x=\xi}\ \eta + D_\eta  \Theta^{t,x,\xi} \Big|_{x=\xi},
\end{align}
which follows directly from  the relation $\Theta^{t,\xi}=\Theta^{t,x,\lr(\xi)}\big|_{x=\xi}$ in Lemma~\ref{lem:tmux}. 

Based on Lemmas~\ref{lem:D_x} and \ref{Lem:D_xi}, we give the following regularity result for the map $U$ defined in \eqref{def:map_U} with respect to $x$ and $\mu$, and also an estimate for $D_x\frac{dV}{d\nu}$. An explicit formulation of $\frac{dV}{d\nu}(t,\mu)(x)$ is also given in Appendix~\ref{App:prop:U_x-mu}. 

\begin{proposition}\label{prop:U_x-mu}
    Under Assumptions (A1), (A2'), (A3) and (B), the map $U(t,x,\mu)$ defined in \eqref{def:map_U} is differentiable in $x$ and linearly functional-differentiable in $\mu$, and the derivatives satisfy the relations
    \begin{equation}\label{prop:U_x-mu_1}
        U_x(t,x,\mu)=D_x Y^{t,x,\mu}_t,\qquad D_{x'}\frac{dU}{d\nu}(t,x,\mu)(x')=D_{x'}\frac{dY^{t,x,\mu}_t}{d\nu}(x').
    \end{equation}
    Moreover, the following estimates for $U$ hold
    \begin{align}
        & |U(t,x,\mu)|\le C(L,T,\lambda) \left(1+|x|+|\mu|_2\right); \label{prop:U_x-mu_2'}\\ 
        & |U_x(t,x,\mu)|,\quad \left|D_{x'}\frac{dU}{d\nu}(t,x,\mu)(x')\right|\le C(L,T,\lambda); \label{prop:U_x-mu_2}
    \end{align}
    and the following estimate for $V$ holds
    \begin{equation}\label{prop:DdV_x}
        \begin{aligned}
            &\left|D_x \frac{dV}{d\nu}(t,\mu)(x) \right|\le C(L,T,\lambda)\left(1+|x|+|\mu|_2\right).
        \end{aligned}
    \end{equation}
\end{proposition}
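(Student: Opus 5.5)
The plan is to read off every assertion of Proposition~\ref{prop:U_x-mu} from Lemmas~\ref{lem:tmux}, \ref{lem:D_x} and \ref{Lem:D_xi} by evaluating at $s=t$. No new system needs to be solved.

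\emph{Differentiability and the relations \eqref{prop:U_x-mu_1}.} By definition $U(t,x,\mu)=Y^{t,x,\mu}_t$. The map $x\mapsto Y^{t,x,\mu}_t$ is differentiable by Lemma~\ref{lem:D_x}, which gives $U_x=D_xY^{t,x,\mu}_t$. Lemma~\ref{Lem:D_xi}, evaluated at $s=t$, shows that $\mu\mapsto Y^{t,x,\mu}_t$ is linearly functionally differentiable, with $D_{x'}\frac{dY^{t,x,\mu}_t}{d\nu}(x')$ existing. Since $U(t,x,\cdot)$ coincides with this map, the second relation in \eqref{prop:U_x-mu_1} follows. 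One subtlety is that the linear functional derivative is only defined up to an additive constant. I would fix it through the normalization used in Lemma~\ref{Lem:D_xi}, so that the two derivatives agree identically and not just up to a constant.

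\emph{Estimates for $U$.} \eqref{prop:U_x-mu_2'} follows by taking $s=t$ in \eqref{tmux-0'} and then a square root. The bounds in \eqref{prop:U_x-mu_2} come from \eqref{tmux_Dx} and \eqref{tmux_Dmu} at $s=t$. The $x$-bound can alternatively be read off from the Lipschitz estimate \eqref{tmux-0}.

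\emph{Estimate \eqref{prop:DdV_x}.} Lemma~\ref{lem:tmux} gives $D_x\frac{dV}{d\nu}(t,\mu)(x)=P^{t,x,\mu}_t$, and then \eqref{tmux-0'} bounds it by $C(1+|x|+|\mu|_2)$. This is the main obstacle, because that identity is a priori only $\mu$-a.e., while \eqref{prop:DdV_x} is claimed for every $x$. I would handle it as follows. First, $x\mapsto P^{t,x,\mu}_t$ is continuous (indeed Lipschitz, by \eqref{tmux-0}). Second, $x\mapsto D_x\frac{dV}{d\nu}(t,\mu)(x)$ is continuous by the construction in Theorem~\ref{thm:value_regu_mu}. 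Hence the two functions agree on the support of $\mu$.

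Off the support, I would use the explicit representation of $\frac{dV}{d\nu}(t,\mu)(x)$ obtained by differentiating the formula \eqref{value_equation} with respect to $\mu$. This is done via the lifting and the directional derivatives of Lemma~\ref{Lem:D_xi}, with the identity \eqref{relation:D_xi}. The representation exhibits $D_x\frac{dV}{d\nu}(t,\mu)(x)$ as an expression in $\Theta^{t,x,\mu}$ and its $x$-derivatives. That expression is defined for all $x$ and reduces to $P^{t,x,\mu}_t$ by the same It\^o/duality cancellation used in the proof of Theorem~\ref{thm:value_regu_mu}. Combining this with \eqref{tmux-0'} gives linear growth in $(x,|\mu|_2)$ for all $x$. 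If the cancellation turns out to be too messy, a fallback is to bound each term directly using \eqref{tmux-0'}, \eqref{tmux_Dx}, \eqref{tmux_Dmu} and the linear growth in Assumption (A2). That also yields $C(L,T,\lambda)(1+|x|+|\mu|_2)$, since every term is at most linear in $(x,|\mu|_2)$ times a bounded derivative.
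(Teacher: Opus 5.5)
Your proposal is correct. The first three claims (differentiability, the relations \eqref{prop:U_x-mu_1}, and the bounds \eqref{prop:U_x-mu_2'}--\eqref{prop:U_x-mu_2}) are proved exactly as in the paper, by evaluating Lemmas~\ref{lem:tmux}, \ref{lem:D_x} and \ref{Lem:D_xi} at $s=t$. Your worry about the additive constant is moot, since only $D_{x'}\frac{dU}{d\nu}$ enters.

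For \eqref{prop:DdV_x}, what you call the fallback is the paper's proof. The paper writes $V(t,\mu)$ as the $\mu$-integral of the deterministic cost along $\Theta^{t,x,\mu}$, with the law arguments expressed as push-forwards $\theta^{t,\cdot,\mu}_s\#\mu$. It then computes $\frac{dV}{d\nu}(t,\mu)(x)$ for every $x$ by the chain rule, using Lemmas~\ref{lem:D_x} and \ref{Lem:D_xi}. Finally it differentiates in $x$ and bounds each term using the growth in (A2) together with \eqref{tmux-0'}, \eqref{tmux_Dx} and \eqref{tmux_Dmu}.

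That expression is not only in $\Theta^{t,x,\mu}$ and its $x$-derivatives. It also contains integrated measure-derivative terms $\int_{\brn}(\cdots)\,D_x\frac{d\Theta^{t,x'',\mu}_s}{d\nu}(x)\,\mu(dx'')$, which is why \eqref{tmux_Dmu} is needed.

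Your primary route would prove more than the paper claims, since Lemma~\ref{lem:tmux} only gives $D_x\frac{dV}{d\nu}(t,\mu)(x)=P^{t,x,\mu}_t$ $\mu$-a.e. However, its off-support step is only asserted. Reducing the explicit formula to $P^{t,x,\mu}_t$ would need a new duality computation. That computation pairs the adjoints $(P,R)^{t,x,\mu}$ and $(P,R)^{t,x'',\mu}$ with the system for $D_x\Theta^{t,x,\mu}$ and with \eqref{FBSDEs:Dmu}, and uses both optimality conditions.

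This is not the cancellation from Theorem~\ref{thm:value_regu_mu}. That theorem uses an $L^2$-G\^ateaux sandwich with remainder $C\|\xi'-\xi\|_2^2$. A mixture perturbation of $\mu$ toward $\delta_x$ has $\|\xi'-\xi\|_2^2$ of order $\epsilon$, the same order as the first-order term. So that argument cannot identify the derivative off the support of $\mu$.

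Since only a bound is required, you can drop the continuity/support discussion and go straight to the term-by-term estimate.
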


To proceed to study the differentiability of $U(t,x,\mu)$ in $t$, we follow the same procedure as in Subsection~\ref{subsec:auxiliary} and consider the limit of $\frac{1}{\epsilon}\left(\Theta_s^{t+\epsilon,x,\mu}-\Theta_s^{t,x,\mu}\right)$ for $s\in(t+\epsilon,T)$ as $\epsilon\to 0$. We have the following result.

\begin{lemma}\label{lem:cirly_X_mu}
    Under Assumptions (A1), (A2'), (A3) and (B), there exists 
    \begin{align*}
        \left(\x^{t,x,\mu},\y^{t,x,\mu},\p^{t,x,\mu},\R^{t,x,\mu},\U^{t,x,\mu}\right):[t,T]\to (\brn)^4\times \brd
    \end{align*}
    such that the following convergence holds
    \begin{equation}\label{derivative_t'}
    \begin{aligned}
        \lim_{\epsilon\to 0}\ \bigg[&\sup_{t+\epsilon\le s\le T}\left|\left(\delta \x^\epsilon_s,\ \delta\y_s^\epsilon,\ \delta\p_s^\epsilon,\ \delta\R_s^\epsilon,\ \delta\U_s^\epsilon\right)\right|_2^2 \bigg] =0. 
    \end{aligned}
    \end{equation}
    where $\delta \x^\epsilon_s:=\frac{1}{\epsilon}\left(X_s^{t+\epsilon,x,\mu}-X_s^{t,x,\mu}\right) - \x^{t,x,\mu}_s$, and the other terms are defined analogously. Moreover, the following estimate holds:
    \begin{align}
        \sup_{t\le s\le T}\left|\left(\x^{t,x,\mu}_s,\y^{t,x,\mu}_s,\p^{t,x,\mu}_s,\R^{t,x,\mu}_s,\U^{t,x,\mu}_s\right)\right|^2 \le C(L,T,\lambda) \left( 1+|x|^2+|\mu|^2_2\right), \label{lem:xypru_0_txmu}
    \end{align}    
\end{lemma}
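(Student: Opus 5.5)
The plan is to follow the route of Theorem~\ref{thm:convergence}, now applied to the deterministic system \eqref{FBSDEs:x}. The one new ingredient is that the coefficients of \eqref{FBSDEs:x} depend on the mean-field flow $\Theta^{t,\xi}$, and this flow itself moves with $t$. First, I will write down the candidate limit system for $\left(\x^{t,x,\mu},\y^{t,x,\mu},\p^{t,x,\mu},\R^{t,x,\mu},\U^{t,x,\mu}\right)$. It is obtained by formally differentiating \eqref{FBSDEs:x} in the initial time. The initial conditions are
\[
\x^{t,x,\mu}_t=-\big(b_0(t)+b_1(t)x+b_2(t)\e[\xi]+b_3(t)u^{t,x,\mu}_t\big),
\]
and the corresponding expression for $\R^{t,x,\mu}_t$ is read off from $R^{t,x,\mu}_t=-\varphi_y(Y^{t,x,\mu}_t)$, exactly as in the $\R$-equation of \eqref{FBSDEs:high}. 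The terms where the mean-field inputs $\e[X^{t,\xi}_r]$, $\lr(\theta^{t,\xi}_r)$, $\e[P^{t,\xi}_r]$, $\e[R^{t,\xi}_r]$ and $u^{t,\xi}$ are perturbed are replaced by their known time derivatives from Theorem~\ref{thm:convergence}. For example, $\e[X^{t,\xi}_r]$ becomes $\e[\x^{t,\xi}_r]$, and $\lr(\theta^{t,\xi}_r)$ enters through $\widetilde\e\big[D_{x'}\frac{dh_\cdot}{d\nu}(\cdots)(\widetilde{\theta^{t,\xi}_r})\widetilde{\x^{t,\xi}_r}+D_{y'}\frac{dh_\cdot}{d\nu}(\cdots)(\widetilde{\theta^{t,\xi}_r})\widetilde{\y^{t,\xi}_r}\big]$. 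Since $\Theta^{t,\xi}$ and $\x^{t,\xi},\dots$ are already given, this is a \emph{linear} forward-backward ODE system in the unknowns, with bounded coefficients (by (A2')) and with given source terms of size $C(1+|x|+|\mu|_2)$ (by \eqref{tmux-0'}, \eqref{lem:xypru_0} and \eqref{1order-FBSDE-0}).

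Second, I will prove that this linear system is well posed and satisfies \eqref{lem:xypru_0_txmu}. I will do this by the continuation method of Lemmas~\ref{lem:estimate}--\ref{lem:extend}, with $\gamma$ multiplying the coupling coefficients and the sources treated as inputs. The a priori estimate is the key step. I pair $\p$ with $\x$ and $\R$ with $\y$ (the duality computation of \eqref{D_tV-1}--\eqref{D_tV-3}), then use the optimality relation defining $\U$ together with the convexity \eqref{def:convex_h}--\eqref{def:convex_varphi}. This controls $\int|\U|^2$ by $\lambda^{-1}$ times the sources. Bounding the remaining unknowns is then standard Gronwall for $\x$ and backward Gronwall for $\y,\p,\R$.

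An important simplification: in \eqref{FBSDEs:x} the convexity enters only through the $x$-pointwise Hessian of $h$ (the mean-field argument is frozen). The pointwise part of \eqref{def:convex_h}, obtained by taking $\de\zeta=0$, is therefore enough. Strictly speaking, convexity in $(x,y,v)$ of $h$ with the measure frozen needs to be extracted from (A3). I expect this to follow by choosing $\zeta^1=\zeta^2$ in (A3).

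Third, I will show the convergence \eqref{derivative_t'}. The difference quotients $\frac1\epsilon(\Theta^{t+\epsilon,x,\mu}_s-\Theta^{t,x,\mu}_s)$ on $[t+\epsilon,T]$ satisfy the same linear system, but with coefficients given by Taylor integrals $\int_0^1(\cdot)\,d\delta$ evaluated at intermediate points. They also carry perturbed mean-field inputs $\frac1\epsilon(\Theta^{t+\epsilon,\xi}-\Theta^{t,\xi})$, and initial data at time $t+\epsilon$ of the form $\frac1\epsilon(x-X^{t,x,\mu}_{t+\epsilon})$ and $\frac1\epsilon(\varphi_y(Y^{t,x,\mu}_{t+\epsilon})-\varphi_y(Y^{t,x,\mu}_t))$ (for the $R$-component). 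I will apply the a priori estimate from the second step to the difference $\delta\x^\epsilon,\dots$. The resulting error terms are of three kinds. The first is the coefficient mismatch. It vanishes by the uniform continuity of the second derivatives in (A2'), by the Lipschitz bound \eqref{tmux-0}-type estimates in time (the analogue of \eqref{lem:estimate_s_1}), and by a uniform bound on difference quotients as in Lemma~\ref{lem:de_bound}. The second is the mean-field input mismatch, which tends to zero by \eqref{derivative_t}. The third is the initial-data mismatch at $t+\epsilon$ versus $t$. It tends to zero by the $\frac12$-H\"older continuity in time from (B) and by the continuity of $u^{t,x,\mu}_s$ in $s$ at $s=t$. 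This continuity follows from the optimality condition and the Lipschitz regularity of the other components.

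I expect the main obstacle to be the third error term, in particular the initial condition of $\R$. It involves $\y^{t,x,\mu}_t$ at the left endpoint of a backward quantity, as well as $u^{t,x,\mu}_t$. Controlling it requires $s\mapsto u^{t,x,\mu}_s$ to be continuous near $s=t$, uniformly in $\epsilon$. My first attempt will be the pointwise optimality relation: by strong convexity in $v$ it gives $|u_{s'}-u_s|\le C(|X_{s'}-X_s|+|Y_{s'}-Y_s|+|P_{s'}-P_s|+|R_{s'}-R_s|)+C|s'-s|^{1/2}$, and the right-hand side is $O(|s'-s|^{1/2})$. Once this is established, the remaining convergence follows from dominated convergence on the Taylor integrals.
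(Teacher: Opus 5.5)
\textbf{Gap in the convexity step.} Your overall route is the paper's: write down the linear limit system \eqref{FBSDEs:high_txmu}, get its well-posedness and \eqref{lem:xypru_0_txmu} from a duality estimate, and prove \eqref{derivative_t'} as in Theorem~\ref{thm:convergence}. The step that fails is your ``important simplification'' in the second step.

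In \eqref{FBSDEs:x} the mean-field derivatives are evaluated at the representative state itself. Examples are $\e\big[D_{x'}\frac{dh}{d\nu}(r,\theta^{t,\xi}_r,\lr(\theta^{t,\xi}_r),u^{t,\xi}_r)(\theta^{t,x,\mu}_r)\big]$ in the $P$-drift and $\e\big[D_{x'}\frac{d\Phi}{d\nu}(X^{t,\xi}_T,\lr(X^{t,\xi}_T))(X^{t,x,\mu}_T)\big]$ in its terminal value. So the measure argument is not frozen in the sense you need. After linearization, the unknowns are also multiplied by the following coefficients:
\begin{itemize}
\item $\e\big[(D^2_{x'}\frac{dh}{d\nu},\,D_{y'}D_{x'}\frac{dh}{d\nu})(r,\theta^{t,\xi}_r,\lr(\theta^{t,\xi}_r),u^{t,\xi}_r)(\theta^{t,x,\mu}_r)\big]$, acting on $(\x^{t,x,\mu},\y^{t,x,\mu})$ in the $\p$-drift;
\item the analogous $D_{x'}D_{y'}\frac{dh}{d\nu}$, $D^2_{y'}\frac{dh}{d\nu}$ terms in the $\R$-drift;
\item $\e\big[D^2_{x'}\frac{d\Phi}{d\nu}(X^{t,\xi}_T,\lr(X^{t,\xi}_T))(X^{t,x,\mu}_T)\big]$, acting on $\x^{t,x,\mu}_T$ in the terminal value of $\p$.
\end{itemize}

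When you pair $\p$ with $\x$ and $\R$ with $\y$, these produce quadratic forms in $(\x^{t,x,\mu},\y^{t,x,\mu})$. The $\de\zeta=0$ part of \eqref{def:convex_h} says nothing about them. The $\de\zeta$-terms of \eqref{def:convex_h}--\eqref{def:convex_Phi} do not help either: there the base point of $\frac{dh}{d\nu}$ (resp.\ $\frac{d\Phi}{d\nu}$) is deterministic and the evaluation point random, the reverse of the present configuration. Bounding the forms crudely by $C\int_t^T(|\x^{t,x,\mu}_r|^2+|\y^{t,x,\mu}_r|^2)\,dr$ cannot in general be absorbed into $2\lambda\int_t^T|\U^{t,x,\mu}_r|^2dr$ without a smallness condition relating $L$, $T$ and $\lambda$. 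So your bound on $\int|\U|^2$ does not close as planned.

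\textbf{The fix.} Use the second convexity requirement in (A3): convexity of $(x',y')\mapsto\frac{dh}{d\nu}(s,x,y,\varrho,v)(x',y')$ and of $x'\mapsto\frac{d\Phi}{d\nu}(x,m)(x')$. This makes those quadratic forms nonnegative, so they can be discarded together with the pointwise Hessian part. The paper invokes exactly this in the proof of Lemma~\ref{lem:tmux} for the system \eqref{FBSDEs:x}, and it is what ``the same argument as in Theorem~\ref{thm:convergence}'' implicitly requires here.

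You need this ingredient in three places:
\begin{itemize}
\item for the limit system;
\item in your third step, for the uniform bound on the $\epsilon$-difference quotients, where these Hessians sit at intermediate Taylor points (or, in monotonicity form, as in \eqref{tmux-1});
\item for the estimate of $\delta\x^\epsilon,\dots$.
\end{itemize}

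With this added, the other ingredients you list are adequate: frozen-measure convexity from $\zeta^1=\zeta^2$ in (A3), the mean-field input errors controlled by \eqref{derivative_t}, and continuity of $s\mapsto u^{t,x,\mu}_s$ at $s=t$ via strong convexity in $v$ and (B).
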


Following the same strategy as in the previous arguments of this subsection, we characterize the limit  $\left(\x^{t,x,\mu},\y^{t,x,\mu},\p^{t,x,\mu},\R^{t,x,\mu},\U^{t,x,\mu}\right)$ as the unique solution of a system of deterministic ODEs (similar to \eqref{FBSDEs:high}). The explicit equations for the limiting process are given in Appendix~\ref{App:lem:cirly_X_mu}.

We are now ready to state the following regularity result for the map $U$ defined in \eqref{def:map_U} with respect to the time variable.

\begin{proposition}\label{prop:dt_U}
    Under Assumptions (A1), (A2'), (A3) and (B), the map $U(t,x,\mu)$ defined in \eqref{def:map_U} is differentiable in $t$, and the derivatives satisfy the relations
    \begin{equation}\label{prop:dt_U_01}
    \begin{aligned}
        \dd_t U(t,x,\mu)=\y^{t,x,\mu}_t - f\left(t, x, U(t,x,\mu), \left(I_n\otimes U(t,\cdot,\mu)\right) \#\mu, u^{t,x,\mu}_t\right), 
    \end{aligned}
    \end{equation}
    where 
    \begin{equation*}\label{prop:dt_U_01'}
        u^{t,x,\mu}_t = \hv\left(t,x,U(t,x,\mu),\left(I_n\otimes U(t,\cdot,\mu)\right) \#\mu, D_{x}\frac{dV}{d\nu}(t,\mu)(x), -\varphi_y\circ U(t,x,\mu) \right).
    \end{equation*}
    Moreover, the following estimate holds
    \begin{equation}\label{prop:dt_U_02}
        \left|\dd_t U(t,x,\mu)\right|\le C(L,T,\lambda) \left( 1+|x|+|\mu|_2\right). 
    \end{equation}
\end{proposition}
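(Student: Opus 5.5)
We prove differentiability of $t\mapsto U(t,x,\mu)=Y^{t,x,\mu}_t$ by splitting the increment, for $\epsilon>0$, as
\begin{align*}
U(t+\epsilon,x,\mu)-U(t,x,\mu)=\left(Y^{t+\epsilon,x,\mu}_{t+\epsilon}-Y^{t,x,\mu}_{t+\epsilon}\right)+\left(Y^{t,x,\mu}_{t+\epsilon}-Y^{t,x,\mu}_t\right).
\end{align*}
Dividing the first bracket by $\epsilon$ gives $\y^{t,x,\mu}_{t+\epsilon}+\delta\y^\epsilon_{t+\epsilon}$. By the uniform convergence \eqref{derivative_t'} of Lemma~\ref{lem:cirly_X_mu}, which holds on $[t+\epsilon,T]$ and hence at $s=t+\epsilon$, the term $\delta\y^\epsilon_{t+\epsilon}$ tends to $0$. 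Since $\y^{t,x,\mu}$ solves a deterministic linear ODE system (the analogue of \eqref{FBSDEs:high}) with bounded coefficients, it is continuous in $s$, so $\y^{t,x,\mu}_{t+\epsilon}\to\y^{t,x,\mu}_t$. For the second bracket we use the backward ODE in \eqref{FBSDEs:x}:
\begin{align*}
\frac1\epsilon\left(Y^{t,x,\mu}_{t+\epsilon}-Y^{t,x,\mu}_t\right)=-\frac1\epsilon\int_t^{t+\epsilon} f\left(r,\theta^{t,x,\mu}_r,\lr\left(\theta^{t,\xi}_r\right),u^{t,x,\mu}_r\right)dr .
\end{align*}
This converges to $-f(t,x,Y^{t,x,\mu}_t,\lr(\xi,Y^{t,\xi}_t),u^{t,x,\mu}_t)$ provided the integrand is continuous at $r=t$. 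Continuity of $f_k$ in time comes from (B). Continuity of $\theta^{t,x,\mu}_r$ follows from \eqref{FBSDEs:x} and the bound \eqref{tmux-0'}. Continuity of $\lr(\theta^{t,\xi}_r)$ in $W_2$ follows from \eqref{lem:estimate_s_1}.

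The main obstacle is the continuity of $r\mapsto u^{t,x,\mu}_r$ at $r=t$, since for $\Theta^{t,\xi}$ we only have the mean-square Hölder bound \eqref{lem:estimate_s_2}. We handle it through the optimality condition in \eqref{FBSDEs:x}. By strong convexity of $h$ in $v$, $u^{t,x,\mu}_r$ is a Lipschitz function of $(\theta^{t,x,\mu}_r,\lr(\theta^{t,\xi}_r),P^{t,x,\mu}_r,R^{t,x,\mu}_r)$ and is Hölder-$\frac12$ in $r$ through the coefficients $b_3,f_7,h_v$ by (B). The other arguments are continuous in $r$: $(X,Y,P,R)^{t,x,\mu}$ are absolutely continuous, with derivatives bounded by \eqref{tmux-0'}. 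Hence $u^{t,x,\mu}$ is continuous and the limit holds.

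The case $\epsilon<0$, giving the left derivative, is handled symmetrically, or alternatively by noting that the right derivative is continuous in $t$, which gives full differentiability. This yields $\dd_t U=\y^{t,x,\mu}_t-f(\cdots)$.

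To rewrite the result in the stated form, we use $\lr(\xi,Y^{t,\xi}_t)=(I_n\otimes U(t,\cdot,\mu))\#\mu$ from Lemma~\ref{lem:tmux}. Then $R^{t,x,\mu}_t=-\varphi_y(U(t,x,\mu))$, and $P^{t,x,\mu}_t=D_x\frac{dV}{d\nu}(t,\mu)(x)$ by the corollary of Lemma~\ref{lem:tmux}. Substituting these into the optimality condition identifies $u^{t,x,\mu}_t=\hv(\cdots)$. Finally, the estimate \eqref{prop:dt_U_02} follows from \eqref{lem:xypru_0_txmu}, the linear growth of $f$ in (A1), \eqref{prop:U_x-mu_2'}, and \eqref{tmux-0'} for $u^{t,x,\mu}_t$, noting $|(I_n\otimes U)\#\mu|_2\le C(1+|\mu|_2)$.
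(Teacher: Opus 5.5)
Your proposal is correct and is essentially the paper's proof. It uses the same splitting into $Y^{t+\epsilon,x,\mu}_{t+\epsilon}-Y^{t,x,\mu}_{t+\epsilon}$ (controlled by Lemma~\ref{lem:cirly_X_mu}) and $Y^{t,x,\mu}_{t+\epsilon}-Y^{t,x,\mu}_t=-\int_t^{t+\epsilon}f\,dr$, then identifies $u^{t,x,\mu}_t$ through the optimality condition in \eqref{FBSDEs:x}, $R^{t,x,\mu}_t=-\varphi_y(U(t,x,\mu))$ and Lemma~\ref{lem:tmux}; your continuity arguments for $s\mapsto\y^{t,x,\mu}_s$ and $r\mapsto u^{t,x,\mu}_r$ just spell out what the paper asserts in one line.

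Like the paper, you only obtain $P^{t,x,\mu}_t=D_x\frac{dV}{d\nu}(t,\mu)(x)$ for $\mu$-a.e.\ $x$, and you only actually control the right difference quotient. Your aside that $\epsilon<0$ is ``symmetric'' is not justified as stated: it would need the convergence in Lemma~\ref{lem:cirly_X_mu} to hold uniformly as the base point $t-\epsilon$ moves, or else continuity of the right derivative in $t$, which is nowhere established.
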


\begin{proof}
For $\epsilon\in(0,1)$, by using the definition of $U$ and the equation for $Y^{t,x,\mu}_s$, we have
\begin{align*}
    \frac{U(t+\epsilon,x,\mu)-U(t,x,\mu)}{\epsilon}=\ &  \frac{1}{\epsilon}\left(Y^{t+\epsilon,x,\mu}_{t+\epsilon}- Y^{t,x,\mu}_t\right)\\
    =\ & \frac{1}{\epsilon}\e\left[Y^{t+\epsilon,x,\mu}_{t+\epsilon}- Y^{t,x,\mu}_{t+\epsilon}\right] + \frac{1}{\epsilon}\e\left[Y^{t,x,\mu}_{t+\epsilon}- Y^{t,x,\mu}_{t}\right] \notag \\
    =\ & \frac{1}{\epsilon}\e\left[Y^{t+\epsilon,x,\mu}_{t+\epsilon}- Y^{t,x,\mu}_{t+\epsilon}\right] - \frac{1}{\epsilon}\e\int_t^{t+\epsilon} f \left(r,\theta^{t,x,\mu}_r,\lr\left(\theta^{t,\xi}_r\right),u^{t,x,\mu}_r\right)dr.
\end{align*}
Then, from the differentiability of $Y^{t,x,\mu}_s$ in $t$ in Lemma~\ref{lem:cirly_X_mu}, and the continuity of $\Theta^{t,x,\mu}_s$ in $s$, we know that $U(t,x,\mu)$ is differentiable in $t$, and its derivative satisfies \eqref{prop:dt_U_01}. From the last equation in \eqref{FBSDEs:x}, the definition of the map $\hv$ and Assumption (B), we know that
\begin{align*}
    u^{t,x,\mu}_t=\hv \left(t,x,Y^{t,x,\mu}_t, \lr\left(\xi,Y^{t,\xi}_t\right),P^{t,x,\mu}_t,R^{t,x,\mu}_t\right).
\end{align*}
Substituting \eqref{DV_P}, \eqref{def:map_U} and the initial condition for $R^{t,x,\mu}_t$ into the last equation, we obtain the formulation of $u^{t,x,\mu}_t$. Then, \eqref{prop:dt_U_02} is a direct consequence of the growth conditions for the map $\hv$ and the coefficients $f$, $\varphi_y$, $D_{x}\frac{dV}{d\nu}$ and Estimate \eqref{prop:U_x-mu_2'}.
\end{proof}

Equation \eqref{prop:dt_U_01} gives the derivative of the map $U$, but it does not constitute a PDE for $U$, since we cannot yet characterize $\y^{t,x,\mu}_t$ in terms of $U$. Indeed, a PDE for $U$ cannot generally be derived for control problems of FBSDEs (especially when the dimension of the backward equation is larger than one) due to the lack of the flow property; see Section~\ref{sec:flow} below for a detailed discussion. In Section~\ref{sec:flow}, we will also provide cases where the flow property holds true for second-order problems (under certain structural assumptions on the coefficient $h$), and derive the coupled PDEs for the maps $(V,U)$ (see Theorem~\ref{thm:flow:HJB:main}).

\section{Second-order HJB equation: a feedback case}\label{sec:feedback}

In this section, we drop the assumption $\sigma\equiv 0$ made in the previous section and return to the second-order problem in the presence of Brownian motion. Instead, we impose the following assumption, which requires that the diffusion coefficient be independent of the control.

{\bf (A4)} The diffusion is independent of control, i.e., $\sigma_3=0$. The coefficients $b_i$, $\sigma_j$, $f_k$ (with $0\le i\le 3$, $0\le j\le 2$ and $0\le k\le 7$) are $\frac{1}{2}$-H\"older continuous in time; and the coefficient $h$ satisfies the same continuity condition in the time argument as in Assumption (B). 

Note that, under Assumption (A4), the minimizer map $\hv(\cdot)$ defined in \eqref{minimizer} is independent of the $q$ argument. Since $q$ no longer appears in the minimization, we define the reduced Hamiltonian $\hr$ by suppressing the $q$-dependence:
\begin{align}\label{def:Ham'}
    \hr(s,x,y,z,\varrho,p,r):=\inf_{v\in\brd} \left\{b\left(s,x,{ \varrho_x},v\right)^\top p -f(s,x,y,z,{ \varrho},v)^\top r + h(s,x,y,z,{ \varrho},v)\right\}.
\end{align}
In view of the definition of the full Hamiltonian $H$ in \eqref{def:Ham}, we have the decomposition
\begin{align*}
    H(s,x,y,z,\varrho,p,q,r)=\hr(s,x,y,z,\varrho,p,r)+ \sum_{j=1}^n\sigma^j\left(s,x, \varrho_x \right)^\top q^j.
\end{align*}

In this section, we consider a special case in which the process $Y^{t,\xi,v}$ admits the following feedback representation in terms of $X^{t,\xi,v}$:
\begin{align}\label{feedback:Y}
    Y^{t,\xi,v}_s= F\left(s,X^{t,\xi,v}_s\right),\quad s\in[t,T],
\end{align}
for some map $F:[0,T]\times\brn\to\brn$ that is $C^1$ in time and $C^2$ in space. In this simpler setting, the control problem $\left(\mathsf{P}_{t,\xi}\right)$ defined in \eqref{FBSDE^v(Introduction)}-\eqref{cost^v} can be reformulated as a standard MFTC problem for a controlled SDE (rather than for controlled FBSDEs). We shall also characterize the role of the adjoint processes $\left(P^{t,\xi},Q^{t,\xi},R^{t,\xi}\right)$, and provide an alternative derivation of the associated HJB equation.

Under Assumption (A1), applying It\^o's formula to $F\left(s,X^{t,\xi,v}_s\right)$ and comparing the resulting drift and diffusion terms with the BSDE for $Y^{t,\xi,v}$, we see that $F$ should be linear in $x$, i.e., $F(s,x)=F_0(s)+F_1(s)x$. More precisely, the maps $F_0$, $F_1$, and the coefficients $f$ and $g$ are required to satisfy the following structural condition, ensuring consistency with the feedback ansatz \eqref{feedback:Y}:

{\bf (C)} The functions $F_0:[0,T]\to\brn$ and $F_1:[0,T]\to\brnn$ are bounded by $L$, differentiable, and their derivatives $\dot F_0$, $\dot F_1$ are bounded by $L$ and $\frac{1}{2}$-H\"older continuous. Moreover, $F_0(T)=0$. The coefficients $f$ and $g$ are assumed to satisfy
\begin{equation}\label{relation:f-F}
\begin{aligned}
    &f_0(s)=-\dot{F}_0(s)- F_1(s) b_0(s),\quad f_1(s)= -\dot{F}_1(s)-F_1(s) b_1(s),\quad f_4(s)=-F_1(s) b_2(s),\\
    &f_7(s)=-F_1(s) b_3(s), \quad f_2(s)=0,\quad f_3(s)=0,\quad f_5(s)=0,\quad f_6(s)=0;\\
    &g_1=F_1(T),\quad g_2=0.
\end{aligned}
\end{equation}

It is straightforward to verify that the coefficients $f$ and $g$ in Assumption (C) satisfy Assumption (A1). The relations \eqref{relation:f-F} guarantee the consistency of the BSDE for $Y^{t,\xi,v}$ with the feedback representation \eqref{feedback:Y}. Moreover, by the uniqueness of the BSDE solution, we obtain the following relation for $Z^{t,\xi,v}_s$: 
\begin{equation}\label{relation:YZ}
    \begin{aligned}
        Z^{t,\xi,v}_s =\ &  F_1(s)\; \sigma\left(s,X^{t,\xi,v}_s,\lr\left(X^{t,\xi,v}_s\right)\right).
    \end{aligned}
\end{equation}

\subsection{Reformulation as a mean-field type control problem}

With the relation \eqref{feedback:Y}, the control problem $\left(\mathsf{P}_{t,\xi}\right)$ can be reformulated as follows:
\begin{equation}\label{cost^v'}
    \begin{aligned}
        &X^{t,\xi,v}_s= \xi+\int_t^s b \left(r,X^{t,\xi,v}_r, \lr\left(X^{t,\xi,v}_r\right), v_r \right)dr + \int_t^s \sigma\left(r,X^{t,\xi,v}_r, \lr\left(X^{t,\xi,v}_r\right) \right)dB_r,\\
        &J^{t,\xi}(v):= \e\bigg[\varphi\Big(F_0(t)+F_1(t)\xi\Big)+\int_t^T \mathbf{h}\left(s,X^{t,\xi,v}_s, \lr\left(X^{t,\xi,v}_s\right), v_s \right) ds  + \Phi\left(X_T^{t,\xi,v},\lr\left(X_T^{t,\xi,v}\right)\right)\bigg],
    \end{aligned}
\end{equation}
where the functional $\mathbf{h}$ is defined as
\begin{align}\label{def:hh}
    &\mathbf{h} (s,x,m,v):= h\Big(s,x, F(s,x), F_1(s)\sigma(s,x,m), \left(I_n\otimes F(s,\cdot)\otimes F_1(s)\sigma(s,\cdot,m)\right)\#m, v \Big).
\end{align}
Here, $\left(I_n\otimes F(s,\cdot)\otimes F_1(s)\sigma(s,\cdot,m)\right) \# m$ represents the push-forward of $m\in\pr_2(\brn)$ by the map $x\mapsto \left(x,F(s,x),F_1(s)\sigma(s,x,m)\right)$. We have the following properties for $\mathbf{h}$. 

\begin{lemma}\label{lem:der_h}
    Under Assumptions (A1), (A2) and (C), the function $\mathbf{h}$ defined in \eqref{def:hh} is continuous, and is differentiable in $(x,v)$ as well as linearly functionally differentiable in $m\in\pr_2(\brn)$, with $x'\mapsto \frac{d\mathbf{h}}{d\nu}(s,x,m,v)(x')$ being differentiable. The derivatives $\mathbf{h}_x$, $\mathbf{h}_v$, and $D_{x'}\frac{d\mathbf{h}}{d\nu}$ are $C(L)$-Lipschitz continuous in $(x,m,v,x')$. Moreover, under Assumption (A3), $\mathbf{h}$ is convex in the following sense: for any $s\in[0,T]$, $x^1,x^2\in\brn$, $v^1,v^2\in\brd$, and $\xi^1,\xi^2\in L^2(\Omega,\f,\mathbb{P};\brn)$,
    \begin{equation}\label{convex:h}
    \begin{aligned}
        &\mathbf{h}\left(s,x^2,\lr\left(\xi^2\right),v^2\right)-\mathbf{h}\left(s,x^1,\lr\left(\xi^1\right),v^1\right)\\
        \geq\ & \mathbf{h}_v \left(s,x^1,\lr\left(\xi^1\right),v^1\right)^\top \left(v^2-v^1\right) + \mathbf{h}_x \left(s,x^1,\lr\left(\xi^1\right),v^1\right)^\top \left(x^2-x^1\right) \\
        &+ \e\left[\left(D_{x'}\frac{d \mathbf{h}}{d\nu}\left(s,x^1,\lr\left(\xi^1\right),v^1\right)\left(\xi^1\right)\right)^\top \left(\xi^2-\xi^1\right)\right] +\lambda\left|v^2-v^1\right|^2;
    \end{aligned}
    \end{equation}
    and the function $\frac{d\mathbf{h}}{d\nu}(s,x,m,v)(\cdot):\brn\to\mathbb{R}$ is convex. 
\end{lemma}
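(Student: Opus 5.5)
The plan is to view $\mathbf{h}$ as the composition of $h$ with the affine lift $\Lambda_s: (x,m)\mapsto \big(x,F(s,x),F_1(s)\sigma(s,x,m),(I_n\otimes F(s,\cdot)\otimes F_1(s)\sigma(s,\cdot,m))\#m\big)$, and to read every claimed property off the corresponding property of $h$ in (A2)--(A3) by the chain rule. Under (A1) and (C), $F(s,x)=F_0(s)+F_1(s)x$ and $\sigma(s,x,m)=\sigma_0(s)+\sigma_1(s)x+\sigma_2(s)\overline{m}$ are affine in $(x,\overline m)$ with coefficients bounded by $L$. Consequently, at the level of random variables, $\xi\sim m$ is sent to $\zeta(\xi):=(\xi,F(s,\xi),F_1(s)\sigma(s,\xi,\lr(\xi)))$, which is affine in $(\xi,\e[\xi])$. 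Continuity of $\mathbf{h}$ follows from continuity of $h$ together with the $W_2$-continuity of $m\mapsto \lr(\zeta(\xi))$, since $W_2(\lr(\zeta(\xi^1)),\lr(\zeta(\xi^2)))\le C(L)\|\xi^1-\xi^2\|_2$.

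\textbf{Differentiability.} First, $\mathbf{h}_v=h_v\circ\Lambda$ and
\[
\mathbf{h}_x=h_x+F_1^\top h_y+\sum_j (F_1\sigma^j_1)^\top h_{z^j},
\]
with all derivatives of $h$ evaluated at $\Lambda_s(x,m)$. For the measure derivative I would work at the level of the lift: compute the G\^ateaux derivative of $\xi\mapsto \mathbf{h}(s,x,\lr(\xi),v)$ and identify it with a candidate for $D_{x'}\frac{d\mathbf{h}}{d\nu}$, using \eqref{lem01_1}. The candidate has three kinds of contributions:
\begin{align*}
D_{x'}\frac{d\mathbf{h}}{d\nu}(s,x,m,v)(x')=\ & D_{x'}\frac{dh}{d\nu}(\cdot)(\zeta(x'))+F_1^\top D_{y'}\frac{dh}{d\nu}(\cdot)(\zeta(x'))+\sum_j(F_1\sigma^j_1)^\top D_{{z'}^j}\frac{dh}{d\nu}(\cdot)(\zeta(x'))\\
&+\sum_j (F_1\sigma^j_2)^\top\Big(h_{z^j}(\cdot)+\int D_{{z'}^j}\frac{dh}{d\nu}(\cdot)(\zeta(y))\,m(dy)\Big).
\end{align*}
The terms in the first line come from the pushforward, while the second line comes from the $\overline m$-dependence of $\sigma$, both inside $z=F_1\sigma(s,x,m)$ and inside the pushforward. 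Here $\zeta(x'):=(x',F(s,x'),F_1\sigma(s,x',m))$.

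To promote this to a linear functional derivative, I will define $\frac{d\mathbf{h}}{d\nu}(m)(x')$ explicitly by
\[
\frac{dh}{d\nu}(\Lambda)(\zeta(x'))+\Big(h_{z}(\Lambda)+\int D_{z'}\frac{dh}{d\nu}(\Lambda)(\zeta(y))\,m(dy)\Big)\cdot F_1\sigma_2 x'.
\]
I will then verify the defining limit along $(1-\epsilon)m+\epsilon m'$. This only needs the chain rule for $h$ along the mixture of pushforwards, because pushforward commutes with mixtures when the map is fixed, plus first-order expansion in $\overline{m}$. Lipschitz continuity with constant $C(L)$ follows from the Lipschitz property of the derivatives of $h$ and the bounded affine structure. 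The one point to check is that the integral terms are Lipschitz in $m$ in $W_2$, which holds via coupling.

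\textbf{Convexity.} Take $\xi^i\sim m^i$ and set $\zeta^i:=\zeta(\xi^i)$, with $x^i,v^i$ and $z^i=F_1\sigma(s,x^i,\lr(\xi^i))$. Apply (A3) to $h$ at these points; note that (A3) is stated for general random variables $\zeta^i$, which is exactly what is needed. Then substitute the differences
\[
y^2-y^1=F_1(x^2-x^1),\qquad z^{2,j}-z^{1,j}=F_1\sigma_1^j(x^2-x^1)+F_1\sigma^j_2\,\e[\xi^2-\xi^1],
\]
and similarly for $\zeta^2-\zeta^1$. Regrouping with Fubini, the right-hand side of (A3) becomes exactly
\[
\mathbf{h}_x^\top(x^2-x^1)+\mathbf{h}_v^\top(v^2-v^1)+\e\Big[D_{x'}\tfrac{d\mathbf{h}}{d\nu}(\xi^1)^\top(\xi^2-\xi^1)\Big]+\lambda|v^2-v^1|^2,
\]
where the $\e[\xi^2-\xi^1]$ terms are absorbed into the constant part of the formula above. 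This regrouping is exact precisely because the maps are affine, so no remainder appears. Convexity of $x'\mapsto\frac{d\mathbf{h}}{d\nu}(x')$ follows because it equals a convex function of $\zeta(x')$, which is affine in $x'$, plus a linear term in $x'$.

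\textbf{Main obstacle.} I expect the hardest part to be the bookkeeping of the $\overline m$-dependence through $\sigma_2$. It enters $z$ directly and also every point of the pushforward, so one must confirm that these contributions assemble into a genuine linear functional derivative (the linear-in-$x'$ term) rather than only a G\^ateaux derivative. I would handle this by verifying the mixture limit directly, using the fact that $\overline{(1-\epsilon)m+\epsilon m'}$ is affine in $\epsilon$.
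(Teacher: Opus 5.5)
Your proposal is correct and follows essentially the paper's route: the same chain-rule formulas for $\mathbf{h}_x$, $\mathbf{h}_v$ and $D_{x'}\frac{d\mathbf{h}}{d\nu}$ through the affine lift $\zeta^i=(\xi^i,F(s,\xi^i),F_1(s)\sigma(s,\xi^i,\lr(\xi^i)))$, and the same convexity argument that applies (A3) to $h$ at these lifted points and then regroups the terms using affinity. The only difference is presentational. The paper quotes a general chain rule for $m\mapsto k(\phi(\cdot,m)\#m)$. You instead write $\frac{d\mathbf{h}}{d\nu}$ itself and check the mixture limit directly, splitting off the translation caused by $\overline{m}$. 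This also makes explicit the convexity of $x'\mapsto\frac{d\mathbf{h}}{d\nu}(s,x,m,v)(x')$, as a convex function of an affine map plus a linear term, where the paper only says ``similarly''.
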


The proof of Lemma~\ref{lem:der_h}, along with the explicit formula for the derivatives of $\mathbf{h}$, is provided in Appendix~\ref{App:lem:der_h}. With these properties, we know that the coefficients for the MFTC problem \eqref{cost^v'} satisfy the linear-convex assumptions as in \cite{AB10}. As a consequence, its unique optimal control $u^{t,\xi}$ is given by the following optimality condition:
\begin{align}\label{optimality_condition}
    b_3(s)^\top \mathbf{P}^{t,\xi}_s + \mathbf{h}_v\left(s,X^{t,\xi}_s,\lr\left(X^{t,\xi}_s\right), u^{t,\xi}_s\right)=0, 
\end{align}
where $\left(\mathbf{P}^{t,\xi},\mathbf{Q}^{t,\xi}\right)$ is the adjoint process corresponding to the optimal control $u^{t,\xi}$, satisfying the following BSDE:
\begin{equation}\label{BSDE:PQ}
    \begin{aligned}
        \mathbf{P}^{t,\xi}_s=\ & -\int_s^T \mathbf{Q}^{t,\xi}_r dB_r +\Phi_x\left(X^{t,\xi}_T, \lr\left(X^{t,\xi}_T\right) \right) +\widetilde{\e}\left[D_{x'}\frac{d\Phi}{d\nu} \left(\widetilde{X^{t,\xi}_T}, \lr\left(X^{t,\xi}_T\right)\right)\left(X^{t,\xi}_T\right) \right] \\
        &+ \int_s^T \bigg\{ b_1(r)^\top \mathbf{P}^{t,\xi}_r + \sum_{j=1}^n \sigma^j_1(r)^\top \mathbf{Q}^{t,\xi,j}_r +b_2(r)^\top \e\left[\mathbf{P}^{t,\xi}_r\right] +\sum_{j=1}^n\sigma^j_2(r)^\top \e\left[\mathbf{Q}^{t,\xi,j}_r\right]\\
        &\qquad \qquad + \mathbf{h}_x \left( r,X^{t,\xi}_r,\lr\left(X^{t,\xi}_r \right), u^{t,\xi}_r \right) 
        + \widetilde{\e} \left[D_{x'}\frac{d\mathbf{h}}{d\nu} \left( r,\widetilde{X^{t,\xi}_r},\lr\left(X^{t,\xi}_r\right), \widetilde{u^{t,\xi}_r}\right)\left(X^{t,\xi}_r\right) \right] \bigg\} dr. 
    \end{aligned}
\end{equation}
We also refer to \cite[Section 6]{AB10} for the well-posedness of the FBSDE system for $(X^{t,\xi},\mathbf{P}^{t,\xi},\mathbf{Q}^{t,\xi})$ defined by \eqref{cost^v'}, \eqref{BSDE:PQ}, and \eqref{optimality_condition}. Since the reformulated problem \eqref{cost^v'} is equivalent to the original problem $(\mathsf{P}_{t,\xi})$, it follows from Theorem~\ref{thm:solvability}, the relation for $f_7$ in \eqref{relation:f-F}, and Assumption (A4) that the optimal control $u^{t,\xi}$ satisfies
\begin{align*}
    b_3(s)^\top \left(P^{t,\xi}_s + F_1(s)^\top R^{t,\xi}_s \right) + h_v\left(s,\theta^{t,\xi}_s, \lr\left(\theta^{t,\xi}_s\right),u^{t,\xi}_s\right) =0, 
\end{align*}
where $\theta^{t,\xi}_s:= \left(X^{t,\xi}_s, Y^{t,\xi}_s, Z^{t,\xi}_s\right)$. The following relations hold between the adjoint processes $(\mathbf{P}^{t,\xi},\mathbf{Q}^{t,\xi})$ of the reformulated problem and $(P^{t,\xi},Q^{t,\xi},R^{t,\xi})$ of the original one.

\begin{proposition}\label{prop:relation:PQR}
    Under Assumptions (A1)-(A4) and (C), we have
    \begin{equation}\label{relation_PQR-PQ}
    \begin{aligned}
        \mathbf{P}^{t,\xi}_s=\ & P^{t,\xi}_s+F_1(s)^\top R^{t,\xi}_s,\\
        \mathbf{Q}^{t,\xi,j}_s=\ & Q^{t,\xi,j}_s - F_1(s)^\top h_{z^j} \left(s,\theta^{t,\xi}_s,\lr\left(\theta^{t,\xi}_s\right),u^{t,\xi}_s\right) \\
        &- \widetilde{\e} \bigg[F_1(s)^\top D_{{z'}^j}\frac{dh}{d\nu} \left(r,\widetilde{\theta^{t,\xi}_s},\lr\left(\theta^{t,\xi}_s\right),\widetilde{u^{t,\xi}_s}\right)\left(\theta^{t,\xi}_s\right) \bigg].
    \end{aligned}
    \end{equation}
\end{proposition}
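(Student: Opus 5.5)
Both problems have the same unique optimal control $u^{t,\xi}$, and the state $X^{t,\xi}$ is the same SDE under (A4). So I will define candidate processes
\begin{align*}
    \bar P_s:=P^{t,\xi}_s+F_1(s)^\top R^{t,\xi}_s,\qquad \bar Q^j_s:=Q^{t,\xi,j}_s-F_1(s)^\top\Big(h_{z^j}(s,\cdot)+\widetilde{\e}\Big[D_{{z'}^j}\tfrac{dh}{d\nu}(s,\widetilde{\cdot})(\theta^{t,\xi}_s)\Big]\Big).
\end{align*}
The plan is to show that $(\bar P,\bar Q)$ solves the linear mean-field BSDE \eqref{BSDE:PQ}, where $X^{t,\xi}$ and $u^{t,\xi}$ are regarded as given. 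This BSDE is Lipschitz and linear in $(\mathbf P,\mathbf Q,\e[\mathbf P],\e[\mathbf Q])$, so its solution is unique, and hence $(\mathbf P,\mathbf Q)=(\bar P,\bar Q)$, which is \eqref{relation_PQR-PQ}.

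\emph{Step 1: dynamics of $\bar P$.} Apply It\^o's formula to $F_1(s)^\top R_s$, using the forward SDE for $R$ in \eqref{FBSDE:MFTC} and $\dot F_1$. The $dB$-part of $F_1^\top R$ is $-F_1^\top\{L_{z^j}+\widetilde\e[D_{{z'}^j}\frac{dL}{d\nu}]\}$. By (C), $f_3=f_6=0$, so $L_{z^j}=h_{z^j}$ and $D_{{z'}^j}\frac{dL}{d\nu}=D_{{z'}^j}\frac{dh}{d\nu}$. Adding the martingale part $Q\,dB$ of $P$ therefore gives exactly $\bar Q\,dB$.

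For the drift, I collect the terms of $P$: $-b_1^\top P-\sum_j\sigma_1^{j\top}Q^j+f_1^\top R-h_x-\widetilde\e[\dots]$, together with the mean-field terms $b_2,\sigma_2,f_4$. The terms coming from $F_1^\top R$ are $\dot F_1^\top R-F_1^\top(-f_2^\top R+h_y+\widetilde\e[D_{y'}\frac{dh}{d\nu}])$ with $f_2=f_5=0$. Then I substitute $f_1=-\dot F_1-F_1b_1$ and $f_4=-F_1b_2$. The $\dot F_1^\top R$ terms cancel. The term $-b_1^\top F_1^\top R$ combines with $-b_1^\top P$ into $-b_1^\top\bar P$, and likewise $-b_2^\top\e[\bar P]$ appears. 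In the $\sigma$ terms I rewrite $Q^j=\bar Q^j+F_1^\top(h_{z^j}+\dots)$.

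\emph{Step 2: matching $\mathbf h$-derivatives.} This is the main obstacle. I would use the explicit derivatives of $\mathbf h$ from Lemma~\ref{lem:der_h} (Appendix~\ref{App:lem:der_h}), computed by the chain rule through $y=F_0+F_1x$, $z^j=F_1\sigma^j(s,x,m)=F_1(\sigma^j_0+\sigma^j_1x+\sigma^j_2\bar m)$, and through the push-forward measure. Concretely,
\begin{align*}
    \mathbf h_x=h_x+F_1^\top h_y+\sum_j\sigma_1^{j\top}F_1^\top h_{z^j},
\end{align*}
and $D_{x'}\frac{d\mathbf h}{d\nu}$ contains $D_{x'}\frac{dh}{d\nu}+F_1^\top D_{y'}\frac{dh}{d\nu}+\sum_j\sigma_1^{j\top}F_1^\top D_{{z'}^j}\frac{dh}{d\nu}$ together with the terms $\sigma_2^{j\top}F_1^\top h_{z^j}$ and $\sigma_2^{j\top}F_1^\top\widetilde\e[D_{{z'}^j}\frac{dh}{d\nu}]$ that come from $\bar m$.

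I must then verify, via Fubini and swapping independent copies as in \eqref{lem:dJ_2-1}, that the leftover $\sigma_1^{j\top}F_1^\top(h_{z^j}+\dots)$ and the $\sigma_2^j$-terms produced in Step 1 coincide with these chain-rule contributions. The delicate bookkeeping is matching the $\widetilde\e$ argument placements, i.e. which variable is the copy, for the $\sigma_2$ and $D_{{z'}}$ terms.

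\emph{Step 3: terminal condition.} Since $g_2=0$ and $g_1=F_1(T)$, we get $\bar P_T=-F_1(T)^\top R_T+\Phi_x+\widetilde\e[\dots]+F_1(T)^\top R_T$, which equals the terminal value in \eqref{BSDE:PQ}. Uniqueness for \eqref{BSDE:PQ} then concludes the proof.

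As a consistency check, the resulting optimality condition agrees with \eqref{optimality_condition}, since $\mathbf h_v=h_v$ and $f_7=-F_1b_3$.
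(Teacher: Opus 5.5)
Your proposal is correct and follows the paper's proof: It\^o's formula on $P^{t,\xi}+F_1^\top R^{t,\xi}$, cancellation of the $\dot F_1$ terms via (C), the substitution $Q^j=\bar Q^j+F_1^\top(\cdots)$ (your $\bar Q^j$ is the paper's $\mathcal{A}^j$), matching with the chain-rule derivatives of $\mathbf{h}$ from Lemma~\ref{lem:der_h}, and the terminal condition from $g_1=F_1(T)$, $g_2=0$. The only difference is the last step. You freeze $(X^{t,\xi},u^{t,\xi})$ and use uniqueness for the linear mean-field BSDE \eqref{BSDE:PQ}, which is legitimate because both problems share the same optimal control. The paper instead invokes uniqueness of the coupled system for $(X^{t,\xi},\mathbf{P}^{t,\xi},\mathbf{Q}^{t,\xi})$ and must verify the coupling condition \eqref{coupling_condition}, which in your version is only a consistency check.
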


The proof consists of applying It\^o's formula to the process $P^{t,\xi}_s + F_1(s)^\top R^{t,\xi}_s$, comparing the resulting BSDE with that satisfied by $(\mathbf{P}^{t,\xi},\mathbf{Q}^{t,\xi})$ using the relations between the derivatives of $\mathbf{h}$ and $h$ obtained in the proof of Lemma~\ref{lem:der_h}, and then invoking the uniqueness of the FBSDE system for $(X^{t,\xi},\mathbf{P}^{t,\xi},\mathbf{Q}^{t,\xi})$. The details are provided in Appendix~\ref{App:prop:relation:PQR}.

\subsection{Value function and HJB equation}

We define the Lagrangian $\mathbf{L}$ associated with $\mathbf{h}$ as
\begin{align*}
    \mathbf{L}(s,x,m,v,\mathbf{p}) := b(s,x,m,v)^\top \mathbf{p} + \mathbf{h}(s,x,m,v),
\end{align*}
and the corresponding Hamiltonian $\mathbf{H}$ as $\mathbf{H}(s,x,m,\mathbf{p}) := \inf_{v\in\brd} \mathbf{L}(s,x,m,v,\mathbf{p})$. We also define the value function $\mathbf{V}:[0,T]\times\pr_2(\brn)\to\br$ by
\begin{align*}
    \mathbf{V} (t,\lr(\xi)):= \inf_{v\in \lr_\f^2(t,T)} \e\bigg[\int_t^T \mathbf{h}\left(s,X^{t,\xi,v}_s, \lr\left(X^{t,\xi,v}_s\right), v_s \right) ds  + \Phi\left(X_T^{t,\xi,v},\lr\left(X_T^{t,\xi,v}\right)\right)\bigg].  
\end{align*}
From \eqref{cost^v'}, the relation between $V$ and $\mathbf{V}$ is given by
\begin{align}\label{relation:V-VV}
    V(t,\lr(\xi))= \e\left[\varphi\Big(F_0(t)+F_1(t)\xi \Big)\right] +\mathbf{V} (t,\lr(\xi)).
\end{align}
By \cite[Theorem 7.1]{AB10}, under Assumptions (A1), (A2'), (A3) and (C), the functional $\mathbf{V}(t,\lr(\xi))$ is twice G\^ateaux differentiable in $\xi$, and the following relation holds:
\begin{align}
    \mathbf{P}^{t,\xi}_t=\ & D_{x}\frac{d\mathbf{V}}{d\nu}(t,\lr(\xi))(\xi). \label{AB10:relation_P}
\end{align}
Moreover, the second-order derivatives $D_\xi^2 \mathbf{V}$,  $D_{x}^2\frac{d\mathbf{V}}{d\nu}(t,\lr(\xi))(\xi)$ and $D_{x'}D_{x}^2\frac{d^2\mathbf{V}}{d\nu^2}(t,\lr(\xi))\left(\xi,\widetilde{\xi}\right)$ are also characterized in \cite[Theorem 7.1]{AB10}, under the second-order differentiability conditions on the coefficients stated in (A2'). We are now ready to state the HJB equation corresponding to the feedback case \eqref{feedback:Y}.

\begin{theorem}
    Under Assumptions (A1), (A2'), (A3), (A4) and (C), the value function $V$ is a classical solution of the following HJB equation on $[0,T]\times\pr_2(\brn)$:
    \begin{equation}\label{HJB:second-order}
    \left\{
    \begin{aligned}
        &\dd_t V(t,\mu) + \frac{1}{2}\int_\brn \text{Tr}\left[ \left(\sigma\sigma^\top\right) (t,x,\mu)  \left(D_{x}^2\frac{dV}{d\nu}(t,\mu)(x)-F_1(t)^\top \varphi_{yy}\Big(F(t,x) \Big)F_1(t)\right)\right]\mu(dx) \\
        &\,\ \ \quad\qquad +\int_\brn \hr \bigg(t,\ x,\  F(t,x),\  F_1(t)\sigma(t,x,\mu),\ \left(I_n\otimes F(t,\cdot)\otimes F_1(t)\sigma(t,\cdot,\mu)\right) \# \mu, \\
        &\ \qquad\qquad\qquad\qquad D_{x} \frac{dV}{d\nu}(t,\mu)(x),\ -\varphi_y\left(F(t,x)\right)  \bigg) \mu(dx)=0,\\
        &V(T,\mu)=\int_\brn \left[\varphi(F(T,x))+\Phi(x,\mu)\right]\mu(dx),
    \end{aligned}
    \right.
    \end{equation}
    in the sense that $V$ is differentiable in $t$, linearly functionally differentiable in $\mu$, with $x\mapsto \frac{dV}{d\nu} (t,\mu)(x)$ twice differentiable.  Here, the function $\hr$ is defined in \eqref{def:Ham'}. 
\end{theorem}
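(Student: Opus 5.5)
The proof goes through the decomposition \eqref{relation:V-VV}, $V(t,\mu)=\int_\brn\varphi(F_0(t)+F_1(t)x)\mu(dx)+\mathbf{V}(t,\mu)$. The first term is explicit, so everything reduces to the standard MFTC value function $\mathbf{V}$. By Lemma~\ref{lem:der_h}, the reformulated problem \eqref{cost^v'} satisfies the linear-convex framework of \cite{AB10}, with control-independent diffusion by (A4). I would therefore invoke the classical-solution result of \cite{AB10} (Theorem~7.1 and its HJB consequence; the time-regularity assumptions are supplied by (A4) and the H\"older continuity in (C)). This gives that $\mathbf{V}$ is differentiable in $t$ and satisfies
\begin{align*}
\dd_t\mathbf{V}(t,\mu)+\frac12\int_\brn \text{Tr}\left[(\sigma\sigma^\top)(t,x,\mu)D_x^2\frac{d\mathbf{V}}{d\nu}(t,\mu)(x)\right]\mu(dx)+\int_\brn \mathbf{H}\left(t,x,\mu,D_x\frac{d\mathbf{V}}{d\nu}(t,\mu)(x)\right)\mu(dx)=0,
\end{align*}
with terminal value $\mathbf{V}(T,\mu)=\int_\brn\Phi(x,\mu)\mu(dx)$. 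If a direct citation is not available in exactly this form, I would redo the \cite{AB10} argument: apply the flow property of the MFTC problem, expand $\mathbf{V}(t+\epsilon,\lr(X^{t,\xi}_{t+\epsilon}))$ by It\^o's formula along measures using the second-order derivatives from \cite[Theorem 7.1]{AB10}, and let $\epsilon\to0$ using the optimality of $u^{t,\xi}$.

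Next I would translate everything into $V$. Write $\psi(t,\mu):=\int_\brn\varphi(F(t,x))\mu(dx)$. Its time derivative is $\int\varphi_y(F(t,x))^\top(\dot F_0(t)+\dot F_1(t)x)\mu(dx)$, and its spatial derivatives are
\begin{align*}
D_x\frac{d\psi}{d\nu}(t,\mu)(x)=F_1(t)^\top\varphi_y(F(t,x)),\qquad D_x^2\frac{d\psi}{d\nu}(t,\mu)(x)=F_1(t)^\top\varphi_{yy}(F(t,x))F_1(t).
\end{align*}
These formulas require $\varphi\in C^2$, which (A2') provides. Hence $D_x\frac{dV}{d\nu}=D_x\frac{d\mathbf{V}}{d\nu}+F_1^\top\varphi_y\circ F$, consistent with \eqref{relation_PQR-PQ} and \eqref{AB10:relation_P}. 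Similarly, $D_x^2\frac{d\mathbf{V}}{d\nu}=D_x^2\frac{dV}{d\nu}-F_1^\top\varphi_{yy}(F)F_1$, which produces exactly the trace term in \eqref{HJB:second-order}. The terminal condition is immediate since $\mathbf{V}(T,\mu)=\int\Phi\,d\mu$.

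The main obstacle is the algebraic identification of the Hamiltonians, together with the extra time-derivative terms coming from $\psi$. Substituting $p=D_x\frac{d\mathbf{V}}{d\nu}=D_x\frac{dV}{d\nu}-F_1^\top\varphi_y(F)$ into $\mathbf{H}$ gives
\begin{align*}
\mathbf{H}(t,x,\mu,p)=\inf_v\Big\{b^\top D_x\tfrac{dV}{d\nu}-b^\top F_1^\top\varphi_y(F)+\mathbf{h}\Big\}.
\end{align*}
Under (C), $-f^\top r$ with $r=-\varphi_y(F)$ equals $\varphi_y(F)^\top f$. Here $f=f_0+f_1x+f_4\overline{\mu}+f_7v$, since the $y,z,\overline{\varrho_y},\overline{\varrho_z}$ coefficients vanish. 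By \eqref{relation:f-F}, this equals $\varphi_y(F)^\top\big(-\dot F_0-\dot F_1x-F_1b(t,x,\mu,v)\big)$. So $\hr\big(t,x,F,F_1\sigma,\cdot,D_x\frac{dV}{d\nu},-\varphi_y(F)\big)=\mathbf{H}(t,x,\mu,p)-\varphi_y(F)^\top(\dot F_0+\dot F_1x)$, where the $-\varphi_y(F)^\top F_1 b$ pieces match inside the infimum and the remaining terms do not depend on $v$. Integrating in $\mu$, the leftover term is exactly $-\dd_t\psi$. Adding $\dd_t\psi$ to the $\mathbf{V}$-equation therefore yields \eqref{HJB:second-order}.

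I expect the careful bookkeeping of the $\overline{\mu}$-term $f_4=-F_1b_2$ and of the $z$- and $\varrho$-arguments of $h$ to be the delicate point. The identification $\mathbf{h}(s,x,\mu,v)=h(\ldots)$ in \eqref{def:hh} settles this directly. The regularity claims (differentiability in $t$, functional differentiability in $\mu$, twice differentiability in $x$) are inherited from $\mathbf{V}$ and the smoothness of $\psi$.
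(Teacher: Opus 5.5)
Your proof is correct, but the key identification step goes by a different route from the paper's.

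\textbf{Shared part.} Both arguments start from \eqref{relation:V-VV} and the classical HJB for $\mathbf{V}$ from \cite{AB10}. Both compute $\dd_t V$ and $D_x^2\frac{dV}{d\nu}$ from the explicit term $\psi$ exactly as you do.

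\textbf{The paper's route.} It evaluates $\mathbf{H}$ along the optimal trajectory:
\begin{align*}
\mathbf{H}\big(t,\xi,\lr(\xi),\mathbf{P}^{t,\xi}_t\big)=b\big(t,\xi,\lr(\xi),u^{t,\xi}_t\big)^\top\mathbf{P}^{t,\xi}_t+\mathbf{h}\big(t,\xi,\lr(\xi),u^{t,\xi}_t\big).
\end{align*}
It then proceeds in four steps:
\begin{enumerate}
\item substitute $\mathbf{P}^{t,\xi}_t=P^{t,\xi}_t+F_1(t)^\top R^{t,\xi}_t$ from Proposition~\ref{prop:relation:PQR};
\item use $R^{t,\xi}_t=-\varphi_y(Y^{t,\xi}_t)$ and \eqref{relation:f-F} to absorb the $\dd_t\psi$ term into $-f^\top R^{t,\xi}_t$;
\item recognize $\hr$ through the optimality condition in \eqref{FBSDE:MFTC}, so that $u^{t,\xi}_t$ minimizes the $\hr$-Lagrangian at $(P^{t,\xi}_t,R^{t,\xi}_t)$;
\item replace $P^{t,\xi}_t$ by $D_x\frac{dV}{d\nu}(t,\lr(\xi))(\xi)$ using Theorem~\ref{thm:value_regu_mu}.
\end{enumerate}

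\textbf{Your route.} You prove, for every $(x,p)$, the pointwise identity
\begin{align*}
&\hr\Big(t,x,F(t,x),F_1(t)\sigma(t,x,\mu),\left(I_n\otimes F(t,\cdot)\otimes F_1(t)\sigma(t,\cdot,\mu)\right)\#\mu,\ p+F_1(t)^\top\varphi_y(F(t,x)),\ -\varphi_y(F(t,x))\Big)\\
&\qquad=\mathbf{H}(t,x,\mu,p)-\varphi_y(F(t,x))^\top\big(\dot F_0(t)+\dot F_1(t)x\big).
\end{align*}
This is pure algebra from \eqref{relation:f-F} and \eqref{def:hh}: the $v$-dependent terms $\pm\varphi_y(F)^\top F_1 b$ cancel inside the infimum, and the remainder does not depend on $v$.

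\textbf{Comparison.} Your route is shorter and more elementary. It needs neither Proposition~\ref{prop:relation:PQR}, nor the identification $D_\xi V=P^{t,\xi}_t$, nor any optimality condition. It also shows that the substitution $V=\psi+\mathbf{V}$ turns the MFTC HJB equation into \eqref{HJB:second-order} identically, not just along the value function. What the paper's route buys is a probabilistic interpretation of the arguments of $\hr$ as the original adjoint processes at the initial time, namely $D_x\frac{dV}{d\nu}(t,\mu)(\xi)=P^{t,\xi}_t$ and $-\varphi_y(F(t,\xi))=R^{t,\xi}_t$. The paper builds on this in \eqref{feedback:ZQ} and in Section~\ref{sec:flow}.
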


\begin{proof}
From the relation \eqref{relation:V-VV}, we see that 
\begin{equation}\label{feedback:HJB_1}
\begin{aligned}
    \dd_t V(t,\lr(\xi))&= \dd_t\mathbf{V} (t,\lr(\xi)) + \e\left\{ \left[\dot{F}_0(t)+\dot{F}_1(t)\xi\right]^\top \varphi_y\Big(F_0(t)+F_1(t)\xi \Big) \right\};\\
    %D_{x}\frac{dV}{d\nu}(t,\lr(\xi))(\xi)&=D_{x}^2\frac{d\mathbf{V}}{d\nu}(t,\lr(\xi))(\xi)+F_1(t)^\top \varphi_y\Big(F_0(t)+F_1(t)\xi \Big);\\
    D_{x}^2\frac{dV}{d\nu}(t,\lr(\xi))(\xi)&=D_{x}^2\frac{d\mathbf{V}}{d\nu}(t,\lr(\xi))(\xi)+F_1(t)^\top \varphi_{yy}\Big(F_0(t)+F_1(t)\xi \Big)F_1(t).
\end{aligned}
\end{equation}
From \cite[Theorem 7.2]{AB10}, we know that $\mathbf{V}$ is the classical solution of the HJB equation corresponding to the MFTC problem \eqref{cost^v'}; and then, by using the characterization of the derivative of $\mathbf{V}$ in \eqref{AB10:relation_P}, we know that 
\begin{align*}
    -\dd_t \mathbf{V} (t,\lr(\xi))=\ & \e\bigg\{\frac{1}{2}\text{Tr}\left[\left(\sigma\sigma^\top\right) (t,\xi,\lr(\xi)) D^2_{x} \frac{d\mathbf{V}}{d\nu}(t,\lr(\xi))(\xi)\right] \\
    &\qquad +\mathbf{H}\left(t, \xi, \lr(\xi), D_{x}\frac{d\mathbf{V}}{d\nu}(t,\lr(\xi))(\xi) \right) \bigg\}\\
    =\ & \e\bigg\{\frac{1}{2}\text{Tr}\left[\left(\sigma\sigma^\top\right) (t,\xi,\lr(\xi)) D^2_{x} \frac{d\mathbf{V}}{d\nu}(t,\lr(\xi))(\xi)\right] \\
    &\qquad + b\left(t,\xi,\lr(\xi),u^{t,\xi}_t \right)^\top \mathbf{P}^{t,\xi}_t + \mathbf{h} \left(t,\xi,\lr(\xi),u^{t,\xi}_t \right) \bigg\}.
\end{align*}
Then, by applying the relation \eqref{feedback:Y}, the characterization of $Z^{t,\xi}$ in \eqref{relation:YZ}, the definition of the functional $\mathbf{h}$ in \eqref{def:hh} and the relation between $\mathbf{P}^{t,\xi}$ and $P^{t,\xi}$ in \eqref{relation_PQR-PQ}, we have
\begin{align*}
    &-\dd_t \mathbf{V} (t,\lr(\xi))\\
    =\ & \e\bigg\{\frac{1}{2}\text{Tr}\left[\left(\sigma\sigma^\top\right) (t,\xi,\lr(\xi)) D^2_{x} \frac{d\mathbf{V}}{d\nu}(t,\lr(\xi))(\xi)\right] + b\left(t,\xi,\lr(\xi),u^{t,\xi}_t \right)^\top \left(P^{t,\xi}_t+F_1(t)^\top R^{t,\xi}_t\right) \\
    &\quad + h \left(t,\xi,Y^{t,\xi}_t,Z^{t,\xi}_t,\lr\left(\xi,Y^{t,\xi}_t,Z^{t,\xi}_t\right),u^{t,\xi}_t \right) \bigg\}.
\end{align*}
Combining the last equality with \eqref{feedback:HJB_1}, by using the initial condition for $R^{t,\xi}_t$ in System \eqref{FBSDE:MFTC}, and the relations in \eqref{relation:f-F}, we know that 
\begin{align*}
    &-\dd_t V(t,\lr(\xi))\\
    =\ & \e\Bigg\{\frac{1}{2}\text{Tr}\left\{\left(\sigma\sigma^\top\right) (t,\xi,\lr(\xi)) \left[D_{x}^2\frac{dV}{d\nu}(t,\lr(\xi))(\xi)-F_1(t)^\top \varphi_{yy}\Big(F_0(t)+F_1(t)\xi \Big)F_1(t)\right]\right\} \\
    &\quad + b\left(t,\xi,\lr(\xi),u^{t,\xi}_t \right)^\top \left(P^{t,\xi}_t+F_1(t)^\top R^{t,\xi}_t\right) + h \left(t,\xi,Y^{t,\xi}_t,Z^{t,\xi}_t,\lr\left(\xi,Y^{t,\xi}_t,Z^{t,\xi}_t\right),u^{t,\xi}_t \right) \\
    &\quad -\left[\dot{F}_0(t)+\dot{F}_1(t)\xi\right]^\top \varphi_y\Big(Y^{t,\xi}_t \Big) \Bigg\} \\
    %=\ & \e\Bigg\{\frac{1}{2}\text{Tr}\left\{\left(\sigma\sigma^\top\right) (t,\xi,\lr(\xi))  \left[D_{x}^2\frac{dV}{d\nu}(t,\lr(\xi))(\xi)-F_1(t)^\top \varphi_{yy}\Big(F_0(t)+F_1(t)\xi \Big)F_1(t)\right]\right\} \\
    %&\quad + b\left(t,\xi,\lr(\xi),u^{t,\xi}_t \right)^\top P^{t,\xi}_t + h \left(t,\xi,Y^{t,\xi}_t,Z^{t,\xi}_t,\lr\left(\xi,Y^{t,\xi}_t,Z^{t,\xi}_t\right),u^{t,\xi}_t, Y^{t,\xi}_t,Z^{t,\xi}_t \right) \\
    %&\quad + \left[\dot{F}_0(t)+\dot{F}_1(t)\xi + F_1(t) b\left(t,\xi,\lr(\xi),u^{t,\xi}_t \right)\right]^\top  R^{t,\xi}_t \Bigg\} \\
    =\ & \e\Bigg\{\frac{1}{2}\text{Tr}\left\{ \left(\sigma\sigma^\top\right) (t,\xi,\lr(\xi))  \left[D_{x}^2\frac{dV}{d\nu}(t,\lr(\xi))(\xi)-F_1(t)^\top \varphi_{yy}\Big(F_0(t)+F_1(t)\xi \Big)F_1(t)\right]\right\} \\
    &\quad + b\left(t,\xi,\lr(\xi),u^{t,\xi}_t \right)^\top P^{t,\xi}_t + h \left(t,\xi,Y^{t,\xi}_t,Z^{t,\xi}_t,\lr\left(\xi,Y^{t,\xi}_t,Z^{t,\xi}_t\right),u^{t,\xi}_t \right) \\
    &\quad -f\left(t,\xi,Y^{t,\xi}_t,Z^{t,\xi}_t,\lr\left(\xi,Y^{t,\xi}_t,Z^{t,\xi}_t\right), u^{t,\xi}_t \right)^\top  R^{t,\xi}_t \Bigg\}.
\end{align*}
Then, from the definition of the function $\hr$ defined in \eqref{def:Ham'}, the characterization for $P^{t,\xi}_t$ in Theorem~\ref{thm:value_regu_mu} and again the initial condition for $R^{t,\xi}_t$, we have 
\begin{align*}
    &-\dd_t V(t,\lr(\xi))\\
    %=\ & \e\Bigg\{ \frac{1}{2}\text{Tr}\left\{ \left(\sigma\sigma^\top\right) (t,\xi,\lr(\xi))  \left[D_{x}^2\frac{dV}{d\nu}(t,\lr(\xi))(\xi)-F_1(t)^\top \varphi_{yy}\Big(F_0(t)+F_1(t)\xi \Big)F_1(t)\right]\right\} \\
    %&\quad +\hr\bigg(t, \xi, Y^{t,\xi}_t,  Z^{t,\xi}_t, \lr\left(\xi,Y^{t,\xi}_t,  Z^{t,\xi}_t\right), P^{t,\xi}_t, R^{t,\xi}_t  \bigg) \Bigg\} \\
    =\ & \e\Bigg\{ \frac{1}{2}\text{Tr}\left\{ \left(\sigma\sigma^\top\right) (t,\xi,\lr(\xi))  \left[D_{x}^2\frac{dV}{d\nu}(t,\lr(\xi))(\xi)-F_1(t)^\top \varphi_{yy}\Big(F_0(t)+F_1(t)\xi \Big)F_1(t)\right]\right\} \\
    &\quad +\hr\bigg(t, \xi, Y^{t,\xi}_t,  Z^{t,\xi}_t, \lr\left(\xi,Y^{t,\xi}_t,  Z^{t,\xi}_t\right), D_{x} \frac{dV}{d\nu}(t,\lr(\xi))(\xi),  -\varphi_y\left(Y^{t,\xi}_t\right)  \bigg) \Bigg\}.
\end{align*}
From \eqref{feedback:Y} and \eqref{relation:YZ}, we know that $Y^{t,\xi}_t=F(t,\xi)$ and $Z^{t,\xi}_t =  F_1(t)\; \sigma\left(t,\xi,\lr(\xi)\right)$, and therefore,
\begin{align*}
    \lr\left(\xi,Y^{t,\xi}_t,  Z^{t,\xi}_t \right)=\left(I_n\otimes F(t,\cdot)\otimes F_1(t)\sigma(t,\cdot,\lr(\xi))\right) \# \lr(\xi). 
\end{align*}
With these results, by using the fact that $\lr(\xi)=\mu$, we obtain \eqref{HJB:second-order}. %\begin{align*}
    %&-\dd_t V(t,\lr(\xi))\\
    %=\ & \e\Bigg\{\frac{1}{2}\text{Tr}\left\{ \left(\sigma\sigma^\top\right) (t,\xi,\lr(\xi))  \left[D_{x}^2\frac{dV}{d\nu}(t,\lr(\xi))(\xi)-F_1(t)^\top \varphi_{yy}\Big(F_0(t)+F_1(t)\xi \Big)F_1(t)\right]\right\}\\
    %&\quad +\hr\bigg(t,\ \xi,\ F(t,\xi),\  F_1(t)\sigma(t,\xi,\lr(\xi)),\ \left(I_n\otimes F(t,\cdot)\otimes F_1(t)\sigma(t,\cdot,\lr(\xi))\right) \# \lr(\xi),\\
    %&\qquad\qquad D_{x} \frac{dV}{d\nu}(t,\lr(\xi))(\xi), \ -\varphi_y\left(F(t,\xi)\right)  \bigg) \Bigg\}.
%\end{align*}
%Together with the terminal condition, 
\end{proof}

The HJB equation \eqref{HJB:second-order} contains the standard second-order MFTC HJB equation (see, e.g., \cite{AB10}) as a special case, obtained by setting $F\equiv 0$ and $\varphi\equiv 0$. The HJB equation \eqref{HJB:second-order} is consistent in form with \eqref{HJB}, with the only difference being that the latter does not contain the following two terms:
\begin{equation}\label{feedback:ZQ}
\begin{aligned}
    &F_1(t)\sigma(t,\xi,\lr(\xi))\qquad \text{(corresponding to $Z^{t,\xi}_t$)},\\
    &\left[D_{x}^2\frac{dV}{d\nu}(t,\lr(\xi))(\xi)-F_1(t)^\top \varphi_{yy}\Big(F(t,\xi) \Big)F_1(t)\right]\sigma (t,\xi,\lr(\xi))\qquad \text{(corresponding to $\mathbf{Q}^{t,\xi}_t$)}.
\end{aligned}
\end{equation}
because \eqref{HJB} is derived under the first-order assumption $\sigma\equiv 0$. However, \eqref{HJB:second-order} cannot subsume \eqref{HJB}, as the former requires the feedback structure \eqref{feedback:Y}, while the latter allows an arbitrary linear controlled BSDE.

\section{Second-order HJB Equation: the case with flow property}\label{sec:flow}

In the proof of the differentiability of $V$ in $t$ for the first-order problem in Theorem~\ref{thm:HJB}, we directly compute the derivatives of the processes $\Theta^{t+\epsilon,\xi}_s,\ t+\epsilon\le s\le T$  with respect to $\epsilon$, and characterize the limiting derivatives as solutions of a new system of FBSDEs of the form \eqref{FBSDEs:high}. This approach differs significantly from the standard stochastic control framework for SDEs, where the flow property $X^{t,\xi}_s=X^{r,X^{t,\xi}_r}_s$, $t\le r\le s\le T$ plays a central role in the derivation of the dynamic programming principle and the HJB equation. In our setting, however, the presence of both a controlled SDE and a controlled BSDE means that the flow property generally fails, rendering the standard approach inapplicable. We therefore develop an alternative method (based on Lemma~\ref{lem:de_bound} through Theorem~\ref{thm:HJB}) to derive the HJB equation.

In this section, we continue to study the second-order problem. We first characterize the flow property by an equivalent condition, and then use it to derive the associated HJB equation; finally, we provide sufficient conditions under which the flow property holds. The flow property is defined as follows:
\begin{align}\label{flow_property}
    \Pi^{t,\xi}_s=\Pi^{r,X^{t,\xi}_r}_s,\quad t\le r\le s\le T, 
\end{align}
where $\Pi^{t,\xi}:=\left(\theta^{t,\xi},P^{t,\xi},Q^{t,\xi},R^{t,\xi},u^{t,\xi}\right):=\left(X^{t,\xi},Y^{t,\xi},Z^{t,\xi},P^{t,\xi},Q^{t,\xi},R^{t,\xi},u^{t,\xi}\right)$ denotes the unique adapted solution of System \eqref{FBSDE:MFTC} with initial condition $(t,\xi)$. 

\subsection{Flow property and dynamic programming principle}

We begin by giving the following necessary and sufficient condition for the validity of the flow property \eqref{flow_property}, whose proof is given in Appendix~\ref{App:prop:s&n}.

\begin{proposition}\label{prop:s&n}
    Under Assumptions (A1)-(A3), suppose that $\varphi\in C^3$. Then, the validity of the flow property \eqref{flow_property} is equivalent to the validity of the following relations: for $s\in[t,T)$,
    \begin{align}
        & -\varphi_{yy} \left(Y^{t,\xi}_s\right) f\left(s,\theta^{t,\xi}_s,\lr\left(\theta^{t,\xi}_s\right), u^{t,\xi}_s \right) + \frac{1}{2}\text{Tr}\left[\varphi_{yyy} \left(Y^{t,\xi}_s\right)Z^{t,\xi}_s\left(Z^{t,\xi}_s\right)^\top \right] \notag\\
        &\qquad =-f_2(s)^\top  R^{t,\xi}_s -f_5(s)^\top  \e\left[R^{t,\xi}_s\right] + h_y \left(s,\theta^{t,\xi}_s,\lr\left(\theta^{t,\xi}_s\right),u^{t,\xi}_s\right) \notag\\
        &\quad\qquad +\widetilde{\e} \bigg[D_{y'}\frac{dh}{d\nu} \left(s,\widetilde{\theta^{t,\xi}_s},\lr\left({ \theta^{t,\xi}_s}\right),\widetilde{u^{t,\xi}_s}\right)\left({ \theta^{t,\xi}_s}\right) \bigg]; \label{condition:1} \\
        &\varphi_{yy} \left(Y^{t,\xi}_s\right) Z^{t,\xi,j}_s = -f^j_3(s)^\top  R^{t,\xi}_s-f^j_6(s)^\top  \e\left[R^{t,\xi}_s\right] +h_{z^j} \left(s,\theta^{t,\xi}_s,\lr\left(\theta^{t,\xi}_s\right),u^{t,\xi}_s\right) \notag\\
        &\qquad\qquad\qquad\qquad +\widetilde{\e} \bigg[D_{{z'}^j}\frac{dh}{d\nu} \left(s,\widetilde{\theta^{t,\xi}_s},\lr\left({ \theta^{t,\xi}_s}\right),\widetilde{u^{t,\xi}_s}\right)\left({ \theta^{t,\xi}_s}\right) \bigg]. \label{condition:2}
    \end{align}
    Moreover, if \eqref{condition:1}-\eqref{condition:2} hold, then we have the following relations
    \begin{align}\label{feedback_RP}
        &R^{t,\xi}_s= - \varphi_y \left(Y^{t,\xi}_s\right),\qquad P^{t,\xi}_s=D_{x}\frac{dV}{d\nu}\left(s,\lr\left(X_s^{t,\xi}\right)\right)\left(X_s^{t,\xi}\right).
    \end{align}
\end{proposition}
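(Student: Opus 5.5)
The whole argument runs through one observation. In the system $\mathcal{E}$ restarted at time $r$ from $X^{t,\xi}_r$, the forward components $X,R$ have initial data $X_r=X^{t,\xi}_r$ and $R_r=-\varphi_y(Y_r)$. The original solution $\Pi^{t,\xi}$, restricted to $[r,T]$, satisfies the same $X$, $Y$, $P$ equations and the same optimality condition. It also satisfies an $R$-equation with the same drift and diffusion, but its $R$ starts from $R^{t,\xi}_r$ rather than from $-\varphi_y(Y^{t,\xi}_r)$.

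By uniqueness in Theorem~\ref{main2_thm} applied on $[r,T]$, the flow property \eqref{flow_property} is therefore equivalent to
\[
R^{t,\xi}_r=-\varphi_y\left(Y^{t,\xi}_r\right)\quad\text{for all } r\in[t,T],\ \mathbb{P}\text{-a.s.}
\]
For the sufficiency direction of this equivalence I would check that $\Pi^{t,\xi}|_{[r,T]}$ solves \eqref{FBSDE:MFTC} with initial data $(r,X^{t,\xi}_r)$. The laws entering the coefficients coincide, since they are laws of the same processes, so $\Pi^{t,\xi}|_{[r,T]}$ is the unique solution. Conversely, if the flow property holds, then $R^{t,\xi}_r=R^{r,X^{t,\xi}_r}_r=-\varphi_y(Y^{r,X^{t,\xi}_r}_r)=-\varphi_y(Y^{t,\xi}_r)$. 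This identity also gives the first relation in \eqref{feedback_RP}.

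The remaining step is to turn the identity $R^{t,\xi}_s=-\varphi_y(Y^{t,\xi}_s)$ for all $s$ into \eqref{condition:1}--\eqref{condition:2}, using $\varphi\in C^3$.
\begin{itemize}
\item Apply It\^o's formula to $-\varphi_y(Y^{t,\xi}_s)$ with $dY_s=-f\,ds+Z_s\,dB_s$. Its drift is $\varphi_{yy}(Y_s)f-\frac12\mathrm{Tr}[\varphi_{yyy}(Y_s)Z_sZ_s^\top]$, and its $dB^j$ coefficient is $-\varphi_{yy}(Y_s)Z^j_s$. Here $\mathrm{Tr}[\cdot]$ is read componentwise as the contraction of the third derivative tensor.
\item Write out the $R$-equation of \eqref{FBSDE:MFTC}, expanding $L_y$, $L_{z^j}$ and the $D_{y'}\frac{dL}{d\nu}$, $D_{{z'}^j}\frac{dL}{d\nu}$ terms from \eqref{Lagrangian} and the linear structure (A1). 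Its drift is $-[-f_2^\top R_s-f_5^\top\e R_s+h_y+\widetilde{\e}D_{y'}\frac{dh}{d\nu}]$, and its $dB^j$ coefficient is the analogous expression with $f_3^j$, $f_6^j$, $h_{z^j}$ and $D_{{z'}^j}\frac{dh}{d\nu}$.
\item The two It\^o processes start from the same value at $s=t$. They coincide for all $s$ if and only if their drifts and diffusion coefficients agree $ds\otimes d\mathbb{P}$-a.e., by uniqueness of the semimartingale decomposition. Matching them gives exactly \eqref{condition:1} and \eqref{condition:2}, after substituting $R_s=-\varphi_y(Y_s)$ only where it appears inside the equations, or equivalently keeping $R$ as is.
\item For the converse, assume \eqref{condition:1}--\eqref{condition:2} with $R=R^{t,\xi}$. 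Then $\Delta_s:=R^{t,\xi}_s+\varphi_y(Y^{t,\xi}_s)$ has zero drift, zero diffusion and $\Delta_t=0$, hence $\Delta\equiv0$.
\end{itemize}
I expect this matching of terms to be the main technical step. The points needing care are the Fubini swap that writes $\widetilde{\e}[D_{y'}\frac{dL}{d\nu}(\widetilde{\pi})(\theta)]$ in the stated form, and the continuity in $s$ needed to pass from a.e. to the stated relations.

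Finally, for $P$ in \eqref{feedback_RP}, the flow property gives $P^{t,\xi}_s=P^{s,X^{t,\xi}_s}_s$. By Theorem~\ref{thm:value_regu_mu}, \eqref{DV_P} together with \eqref{relation:DV-dV} at initial time $s$ and initial condition $X^{t,\xi}_s\in L^2_{\f_s}$, this equals
\[
D_x\frac{dV}{d\nu}\left(s,\lr\left(X^{t,\xi}_s\right)\right)\left(X^{t,\xi}_s\right).
\]
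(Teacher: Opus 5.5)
Your proposal is correct and takes essentially the same route as the paper. Uniqueness for System~\eqref{FBSDE:MFTC} restarted at $(r,X^{t,\xi}_r)$ reduces the flow property to $R^{t,\xi}_r=-\varphi_y(Y^{t,\xi}_r)$ for all $r$; It\^o's formula for $\varphi_y(Y^{t,\xi})$, together with uniqueness of the semimartingale decomposition, then gives \eqref{condition:1}--\eqref{condition:2}; and the $P$-relation follows from the flow property and Theorem~\ref{thm:value_regu_mu}. Your explicit converse via $\Delta_s=R^{t,\xi}_s+\varphi_y(Y^{t,\xi}_s)$ usefully spells out a step the paper leaves implicit; also, since $Z^{t,\xi}$ and $u^{t,\xi}$ are only $\lr^2_\f$ processes, the two conditions should be read $ds\otimes d\mathbb{P}$-a.e., so no continuity argument in $s$ is needed (nor is one available at this stage).
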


From Proposition~\ref{prop:s&n}, the flow property is equivalent to Conditions \eqref{condition:1}-\eqref{condition:2}. These conditions are not generally satisfied, so the flow property typically fails for control problems of mean-field FBSDEs. In Subsection~\ref{subsec:example}, we shall provide sufficient conditions on $\varphi$ and $h$ that ensure \eqref{condition:1}-\eqref{condition:2} and hence the flow property \eqref{flow_property}. Before doing so, we assume that these conditions hold and proceed to derive the dynamic programming principle, the HJB equation, and the decoupling field. We emphasize that all subsequent results remain valid if the validity of these conditions is replaced by Assumption (D), introduced in Subsection~\ref{subsec:example}.

With the flow property \eqref{flow_property} in hand, we can establish the following dynamic programming principle for the optimal control problem of mean-field FBSDEs.

\begin{theorem}\label{thm:DPP}
    Under Assumptions (A1)-(A3) and the validity of Conditions \eqref{condition:1}-\eqref{condition:2}, for any $0\le t\le s\le T$, we have
    \begin{align}
        &V(t,\lr(\xi))-V\left(s,\lr\left(X^{t,\xi}_s\right)\right) \notag \\
        =\ & \e \int_t^s \bigg\{ h\left(r,\theta^{t,\xi}_r,\lr\left(\theta^{t,\xi}_r\right),u^{t,\xi}_r\right) -  \left(R^{t,\xi}_r\right)^\top f\left(r,\theta^{t,\xi}_r, \lr\left(\theta^{t,\xi}_r\right), u^{t,\xi}_r \right) \notag \\
        &\quad\qquad - \frac{1}{2} \text{Tr}\left[\varphi_{yy} \left(Y^{t,\xi}_r\right)Z^{t,\xi}_r\left(Z^{t,\xi}_r\right)^\top \right]\bigg\}dr. \label{thm:DPP_0}
    \end{align}
\end{theorem}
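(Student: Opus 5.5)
The plan is to write both values through the representation \eqref{value_equation} and let the flow property \eqref{flow_property} identify the tail of the optimal trajectory from $(t,\xi)$ with the optimal trajectory from $(s,X^{t,\xi}_s)$. By Proposition~\ref{prop:s&n}, Conditions \eqref{condition:1}--\eqref{condition:2} give \eqref{flow_property}, so $\Pi^{t,\xi}_r=\Pi^{s,X^{t,\xi}_s}_r$ for $r\in[s,T]$. In particular $u^{s,X^{t,\xi}_s}$ coincides on $[s,T]$ with the restriction of $u^{t,\xi}$, and so do the state processes. Since $V$ depends only on the law, the value $V(s,\lr(X^{t,\xi}_s))$ equals $J^{s,X^{t,\xi}_s}(u^{s,X^{t,\xi}_s})$, which we can write as
\begin{align*}
V\left(s,\lr\left(X^{t,\xi}_s\right)\right)=\e\bigg[\int_s^T h\left(r,\theta^{t,\xi}_r,\lr\left(\theta^{t,\xi}_r\right),u^{t,\xi}_r\right)dr+\Phi\left(X^{t,\xi}_T,\lr\left(X^{t,\xi}_T\right)\right)+\varphi\left(Y^{t,\xi}_s\right)\bigg].
\end{align*}
This needs care on one point: $X^{t,\xi}_s$ is $\f_s$-measurable rather than independent of the increments of $B$ after $s$ in a way that alters the law-only dependence. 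The problem at initial time $s$ is posed with $\f_s$-measurable initial data, though, so \eqref{value_equation} applies directly.

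Subtracting this from \eqref{value_equation} at $(t,\xi)$, the $\Phi$ terms and the $h$-integrals over $[s,T]$ cancel. What remains is
\begin{align*}
V(t,\lr(\xi))-V\left(s,\lr\left(X^{t,\xi}_s\right)\right)=\e\int_t^s h\left(r,\theta^{t,\xi}_r,\lr\left(\theta^{t,\xi}_r\right),u^{t,\xi}_r\right)dr+\e\left[\varphi\left(Y^{t,\xi}_t\right)-\varphi\left(Y^{t,\xi}_s\right)\right].
\end{align*}
Next, apply It\^o's formula to $\varphi(Y^{t,\xi}_r)$ on $[t,s]$, using the BSDE dynamics $dY_r=-f\,dr+Z_r\,dB_r$:
\begin{align*}
\varphi\left(Y_s\right)-\varphi\left(Y_t\right)=\int_t^s\Big[-\varphi_y(Y_r)^\top f(r,\cdot)+\tfrac12\text{Tr}\left[\varphi_{yy}(Y_r)Z_rZ_r^\top\right]\Big]dr+\int_t^s\varphi_y(Y_r)^\top Z_r\,dB_r.
\end{align*}
Taking expectations kills the stochastic integral. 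This is justified because $\varphi_y$ has linear growth, $Y\in\sr^2_\f$ and $Z\in\lr^2_\f$, and a localization argument handles the case where the local martingale property needs support. Here we treat $\varphi\in C^2$ with bounded $\varphi_{yy}$, as in (A2') or in the $C^3$ hypothesis of Proposition~\ref{prop:s&n}.

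Finally, substitute the relation $R^{t,\xi}_r=-\varphi_y(Y^{t,\xi}_r)$ from \eqref{feedback_RP}. Then $-\e[\varphi(Y_s)-\varphi(Y_t)]$ becomes $\e\int_t^s\{-R_r^\top f-\tfrac12\text{Tr}[\varphi_{yy}ZZ^\top]\}dr$, and combining this with the $h$-integral gives \eqref{thm:DPP_0}.

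The main obstacle is the first step: rigorously identifying $V(s,\lr(X^{t,\xi}_s))$ with the cost along the restricted trajectory. This requires \eqref{flow_property} to hold as an identity of the full solution $\Pi$, including $Y$ and the initial value of $R$, so that $u^{t,\xi}|_{[s,T]}$ is indeed optimal from $(s,X^{t,\xi}_s)$. It also requires the uniqueness in Theorem~\ref{thm:solvability}, invoked with the filtration started at $s$. I would handle this by citing Proposition~\ref{prop:s&n} together with Theorem~\ref{thm:suff}: the restricted process solves System \eqref{FBSDE:MFTC} on $[s,T]$ precisely because $R_s=-\varphi_y(Y_s)$, which is the required initial condition.
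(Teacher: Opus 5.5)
Your proposal is correct and takes essentially the same route as the paper: the flow property from Proposition~\ref{prop:s&n}, the representation \eqref{value_equation} for both values, It\^o's formula applied to $\varphi(Y^{t,\xi}_r)$, and the substitution $R^{t,\xi}_r=-\varphi_y(Y^{t,\xi}_r)$ from \eqref{feedback_RP}. The only difference is cosmetic: the paper applies It\^o on $[s,T]$ to both trajectories, rewriting each value in terms of $\varphi(Y_T)$ before subtracting, whereas you subtract first and apply It\^o once on $[t,s]$, which is slightly more economical.
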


\begin{proof}
We first apply It\^o's formula on $\varphi\left(Y^{t,\xi}_r\right)$, integrate on $[s,T]$ and take expectation:  
\begin{align*}
    &\e\left[\varphi\left(Y^{t,\xi}_T\right)-\varphi\left(Y^{t,\xi}_s\right)\right]\\
    =\ & \e\int_s^T\left\{ -f\left(r,\theta^{t,\xi}_r,\lr\left({ \theta^{t,\xi}_r}\right),u^{t,\xi}_r\right)^\top \varphi_y\left(Y^{t,\xi}_r\right) + \frac{1}{2}\text{Tr}\left[\varphi_{yy} \left(Y^{t,\xi}_r\right)Z^{t,\xi}_r\left(Z^{t,\xi}_r\right)^\top \right] \right\}dr.
\end{align*}
By using the first relation in \eqref{feedback_RP}, we have
\begin{align}
        &\e\left[\varphi\left(Y^{t,\xi}_s\right)\right] \notag \\
        =\ & \e\left[\varphi\left(Y^{t,\xi}_T\right)\right]-\e\int_s^T\left\{ f\left(r,\theta^{t,\xi}_r,\lr\left({ \theta^{t,\xi}_r}\right),u^{t,\xi}_r\right)^\top R^{t,\xi}_r + \frac{1}{2}\text{Tr}\left[\varphi_{yy} \left(Y^{t,\xi}_r\right)Z^{t,\xi}_r\left(Z^{t,\xi}_r\right)^\top \right] \right\}. \label{lem:DPP_0'}
\end{align}
From \eqref{value_equation} and the relation \eqref{lem:DPP_0'}, we know that 
\begin{align*}
    V(t,\lr(\xi))%=\ & \e\left[\int_t^T h\left(r,\theta_r^{t,\xi},\lr\left(\theta^{t,\xi}_r\right), u^{t,\xi}_r\right) dr + \Phi\left(X_T^{t,\xi},\lr\left(X_T^{t,\xi}\right)\right)+\varphi\left(Y_t^{t,\xi}\right) \right]\\
    =\ &  \e\bigg\{\int_t^T \bigg\{ h\left(r,\theta_r^{t,\xi},\lr\left(\theta^{t,\xi}_r\right), u^{t,\xi}_r\right) -  \left(R^{t,\xi}_r\right)^\top f\left(r,\theta^{t,\xi}_r,\lr\left(\theta^{t,\xi}_r\right), u^{t,\xi}_r \right) \\
    &\qquad\qquad  - \frac{1}{2}\text{Tr}\left[\varphi_{yy} \left(Y^{t,\xi}_r\right)Z^{t,\xi}_r\left(Z^{t,\xi}_r\right)^\top \right] \bigg\}dr  +\Phi\left(X_T^{t,\xi},\lr\left(X_T^{t,\xi}\right)\right)+\varphi\left(Y^{t,\xi}_T\right)\bigg\}.
\end{align*}
Applying the above relation with $V\left(s,\lr\left(X^{t,\xi}_s\right)\right)$, and using the flow property \eqref{flow_property}, we have
\begin{align*}
    &V\left(s,\lr\left(X^{t,\xi}_s\right)\right)\\
    =\ &  \e\int_s^T \bigg\{ h\left(r,\theta_r^{s,X^{t,\xi}_s},\lr\left(\theta_r^{s,X^{t,\xi}_s}\right),u^{s,X^{t,\xi}_s}_r\right) - \left(R^{s,X^{t,\xi}_s}_r\right)^\top f\left(r,\theta^{s,X^{t,\xi}_s}_r, \lr\left(\theta_r^{s,X^{t,\xi}_s}\right), u^{s,X^{t,\xi}_s}_r \right) \\
    &\qquad\qquad - \frac{1}{2}\text{Tr}\left[\varphi_{yy} \left(Y^{s,X^{t,\xi}_s}_r\right)Z^{s,X^{t,\xi}_s}_r\left(Z^{s,X^{t,\xi}_s}_r\right)^\top \right]  \bigg\}dr \\
    &+\e\bigg[\Phi\left(X_T^{s,X^{t,\xi}_s},\lr\left(X_T^{s,X^{t,\xi}_s}\right)\right)+\varphi\left(Y^{s,X^{t,\xi}_s}_T\right)\bigg]\\
    =\ &  \e\bigg\{\int_s^T \bigg\{ h\left(r,\theta_r^{t,\xi},\lr\left(\theta_r^{t,\xi}\right),u^{t,\xi}_r\right) -  \left(R^{t,\xi}_r\right)^\top f\left(r,\theta^{t,\xi}_r,\lr\left(\theta_r^{t,\xi}\right), u^{t,\xi}_r \right) \\
    &\qquad\qquad - \frac{1}{2} \text{Tr}\left[\varphi_{yy} \left(Y^{t,\xi}_r\right)Z^{t,\xi}_r\left(Z^{t,\xi}_r\right)^\top \right]\bigg\}dr +\Phi\left(X_T^{t,\xi},\lr\left(X_T^{t,\xi}\right)\right)+\varphi\left(Y^{t,\xi}_T\right)\bigg\}.
\end{align*}
By combining the above two equalities, we obtain \eqref{thm:DPP_0}.
\end{proof}

To proceed to derive the HJB equation, we henceforth impose Assumption (A4) (i.e., the diffusion is independent of the control, and all coefficients are $\frac{1}{2}$-H\"older continuous in time). Using the flow property \eqref{flow_property}, we first establish the following time-regularity estimates for the processes.

\begin{proposition}\label{prop:flow:cinti:t}
    Under Assumptions (A1)-(A4) and the validity of Conditions \eqref{condition:1}-\eqref{condition:2}, we have
    \begin{align}\label{prop:conti_t_0}
        &\e\left[\left|X^{t,\xi}_s - X^{t,\xi}_t \right|^2+\left|Y^{t,\xi}_s - Y^{t,\xi}_t \right|^2+\left|P^{t,\xi}_s - P^{t,\xi}_t \right|^2+\left|R^{t,\xi}_s - R^{t,\xi}_t \right|^2\right] \notag \\
        \le\ & C(L,T,\lambda)\left(1+\|\xi\|_2^2\right)(s-t),\quad 0\le t\le s\le T.
    \end{align}
    The value function $V$ satisfies
    \begin{align}
        \left|D_{x}\frac{dV}{d\nu}(s,\mu)(x) - D_{x}\frac{dV}{d\nu}(t,\mu)(x) \right|  \le C(L,T,\lambda)\left(1+|x|+|\mu|_2\right)(s-t)^{\frac{1}{2}}. \label{prop:conti_t_0'}
    \end{align}
    Moreover, if Assumption (A2') holds, then $D^2_{x}\frac{dV}{d\nu}(t,\mu)(x)$ is continuous in $(t,\mu,x)$ and
    \begin{align}
        \left|D^2_{x}\frac{dV}{d\nu}(t,\mu)(x) \right|  \le C(L,T,\lambda). \label{prop:conti_t_0''}
    \end{align}
\end{proposition}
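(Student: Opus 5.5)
The plan has four steps: a direct SDE/BSDE estimate for $(X,Y,P,R)$, then the flow property for the time Hölder bound on $D_x\frac{dV}{d\nu}$, and finally a characterization of the second derivative via the $\xi$-derivative system.

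\textbf{Step 1: the estimate \eqref{prop:conti_t_0}.} The forward component is routine. Since $X^{t,\xi}$ solves an SDE with linear coefficients, and $u^{t,\xi}$, $X^{t,\xi}$ are bounded in $\mathcal{L}^2$ by Theorem~\ref{main2_thm}, we get $\e|X^{t,\xi}_s-X^{t,\xi}_t|^2\le C(1+\|\xi\|_2^2)(s-t)$. The diffusion term contributes the order $(s-t)$, and the drift contributes $(s-t)\int_t^s\e|u_r|^2dr$, which is of smaller order.

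For $Y$, I write
\[
Y_s-Y_t=-\int_t^s f\,dr+\int_t^s Z_r\,dB_r .
\]
The $f$-integral is of order $(s-t)^2$, but $\e\int_t^s|Z_r|^2dr$ is a priori only $o(1)$. This is the main obstacle: we need $\e|Z_r|^2$ bounded uniformly in $r$, i.e.\ a uniform bound on $Z$ in time.

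I would try to get it from the flow property together with the relations \eqref{feedback_RP}. Condition \eqref{condition:2} ties $\varphi_{yy}(Y_r)Z_r$ to $R_r$ and $h_z$, but it does not invert $\varphi_{yy}$. So I would instead use Lipschitz regularity in the initial condition. By the flow property, $(Y_r,Z_r)$ equals $\Pi^{r,X_r}$. The Lipschitz estimate \eqref{lem:kappa_0} from Theorem~\ref{main2_thm} is uniform in the initial time. Applying it at the initial time $r$ with initial data $X^{t,\xi}_r$ and its perturbations, in the spirit of the Malliavin/decoupling argument, should give $\|Z_r\|_2\le C(1+\|\xi\|_2)$ for a.e.\ $r$.

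Concretely, I would write $Z_r$ as the limit of $\frac{1}{\epsilon}\e[\,Y_{r+\epsilon}(B_{r+\epsilon}-B_r)\mid\mathcal F_r]$. The flow property lets me compare $Y_{r+\epsilon}=Y^{r+\epsilon,X_{r+\epsilon}}_{r+\epsilon}$ with a functional of $X_{r+\epsilon}$ that is Lipschitz in $L^2$. Then Cauchy–Schwarz against $B_{r+\epsilon}-B_r$, together with the $X$-increment bound, yields the uniform bound.

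The same argument applies to $P$ (with $Q$ in place of $Z$), since $P_r=D_x\frac{dV}{d\nu}(r,\lr(X_r))(X_r)$ by \eqref{feedback_RP}. For $R$, the relation $R=-\varphi_y(Y)$ and the Lipschitz continuity of $\varphi_y$ reduce everything to the $Y$ estimate.

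\textbf{Step 2: the time Hölder bound \eqref{prop:conti_t_0'}.} By Lemma-type relations analogous to Lemma~\ref{lem:tmux}, I would introduce the decoupled processes $\Pi^{t,x,\mu}$, so that $D_x\frac{dV}{d\nu}(t,\mu)(x)=P^{t,x,\mu}_t$. Then I split
\[
P^{s,x,\mu}_s-P^{t,x,\mu}_t=\big(P^{s,x,\mu}_s-\e P^{t,x,\mu}_s\big)+\e\big(P^{t,x,\mu}_s-P^{t,x,\mu}_t\big).
\]
The second term is handled by the Step 1 estimate applied to the $x$-system. For the first term, the flow property gives $P^{t,x,\mu}_s=D_x\frac{dV}{d\nu}(s,\lr(X^{t,\xi}_s))(X^{t,x,\mu}_s)$. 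I then use the Lipschitz bounds in $x$ (from \eqref{DV:growth}) and in $\mu$ (from \eqref{lem:kappa_0}) to compare it with the value at $(s,\mu,x)$. That comparison costs $\|X_s-X_t\|_2\le C(1+\|\xi\|_2)(s-t)^{1/2}$.

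\textbf{Step 3: the second derivative \eqref{prop:conti_t_0''}.} Under (A2'), I would characterize $D^2_x\frac{dV}{d\nu}(t,\mu)(x)=D_xP^{t,x,\mu}_t$ via the linearized system, as in Lemma~\ref{lem:D_x} but now stochastic. Its well-posedness follows from the continuation argument of Lemma~\ref{lem:extend} applied to a linear system whose coefficients are bounded by $L$. This gives a bound independent of $(x,\mu)$, namely $C(L,T,\lambda)$.

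\textbf{Step 4: continuity of the second derivative in $(t,\mu,x)$.} This follows from stability of the linearized system under perturbation of its coefficients. I would use the uniform continuity assumed in (A2'), dominated convergence, and the continuity of $\Pi^{t,x,\mu}$ in $(t,x,\mu)$ established in Steps 1–2.
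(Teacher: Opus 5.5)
Your proposal is correct in outline. Steps 2--4 follow the paper's route; your Step 2 is actually a more explicit version of the paper's sketch for the $x$-system. Step 1, however, treats $Y$ and $P$ differently.

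\textbf{How the paper handles Step 1.} The paper never bounds $Z$ or $Q$. It uses that $Y^{s,\xi}_s$ and $Y^{t,\xi}_t$ are functions of $\xi$ to write
\[
Y^{t,\xi}_s-Y^{t,\xi}_t=\big(Y^{s,X^{t,\xi}_s}_s-Y^{s,\xi}_s\big)+\e\big[Y^{s,\xi}_s-Y^{s,X^{t,\xi}_s}_s\,\big|\,\xi\big]+\e\big[Y^{t,\xi}_s-Y^{t,\xi}_t\,\big|\,\xi\big].
\]
The first two terms are controlled by the flow property, \eqref{lem:kappa_0} at initial time $s$, and $\|X^{t,\xi}_s-\xi\|_2^2\le C(1+\|\xi\|_2^2)(s-t)$. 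In the last term, conditioning on $\xi$ kills $\int_t^s Z_r\,dB_r$. This is the same expectation trick you use yourself in Step 2; applying it in Step 1 removes what you call the main obstacle.

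\textbf{Your route also works, once made precise.}
\begin{itemize}
\item Compare $Y_{r+\epsilon}=Y^{r+\epsilon,X_{r+\epsilon}}_{r+\epsilon}$ with $Y^{r+\epsilon,X_r}_{r+\epsilon}$. The latter is a function of $X_r$, hence $\f_r$-measurable and orthogonal to $B_{r+\epsilon}-B_r$. This is the same measurability fact the paper uses for $Y^{s,\xi}_s$.
\item Apply \eqref{lem:kappa_0} at initial time $r+\epsilon$, together with $\e\big|\e[A(B^j_{r+\epsilon}-B^j_r)\,|\,\f_r]\big|^2\le\epsilon\,\e|A|^2$. This gives $\big\|\epsilon^{-1}\e[Y_{r+\epsilon}(B^j_{r+\epsilon}-B^j_r)\,|\,\f_r]\big\|_2\le C(1+\|\xi\|_2)$.
\item Conclude via the standard $\lr^2_\f$-convergence $\epsilon^{-1}\e[\int_r^{r+\epsilon}Z_w\,dw\,|\,\f_r]\to Z_r$; the $f$-contribution vanishes in the limit.
\end{itemize}
This costs an extra approximation argument that the paper's route avoids. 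In exchange, it gives more: an $\operatorname{ess\,sup}_r\|Z_r\|_2$ bound (and likewise for $Q$) under (A1)--(A4) and the flow property alone. In the paper such a bound appears only later, through \eqref{relation:Z}.

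\textbf{Correction in Step 2.} The pointwise Lipschitz bounds of $(x,\mu)\mapsto D_x\frac{dV}{d\nu}(s,\mu)(x)$ do not follow from \eqref{DV:growth} or \eqref{lem:kappa_0}. Those are $L^2$ estimates for the lifted map $\xi\mapsto P^{s,\xi}_s$, and they mix the $x$- and $\mu$-dependence. What you need is the second-order analogue of \eqref{tmux-0} for the $x$-system $\Pi^{s,x,\mu}$: stability in the initial point, and stability in the frozen law of $\theta^{s,\cdot}$, the latter fed by \eqref{lem:kappa_0}. Its proof uses the convexity of $\frac{dh}{d\nu}(s,x,y,z,\varrho,v)(\cdot)$, and this is what the paper invokes.

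\textbf{Remark on Step 3.} The continuation argument for the linearized system relies on the monotonicity coming from (A3) and the convexity of $\Phi,\varphi$. Bounded coefficients alone are not enough.
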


The proof of Proposition~\ref{prop:flow:cinti:t} is given in Appendix~\ref{App:prop:flow:cinti:t}. We now state the following result, which gives an expansion of the value function $V$ in the time variable.

\begin{theorem}\label{thm:flow:HJB:pre}
    Under Assumptions (A1), (A2'), (A3), (A4) and the validity of Conditions \eqref{condition:1}-\eqref{condition:2}, for any $0\le t\le s\le T$, we have   
    \begin{align}
        &V(s,\lr(\xi))-V(t,\lr(\xi)) \notag \\
        =\ & -\e\int_t^s \bigg\{\frac{1}{2}\text{Tr}\left[D^2_{x}\frac{dV}{d\nu}\left(r,\lr\left(X^{t,\xi}_r\right)\right)\left(X^{t,\xi}_r\right) \left(\sigma\sigma^\top\right) \left(r,X^{t,\xi}_r,\lr\left(X^{t,\xi}_r\right)\right)\right] \notag \\
        &\qquad\qquad + \hr\bigg(r,\theta^{t,\xi}_r,\lr\left(\theta^{t,\xi}_r\right), D_{x}\frac{dV}{d\nu}\left(r,\lr\left(X_r^{t,\xi}\right)\right)\left(X_r^{t,\xi}\right),  -\varphi_y \left(Y^{t,\xi}_r\right) \bigg)  \notag \\
        &\qquad\qquad - \frac{1}{2} \text{Tr}\left[\varphi_{yy} \left(Y^{t,\xi}_r\right)Z^{t,\xi}_r\left(Z^{t,\xi}_r\right)^\top \right] \bigg\}dr + o\left(|s-t|\right), \label{discuss:Bell_0}
    \end{align}
    where the functional $\hr$ is defined in \eqref{def:Ham'}.
\end{theorem}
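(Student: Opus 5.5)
We combine the dynamic programming principle of Theorem~\ref{thm:DPP} with a first-order expansion of $V$ in the measure argument along the flow $s\mapsto\lr(X^{t,\xi}_s)$. Write
\begin{align*}
V(s,\lr(\xi))-V(t,\lr(\xi))=\Big[V(s,\lr(\xi))-V\big(s,\lr(X^{t,\xi}_s)\big)\Big]+\Big[V\big(s,\lr(X^{t,\xi}_s)\big)-V(t,\lr(\xi))\Big].
\end{align*}
The second bracket equals minus the right-hand side of \eqref{thm:DPP_0}.

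\emph{Step 1: the running terms.} By \eqref{feedback_RP} we substitute $R^{t,\xi}_r=-\varphi_y(Y^{t,\xi}_r)$ and $P^{t,\xi}_r=D_x\frac{dV}{d\nu}(r,\lr(X^{t,\xi}_r))(X^{t,\xi}_r)$. Since $u^{t,\xi}_r$ is the minimizer, the optimality condition gives
\begin{align*}
h-(R^{t,\xi}_r)^\top f=\hr(r,\theta^{t,\xi}_r,\lr(\theta^{t,\xi}_r),P^{t,\xi}_r,R^{t,\xi}_r)-b(r,X^{t,\xi}_r,\lr(X^{t,\xi}_r),u^{t,\xi}_r)^\top P^{t,\xi}_r .
\end{align*}
Hence the second bracket equals
\begin{align*}
-\e\int_t^s\Big\{\hr(\cdots)-\tfrac12\mathrm{Tr}\big[\varphi_{yy}ZZ^\top\big]\Big\}dr+\e\int_t^s b(r,\cdot,u^{t,\xi}_r)^\top P^{t,\xi}_r\,dr .
\end{align*}

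\emph{Step 2: the first bracket.} Apply the Itô-type formula for functions of measures to $V(s,\cdot)$ along $\lr(X^{t,\xi}_r)$ with $s$ frozen. Equivalently, use the Gâteaux differentiability in $\xi$ from Theorem~\ref{thm:value_regu_mu} and a second-order Taylor expansion in $L^2$. Proposition~\ref{prop:flow:cinti:t} gives continuity and boundedness of $D^2_x\frac{dV}{d\nu}$, which makes this expansion available. The result is
\begin{align*}
V\big(s,\lr(X^{t,\xi}_s)\big)-V(s,\lr(\xi))&=\e\int_t^s\Big\{D_x\tfrac{dV}{d\nu}(s,\lr(X_r))(X_r)^\top b(r,\cdot)\\
&\qquad+\tfrac12\mathrm{Tr}\Big[D_x^2\tfrac{dV}{d\nu}(s,\lr(X_r))(X_r)\,\sigma\sigma^\top\Big]\Big\}dr .
\end{align*}
The diffusion term is control-free thanks to (A4). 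If the measure-Itô formula is not directly available at this regularity, I would instead prove it by discretizing $[t,s]$ and expanding each increment to second order using the Lipschitz bound \eqref{DV:growth}.

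\emph{Step 3: replacing $s$ by $r$.} Subtracting the expression of Step 2 from Step 1 gives the claimed expansion, except that the derivatives of $V$ are evaluated at time $s$ rather than $r$. By \eqref{prop:conti_t_0'}, the error from the $P$-term is at most
\begin{align*}
C\int_t^s (s-r)^{1/2}\,dr=O(|s-t|^{3/2})=o(|s-t|).
\end{align*}
The corresponding error in the second-order term is $o(|s-t|)$ by the continuity of $D^2_x\frac{dV}{d\nu}$ in $(t,\mu,x)$, its uniform bound \eqref{prop:conti_t_0''}, and dominated convergence together with \eqref{prop:conti_t_0}.

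\emph{Main obstacle.} The hardest step is Step 2: justifying the second-order Itô expansion of $\mu\mapsto V(s,\mu)$ using only the regularity available in Proposition~\ref{prop:flow:cinti:t}. In particular, no second mixed measure derivative $\frac{d^2V}{d\nu^2}$ is assumed. I expect this to be manageable because the diffusion is driven by a common Brownian motion acting on each particle, and because $\mathbb{E}$ of the martingale part vanishes. Concretely, I would write $X_s-\xi$ as a sum of drift and martingale increments, expand $V(s,\lr(\cdot))$ via its Fréchet derivative $D_xV$, and use the Lipschitz continuity of that derivative in $L^2$ to control the remainder.
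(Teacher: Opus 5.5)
Your proposal is correct and follows essentially the same route as the paper. You split via the DPP of Theorem~\ref{thm:DPP} and apply Itô's formula to $\mu\mapsto V(s,\mu)$ along $\lr(X^{t,\xi}_r)$. That step needs only $D_x\frac{dV}{d\nu}$ and $D^2_x\frac{dV}{d\nu}$ from Proposition~\ref{prop:flow:cinti:t}, not $\frac{d^2V}{d\nu^2}$, because the law flow is deterministic. You then rewrite $h-(R^{t,\xi}_r)^\top f$ through $\hr$ using \eqref{feedback_RP} and the optimality of $u^{t,\xi}_r$, and move the time argument of the derivatives from $s$ to $r$ via \eqref{prop:conti_t_0'}--\eqref{prop:conti_t_0''}.

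Two small cautions. First, your fallback of expanding increments using only the Lipschitz bound \eqref{DV:growth} leaves a remainder of the same order $s-t$ as the trace term. It must therefore genuinely use $D^2_x\frac{dV}{d\nu}$, with the zero-mean Brownian increment killing the variation in the measure argument. Second, the $b$-term error is $o(|s-t|)$ by Cauchy--Schwarz and $\e\int_t^s|u^{t,\xi}_r|^2dr\to0$. Your claimed $O(|s-t|^{3/2})$ would need a uniform-in-$r$ bound on $\|u^{t,\xi}_r\|_2$, which the estimates available at this point do not give, since $u^{t,\xi}_r$ may depend on $Z^{t,\xi}_r$.
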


\begin{proof}
By using Theorem~\ref{thm:DPP}, we can write 
\begin{align}
    &V(s,\lr(\xi))-V(t,\lr(\xi)) \notag \\
    =\ &V(s,\lr(\xi))-V\left(s,\lr\left(X^{t,\xi}_s\right)\right)+V\left(s,\lr\left(X^{t,\xi}_s\right)\right)-V(t,\lr(\xi)) \notag \\
    =\ & V(s,\lr(\xi))-V\left(s,\lr\left(X^{t,\xi}_s\right)\right) \notag \\
    &- \e \int_t^s \bigg\{ h\left(r,\theta_r^{t,\xi},\lr\left(\theta^{t,\xi}_r\right),u^{t,\xi}_r\right) -  \left(R^{t,\xi}_r\right)^\top f\left(r,\theta^{t,\xi}_r,\lr\left(\theta^{t,\xi}_r\right), u^{t,\xi}_r \right) \notag \\
    &\qquad\qquad - \frac{1}{2} \text{Tr}\left[\varphi_{yy} \left(Y^{t,\xi}_r\right)Z^{t,\xi}_r\left(Z^{t,\xi}_r\right)^\top \right] \bigg\}dr. \label{discuss:Bell_1}
\end{align}
By applying It\^o's formula on the term $V\left(s,\lr\left(X^{t,\xi}_s\right)\right)-V(s,\lr(\xi)))$ and using the continuity estimates in \eqref{prop:conti_t_0'} and \eqref{prop:conti_t_0''}, 
%and the boundedness estimates of $\left(X^{t,\xi},u^{t,\xi}\right)$ (see \eqref{lem:kappa_0} and \eqref{special_h_v}), 
we have
\begin{align*}
    &V\left(s,\lr\left(X^{t,\xi}_s\right)\right)-V(s,\lr(\xi))\\
    %=\ & \e\int_t^s \bigg\{ D_{x}\frac{dV}{d\nu}\left(s,\lr\left(X^{t,\xi}_r\right)\right)\left(X^{t,\xi}_r\right)^\top b \left(r,X^{t,\xi}_r,\lr\left(X^{t,\xi}_r\right), u^{t,\xi}_r\right) \\
    %&\quad\qquad + \frac{1}{2}\text{Tr}\left[D^2_{x}\frac{dV}{d\nu}\left(s,\lr\left(X^{t,\xi}_r\right)\right)\left(X^{t,\xi}_r\right) \left(\sigma\sigma^\top\right) \left(r,X^{t,\xi}_r,\lr\left(X^{t,\xi}_r\right)\right)\right]  \bigg\}dr\\
    =\ & \e\int_t^s \bigg\{ D_{x}\frac{dV}{d\nu}\left(r,\lr\left(X^{t,\xi}_r\right)\right)\left(X^{t,\xi}_r\right)^\top b \left(r,X^{t,\xi}_r,\lr\left(X^{t,\xi}_r\right), u^{t,\xi}_r\right) \\
    &\quad\qquad + \frac{1}{2}\text{Tr}\left[D^2_{x}\frac{dV}{d\nu}\left(r,\lr\left(X^{t,\xi}_r\right)\right)\left(X^{t,\xi}_r\right) \left(\sigma\sigma^\top\right) \left(r,X^{t,\xi}_r,\lr\left(X^{t,\xi}_r\right)\right)\right]  \bigg\}dr + o\left(|s-t|\right).
\end{align*}
Combining the last equality with \eqref{discuss:Bell_1}, %we have
and by using the relation \eqref{feedback_RP} and the definition of the function $\hr$ in \eqref{def:Ham'}, we obtain \eqref{discuss:Bell_0}. 
\end{proof}

Equation \eqref{discuss:Bell_0} provides an expansion of the value function in time that involves the state processes $\theta^{t,\xi}_s=(X^{t,\xi}_s,Y^{t,\xi}_s,Z^{t,\xi}_s)$. From the estimate \eqref{prop:conti_t_0}, we already know the continuity of $(X^{t,\xi}_s,Y^{t,\xi}_s)$ in $s$. If we can further establish the continuity of $Z^{t,\xi}_s$ in $s$, then \eqref{discuss:Bell_0} would imply differentiability of $V$ in $t$. Moreover, if we can characterize $(Y^{t,\xi},Z^{t,\xi})$ in terms of $X^{t,\xi}$, the HJB equation can be written in a more explicit form.

\subsection{Decoupling field and HJB equation}

Under Assumption (A4), the diffusion coefficient $\sigma$ is independent of the control, so the minimizer map $\hv(\cdot)$ defined in \eqref{minimizer} does not depend on the $q$ argument. Using the relations in \eqref{feedback_RP}, the state processes $(X^{t,\xi},Y^{t,\xi},Z^{t,\xi})$ satisfy the following coupled FBSDEs:
\begin{equation}\label{System:XYZ}
    \left\{
        \begin{aligned}
            &X^{t,\xi}_s= \xi+\int_t^s B\left(r, \theta^{t,\xi}_r, \lr\left(\theta^{t,\xi}_r\right)\right) dr  +\int_t^s \sigma\left(r, X^{t,\xi}_r, \lr\left(X^{t,\xi}_r\right)\right) dB_r,\\
            &Y^{t,\xi}_s= g \left(X^{t,\xi}_T,\lr\left( X^{t,\xi}_T\right)\right) + \int_s^T  F\left(r, \theta^{t,\xi}_r, \lr\left(\theta^{t,\xi}_r\right) \right) dr -\int_s^T Z^{t,\xi}_r dB_r,
        \end{aligned}
    \right.
\end{equation}
where the coefficients $B$ and $F$ are defined by 
\begin{align*}
    B(s,x,y,z,\varrho):=\ & \hr_p \left(s,x,y,z,\varrho,D_{x}\frac{dV}{d\nu}(s,\varrho_x)\left(x\right), -\varphi_y(y)\right),\\
    F(s,x,y,z,\varrho):=\ &  -\hr_r \left(s,x,y,z,\varrho,D_{x}\frac{dV}{d\nu}(s,\varrho_x)\left(x\right), -\varphi_y(y)\right).
\end{align*}
Here, the functional $\hr$ is defined in \eqref{def:Ham'}. Let $\Pi^{t,x,\mu}:=\left(\theta^{t,x,\mu},P^{t,x,\mu},Q^{t,x,\mu},R^{t,x,\mu},u^{t,x,\mu}\right)=\left(X^{t,x,\mu},Y^{t,x,\mu},Z^{t,x,\mu},P^{t,x,\mu},Q^{t,x,\mu},R^{t,x,\mu},u^{t,x,\mu}\right)$ denote the solution of System \eqref{FBSDE:MFTC=gamma} with the initial condition being replaced by $X_t=x$ and with coefficients depending on $\lr(\theta^{t,\xi}_s)$. This is the second-order analogue of the first-order system $\eqref{FBSDEs:x}$, whose well-posedness and regularity follow by essentially the same argument as in Lemma~\ref{lem:tmux}. From the relations in \eqref{feedback_RP}, we know that $\theta^{t,x,\mu}$ is the solution of the following FBSDEs:
\begin{equation}\label{System:XYZ'}
    \left\{
        \begin{aligned}
            &X^{t,x,\mu}_s= x +\int_t^s B\left(r, \theta^{t,x,\mu}_r, \lr\left(\theta^{t,\xi}_r\right)\right) dr  +\int_t^s \sigma\left(r, X^{t,x,\mu}_r, \lr\left(X^{t,\xi}_r\right)\right) dB_r,\\
            &Y^{t,x,\mu}_s= g \left(X^{t,x,\mu}_T,\lr\left( X^{t,\xi}_T\right)\right) + \int_s^T  F\left(r, \theta^{t,x,\mu}_r, \lr\left(\theta^{t,\xi}_r\right) \right) dr -\int_s^T Z^{t,x,\mu}_r dB_r.
        \end{aligned}
    \right.
\end{equation}
As in \eqref{def:map_U}, we define the map $U:[0,T]\times\brn\times\pr_2(\brn)\to\brn$ by 
\begin{align}\label{def:U}
    U(t,x,\mu):=Y^{t,x,\mu}_t.
\end{align}
From the flow property \eqref{flow_property}, we see that 
\begin{align}\label{def:U'}
    &U\left(s, X_s^{t,x,\mu},\lr\left(X^{t,\xi}_s\right)\right)=Y^{s, X_s^{t,x,\mu},\lr\left(X^{t,\xi}_s\right)}_s=Y^{t,x,\mu}_s,
\end{align}
which shows that $U$ is indeed a decoupling field for FBSDEs \eqref{System:XYZ'}. In Theorem~\ref{main2_thm}, we established the Lipschitz continuity of the map $\xi\mapsto \Pi^{t,\xi}$. By following a similar argument, we can also obtain the Lipschitz continuity of $x\mapsto \theta^{t,x,\mu}$ and $\mu\mapsto \theta^{t,x,\mu}$. The proof is analogous and additionally relies on the continuity result of Theorem~\ref{main2_thm}. Together, these two continuity results imply the Lipschitz continuity of $U$ with respect to both $x$ and $\mu$. 

Moreover, following a similar approach as in Subsection~\ref{subsec:U}, we can further establish the second-order differentiability of $U$ with respect to $x$ and $\mu$ by studying the G\^ateaux derivatives of the map $(x,\xi)\mapsto \theta^{t,x,\lr(\xi)}$ in the second-order setting. This requires the following additional assumption, which is a regularity-enhanced version of Assumption (A2').

\textbf{(A2'')} The functionals $h$, $\Phi$ and $\varphi$ satisfy (A2'). Moreover, the following derivatives exist, and they are continuous in all arguments, and are globally bounded by $L$ in norm:
\begin{align*}
    &\left(D_{x},D_{y},D_{z},D_{v}\right)^3 h, \quad \left(D_{x'},D_{y'},D_{z'}\right)^2 \frac{d\left(h_x,h_y,h_z,h_v\right)}{d\nu},\quad \left(D_{x''},D_{y''},D_{z''}\right)\left(D_{x'},D_{y'},D_{z'}\right)^2 \frac{d^2h}{d\nu^2},\\
    &\left(D_{x'},D_{y'},D_{z'}\right)^3 \frac{dh}{d\nu}; \qquad \Phi_{xxx},\quad  D^2_{x'} \frac{d\Phi_x}{d\nu},\quad D_{x''}D^2_{x'} \frac{d^2\Phi}{d\nu^2},\quad D_{x'}^3 \frac{d\Phi}{d\nu}.
\end{align*}

We now state the following regularity properties of $U$.

\begin{proposition}\label{prop:regu_U}
    Under Assumptions (A1), (A2''), (A3), (A4) and the validity of Conditions \eqref{condition:1}-\eqref{condition:2}, the map $U$ defined in \eqref{def:U} is $C^2$ in $x$ and linearly functionally differentiable in $\mu$, with the map $x'\mapsto \frac{dU}{d\nu}(t,x,\mu)(x')$ being $C^2$. Moreover, the derivatives $\left(U_x,U_{xx},D_{x'}\frac{dU}{d\nu}, D^2_{x'}\frac{dU}{d\nu}\right)$ are continuous in the time variable. 
\end{proposition}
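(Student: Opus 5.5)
We establish regularity of $U(t,x,\mu)=Y^{t,x,\mu}_t$ by working with the full forward-backward system for $\Pi^{t,x,\mu}$: the second-order analogue of \eqref{FBSDEs:x}, with initial condition $X_t=x$ and coefficients frozen along $\lr(\theta^{t,\xi}_s)$. We do not use the reduced system \eqref{System:XYZ'}, because its coefficients $B,F$ involve $D_x\frac{dV}{d\nu}$, whose regularity is not yet available. The full system is linear in the state and adjoint variables and has the strongly convex optimality condition, so the continuation method of Lemma~\ref{lem:extend} and the stability estimate of Lemma~\ref{lem:estimate} apply to every linearized system we write down. Each derivative will be identified as the unique solution of such a linear system, with coefficients built from second and third derivatives of $h,\Phi,\varphi$. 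Assumption (A2'') supplies exactly the bounded, continuous third derivatives these systems need.

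\emph{Derivatives in $x$.} We proceed as in Lemma~\ref{lem:D_x}. First we show that the difference quotient $\frac{1}{\epsilon}(\Pi^{t,x+\epsilon e,\mu}-\Pi^{t,x,\mu})$ converges in $\mathbb{S}$ to the solution $D_x\Pi^{t,x,\mu}$ of the formally differentiated system. This follows from Lemma~\ref{lem:estimate} applied to the remainder, which is an input of size $o(1)$ in $\mathbb{I}$ by the uniform continuity of the second derivatives in (A2'). Differentiating once more gives a linear system for $D^2_x\Pi^{t,x,\mu}$ whose inhomogeneous terms are quadratic in $D_x\Pi$, with coefficients given by third derivatives. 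The same argument again yields convergence and a bound $C(L,T,\lambda)$. Since $Y_t$ is deterministic, $U_x=D_xY^{t,x,\mu}_t$ and $U_{xx}=D_x^2Y^{t,x,\mu}_t$.

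\emph{Derivatives in $\mu$.} We follow Lemma~\ref{Lem:D_xi} via lifting. We compute the directional derivative $D_\eta\Pi^{t,\xi}$ of the McKean--Vlasov system and the directional derivative $D_\eta\Pi^{t,x,\xi}$ of the $x$-system, using identity \eqref{relation:D_xi} to separate the two contributions. Representing $D_\eta\Pi^{t,x,\xi}=\e[D_{x'}\frac{d\Pi^{t,x,\mu}}{d\nu}(\xi)\eta]$ requires solving an auxiliary system indexed by a deterministic point $x'$ whose source term is linear in the direction. This gives $D_{x'}\frac{dU}{d\nu}(t,x,\mu)(x')$. Differentiating that system in $x'$ then gives $D^2_{x'}\frac{dU}{d\nu}$, and this step uses the terms $D_{x''}D_{x'}^2\frac{d^2h}{d\nu^2}$ and $D_{x'}^3\frac{dh}{d\nu}$ from (A2''). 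Linear functional differentiability itself follows as in Theorem~\ref{thm:value_regu_mu}, by integrating the Gâteaux derivative along $(1-\epsilon)\mu+\epsilon\mu'$.

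\emph{Continuity in time.} Every derivative process solves a linear FBSDE whose coefficients are evaluated along $\Pi^{t,\cdot}$ and $\Pi^{t,x,\mu}$. We would like to compare the systems at times $t$ and $t'>t$ through the flow property \eqref{flow_property}, by restarting the time-$t$ derivative system at $t'$. This restart introduces a mismatch of initial conditions of order $\|\Pi^{t,\cdot}_{t'}-\Pi^{t,\cdot}_t\|$, which is $O((t'-t)^{1/2})$ by \eqref{prop:conti_t_0}, and stability (Lemma~\ref{lem:estimate}) then bounds the difference of solutions.

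We expect this time-continuity step to be the main obstacle, for two reasons. First, the derivative system at time $t$, evaluated on $[t',T]$, is not identical to the derivative system at $t'$: the quantity $D_xY^{t,x,\mu}_s$ equals $U_x(s,X^{t,x,\mu}_s,\cdot)D_xX^{t,x,\mu}_s$, and the factor $D_xX^{t,x,\mu}_{t'}$ is not the identity. Our plan is to use the decoupling relation \eqref{def:U'} to write $U_x(t',\cdot)$ in terms of the time-$t$ derivatives, and then control the error from $D_xX^{t,x,\mu}_{t'}-I$, which is small because $\sigma$ is control-independent. Second, for the measure derivatives, the shift $\mu\to\lr(X^{t,\xi}_{t'})$ must be handled using the $W_2$-continuity of the derivatives in $\mu$. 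That continuity follows from the same stability estimate together with the uniform continuity assumed in (A2'').
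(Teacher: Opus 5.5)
Your outline has the same architecture as the paper's sketch in Appendix~\ref{App:prop:regu_U}: derivatives in $x$ as in Lemma~\ref{lem:D_x}, measure derivatives by lifting and the identity \eqref{relation:D_xi} as in Lemma~\ref{Lem:D_xi}, second-order versions under (A2''), and time continuity via the flow property as in Proposition~\ref{prop:flow:cinti:t}. The difference is which system you linearize. The paper characterizes $D_\eta\theta^{t,\xi}$ through \eqref{System:XYZ_D_xi}, the linearization of the reduced system \eqref{System:XYZ}, whose coefficients $B,F$ contain $D_x\frac{dV}{d\nu}$. You instead differentiate the full state--adjoint--optimality system for $\Pi^{t,x,\mu}$, as was done in Section~\ref{sec:HJB:1st}.

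Your choice makes the argument self-contained. In the paper's route, $B_x$ and $D_{x'}\frac{dB}{d\nu}$ need $D^2_x\frac{dV}{d\nu}$ and the measure derivative of $D_x\frac{dV}{d\nu}$, and $U_{xx}$ would need $D^3_x\frac{dV}{d\nu}$; the last two are never established in the paper. In the full system, $D_x\frac{dV}{d\nu}=P$ is an output, and each linearized system keeps the convexity-based monotone structure, so Lemmas~\ref{lem:estimate} and \ref{lem:extend} carry over.

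The paper's choice gives a smaller system in which all nonlinear $z$-dependence sits in $dr$-terms. This is where your second-order step needs more than you claim. In the full system the $dB$-integrand of the $R$-equation contains $h_{z^j}$, and $D_xu$ depends on $D_xZ$ through $h_{vz}$. The system for $D^2_x\Pi^{t,x,\mu}$ therefore has inhomogeneities such as $h_{z^jzz}[D_xZ,D_xZ]$ inside a stochastic integral. These belong to $\mathbb{I}$ only if $\e\int_t^T|D_xZ_s|^4ds<\infty$, which the $L^2$ stability estimate does not provide. You therefore need either such an integrability bound, or a reformulation in which these terms appear only in $dr$-integrals (as in the reduced system), where control of $\e(\int_t^T|D_xZ_s|^2ds)^2$ suffices.

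Your time-continuity step, by contrast, is more explicit than the paper's, which only asserts continuity in $s$ of $D_xY^{t,x,\mu}_s$ and its relatives. The identity $D_xY^{t,x,\mu}_s=U_x(s,X^{t,x,\mu}_s,\lr(X^{t,\xi}_s))D_xX^{t,x,\mu}_s$ is the key. Together with control of $D_xX^{t,x,\mu}_{t'}-I_n$ and of the shift $\mu\to\lr(X^{t,\xi}_{t'})$, it is what turns continuity in $s$ into continuity of $t\mapsto U_x(t,x,\mu)$ and of the other derivatives.
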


The proof of the differentiability in $x$ relies on the study of the first- and second-order derivatives of the map $x\mapsto \theta^{t,x,\mu}$, denoted respectively by $D_x \theta^{t,x,\mu}$ and $D^2_x \theta^{t,x,\mu}$. For the differentiability with respect to $\mu$, we study the derivatives $D_{x'}\frac{d\theta^{t,x,\mu}}{d\nu}(x')$ and $D^2_{x'}\frac{d\theta^{t,x,\mu}}{d\nu}(x')$, which correspond to $D_{x'}\frac{dU}{d\nu}$ and $D^2_{x'}\frac{dU}{d\nu}$ respectively. In the latter case, we also need the G\^ateaux differentiability of the map $\xi\mapsto \theta^{t,\xi}$. All these derivatives can be characterized successively as solutions of FBSDE systems. Since the proof relies on the same technical arguments as those used several times earlier in this article, we do not reproduce them here; a brief outline is provided in Appendix~\ref{App:prop:regu_U} for completeness. 

We now characterize the process $Z^{t,\xi}$ as a function of $X^{t,\xi}$. Using \eqref{def:U'}, we can write
\begin{align*}
    Y^{t,x,\mu}_s - Y^{t,x,\mu}_t =\ & Y^{t,x,\mu}_s - Y^{s,x,\mu}_s + Y^{s,x,\mu}_s - Y^{t,x,\mu}_t  \\
    =\ &U\left(s, X_s^{t,x,\mu},\lr\left(X^{t,\xi}_s\right)\right)-U(s,x,\mu) + \left[Y^{s,x,\mu}_s - Y^{t,x,\mu}_t\right].
\end{align*}
From the regularity properties of $U$ in $(x,\mu)$ established in Proposition~\ref{prop:regu_U}, we can apply It\^o's formula to the term $U\left(s, X_s^{t,x,\mu},\lr\left(X^{t,\xi}_s\right)\right)-U(s,x,\mu)$. Using the continuity in time of the derivatives $\left(U_x,U_{xx},D_{x'}\frac{dU}{d\nu}, D^2_{x'}\frac{dU}{d\nu}\right)$, and the fact that the term $\left[Y^{s,x,\mu}_s - Y^{t,x,\mu}_t\right]$ is deterministic, the uniqueness of the BSDE for $Y^{t,x,\mu}$ yields
\begin{align}\label{relation:Z}
    Z^{t,x,\mu}_s=U_x \left(s, X^{t,x,\mu}_s,\lr\left(X^{t,\xi}_s\right)\right)\sigma \left(s, X^{t,x,\mu}_s, \lr\left(X^{t,\xi}_s\right)\right).
\end{align}

Using \eqref{def:U'} and \eqref{relation:Z}, we obtain
\begin{align}\label{distribution}
    \lr\left(\xi, Y^{t,\xi}_t,Z^{t,\xi}_t\right)= \Big(I_n\otimes U\left(t, \cdot,\mu\right) \otimes \left[U_x \left(t, \cdot,\mu\right)\sigma \left(t, \cdot, \mu\right)\right]\Big)\#\mu,
\end{align}
where the right-hand side denotes the push-forward of $\mu$ under the map $$x\mapsto \left(x,U(t,x,\mu),U_x(t,x,\mu)\sigma(t,x,\mu)\right).$$ The relation \eqref{relation:Z}, together with Propositions~\ref{prop:flow:cinti:t} and \ref{prop:regu_U}, guarantees the continuity of $Z^{t,\xi}_s$ in $s$. Consequently, this continuity and Theorem~\ref{thm:flow:HJB:pre} imply the differentiability of $V(t,\mu)$ in $t$. We are now ready to state the following result, which characterizes the HJB equation for $V$ and the decoupling field $U$ as the solution of a coupled system of backward PDEs on the measure space $\pr_2(\brn)$.

\begin{theorem}\label{thm:flow:HJB:main}
    Under Assumptions (A1), (A2''), (A3), (A4) and the validity of Conditions \eqref{condition:1}-\eqref{condition:2}, the value function $V$ and the decoupling field $U$ give a classical solution of the following coupled system of backward PDEs: for $(t,x,\mu)\in[0,T]\times\brn\times\pr_2(\brn)$,
    \begin{equation}\label{discuss:Bell_main}
        \left\{
        \begin{aligned}
            &\dd_t V(t,\mu) + \int_\brn \bigg\{ \frac{1}{2}\text{Tr}\left[\left(D^2_{x}\frac{dV}{d\nu}\left(t,\mu\right)(x)-\left[U_x^\top\varphi_{yy} (U)U_x\right](t, x,\mu)\right) \left(\sigma\sigma^\top\right) (t,x,\mu)\right]  \\
            &\quad\qquad\qquad\qquad + \hr\left(\varpi(t,x,\mu) \right) \bigg\} \mu(dx)=0,\\
            &\dd_t U \left(t, x,\mu \right) +\frac{1}{2}\text{Tr}\left[\left(\sigma\sigma^\top\right) \left(t, x, \mu \right) U_{xx} \left(t, x,\mu \right)\right] + U_x \left(t, x,\mu \right) \hr_p \left(\varpi(t,x,\mu) \right) - \hr_r (\varpi(t,x,\mu)) \\
            &\quad\qquad\qquad + \int_\brn \bigg\{ \frac{1}{2}\text{Tr}\left[\left(\sigma\sigma^\top\right) \left(t, x', \mu \right) D^2_{x'}\frac{dU}{d\nu} \left(t, x,\mu\right)\left(x'\right)\right] \\
            &\qquad\qquad\qquad\qquad + D_{x'}\frac{dU}{d\nu} \left(t, x,\mu \right)(x') \hr_p \left(\varpi(t,x',\mu)\right)\bigg\} \mu(dx') =0,\\
            &V(T,\mu) = \int_\brn \left[\varphi(g(x,\mu))+\Phi(x,\mu)\right]\mu(dx),\quad U \left(T, x,\mu \right)= g(x,\mu),
        \end{aligned}
        \right.
    \end{equation}
    where we use the notation $\varpi(t,x,\mu)$ to denote the tuple
    \begin{align*}
        \varpi(t,x,\mu):=\bigg(&t,\ x,\ U(t,x,\mu),\ U_x(t,x,\mu)\sigma(t,x,\mu),\ \Big(I_n\otimes U\left(t, \cdot,\mu\right) \otimes \left[U_x \left(t, \cdot,\mu\right)\sigma \left(t, \cdot, \mu\right)\right]\Big)\#\mu, \\
        &D_{x}\frac{dV}{d\nu}(t,\mu)(x),\  -\varphi_y (U(t,x,\mu)) \bigg).
    \end{align*}
\end{theorem}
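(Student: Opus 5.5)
The plan is to treat the two equations separately: the $V$-equation comes from Theorem~\ref{thm:flow:HJB:pre} by letting $s\downarrow t$, and the $U$-equation comes from a time expansion of the decoupling identity \eqref{def:U'}. The terminal conditions are immediate. $U(T,x,\mu)=Y^{T,x,\mu}_T=g(x,\mu)$, and $V(T,\mu)$ is read off from \eqref{value_equation} with $t=T$, exactly as in Theorem~\ref{thm:HJB}.

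\emph{Equation for $V$.} Divide \eqref{discuss:Bell_0} by $s-t$ and let $s\downarrow t$; the left derivative is handled symmetrically by applying the expansion on $[t-\epsilon,t]$.
\begin{itemize}
\item The integrand is continuous in $r$ in $L^1$. This follows from several ingredients: the estimate \eqref{prop:conti_t_0}; the continuity of $Z^{t,\xi}_r$ in $r$, which holds through \eqref{relation:Z}; Proposition~\ref{prop:regu_U}; the time continuity of $D_x\frac{dV}{d\nu}$ and $D_x^2\frac{dV}{d\nu}$ from Proposition~\ref{prop:flow:cinti:t}; and Lipschitz continuity of $\hr$ in its arguments.
\item The limit is therefore the integrand evaluated at $r=t$.
\item Substitute $Y^{t,\xi}_t=U(t,\xi,\mu)$ and $Z^{t,\xi}_t=U_x\sigma(t,\xi,\mu)$ from \eqref{relation:Z}. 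The trace term $\frac12\mathrm{Tr}[\varphi_{yy}(Y)ZZ^\top]$ then becomes $\frac12\mathrm{Tr}[U_x^\top\varphi_{yy}(U)U_x\sigma\sigma^\top]$.
\item Use \eqref{distribution} for the joint law.
\end{itemize}
Writing the expectation as an integral against $\mu$ gives the first line of \eqref{discuss:Bell_main}.

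\emph{Equation for $U$.} Fix $(t,x,\mu)$ with $\xi\sim\mu$. Apply It\^o's formula to $s\mapsto U(s_0,X^{t,x,\mu}_s,\lr(X^{t,\xi}_s))$ with the time argument $s_0$ frozen. This is legitimate by the $C^2$ regularity in $x$ and in the measure argument from Proposition~\ref{prop:regu_U}, via the mean-field It\^o formula. Combine this with the identity
\[
Y^{t,x,\mu}_s-Y^{t,x,\mu}_t=\big[U(s,X^{t,x,\mu}_s,\lr(X^{t,\xi}_s))-U(s,x,\mu)\big]+\big[U(s,x,\mu)-U(t,x,\mu)\big],
\]
and with the BSDE for $Y^{t,x,\mu}$ in \eqref{System:XYZ'}, whose drift is $-F=\hr_r(\varpi)$.

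Take expectations, divide by $s-t$, and let $s\downarrow t$. The time continuity of the derivatives of $U$ (Proposition~\ref{prop:regu_U}) makes the frozen-time It\^o expansion accurate up to $o(s-t)$. It yields:
\begin{itemize}
\item from the $x$-variable, the drift $U_x\hr_p(\varpi(t,x,\mu))$ and the diffusion term $\frac12\mathrm{Tr}[\sigma\sigma^\top U_{xx}]$;
\item from the measure variable, $\e[D_{x'}\frac{dU}{d\nu}(\xi)\hr_p(\varpi(t,\xi,\mu))+\frac12\mathrm{Tr}(\sigma\sigma^\top(t,\xi,\mu)D^2_{x'}\frac{dU}{d\nu}(\xi))]$.
\end{itemize}
Here we use that under the flow property the drift of $X^{t,\xi}$ at time $t$ is $B=\hr_p(\varpi(t,\xi,\mu))$, with $D_x\frac{dV}{d\nu}$ playing the role of $P$ by \eqref{feedback_RP}. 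Consequently the limit of $\frac{U(s,x,\mu)-U(t,x,\mu)}{s-t}$ exists, which gives right differentiability of $U$ in $t$. Its value equals $\hr_r(\varpi)$ minus the It\^o terms, and this is exactly the second line of \eqref{discuss:Bell_main}. Continuity in $t$ of this right derivative, inherited from the continuity of all terms, upgrades it to full differentiability.

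\emph{Main obstacle.} The hardest step is the uniform $o(s-t)$ control of the frozen-time It\^o expansion. It requires the derivatives $U_{xx}$ and $D^2_{x'}\frac{dU}{d\nu}$ to be continuous in time jointly with $(x,x',\mu)$, with growth bounds allowing dominated convergence. My first attempt would use the bounded second derivatives from (A2''), together with the $\frac12$-H\"older estimates \eqref{prop:conti_t_0} on $X$, to bound $\e\int_t^s|D^2U(s,\cdot)-D^2U(r,\cdot)|\,dr$ by $o(s-t)$.
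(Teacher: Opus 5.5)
Your proposal is correct and follows essentially the same route as the paper. The $V$-equation is read off from the expansion in Theorem~\ref{thm:flow:HJB:pre} using \eqref{relation:Z} and \eqref{distribution}. The $U$-equation comes from splitting $U(s,x,\mu)-U(t,x,\mu)$ via the decoupling identity \eqref{def:U'} and the BSDE for $Y^{t,x,\mu}$, applying It\^o's formula to the frozen-time map $U(s,\cdot,\cdot)$ along $(X^{t,x,\mu}_r,\lr(X^{t,\xi}_r))$, and letting $s\to t$. Your extra steps (right derivative plus continuity of that right derivative, and the uniform $o(s-t)$ control of the frozen-time expansion) only make explicit what the paper leaves implicit in ``taking the limit as $s\to t$''.
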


\begin{proof}
The HJB equation for $V$ is a direct consequence of Theorem~\ref{thm:flow:HJB:pre} and the relations \eqref{def:U'}, \eqref{relation:Z} and \eqref{distribution}. For the decoupling field $U$, we can write
\begin{align*}
    &U(s,x,\mu)-U(t,x,\mu)= Y^{s,x,\mu}_s - Y^{t,x,\mu}_t = \e \left[Y^{s,x,\mu}_s-Y^{t, x,\mu}_s + Y^{t,x,\mu}_s - Y^{t,x,\mu}_t  \right]\\
    =\ & \e\left[U(s,x,\mu)- U\left(s, X_s^{t,x,\mu},\lr\left(X^{t,\xi}_s\right)\right) \right] - \e \int_t^s F \left(r, \theta^{t,x,\mu}_r, \lr\left(\theta^{t,\xi}_r\right) \right) dr. 
\end{align*}
From Proposition~\ref{prop:regu_U} and the continuity of the map $r\mapsto\left(X^{t,x,\mu}_r, X^{t,\xi}_r, Y^{t,x,\mu}_r, Z^{t,x,\mu}_r \right)$ in $r$, we know that $U$ is differentiable in the time argument. Then, by applying It\^o's formula for the term $\left[U(s,x,\mu)- U\left(s, X_s^{t,x,\mu},\lr\left(X^{t,\xi}_s\right)\right) \right]$ and taking the limit as $s\to t$, we can obtain the PDE for $U$. 
\end{proof}

\subsection{Examples ensuring flow property}\label{subsec:example}

In this subsection, we provide examples where Conditions \eqref{condition:1}-\eqref{condition:2} are satisfied. These conditions suggest considering the case in which the functions $h$ and $\varphi$ take the following quadratic form in $(y,z)$:

\textbf{(D)} The functionals $\varphi$ and $h$ are assumed to take the following quadratic forms:
\begin{align*}
    \varphi(y)=\frac{1}{2} y^\top \Psi y, \qquad \text{$\Psi$ is positive definite},
\end{align*}
and
\begin{equation}\label{Condition:h}
\begin{aligned}
    h(s,x,y,z,\varrho,v)=\ & h^0\left(s,x,\varrho_x,v\right) -y^\top \Psi  f_0(s) - y^\top \Psi f_1(s)x - y^\top \Psi f_4(s)\overline{\varrho_x} \\
    &- \frac{1}{2}y^\top \left(f_2(s)^\top \Psi + \Psi f_2(s) \right) y - \frac{1}{2}\overline{\varrho_y}^\top \left(f_5(s)^\top \Psi + \Psi f_5(s) \right) \overline{\varrho_y} \\
    &- \sum_{j=1}^n (z^j)^\top \big[f^j_3(s)^\top \Psi y + f^j_6(s)^\top \Psi \overline{\varrho_y} \big] + \frac{1}{2}\sum_{j=1}^n (z^j)^\top \Psi z^j - y^\top \Psi f_7(s)v.
\end{aligned}
\end{equation}
The function $h^0:[0,T]\times\brn\times\pr_2(\brn)\times\brd\to\br$ is convex in the following sense: for some $\lambda>0$ and $\lambda_v,\lambda_x,\lambda_m\ge0$,
\begin{align}
    &h^0\left(s,x^2,\lr\left(\xi^2\right),v^2\right)-h^0\left(s,x^1,\lr\left(\xi^1\right),v^1\right) \notag \\
    \geq\ & h^0_x\left(s,x^1,\lr\left(\xi^1\right),v^1\right)^\top \left(x^2-x^1\right) +h^0_v\left(s,x^1,\lr\left(\xi^1\right),v^1\right)^\top \left(v^2-v^1\right) \notag \\
    &+\e\left[\left(D_{x'}\frac{dh^0}{d\nu}\left(s,x^1,\lr\left(\xi^1\right),v^1\right)\left(\xi^1\right)\right)^\top \left(\xi^2-\xi^1\right)\right] \notag \\
    &+(\lambda+\lambda_v)\left|v^2-v^1\right|^2 + \lambda_x \left|x^2-x^1\right|^2 + \lambda_m \left|\e\left[\xi^2-\xi^1\right]\right|^2, \label{convex_h^0}
\end{align}
and the function $x'\mapsto \frac{dh^0}{d\nu}(s,x,m,v)(x')$ is convex. The terminal cost $\Phi$ is assumed convex in $(x,m)$ in the analogous sense. Moreover, the following block matrix is positive semi-definite for every $s\in[0,T]$:
\small
\begin{equation}\label{matrix}
\left(
\begin{array}{cccccccc}
   2\lambda_x I_n  & \  & \  & -f_1(s)^\top \Psi  & \ & \  & \  & \  \\
   \   & 2\lambda_m I_n  & \  & -f_4(s)^\top \Psi & \ &  \ & \ & \   \\
   \   & \  & 2\lambda_v I_d & -f_7(s)^\top \Psi & \ &  \  & \  & \   \\
   -\Psi f_1(s) & -\Psi f_4(s) & -\Psi f_7(s) & -f_2(s)^\top \Psi-\Psi f_2(s) & 0 &  -\Psi f^1_3(s) & \cdots & -\Psi f^n_3(s) \\
   \ & \ & \ & 0 & -f_5(s)^\top \Psi-\Psi f_5(s) & -\Psi f^1_6(s) & \cdots & -\Psi f^n_6(s) \\
   \  & \  & \  & -f^1_3(s)^\top \Psi & -f^1_6(s)^\top \Psi &  \Psi & \  & \  \\
   \  & \  & \  & \vdots & \vdots &  \  & \ddots  & \  \\
   \  & \  & \  & -f^n_3(s)^\top \Psi & -f^n_6(s)^\top \Psi &  \  & \  & \Psi 
\end{array}
\right).
\end{equation}
\normalsize

The following result shows that Assumption (D) indeed implies the convexity conditions on $h$ in (A3). Its proof is given in Appendix~\ref{App:convex_h}.

\begin{lemma}\label{convex_h}
    Under Assumptions (A1), (A2) and (D), the convexity condition on $h$ in (A3) is satisfied.
\end{lemma}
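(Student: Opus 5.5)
The plan is to split $h$ from (D) into $h^0$ plus a quadratic part and verify the convexity inequality of (A3) term by term.

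Write $h=h^0+h^1$, where
\[
\begin{aligned}
h^1(s,x,y,z,\varrho,v) := {}& -y^\top\Psi f_0(s) - y^\top \Psi f_1(s)x - y^\top\Psi f_4(s)\overline{\varrho_x} - y^\top \Psi f_7(s) v\\
&- \tfrac12 y^\top(f_2^\top\Psi+\Psi f_2)y - \tfrac12 \overline{\varrho_y}^\top(f_5^\top\Psi+\Psi f_5)\overline{\varrho_y}\\
&- \sum_j (z^j)^\top\big[f_3^{j\top}\Psi y+f_6^{j\top}\Psi\overline{\varrho_y}\big] + \tfrac12\sum_j (z^j)^\top\Psi z^j.
\end{aligned}
\]
The part $h^1$ is a polynomial of degree at most two in $(x,y,z,v,\overline{\varrho_x},\overline{\varrho_y})$, and it depends on the measure only through means. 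Its linear functional derivatives are therefore affine in $(x',y',z')$, and hence convex, as (A3) requires. Note that $\frac{dh}{d\nu}=\frac{dh^0}{d\nu}+\frac{dh^1}{d\nu}$, and the first summand is convex by (D).

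The first-order convexity inequality is linear in $h$, so the key identity is this. The difference ``$h(2)-h(1)$ minus the first-order terms'' splits into the analogous remainder for $h^0$ plus the analogous remainder for $h^1$.

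\emph{The $h^0$ remainder.} By \eqref{convex_h^0} it is at least
\[
(\lambda+\lambda_v)|\de v|^2+\lambda_x|\de x|^2+\lambda_m|\e[\de\zeta_x]|^2,
\]
where $\de x=x^2-x^1$, $\de\zeta=\zeta^2-\zeta^1$, and so on.

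\emph{The $h^1$ remainder.} Since $h^1$ is quadratic, its remainder equals exactly the second-order form
\[
\tfrac12 D^2h^1[(\de x,\de y,\de z,\de v,\e\de\zeta_x,\e\de\zeta_y)]^{\otimes2}.
\]
Here the measure enters through $\overline{\varrho_x}=\e[\zeta_x]$ and $\overline{\varrho_y}=\e[\zeta_y]$. One checks that the Lions-type terms $\e[D_{y'}\frac{dh^1}{d\nu}(\zeta^1)^\top\de\zeta_y]$ reproduce the gradient of $h^1$ with respect to the mean variables contracted with $\e[\de\zeta_y]$. Spelled out, the remainder is
\[
\begin{aligned}
&-\de y^\top\Psi f_1\de x-\de y^\top\Psi f_4\e[\de\zeta_x]-\de y^\top\Psi f_7\de v-\tfrac12\de y^\top(f_2^\top\Psi+\Psi f_2)\de y\\
&-\tfrac12\e[\de\zeta_y]^\top(f_5^\top\Psi+\Psi f_5)\e[\de\zeta_y]-\sum_j\de z^{j\top}\big[f_3^{j\top}\Psi\de y+f_6^{j\top}\Psi\e[\de\zeta_y]\big]+\tfrac12\sum_j\de z^{j\top}\Psi\de z^j.
\end{aligned}
\]

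\emph{Combining.} Add the extra $h^0$ terms $\lambda_v|\de v|^2+\lambda_x|\de x|^2+\lambda_m|\e\de\zeta_x|^2$ to this expression. The result is exactly $\frac12 w^\top M(s)w$, where $M(s)$ is the block matrix \eqref{matrix} and
\[
w=(\de x,\e[\de\zeta_x],\de v,\de y,\e[\de\zeta_y],\de z^1,\dots,\de z^n).
\]
The factor $2$ in $2\lambda_xI_n$ and similar blocks is what accounts for the $\frac12$. Positive semidefiniteness of $M(s)$ gives $\frac12 w^\top M(s) w\ge0$, so the total remainder is at least $\lambda|\de v|^2$, which is exactly (A3). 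Convexity of $\Phi$ is assumed in (D), and $\varphi$ is convex because $\Psi$ is positive definite.

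The main obstacle is the bookkeeping in matching the cross terms of $M(s)$. Each off-diagonal block appears twice, once for each ordering of the pair, and this must produce the sign and factor conventions $-\Psi f_1$, $-\Psi f_4$, and so on. The terms involving $\overline{\varrho}$ also need care. In the inequality they appear in expectation form, $\e[\,\cdot\,]$ of derivatives evaluated at $\zeta^1$ and contracted with $\de\zeta$, and because these derivatives are constant in the dummy variable this reduces to contraction with $\e[\de\zeta]$. Once that reduction is made, the $\overline{\varrho}$ entries can be treated as ordinary finite-dimensional variables.
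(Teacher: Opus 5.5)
Your proposal is correct and follows essentially the paper's route. The paper also computes $\frac{dh}{d\nu}$ as $\frac{dh^0}{d\nu}(x')$ plus a part affine in $y'$ (independent of $z'$) to get its convexity, then uses \eqref{convex_h^0} to extract $(\lambda+\lambda_v)|\de v|^2+\lambda_x|\de x|^2+\lambda_m|\e[\de\zeta_x]|^2$, writes out the remaining quadratic terms in $(\de x,\de y,\de z,\de v,\e[\de\zeta_x],\e[\de\zeta_y])$, and concludes by positive semidefiniteness of \eqref{matrix}. Your explicit splitting $h=h^0+h^1$, with the $h^1$ remainder equal to its exact second-order form, just makes that bookkeeping more transparent.
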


The next result shows that Assumption (D) ensures the conditions \eqref{condition:1}-\eqref{condition:2}, and hence the flow property \eqref{flow_property}.

\begin{proposition}\label{prop:relation:Y-R}
    Under Assumptions (A1), (A2) and (D), we have the relation
    \begin{align}\label{relation:Y-R}
        R^{t,\xi}_s=-\Psi Y^{t,\xi}_s,\quad s\in[t,T],
    \end{align}
    and Conditions \eqref{condition:1} and \eqref{condition:2} hold true. As a consequence, the flow property \eqref{flow_property} and the relation \eqref{feedback_RP} hold true. 
\end{proposition}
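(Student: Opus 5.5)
The plan is to show that $\widetilde R_s:=-\Psi Y^{t,\xi}_s$ solves the forward SDE for $R$ in System \eqref{FBSDE:MFTC}. Uniqueness (Theorem~\ref{main2_thm}) then forces $R^{t,\xi}=-\Psi Y^{t,\xi}$. Conditions \eqref{condition:1}-\eqref{condition:2} will be read off by substituting this relation, and Proposition~\ref{prop:s&n} will give the flow property and \eqref{feedback_RP}. The point is that, under (D), the $(y,z)$-derivatives of $h$ are affine in $(y,z,\overline{\varrho_y})$, with coefficients chosen to cancel the $f$-terms in $L_y$ and $L_{z^j}$.

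First, I would compute the derivatives of $h$ under (D):
\[
h_y=-\Psi f_0-\Psi f_1x-\Psi f_4\overline{\varrho_x}-(f_2^\top\Psi+\Psi f_2)y-\sum_j f_3^{j\top}\Psi z^j-\Psi f_7 v,\qquad h_{z^j}=\Psi z^j-\Psi f_3^j y-\Psi f_6^j\overline{\varrho_y},
\]
\[
D_{y'}\frac{dh}{d\nu}=-(f_5^\top\Psi+\Psi f_5)\overline{\varrho_y}-\sum_j f_6^{j\top}\Psi z^j,\qquad D_{{z'}^j}\frac{dh}{d\nu}=0.
\]
Evaluating at $\theta^{t,\xi}_s$ and taking $\widetilde{\e}$ in the mean-field terms gives explicit affine expressions in $\theta^{t,\xi}_s$ and its means. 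Here the constant-in-$x'$ form of $D_{y'}\frac{dh}{d\nu}$ means $\widetilde{\e}[\cdot]$ only replaces $Z$ by $\e[Z]$.

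Next, I would verify the drift and diffusion. Using the BSDE, $d(-\Psi Y_s)=\Psi f(s,\theta_s,\lr(\theta_s),u_s)\,ds-\Psi Z_s\,dB_s$, and the initial value $-\Psi Y_t=-\varphi_y(Y_t)$ matches. Plugging $r=-\Psi Y$ into the drift of $R$ gives
\[
-L_y-\widetilde{\e}[D_{y'}\tfrac{dL}{d\nu}]=-f_2^\top\Psi Y-f_5^\top\Psi\e[Y]-h_y-\widetilde{\e}[D_{y'}\tfrac{dh}{d\nu}].
\]
With the formulas above, the $y$-quadratic symmetrization $(f_2^\top\Psi+\Psi f_2)$ reduces to $\Psi f_2 Y$, and the same happens for $f_5$. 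Collecting all terms should give exactly $\Psi(f_0+f_1X+f_2Y+\sum_j f_3^jZ^j+f_4\e X+f_5\e Y+\sum_j f_6^j\e Z^j+f_7u)=\Psi f$. For the diffusion, $-L_{z^j}-\widetilde{\e}[D_{{z'}^j}\tfrac{dL}{d\nu}]=-f_3^{j\top}\Psi Y-f_6^{j\top}\Psi\e[Y]-h_{z^j}$. Here I must check carefully that the $f_3^j y$ and $f_6^j\overline{\varrho_y}$ terms cancel so that only $-\Psi Z^j$ remains. This bookkeeping of transposes is the main obstacle. The $z^\top f_3^{j\top}\Psi y$ cross term contributes to both $h_y$ and $h_{z^j}$, and I would track each transpose explicitly; if a mismatch appears, it must be absorbed by the symmetric choices in (D).

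Finally, I would treat uniqueness and the two conditions. Since $\widetilde R$ solves the linear forward SDE for $R$ with the given $(\theta,u)$, which is uniquely solvable by Lemma~\ref{lem:wellposedness_adjoint}, we get $R=\widetilde R$; convexity (A3) is supplied by Lemma~\ref{convex_h}. For \eqref{condition:1}, since $\varphi_{yyy}=0$ and $\varphi_{yy}=\Psi$, the left side is $-\Psi f$. The right side equals the negative of the drift just computed, which is $-\Psi f$. For \eqref{condition:2}, the left side is $\Psi Z^j$, and the right side is $f_3^{j\top}\Psi Y+f_6^{j\top}\Psi\e Y+h_{z^j}=\Psi Z^j$. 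Proposition~\ref{prop:s&n}, with $\varphi\in C^3$ trivially, concludes.
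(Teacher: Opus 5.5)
Your strategy is the same as the paper's. The paper shows that $\Psi Y^{t,\xi}+R^{t,\xi}$ solves a homogeneous linear mean-field SDE with zero initial value, which is equivalent to your check that $-\Psi Y^{t,\xi}$ solves the uniquely solvable linear SDE for $R$. Reading off \eqref{condition:1}--\eqref{condition:2} and invoking Proposition~\ref{prop:s&n} is also exactly what the paper does.

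The problem is that your three displayed derivative formulas have the transposes on the bilinear cross terms swapped. As a result, the cancellations you assert do not follow from them. Using $\Psi=\Psi^\top$, write $(z^j)^\top (f^j_3)^\top\Psi y=y^\top\Psi f^j_3 z^j$. Its $y$-gradient is $\Psi f^j_3 z^j$, and its $z^j$-gradient is $(f^j_3)^\top\Psi y$. The same holds for the $f^j_6$ term, with $\overline{\varrho_y}$ in place of $y$. Hence
\[
h_y=-\Psi f_0-\Psi f_1x-\Psi f_4\overline{\varrho_x}-\left(f_2^\top\Psi+\Psi f_2\right)y-\sum_{j=1}^n\Psi f^j_3z^j-\Psi f_7v,\qquad h_{z^j}=\Psi z^j-(f^j_3)^\top\Psi y-(f^j_6)^\top\Psi\overline{\varrho_y},
\]
\[
D_{y'}\frac{dh}{d\nu}=-\left(f_5^\top\Psi+\Psi f_5\right)\overline{\varrho_y}-\sum_{j=1}^n\Psi f^j_6z^j,\qquad D_{{z'}^j}\frac{dh}{d\nu}=0,
\]
which is what the paper records in \eqref{prop:flow_1}--\eqref{prop:flow_2}. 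By contrast, you have $(f^j_3)^\top\Psi z^j$ in $h_y$, $\Psi f^j_3y+\Psi f^j_6\overline{\varrho_y}$ in $h_{z^j}$, and $(f^j_6)^\top\Psi z^j$ in $D_{y'}\frac{dh}{d\nu}$.

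With your versions, the drift of $R$ at $R=-\Psi Y$ equals $\Psi f$ plus $\sum_j\big((f^j_3)^\top\Psi-\Psi f^j_3\big)Z^j+\sum_j\big((f^j_6)^\top\Psi-\Psi f^j_6\big)\e[Z^j]$. The diffusion coefficient equals $-\Psi Z^j$ plus $\big(\Psi f^j_3-(f^j_3)^\top\Psi\big)Y+\big(\Psi f^j_6-(f^j_6)^\top\Psi\big)\e[Y]$. Your check of \eqref{condition:2} fails in the same way.

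Your fallback, that such a mismatch ``must be absorbed by the symmetric choices in (D)'', is not available. Assumption (D) imposes no symmetry on $\Psi f^j_3$ or $\Psi f^j_6$; the matrix \eqref{matrix} is only required to be positive semi-definite. No absorption is needed once the gradients are computed correctly. The cross terms are built so that their $y$-gradient (resp.\ $\overline{\varrho_y}$-gradient) supplies exactly the $\Psi f^j_3Z^j$ (resp.\ $\Psi f^j_6\e[Z^j]$) part of $\Psi f$ in the drift. Their $z^j$-gradients cancel $(f^j_3)^\top R+(f^j_6)^\top\e[R]$ in the diffusion. The only symmetry used is $\Psi=\Psi^\top$. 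With the corrected formulas, the rest of your argument goes through as written.
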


\begin{proof}
From \eqref{Condition:h}, we can compute that
\begin{equation}\label{prop:flow_1}
\begin{aligned}
    &h_y(s,x,y,z,\varrho,v)=-\Psi f(s,x,y,z,\varrho,v) - f_2(s)^\top \Psi y + \Psi f_5(s) \overline{\varrho_y} + \sum_{j=1}^n \Psi f^j_6(s) \overline{\varrho_{z^j}},\\
    &h_{z^j}(s,x,y,z,\varrho,v)=-f^j_3(s)^\top \Psi y -f^j_6(s)^\top \Psi \overline{\varrho_y} + \Psi z^j;
\end{aligned}
\end{equation}
and also the following derivatives in distribution variable
\begin{equation}\label{prop:flow_2}
\begin{aligned}
    &D_{y'}\frac{dh}{d\nu}(s,x,y,z,\varrho,v)(x',y',z')= -\left(f_5(s)^\top \Psi + \Psi f_5(s) \right)\overline{\varrho_y} - \sum_{j=1}^n \Psi f^j_6(s) z^j,\\
    &D_{z'}\frac{dh}{d\nu}(s,x,y,z,\varrho,v)(x',y',z')=0.
\end{aligned}
\end{equation}
Therefore, the SDE for $R^{t,\xi}$ can also be written as
\begin{align*}
    dR^{t,\xi}_s=\ &  \bigg\{ f_2(s)^\top  \left(R^{t,\xi}_s + \Psi Y^{t,\xi}_s \right)  +f_5(s)^\top  \e\left[R^{t,\xi}_s+ \Psi Y^{t,\xi}_s\right]  +\Psi f \left(s,\theta^{t,\xi}_s,\lr\left({ \theta^{t,\xi}_s}\right),u^{t,\xi}_s\right) \bigg\} ds \\
    &+ \sum_{j=1}^n \bigg\{ f^j_3(s)^\top  \left(R^{t,\xi}_s + \Psi Y^{t,\xi}_s \right)  +f^j_6(s)^\top  \e\left[R^{t,\xi}_s+ \Psi Y^{t,\xi}_s\right]  -\Psi Z^{t,\xi,j}_s \bigg\} dB^j_s. 
\end{align*}
Then, by using the BSDE for $\left(Y^{t,\xi},Z^{t,\xi}\right)$, we have 
\begin{align*}
    d\left(\Psi Y^{t,\xi}_s + R^{t,\xi}_s\right)=\ &  \bigg\{ f_2(s)^\top  \left(R^{t,\xi}_s + \Psi Y^{t,\xi}_s \right)  +f_5(s)^\top  \e\left[R^{t,\xi}_s+ \Psi Y^{t,\xi}_s\right]  \bigg\} ds \\
    &+ \sum_{j=1}^n \bigg\{ f^j_3(s)^\top  \left(R^{t,\xi}_s + \Psi Y^{t,\xi}_s \right)  +f^j_6(s)^\top  \e\left[R^{t,\xi}_s+ \Psi Y^{t,\xi}_s\right] \bigg\} dB^j_s. 
\end{align*}
From the initial condition of $R^{t,\xi}_t$, we also see that $\Psi Y^{t,\xi}_t+ R^{t,\xi}_t=0$. Therefore, from the uniqueness result of the linear SDE, we know that \eqref{relation:Y-R} holds true, which together with \eqref{prop:flow_1} and \eqref{prop:flow_2} imply Conditions \eqref{condition:1} and \eqref{condition:2}. Then, the validity of \eqref{flow_property} and \eqref{feedback_RP} are consequences of Proposition~\ref{prop:s&n}. 
\end{proof}

Assumption (D) provides a class of cases with the flow property. In fact, by using \eqref{relation:Y-R} and \eqref{prop:flow_1}, we know that the optimality condition can also be written as
\begin{align*}
    0=\ & b_3(s)^\top P^{t,\xi}_s - f_7(s)^\top R^{t,\xi}_s + h^0_v\left(s,X^{t,\xi}_s,\lr\left(X^{t,\xi}_s\right),u^{t,\xi}_s\right) -  f_7(s)^\top \Psi Y^{t,\xi}_s \\
    =\ & b_3(s)^\top P^{t,\xi}_s + h^0_v\left(s,X^{t,\xi}_s,\lr\left(X^{t,\xi}_s\right),u^{t,\xi}_s\right),
\end{align*}
which shows that the optimal control $u^{t,\xi}_s$ is a feedback function of $X^{t,\xi}_s$ and $P^{t,\xi}_s$, not $Y^{t,\xi}_s$ or $Z^{t,\xi}_s$.  This means that, under Assumption (D), the system for $\Pi^{t,\xi}$ decouples into two FBSDEs: one for $\left(X^{t,\xi},P^{t,\xi},Q^{t,\xi}\right)$ and another for $\left(Y^{t,\xi},Z^{t,\xi},R^{t,\xi}\right)$. Specifically, under Assumption (D), the equations in \eqref{System:XYZ} can be rewritten in the following decoupled form:
\begin{equation*}
    \left\{
        \begin{aligned}
            &X^{t,\xi}_s= \xi+\int_t^s \hr_p\left(r, X^{t,\xi}_r, \lr\left(X^{t,\xi}_r\right),D_{x}\frac{dV}{d\nu}\left(r, \lr\left(X^{t,\xi}_r\right)\right)\left(X^{t,\xi}_r\right)\right) dr  \\
            &\quad\qquad +\int_t^s \sigma\left(r, X^{t,\xi}_r, \lr\left(X^{t,\xi}_r\right)\right) dB_r,\\
            &Y^{t,\xi}_s= g \left(X^{t,\xi}_T,\lr\left( X^{t,\xi}_T\right)\right) - \int_s^T  \hr_r\left(r, \theta^{t,\xi}_r, \lr\left(\theta^{t,\xi}_r\right),D_{x}\frac{dV}{d\nu}\left(r, \lr\left(X^{t,\xi}_r\right)\right)\left(X^{t,\xi}_r\right),-\Psi Y^{t,\xi}_r \right) dr \\
            &\quad\qquad -\int_s^T Z^{t,\xi}_r dB_r.
        \end{aligned}
    \right.
\end{equation*}
Of course, the validity of Conditions \eqref{condition:1}–\eqref{condition:2} is not equivalent to that of Assumption (D). Nevertheless, at least in the linear-quadratic setting, we observe that the flow property essentially implies the decoupling of the system for $\left(X^{t,\xi},Y^{t,\xi},Z^{t,\xi}\right)$ --- a phenomenon that is not generally true for control problems of mean-field FBSDEs.

%%%%%%%%%%%%%%%%%%%%%%%%%%%%%%%%%%%%%%%%%%%%%%%%%%%%%%%%%%%%%%%%%%%%%%%%%%%%
%\newpage

\normalsize 
\appendix
\appendixpage
\addappheadtotoc

\section{Proof of some statements in Section~\ref{sec:MFTC}}\label{App:sec:MFTC}

\subsection{Proof of Lemma~\ref{lem:wellposedness_adjoint}}\label{App:lem:wellposedness_adjoint}
In view of \eqref{Lagrangian} and Assumption (A1), the equation for $R^u$ can also be written as 
\begin{align*}
    R^u_s= \ & -\varphi_y \left(Y^u_t\right) + \int_t^s \bigg\{ f_2(r)^\top R^u_r { + f_5(r)^\top \e\left[R^u_r\right]} -h_y \left(r,\theta^u_r,\lr\left({ \theta^u_r}\right), u_r \right) \\
    &\qquad\qquad\qquad\qquad\qquad { - \widetilde{\e} \left[D_{y'}\frac{dh}{d\nu} \left(r,\widetilde{\theta^u_r},\lr\left(\theta^u_r\right), \widetilde{u_r} \right)\left(\theta^u_r\right) \right]} \bigg\} dr \\
    &+\sum_{j=1}^n \int_t^s \bigg\{f^j_3(r)^\top R^u_r { + f^j_6(r)^\top \e\left[R^u_r\right]} - h_{z^j}\left(r,\theta^u_r,\lr\left({ \theta^u_r}\right), u_r \right)\\
    &\qquad\qquad\qquad { - \widetilde{\e} \left[D_{{z'}^j}\frac{dh}{d\nu} \left(r,\widetilde{\theta^u_r},\lr\left(\theta^u_r\right), \widetilde{u_r} \right)\left(\theta^u_r\right) \right]}  \bigg\} dB^j_r.
\end{align*}
With the processes $u$ and $\theta^u$ being given, the last equation is a linear SDE for $R^u$, which has a unique adapted solution. Moreover, by using Assumption (A2) and standard argument for SDEs (see \cite{MR1696772} for instance), we have 
\begin{align}\label{lem:wellposedness_adjoint_1}
    \e\bigg[\sup_{s\in[t,T]}\left|R^u_s\right|^2\bigg]\le C(L,T)\; \e\left[1+\left|Y^u_t\right|^2+\int_t^T \left(|u_s|^2+\left|\theta^u_s\right|^2\right) ds\right].
\end{align}
Similarly, the BSDE for $\left(P^u,Q^u\right)$ can also be written as
\begin{align*}
    P^u_s=\ & -\int_s^T Q^u_r dB_r - g_1^\top  R^u_T - g_2^\top  \e\left[R^u_T\right] \\
    &+\Phi_x\left(X^u_T, \lr\left(X^u_T\right) \right) +\widetilde{\e}\left[D_{x'}\frac{d\Phi}{d\nu} \left(\widetilde{X^u_T}, \lr\left(X^u_T\right)\right)\left(X^u_T\right) \right] \\
    &+ \int_s^T \bigg\{b_1(r)^\top P^u_r +\sum_{j=1}^n \sigma^j_1(r)^\top Q^{u,j}_r -f_1(r)^\top R^u_r \\
    &\quad\qquad + b_2(r)^\top \e\left[P^u_r\right] +\sum_{j=1}^n \sigma^j_2(r)^\top \e\left[Q^{u,j}_r\right] -f_4(r)^\top \e\left[R^u_r\right]\\
    &\quad\qquad + h_x\left(r,\theta^u_r,\lr\left({ \theta^u_r}\right), u_r\right) + \widetilde{\e} \bigg[D_{x'}\frac{dh}{d\nu} \left(r,\widetilde{\theta^u_r}, \lr\left({ \theta^u_r}\right), \widetilde{u_r} \right)\left({ \theta^u_r}\right) \bigg] \bigg\} dr. 
\end{align*}
With the processes $u$, $\theta^u$ and $R^u$ being given, the above equation is a linear BSDE for $\left(P^u,Q^u\right)$, which has a unique adapted solution. By using standard argument for BSDEs, Assumption (A2) and \eqref{lem:wellposedness_adjoint_1}, we have 
\begin{align}
    & \e\left[\sup_{s\in[t,T]}\left|P^u_s\right|^2+\int_t^T \left|Q^u_s\right|^2 ds \right] \notag \\
    \le\ & C(L,T)\; \e\left[1+\left|X^u_T\right|^2+\left|R^u_T\right|^2+\int_t^T \left(|u_s|^2+\left|\theta^u_s\right|^2+\left|R^u_s\right|^2\right) ds\right]\notag  \\
    \le\ & C(L,T)\; \e\left[1+\left|X^u_T\right|^2+\left|Y^u_t\right|^2+\int_t^T \left(|u_s|^2+\left|\theta^u_s\right|^2\right) ds\right]. \label{lem:wellposedness_adjoint_2}
\end{align}
Substituting \eqref{SDE:bound} and \eqref{BSDE:bound} into \eqref{lem:wellposedness_adjoint_1} and \eqref{lem:wellposedness_adjoint_2}, we obtain \eqref{lem:wellposedness_adjoint_0}.

\subsection{Proof of Lemma~\ref{lem:estimate}}\label{App:lem:estimate}
We denote by $\de\xi:=\xi^2-\xi^1$, $\de \ir:=\ir^2-\ir^1$ and $\de \Pi:= \left(\Pi^2-\Pi^1\right)$. Then, from the FBSDEs for $\Pi^1$ and $\Pi^2$, we know that the processes $\de \Pi$ satisfy the following equations:
\begin{align}
            &\de X_s= \de \xi+\int_t^s \left\{\gamma \left[ b_1(r)\de X_r + b_2(r)\e\left[\de X_r\right]+b_3(r)\de u_r\right] + \de \ir^b_r \right\} dr \notag \\
            &\qquad\qquad + \int_t^s \left\{\gamma \left[ \sigma_1(r)\de X_r + \sigma_2(r)\e\left[\de X_r\right]+\sigma_3(r)\de u_r\right] + \de \ir^\sigma_r\right\} dB_r, \notag \\
            &\de Y_s= -\int_s^T \de Z_r dB_r+ \gamma g_1\de X_T+\gamma g_2\e\left[\de X_T\right] +\de\ir^g \notag \\
            &\qquad\qquad + \int_s^T \bigg\{\gamma \bigg[f_1(r)\de X_r+ f_2(r)\de Y_r +\sum_{j=1}^n f^j_3(r) \de Z^j_r + f_4(r) \e\left[\de X_r\right]{ + f_5(r) \e\left[\de Y_r\right]} \notag \\
            &\quad\qquad\qquad\qquad\qquad { +\sum_{j=1}^n f^j_6(r) \e\left[\de Z^j_r\right]} + f_7(r)\de u_r  \bigg] + \de \ir^f_r\bigg\}dr, \notag \\
            &\de P_s=  \gamma\bigg\{- g_1^\top  \de R_T - g_2^\top \e\left[\de R_T\right]  +\Phi_x\left(X^2_T, \lr\left(X^2_T\right) \right) - \Phi_x\left(X^1_T, \lr\left(X^1_T\right) \right)  \notag \\
            &\qquad\qquad +\widetilde{\e}\left[D_{x'}\frac{d\Phi}{d\nu} \left(\widetilde{X^2_T}, \lr\left(X^2_T\right)\right)\left(X^2_T\right) - D_{x'}\frac{d\Phi}{d\nu} \left(\widetilde{X^1_T}, \lr\left(X^1_T\right)\right)\left(X^1_T\right) \right] \bigg\} + \de \ir^\Phi  \notag \\
            &\quad\qquad + \int_s^T \bigg\{\gamma \bigg[ b_1(r)^\top \de P_r + \sum_{j=1}^n \sigma^j_1(r)\de Q^j_r - f_1(r)^\top \de R_r + b_2(r)^\top \e[\de P_r]  \notag \\
            &\qquad\qquad\qquad\qquad + \sum_{j=1}^n \sigma^j_2(r)\e\left[\de Q^j_r\right]- f_4(r)^\top \e[\de R_r] + h_x\left(r,\cdot\right)\big|^{\left(\theta^2_r,\lr({ \theta^2_r}),u^2_r\right)}_{\left(\theta^1_r,\lr({ \theta^1_r}),u^1_r\right)} \notag \\
            &\qquad\qquad\qquad\qquad  + \widetilde{\e} \bigg[D_{x'}\frac{dh}{d\nu} \left(r,\cdot \right)(\cdot)\Big|^{\left(\widetilde{\theta^2_r},\lr\left(\theta^2_r\right),\widetilde{u^2_r}\right)\left({ \theta^2_r}\right)}_{\left(\widetilde{\theta^1_r},\lr\left(\theta^1_r\right),\widetilde{u^1_r}\right)\left({ \theta^1_r}\right)} \bigg]\bigg]  + \de \ir^x_r \bigg\} dr -\int_s^T \de Q_r dB_r, \notag \\
            &\de R_s= -\gamma \left[\varphi_y \left(Y^2_t\right)-\varphi_y \left(Y^1_t\right)\right] - \de \ir^\varphi \notag \\
            &\quad\qquad +\int_t^s \bigg\{\gamma \bigg[f_2(r)^\top \de R_r { + f_5(r)^\top \e[\de R_r]} -h_y \left(r,\cdot\right)\big|^{\left(\theta^2_r,\lr({ \theta^2_r}),u^2_r\right)}_{\left(\theta^1_r,\lr({\theta^1_r}),u^1_r\right)} \notag\\
            &\qquad\qquad\qquad\qquad { - \widetilde{\e} \bigg[D_{y'}\frac{dh}{d\nu} \left(r,\cdot\right)(\cdot)\Big|^{\left(\widetilde{\theta^2_r},\lr\left(\theta^2_r\right),\widetilde{u^2_r}\right)\left(\theta^2_r\right)}_{\left(\widetilde{\theta^1_r},\lr\left(\theta^1_r\right),\widetilde{u^1_r}\right)\left(\theta^1_r\right)} \bigg]}\bigg] +\de \ir^y_r\bigg\} dr \notag \\
            &\qquad\qquad +\sum_{j=1}^n \int_t^s \bigg\{\gamma \bigg[f^j_3(r)^\top \de R_r{ +f^j_6(r)^\top \e[\de R_r]} -h_{z^j}\left(r,\cdot\right)\big|^{\left(\theta^2_r,\lr({ \theta^2_r}),u^2_r\right)}_{\left(\theta^1_r,\lr({ \theta^1_r}),u^1_r\right)}  \notag \\
            &\quad\qquad\qquad\qquad\qquad { - \widetilde{\e} \bigg[D_{{z'}^j}\frac{dh}{d\nu} \left(r,\cdot\right)(\cdot)\Big|^{\left(\widetilde{\theta^2_r},\lr\left(\theta^2_r\right),\widetilde{u^2_r}\right)\left(\theta^2_r\right)}_{\left(\widetilde{\theta^1_r},\lr\left(\theta^1_r\right),\widetilde{u^1_r}\right)\left(\theta^1_r\right)}\bigg]}\bigg] + \de \ir^{z^j}_r\bigg\} dB^j_r, \notag \\
            &b_3(s)^\top \de P_s+ \sum_{j=1}^n \sigma^j_3(s)^\top \de Q^j_s -f_7(s)^\top \de R_s + h_v\left(s,\cdot\right)\big|^{\left(\theta^2_s,\lr\left({\theta^2_s}\right),u^2_s\right)}_{\left(\theta^1_s,\lr\left({\theta^1_s}\right),u^1_s\right)} =0,\quad s\in[t,T]. \label{lem:estimate_FBSDE}
\end{align}
By applying It\^o's formula for $\de P_s^\top \de X_s$, taking expectation and using Fubini's theorem, we have
\begin{align}
    &\e\left[\de P_T^\top \de X_T -\de P_t^\top \de\xi\right] \notag\\
    =\ & \e\int_t^T \bigg\{ \gamma \de P_s^\top b_3(s)  \de u_s + \sum_{j=1}^n \gamma \left(\de Q^j_s\right)^\top \sigma^j_3(s) \de u_s + \gamma \de R_s^\top  f_1(s) \de X_s +\gamma \e\left[\de R_s\right]^\top f_4(s)\de X_s  \notag \\
    &\quad\qquad - \gamma \left[h_x\left(s,\cdot\right)\big|^{\left(\theta^2_s,\lr\left({ \theta^2_s}\right),u^2_s\right)}_{\left(\theta^1_s,\lr\left({ \theta^1_s}\right),u^1_s\right)}\right]^\top \de X_s  - \gamma \widetilde{\e} \bigg[D_{x'}\frac{dh}{d\nu} \left(s,\cdot\right)(\cdot)\Big|^{\left(\widetilde{\theta^2_s},\lr\left({ \theta^2_s}\right),\widetilde{u^2_s}\right)\left({ \theta^2_s}\right)}_{\left(\widetilde{\theta^1_s},\lr\left({ \theta^1_s}\right),\widetilde{u^1_s}\right)\left({ \theta^1_s}\right)} \bigg]^\top \de X_s  \notag\\
    &\quad\qquad + \de P_s^\top\de \ir^b_s  +  \sum_{j=1}^n \left(\de Q^j_s\right)^\top \de \ir^{\sigma^j}_s - \de X_s^\top \de \ir^x_s  \bigg\} ds \notag \\
    =\ & \e\int_t^T \bigg\{ \gamma \de u_s^\top \bigg[b_3(s)^\top \de P_s  + \sum_{j=1}^n  \sigma^j_3(s)^\top \de Q^j_s\bigg] + \gamma \de R_s^\top  \Big[f_1(s) \de X_s + f_4(s)\e\left[\de X_s\right] \Big]  \notag \\
    &\quad\qquad - \gamma \left[h_x\left(s,\cdot\right)\big|^{\left(\theta^2_s,\lr\left({ \theta^2_s}\right),u^2_s\right)}_{\left(\theta^1_s,\lr\left({ \theta^1_s}\right),u^1_s\right)}\right]^\top \de X_s  - \gamma \widetilde{\e} \bigg[\bigg(D_{x'}\frac{dh}{d\nu} \left(s,\cdot\right)(\cdot)\Big|^{\left(\theta^2_s,\lr\left({ \theta^2_s}\right),u^2_s\right)\left(\widetilde{ \theta^2_s}\right)}_{\left(\theta^1_s,\lr\left({ \theta^1_s}\right),u^1_s\right)\left(\widetilde{ \theta^1_s}\right)} \bigg)^\top \widetilde{\de X_s} \bigg]  \notag\\
    &\quad\qquad + \de P_s^\top\de \ir^b_s  +  \sum_{j=1}^n \left(\de Q^j_s\right)^\top \de \ir^{\sigma^j}_s - \de X_s^\top \de \ir^x_s  \bigg\} ds. \label{lem:estimate_1}
\end{align}
Similarly, by using It\^o's formula and Fubini's theorem for the term $\de R_s^\top \de Y_s$, we can deduce that 
\begin{align}
    &\e\left[\de R_T^\top \de Y_T -\de R_t^\top \de Y_t \right] \notag \\
    =\ & \e\int_t^T \bigg\{ -\gamma \de R_s^\top \bigg[f_1(s)\de X_s + f_4(s) \e\left[\de X_s\right]  \bigg] -\gamma \de u_s^\top f_7(s)\de R_s \notag\\
    &\quad\qquad  - \gamma \left[h_y \left(s,\cdot\right)\big|^{\left(\theta^2_s,\lr\left({ \theta^2_s}\right),u^2_s\right)}_{\left(\theta^1_s,\lr\left({ \theta^1_s}\right),u^1_s\right)}\right]^\top \de Y_s  -\gamma \sum_{j=1}^n   \left[h_{z^j}\left(s,\cdot\right)\big|^{\left(\theta^2_s,\lr\left({ \theta^2_s}\right),u^2_s\right)}_{\left(\theta^1_s,\lr\left({ \theta^1_s}\right),u^1_s\right)} \right]^\top \de Z^j_s \notag\\
    & \quad\qquad  - \gamma \widetilde{\e} \bigg[\bigg(D_{y'}\frac{dh}{d\nu} \left(s,\cdot\right)(\cdot)\Big|^{\left(\theta^2_s,\lr\left({ \theta^2_s}\right),u^2_s\right)\left(\widetilde{ \theta^2_s}\right)}_{\left(\theta^1_s,\lr\left({ \theta^1_s}\right),u^1_s\right)\left(\widetilde{ \theta^1_s}\right)} \bigg)^\top \widetilde{\de Y_s} \bigg]  \notag\\
    & \quad\qquad - \gamma\sum_{j=1}^n \widetilde{\e} \bigg[\bigg(D_{{z'}^j}\frac{dh}{d\nu} \left(s,\cdot\right)(\cdot)\Big|^{\left(\theta^2_s,\lr\left({ \theta^2_s}\right),u^2_s\right)\left(\widetilde{ \theta^2_s}\right)}_{\left(\theta^1_s,\lr\left({ \theta^1_s}\right),u^1_s\right)\left(\widetilde{ \theta^1_s}\right)} \bigg)^\top \widetilde{\de Z^j_s} \bigg] \notag \\
    &\quad\qquad -\de R_s^\top \de \ir^f_s + \de Y_s^\top \de \ir^y_s  +\sum_{j=1}^n \left(\de Z^j_s\right)^\top \de \ir^{z^j}_s  \bigg\} ds. \label{lem:estimate_2}
\end{align}
Plugging \eqref{lem:estimate_2} with \eqref{lem:estimate_1}, by using the last equality in \eqref{lem:estimate_FBSDE} and the convexity of $h$ in Assumption (A3), we know that
\begin{align}
    &\e\left[\de P_T^\top \de X_T + \de R_T^\top \de Y_T -\de P_t^\top \de\xi -\de R_t^\top \de Y_t \right] \notag\\
    =\ & \e\int_t^T \bigg\{ \gamma \de u_s^\top \bigg[b_3(s)^\top \de P_s  + \sum_{j=1}^n  \sigma^j_3(s)^\top \de Q^j_s - f_7(s)\de R_s \bigg] - \gamma \left[h_x(s,\cdot)\big|^{\left(\theta^2_s,\lr\left({ \theta^2_s}\right),u^2_s\right)}_{\left(\theta^1_s,\lr\left({ \theta^1_s}\right),u^1_s\right)}\right]^\top \de X_s  \notag \\
    &\quad\qquad - \gamma \left[h_y\left(s,\cdot\right)\big|^{\left(\theta^2_s,\lr\left({ \theta^2_s}\right),u^2_s\right)}_{\left(\theta^1_s,\lr\left({ \theta^1_s}\right),u^1_s\right)}\right]^\top \de Y_s  -\gamma \sum_{j=1}^n   \left[h_{z^j}\left(s,\cdot\right)\big|^{\left(\theta^2_s,\lr\left({ \theta^2_s}\right),u^2_s\right)}_{\left(\theta^1_s,\lr\left({ \theta^1_s}\right),u^1_s\right)} \right]^\top \de Z^j_s  \notag\\
    &\quad\qquad  - \gamma \widetilde{\e} \bigg[\bigg(D_{x'}\frac{dh}{d\nu} \left(s,\cdot\right)(\cdot)\Big|^{\left(\theta^2_s,\lr\left({ \theta^2_s}\right),u^2_s\right)\left(\widetilde{ \theta^2_s}\right)}_{\left(\theta^1_s,\lr\left({ \theta^1_s}\right),u^1_s\right)\left(\widetilde{ \theta^1_s}\right)} \bigg)^\top \widetilde{\de X_s} \bigg]   \notag\\
    & \quad\qquad - \gamma \widetilde{\e} \bigg[\bigg(D_{y'}\frac{dh}{d\nu} \left(s,\cdot\right)(\cdot)\Big|^{\left(\theta^2_s,\lr\left({ \theta^2_s}\right),u^2_s\right)\left(\widetilde{ \theta^2_s}\right)}_{\left(\theta^1_s,\lr\left({ \theta^1_s}\right),u^1_s\right)\left(\widetilde{ \theta^1_s}\right)} \bigg)^\top \widetilde{\de Y_s} \bigg] \notag \\
    & \quad\qquad - \gamma\sum_{j=1}^n \widetilde{\e} \bigg[\bigg(D_{{z'}^j}\frac{dh}{d\nu} \left(s,\cdot\right)(\cdot)\Big|^{\left(\theta^2_s,\lr\left({ \theta^2_s}\right),u^2_s\right)\left(\widetilde{ \theta^2_s}\right)}_{\left(\theta^1_s,\lr\left({ \theta^1_s}\right),u^1_s\right)\left(\widetilde{ \theta^1_s}\right)} \bigg)^\top \widetilde{\de Z^j_s} \bigg] + \de P_s^\top\de \ir^b_s  +  \sum_{j=1}^n \left(\de Q^j_s\right)^\top \de \ir^{\sigma^j}_s  \notag \\
    &\quad\qquad  - \de X_s^\top \de \ir^x_s-\de R_s^\top \de \ir^f_s + \de Y_s^\top \de \ir^y_s  +\sum_{j=1}^n \left(\de Z^j_s\right)^\top \de \ir^{z^j}_s\bigg\} ds \notag\\
    =\ & \e\int_t^T \bigg\{ -\gamma \left[h_v(s,\cdot)\big|^{\left(\theta^2_s,\lr\left({ \theta^2_s}\right),u^2_s\right)}_{\left(\theta^1_s,\lr\left({ \theta^1_s}\right),u^1_s\right)} \right]^\top \de u_s  - \gamma \left[h_x(s,\cdot)\big|^{\left(\theta^2_s,\lr\left({ \theta^2_s}\right),u^2_s\right)}_{\left(\theta^1_s,\lr\left({ \theta^1_s}\right),u^1_s\right)}\right]^\top \de X_s   \notag\\
    &\quad\qquad  - \gamma \left[h_y (s,\cdot)\big|^{\left(\theta^2_s,\lr\left({ \theta^2_s}\right),u^2_s\right)}_{\left(\theta^1_s,\lr\left({ \theta^1_s}\right),u^1_s\right)}\right]^\top \de Y_s  -\gamma \sum_{j=1}^n   \left[h_{z^j}(s,\cdot)\big|^{\left(\theta^2_s,\lr\left({ \theta^2_s}\right),u^2_s\right)}_{\left(\theta^1_s,\lr\left({ \theta^1_s}\right),u^1_s\right)} \right]^\top \de Z^j_s \notag\\
    &\quad\qquad  - \gamma \widetilde{\e} \bigg[\bigg(D_{x'}\frac{dh}{d\nu} \left(s,\cdot\right)(\cdot)\Big|^{\left(\theta^2_s,\lr\left({ \theta^2_s}\right),u^2_s\right)\left(\widetilde{ \theta^2_s}\right)}_{\left(\theta^1_s,\lr\left({ \theta^1_s}\right),u^1_s\right)\left(\widetilde{ \theta^1_s}\right)} \bigg)^\top \widetilde{\de X_s} \bigg]   \notag\\
    & \quad\qquad - \gamma \widetilde{\e} \bigg[\bigg(D_{y'}\frac{dh}{d\nu} \left(s,\cdot\right)(\cdot)\Big|^{\left(\theta^2_s,\lr\left({ \theta^2_s}\right),u^2_s\right)\left(\widetilde{ \theta^2_s}\right)}_{\left(\theta^1_s,\lr\left({ \theta^1_s}\right),u^1_s\right)\left(\widetilde{ \theta^1_s}\right)} \bigg)^\top \widetilde{\de Y_s} \bigg] \notag \\
    & \quad\qquad - \gamma\sum_{j=1}^n \widetilde{\e} \bigg[\bigg(D_{{z'}^j}\frac{dh}{d\nu} \left(s,\cdot\right)(\cdot)\Big|^{\left(\theta^2_s,\lr\left({ \theta^2_s}\right),u^2_s\right)\left(\widetilde{ \theta^2_s}\right)}_{\left(\theta^1_s,\lr\left({ \theta^1_s}\right),u^1_s\right)\left(\widetilde{ \theta^1_s}\right)} \bigg)^\top \widetilde{\de Z^j_s} \bigg] + \de P_s^\top\de \ir^b_s  +  \sum_{j=1}^n \left(\de Q^j_s\right)^\top \de \ir^{\sigma^j}_s  \notag \\
    &\quad\qquad - \de X_s^\top \de \ir^x_s-\de R_s^\top \de \ir^f_s + \de Y_s^\top \de \ir^y_s  +\sum_{j=1}^n \left(\de Z^j_s\right)^\top \de \ir^{z^j}_s\bigg\} ds \notag\\
    \le\ & \e\int_t^T \bigg[ -2\gamma \lambda |\de u_s|^2 + \de P_s^\top\de \ir^b_s  +  \sum_{j=1}^n \left(\de Q^j_s\right)^\top \de \ir^{\sigma^j}_s - \de X_s^\top \de \ir^x_s-\de R_s^\top \de \ir^f_s \notag\\
    &\quad\qquad   + \de Y_s^\top \de \ir^y_s  +\sum_{j=1}^n \left(\de Z^j_s\right)^\top \de \ir^{z^j}_s\bigg] ds. \label{lem:estimate_3}
\end{align}
By using the terminal conditions for $\de P_T$ and $\de Y_T$, the initial condition for $\de R_t$, and then  applying Fubini's theorem and also the convexity conditions on $\varphi$ and $\Phi$ in Assumption (A3), we have
\begin{align}
    &\e\left[\de P_T^\top \de X_T + \de R_T^\top \de Y_T-\de R_t^\top \de Y_t \right] \notag \\
    =\ & \e\bigg\{ \gamma \de Y_t^\top \left[\varphi_y \left(Y^2_t\right)-\varphi_y \left(Y^1_t\right)\right]+ \gamma \left[\Phi_x\left(X^2_T, \lr\left(X^2_T\right) \right) - \Phi_x\left(X^1_T, \lr\left(X^1_T\right) \right)\right]^\top \de X_T\notag \\
    &\quad +\gamma\widetilde{\e}\bigg[\left(D_{x'}\frac{d\Phi}{d\nu} \left(X^2_T, \lr\left(X^2_T\right)\right)\left(\widetilde{X^2_T}\right) - D_{x'}\frac{d\Phi}{d\nu} \left(X^1_T, \lr\left(X^1_T\right)\right)\left(\widetilde{X^1_T}\right) \right)^\top \widetilde{\de X_T} \bigg] \notag\\
    &\quad + \de X_T^\top \de \ir^\Phi +  \de R_T^\top \de\ir^g + \de Y_t^\top \de \ir^\varphi  \bigg\} \notag\\
    \geq\ & \e\left[ \de X_T^\top \de \ir^\Phi +  \de R_T^\top \de\ir^g + \de Y_t^\top \de \ir^\varphi  \right]. \notag
\end{align}
Substituting the last inequality back into \eqref{lem:estimate_3}, we know that for any $\epsilon\in(0,1)$,  
\begin{align}
    &2\gamma \lambda\int_t^T  \left\|\de u_s\right\|_2^2 ds \notag\\
    \le\ & \e\bigg[ \int_t^T \bigg( \de P_s^\top\de \ir^b_s  +  \sum_{j=1}^n \left(\de Q^j_s\right)^\top \de \ir^{\sigma^j}_s - \de X_s^\top \de \ir^x_s-\de R_s^\top \de \ir^f_s  + \de Y_s^\top \de \ir^y_s   \notag\\
    &\qquad\qquad  +\sum_{j=1}^n \left(\de Z^j_s\right)^\top \de \ir^{z^j}_s\bigg) ds - \de X_T^\top \de \ir^\Phi -  \de R_T^\top \de\ir^g - \de Y_t^\top \de \ir^\varphi  +\de P_t^\top \de\xi  \bigg]\notag\\
    \le\ & \int_t^T \Big( \left\|\de P_s\right\|_2 \left\|\de \ir^b_s\right\|_2 + \left\|\de Q_s\right\|_2 \left\|\de \ir^{\sigma}_s\right\|_2 + \left\|\de X_s\right\|_2 \left\|\de \ir^x_s\right\|_2  + \left\|\de R_s\right\|_2 \left\|\de \ir^f_s\right\|_2  \notag \\
    &\qquad + \left\|\de Y_s\right\|_2 \left\|\de \ir^y_s\right\|_2 +\left\|\de Z_s\right\|_2 \left\|\de \ir^z_s\right\|_2\Big)ds +  \left\|\de X_T\right\|_2  \left\|\de \ir^\Phi\right\|_2 + \left\| \de R_T\right\|_2 \left\|\de\ir^g\right\|_2 \notag \\
    & + \left\|\de Y_t\right\|_2 \left\|\de \ir^\varphi\right\|_2  + \left\|\de P_t\right\|_2 \left\|\de\xi\right\|_2 \notag\\
    \le\ & \epsilon \|(\de\pi,0)\|^2_{\mathbb{S}} + \frac{C(L,T)}{\epsilon} \left(\|\de\ir\|^2_{\mathbb{I}} + \|\de\xi\|^2_2\right). \label{lem:estimate_4}
\end{align}
Here, the tuple of processes $(\de \pi,0)$ is $\de \Pi$ with $\de u$ being replaced by $0$, and it is an element in $\mathbb{S}$. By using standard estimate for the SDE of $\de X$, we have
\begin{align}
    \e\bigg[\sup_{t\le s\le T} \left|\de X_s\right|^2\bigg]\le C(L,T) \left[ \|\de\xi\|^2_2 + \gamma^2 \int_t^T  \left\|\de u_s\right\|_2^2 ds + \int_t^T \left(\left\|\de \ir^b_s\right\|_2^2 + \left\|\de \ir^\sigma_s\right\|_2^2 \right) ds \right]; \label{lem:estimate_5} 
\end{align}
then, by using standard estimate for the BSDE of $(\de Y,\de Z)$ and \eqref{lem:estimate_5} (and also the fact that $0\le \gamma\le 1$), we have
\begin{align}
    &\e\bigg[\sup_{t\le s\le T} \left|\de Y_s\right|^2+\int_t^T \left|\de Z_s\right|^2ds \bigg] \notag \\
    \le\ & C(L,T) \bigg[\gamma^2\sup_{t\le s\le T} \left\|\de X_s\right\|_2^2+\left\|\de\ir^g\right\|^2_2 +\gamma^2 \int_t^T  \left\|\de u_s\right\|_2^2 ds + \int_t^T \left\|\de \ir^f_s\right\|_2^2  ds \bigg]\notag\\
    \le\ & C(L,T) \bigg[\|\de\xi\|^2_2 +\left\|\de\ir^g\right\|^2_2 +\gamma^2 \int_t^T  \left\|\de u_s\right\|_2^2 ds + \int_t^T\left(\left\|\de \ir^b_s\right\|_2^2 + \left\|\de \ir^\sigma_s\right\|_2^2 + \left\|\de \ir^f_s\right\|_2^2 \right)  ds \bigg]. \label{lem:estimate_6} 
\end{align}
Similarly, from standard estimate for the SDE of $\de R$ and Estimates \eqref{lem:estimate_5} and \eqref{lem:estimate_6}, we have 
\begin{align}
    \e\bigg[\sup_{t\le s\le T} \left|\de R_s\right|^2\bigg]\le\ & C(L,T) \bigg[ \gamma^2\sup_{t\le s\le T} \left\|\de Y_s\right\|^2_2 + \gamma^2 \int_t^T  \left(\left\|\de X_s\right\|_2^2 + \left\|\de Z_s\right\|_2^2 + \left\|\de u_s\right\|_2^2 \right)ds \notag \\
    &\qquad\qquad +\left\|\de\ir^\varphi\right\|_2^2 + \int_t^T \left(\left\|\de \ir^y_s\right\|_2^2 + \left\|\de \ir^z_s\right\|_2^2 \right) ds \bigg] \notag\\
    \le\ & C(L,T) \bigg[ \|\de\xi\|^2_2 +\left\|\de\ir^g\right\|^2_2 +\left\|\de\ir^\varphi\right\|_2^2 + \gamma^2 \int_t^T  \left\|\de u_s\right\|_2^2 ds \notag \\
    &\qquad\qquad + \int_t^T \left(\left\|\de \ir^b_s\right\|_2^2 + \left\|\de \ir^\sigma_s\right\|_2^2 + \left\|\de \ir^f_s\right\|_2^2 + \left\|\de \ir^y_s\right\|_2^2 + \left\|\de \ir^z_s\right\|_2^2 \right) ds \bigg]; \label{lem:estimate_7} 
\end{align}
and finally, by using standard estimate for the BSDE of $(\de P,\de Q)$ and Estimates \eqref{lem:estimate_5}-\eqref{lem:estimate_7}, we know that
\begin{align}
    &\e\bigg[\sup_{t\le s\le T} \left|\de P_s\right|^2+\int_t^T \left|\de Q_s\right|^2ds \bigg] \notag \\
    \le\ & C(L,T) \bigg[\gamma^2\sup_{t\le s\le T} \left\|\de X_s\right\|_2^2+\gamma^2\sup_{t\le s\le T} \left\|\de R_s\right\|_2^2+\left\|\de\ir^\Phi\right\|^2_2 \notag \\
    &\qquad\qquad +\gamma^2 \int_t^T  \left(\left\|\de Y_s\right\|_2^2+\left\|\de Z_s\right\|_2^2+\left\|\de u_s\right\|_2^2\right) ds + \int_t^T \left\|\de \ir^x_s\right\|_2^2  ds \bigg]\notag\\
    \le\ & C(L,T) \bigg(\|\de\xi\|^2_2 +\left\|\de\ir\right\|^2_{\mathbb{I}}  +\gamma^2 \int_t^T  \left\|\de u_s\right\|_2^2 ds \bigg). \label{lem:estimate_8} 
\end{align}
Combining \eqref{lem:estimate_5}-\eqref{lem:estimate_8}, we have
\begin{align}
    \|(\de\pi,0)\|^2_{\mathbb{S}} \le C(L,T) \bigg(\|\de\xi\|^2_2 +\left\|\de\ir\right\|^2_{\mathbb{I}}  +\gamma^2 \int_t^T  \left\|\de u_s\right\|_2^2 ds \bigg), \label{lem:estimate_9} 
\end{align}
which together with \eqref{lem:estimate_4} (and the fact that $0\le \epsilon,\gamma\le 1$) imply
\begin{align}
    \gamma \left[2\lambda- \epsilon C(L,T) \right]\int_t^T  \left\|\de u_s\right\|_2^2 ds \le    C(L,T)\left(1+\frac{1}{\epsilon} \right)\left(\|\de\ir\|^2_{\mathbb{I}} + \|\de\xi\|^2_2\right). \notag
\end{align}
By choosing $\epsilon:=\frac{\lambda}{C(L,T)}$, we obtain that
\begin{align}
    \gamma \int_t^T  \left\|\de u_s\right\|_2^2 ds \le    C(L,T,\lambda)\left(\|\de\ir\|^2_{\mathbb{I}} + \|\de\xi\|^2_2\right). \notag
\end{align}
Substituting the last estimate back into \eqref{lem:estimate_9}, we have 
\begin{align}
    \|(\de\pi,0)\|^2_{\mathbb{S}} \le C(L,T) \left(\|\de\xi\|^2_2 +\left\|\de\ir\right\|^2_{\mathbb{I}} \right). \label{lem:estimate_10} 
\end{align}
From the last equality of \eqref{lem:estimate_FBSDE}, the Lipschitz condition on $h_v$ and the convexity of $h$ in $v$, we have
\begin{align*}
    0=\ & \de u_s^\top \bigg(b_3(s)^\top \de P_s+ \sum_{j=1}^n \sigma^j_3(s)^\top \de Q^j_s -f_7(s)^\top \de R_s + h_v(s,\cdot)\big|^{\left(\theta^2_s,\lr\left({ \theta^2_s}\right),u^2_s\right)}_{\left(\theta^1_s,\lr\left({ \theta^1_s}\right),u^1_s\right)} \bigg)\\
    =\ & \de u_s^\top \bigg(b_3(s)^\top \de P_s+ \sum_{j=1}^n \sigma^j_3(s)^\top \de Q^j_s - f_7(s)^\top \de R_s + h_v(s,\cdot,u_2)\big|^{\left(\theta^2_s,\lr\left({ \theta^2_s}\right)\right)}_{\left(\theta^1_s,\lr\left({ \theta^1_s}\right)\right)} \bigg)\\
    & + \Big(h_v\left(s,\theta^1_s,\lr\left({ \theta^1_s}\right),\cdot\right)\big|^{u^2_s}_{u^1_s} \Big)^\top \de u_s\\
    \geq \ & -C(L) \left| \de u_s\right|\cdot \bigg( \left|\de P_s\right| + \left|\de Q_s\right| + \left|\de R_s\right| + \left|\de \theta_s\right| +\left\|{ \de \theta_s}\right\|_2 \bigg) + 2\lambda \left|\de u_s\right|^2,
\end{align*}
from which we know that 
\begin{align}
    \left|\de u_s\right| \le C(L,\lambda) \bigg(\left|\de \theta_s\right| +\left\|{ \de \theta_s}\right\|_2 + \left|\de P_s\right| + \left|\de Q_s\right| + \left|\de R_s\right| \bigg). \label{lem:estimate_11} 
\end{align}
By combining \eqref{lem:estimate_11} and \eqref{lem:estimate_10}, we obtain \eqref{lem:estimate_0}.

\section{Proof of some statements in Section~\ref{sec:HJB:1st}}\label{App:sec:HJB:1st}

\subsection{Proof of Lemma~\ref{lem:estimate_s}}\label{App:lem:estimate_s}
In this proof, we temporarily omit the superscript $t,\xi$ of all the processes. From \eqref{lem:kappa_0}, we have already known that 
\begin{align}
    \e\bigg[\sup_{t\le s\le T}\left|X_s\right|^2+\sup_{t\le s\le T}\left|Y_s\right|^2+\sup_{t\le s\le T}\left|P_s\right|^2+\sup_{t\le s\le T}\left|R_s\right|^2\bigg]\le C(L,T,\lambda) \left( 1+\left\|\xi\right\|^2_2\right),\label{1order-FBSDE-1}
\end{align}  
and it remains the estimate for $\e\bigg[\sup_{t\le s\le T}\left|u_s\right|^2\bigg]$. From \eqref{FBSDEs:first} and the convexity condition on $h$ in $v$ in Assumption (A3), we see that
\begin{align*}
    0
    %=\ & \left(u_s\right)^\top b_3(s)^\top  P_s -\left(u_s\right)^\top f_7(s)^\top  R_s + h_v\left(s,\theta_s,\lr({ \theta_s}),u_s\right)^\top u_s\\
    =\ & \left(u_s\right)^\top b_3(s)^\top  P_s -\left(u_s\right)^\top f_7(s)^\top  R_s +h_v\left(s,\theta_s,\lr({ \theta_s}),0\right)^\top u_s \\
    &+ \left[h_v\left(s,\theta_s,\lr({ \theta_s}),u_s\right)-h_v\left(s,\theta_s,\lr({ \theta_s}),0\right)\right]^\top u_s\\
    \geq\ & -L \left|u_s\right| \cdot \left(\left|P_s\right| + \left|R_s\right| + \left|X_s\right| + \left\|X_s\right\|_2 + \left|Y_s\right|{ + \left\|Y_s\right\|_2} \right) + 2\lambda \left|u_s\right|^2, 
\end{align*}
which implies
\begin{align*}
    \left|u_s\right| \le \frac{L}{2\lambda}  \left(\left|P_s\right| + \left|R_s\right| + \left|X_s\right| + \left\|X_s\right\|_2 + \left|Y_s\right| { + \left\|Y_s\right\|_2} \right).
\end{align*}
Then, by using \eqref{1order-FBSDE-1}, we obtain the estimate for $\e\bigg[\sup_{t\le s\le T}\left|u_s\right|^2\bigg]$. For $t\le s\le s'\le T$, by using Cauchy–Schwarz inequality and Estimate \eqref{1order-FBSDE-0}, we have 
\begin{align*}
    &\e\left[\left|X_{s'}-X_s\right|^2 + \left|Y_{s'}-Y_s\right|^2 + \left|P_{s'}-P_s\right|^2 + \left|R_{s'}-R_s\right|^2\right]\notag\\
    \le\ & C(L) (s'-s) \e\left[\int_s^{s'} \left(1+ \left|X_s\right|^2 + \left\|X_s\right\|_2^2 + \left|u_s\right|^2 + \left|Y_s\right|^2+ \left\|Y_s\right\|_2^2 + \left|P_s\right|^2 + \left|R_s\right|^2\right)dr\right] \notag \\
    \le\ & C(L,T,\lambda) (s'-s)^2, 
\end{align*}
from which we obtain \eqref{lem:estimate_s_1}. 
We use again \eqref{FBSDEs:first} to see that
\begin{align*}
    0=\ & \left[b_3(s')^\top  P_{s'} - f_7(s')^\top  R_{s'} + h_v\left(s',\theta_{s'},\lr\left({ \theta_{s'}}\right),u_{s'}\right)\right] \\
    &-\left[b_3(s)^\top  P_s - f_7(s)^\top  R_s + h_v\left(s,\theta_s,\lr\left({ \theta_s}\right),u_s\right)\right]\\
    =\ & b_3(s')^\top  \left(P_{s'}-P_{s}\right) - f_7(s')^\top  \left(R_{s'}-R_{s}\right) + h_v\left(s',\theta_{s'},\lr\left({ \theta_{s'}}\right),u_{s'}\right)-h_v\left(s',\theta_{s},\lr\left({ \theta_s}\right),u_{s'}\right)\\
    &+ \left(b_3(s')-b_3(s)\right)^\top P_{s} - \left(f_7(s')-f_7(s)\right)^\top R_{s} + h_v\left(s',\theta_{s},\lr\left({ \theta_s}\right),u_{s}\right)-h_v\left(s,\theta_{s},\lr\left({ \theta_s}\right),u_{s}\right)\\
    &+h_v\left(s',\theta_{s},\lr\left({ \theta_s}\right),u_{s'}\right)-h_v\left(s',\theta_{s},\lr\left({ \theta_s}\right),u_{s}\right),
\end{align*}
then, by using the convexity condition on $h$ in $v$ in Assumption (A3), we have 
\begin{align*}
    &2\lambda\left|u_{s'}-u_s\right|\\
    \le\ & L \left( \left|X_{s'}-X_{s}\right| +\left\|X_{s'}-X_{s}\right\|_2 +\left|Y_{s'}-Y_{s}\right| +\left\|Y_{s'}-Y_{s}\right\|_2 +\left|P_{s'}-P_{s}\right| + \left|R_{s'}-R_{s}\right| \right)\\
    &+ \left|b_3(s')-b_3(s)\right|\cdot \left|P_{s}\right| + \left|f_7(s')-f_7(s)\right|\cdot \left|R_{s}\right| + \left|h_v\left(s',\theta_{s},\lr\left({ \theta_s}\right),u_{s}\right)-h_v\left(s,\theta_{s},\lr\left({ \theta_s}\right),u_{s}\right)\right|.  
\end{align*}
Then, by using \eqref{1order-FBSDE-0} and \eqref{lem:estimate_s_1} and the continuity conditions with respect to $s$ in Assumption (B), we obtain \eqref{lem:estimate_s_2}.

\subsection{Proof of Lemma~\ref{lem:de_bound}}\label{App:lem:de_bound}
From \eqref{def:x_s^epsilon}, we can write the equations for $\left(\x^\epsilon,\y^\epsilon,\p^\epsilon,\R^\epsilon,\U^\epsilon\right)$ as 
\begin{align}
    &\x^\epsilon_s= -\int_t^{t+\epsilon} \frac{1}{\epsilon}\Big[ b_0(r) + b_1(r) X_r + b_2(r)\e\left[X_r\right]+b_3(r)u_r\Big] dr \notag \\
    &\quad\qquad + \int_{t+\epsilon}^s \left[b_1(r) \x^\epsilon_r + b_2(r)\e\left[\x^\epsilon_r\right]+b_3(r)\U^\epsilon_r\right] dr, \notag\\
    &\y^\epsilon_s= g_1 \x^\epsilon_T+ g_2 \e\left[\x^\epsilon_T\right]  + \int_s^T  \left[f_1(r) \x^\epsilon_r  + f_2(r) \y^\epsilon_r + f_4(r) \e\left[\x^\epsilon_r\right] + f_5(r) \e\left[\y^\epsilon_r\right] + f_7(r) \U^\epsilon_r \right] dr, \notag\\
    &\p^\epsilon_s= - g_1^\top \R^\epsilon_T - g_2^\top \e\left[\R^\epsilon_T\right] +\int_0^1 \Phi_{xx}\left(X^{\epsilon,\delta}_T, \lr\left(X^{\epsilon,\delta}_T\right)\right) \x^\epsilon_T d\delta  \notag\\
    &\quad\qquad + \int_0^1 \widetilde{\e}\left[D_{x'}\frac{d\Phi_x}{d\nu}\left(X^{\epsilon,\delta}_T, \lr\left(X^{\epsilon,\delta}_T\right)\right)  \left(\widetilde{X^{\epsilon,\delta}_T}\right) \widetilde{\x^\epsilon_T}\right] d\delta \notag\\
    &\quad\qquad +\int_0^1 \widetilde{\e}\bigg[\left(D_{x'}\frac{d\Phi_x}{d\nu}\left(\widetilde{X^{\epsilon,\delta}_T}, \lr\left(X^{\epsilon,\delta}_T\right)\right) \left(X^{\epsilon,\delta}_T\right)\right)^\top \widetilde{\x^\epsilon_T} \bigg]d\delta  \notag\\
    &\quad\qquad +\int_0^1 \he\widetilde{\e}\bigg[D_{x''}D_{x'}\frac{d^2\Phi}{d\nu^2}\left(\widetilde{X^{\epsilon,\delta}_T}, \lr\left(X^{\epsilon,\delta}_T\right)\right) \left(X^{\epsilon,\delta}_T,\widehat{X^{\epsilon,\delta}_T}\right)  \widehat{\x^\epsilon_T} \bigg]d\delta  \notag\\
    &\quad\qquad +\int_0^1 \widetilde{\e}\bigg[D_{x'}^2\frac{d\Phi}{d\nu}\left(\widetilde{X^{\epsilon,\delta}_T}, \lr\left(X^{\epsilon,\delta}_T\right)\right) \left(X^{\epsilon,\delta}_T\right) \x^\epsilon_T \bigg]d\delta  \notag\\
    &\quad\qquad + \int_s^T \int_0^1  b_1(r)^\top  \p^\epsilon_r - f_1(r)^\top \R^\epsilon_r + b_2(r)^\top  \e\left[\p^\epsilon_r\right] - f_4(r)^\top \left[\R^\epsilon_r\right] \notag\\
    &\qquad\qquad\qquad +\left[\left(
    \begin{array}{ccc}
        h_{xx} & h_{xy} & h_{xv} 
    \end{array}\right)\left(r,\theta^{\epsilon,\delta}_r,\lr\left({ \theta^{\epsilon,\delta}_r}\right),u^{\epsilon,\delta}_r\right)\right] \left(
    \begin{array}{c}
         \x^\epsilon_r  \\
         \y^\epsilon_r \\
         \U^\epsilon_r 
    \end{array}
    \right) \notag \\
    & \qquad\qquad\qquad +\widetilde{\e}\left\{\left[\left(\begin{array}{cc}
        D_{x'}\frac{dh_{x}}{d\nu} & D_{y'}\frac{dh_{x}}{d\nu}
    \end{array}\right)\left(r,\theta^{\epsilon,\delta}_r,\lr\left({ \theta^{\epsilon,\delta}_r}\right),u^{\epsilon,\delta}_r\right)\left(\widetilde{ \theta^{\epsilon,\delta}_r}\right)\right] \left(\begin{array}{cc}
        \widetilde{\x^\epsilon_r}  \\
        \widetilde{\y^\epsilon_r} 
    \end{array}
    \right) \right\}  \notag\\
    &\qquad\qquad\qquad +\widetilde{\e}\left\{\left[\left(\begin{array}{ccc}
        D_{x'}\frac{dh_x}{d\nu} \\ 
        D_{x'}\frac{dh_y}{d\nu} \\
        D_{x'}\frac{dh_v}{d\nu}
    \end{array}\right)\left(r,\widetilde{\theta^{\epsilon,\delta}_r},\lr\left({ \theta^{\epsilon,\delta}_r}\right),\widetilde{u^{\epsilon,\delta}_r}\right)\left({ \theta^{\epsilon,\delta}_r}\right)\right]^\top  \left(\begin{array}{c}
        \widetilde{\x^\epsilon_r}  \\
        \widetilde{\y^\epsilon_r}  \\
        \widetilde{\U^\epsilon_r} 
    \end{array}
    \right) \right\}  \notag\\
    & \qquad\qquad\qquad  + \he\widetilde{\e} \left\{ \left[\left( \begin{array}{cc}
         D_{x''}D_{x'}\frac{d^2h}{d\nu^2} & D_{y''}D_{x'}\frac{d^2h}{d\nu^2} \\
    \end{array}\right) \left(r,\widetilde{\theta^{\epsilon,\delta}_r},\lr\left({ \theta^{\epsilon,\delta}_r}\right),\widetilde{u^{\epsilon,\delta}_r}\right)\left({ \theta^{\epsilon,\delta}_r},\widehat{{ \theta^{\epsilon,\delta}_r}}\right) \right] \left(\begin{array}{c}
         \widehat{\x^\epsilon_r}   \\
         \widehat{\y^\epsilon_r} 
    \end{array}\right) \right\}  \notag\\
    & \qquad\qquad\qquad  + \widetilde{\e} \bigg\{ \left[\left( \begin{array}{cc}
         D^2_{x'}\frac{dh}{d\nu} & D_{y'}D_{x'}\frac{dh}{d\nu} \\
    \end{array}\right) \left(r,\widetilde{\theta^{\epsilon,\delta}_r},\lr\left({ \theta^{\epsilon,\delta}_r}\right),\widetilde{u^{\epsilon,\delta}_r}\right)\left({ \theta^{\epsilon,\delta}_r}\right) \right] \left(\begin{array}{c}
         \x^\epsilon_r   \\
         \y^\epsilon_r 
    \end{array}\right) \bigg\}  d\delta dr, \notag\\
    & \R^\epsilon_s= - \int_0^1\varphi_{yy} \left(Y^{\epsilon,\delta}_{t+\epsilon}\right) \y^\epsilon_{t+\epsilon} d\delta + \frac{1}{\epsilon}\int_t^{t+\epsilon} \varphi_{yy}\left(Y_r\right) f\left(r,X_r,Y_r, { \lr(X_r,Y_r)}, u_r\right) dr \notag \\
    &\quad\qquad -\frac{1}{\epsilon}\int_t^{t+\epsilon} \left\{f_2(r)^\top  R_r { + f_5(r)^\top  \e[R_r]} -h_y \left(r,\theta_r,\lr({ \theta_r}),u_r\right) { -\widetilde{\e}\left[D_{y'} \frac{dh}{d\nu}\left(r,\widetilde{\theta_r},\lr(\theta_r),\widetilde{u_r} \right)\left(\theta_r\right) \right]} \right\} dr \notag \\
    &\quad\qquad +\int_{t+\epsilon}^s \int_0^1 f_2(r)^\top  \R^\epsilon_r { + f_5(r)^\top  \e\left[\R^\epsilon_r\right]} - \left[\left( \begin{array}{ccc}
         h_{yx} & h_{yy} & h_{yv}
    \end{array}\right)\left(r,\theta^{\epsilon,\delta}_r,\lr\left({ \theta^{\epsilon,\delta}_r}\right),u^{\epsilon,\delta}_r\right)\right] \left(\begin{array}{c}
         \x^\epsilon_r \\
         \y^\epsilon_r \\
         \U^\epsilon_r
    \end{array}\right) \notag \\
    & \qquad\qquad\qquad -\widetilde{\e}\left\{ \left[\left(\begin{array}{cc}
         D_{x'}\frac{dh_{y}}{d\nu} & D_{y'}\frac{dh_{y}}{d\nu} 
    \end{array} \right) \left(r,\theta^{\epsilon,\delta}_r,\lr\left(\theta^{\epsilon,\delta}_r\right),u^{\epsilon,\delta}_r\right)\left(\widetilde{\theta^{\epsilon,\delta}_r}\right) \right] \left( \begin{array}{cc}
         \widetilde{\x^\epsilon_r}  \\
         \widetilde{\y^\epsilon_r} 
    \end{array}\right) \right\}  \notag\\
    & \qquad\qquad\qquad - \widetilde{\e}\left\{\left[\left(\begin{array}{ccc}
        D_{y'}\frac{dh_x}{d\nu} \\ 
        D_{y'}\frac{dh_y}{d\nu} \\
        D_{y'}\frac{dh_v}{d\nu}
    \end{array}\right)\left(r,\widetilde{\theta^{\epsilon,\delta}_r},\lr\left({ \theta^{\epsilon,\delta}_r}\right),\widetilde{u^{\epsilon,\delta}_r}\right)\left({ \theta^{\epsilon,\delta}_r}\right)\right]^\top  \left(\begin{array}{c}
        \widetilde{\x^\epsilon_r}  \\
        \widetilde{\y^\epsilon_r}  \\
        \widetilde{\U^\epsilon_r} 
    \end{array}
    \right) \right\}  \notag\\
    & \qquad\qquad\qquad  - \he\widetilde{\e} \left\{ \left[\left( \begin{array}{cc}
         D_{x''}D_{y'}\frac{d^2h}{d\nu^2} & D_{y''}D_{y'}\frac{d^2h}{d\nu^2} \\
    \end{array}\right) \left(r,\widetilde{\theta^{\epsilon,\delta}_r},\lr\left({ \theta^{\epsilon,\delta}_r}\right),\widetilde{u^{\epsilon,\delta}_r}\right)\left({ \theta^{\epsilon,\delta}_r},\widehat{{ \theta^{\epsilon,\delta}_r}}\right) \right] \left(\begin{array}{c}
         \widehat{\x^\epsilon_r}   \\
         \widehat{\y^\epsilon_r} 
    \end{array}\right) \right\}  \notag\\
    & \qquad\qquad\qquad  - \widetilde{\e} \bigg\{ \left[\left( \begin{array}{cc}
         D_{x'}D_{y'}\frac{dh}{d\nu} & D_{y'}^2\frac{dh}{d\nu} \\
    \end{array}\right) \left(r,\widetilde{\theta^{\epsilon,\delta}_r},\lr\left({ \theta^{\epsilon,\delta}_r}\right),\widetilde{u^{\epsilon,\delta}_r}\right)\left({ \theta^{\epsilon,\delta}_r}\right) \right] \left(\begin{array}{c}
         \x^\epsilon_r   \\
         \y^\epsilon_r 
    \end{array}\right) \bigg\} d\delta dr, \notag \\
    &0=b_3(s)^\top  \p^\epsilon_s -f_7(s)^\top  \R^\epsilon_s + \left[\left(\begin{array}{ccc}
         h_{vx} & h_{vy} & h_{vv} 
    \end{array} \right)\left(s,\theta^{\epsilon,\delta}_s,\lr\left({ \theta^{\epsilon,\delta}_s}\right),u^{\epsilon,\delta}_s\right) \left(\begin{array}{c}
         \x^\epsilon_s \\
         \y^\epsilon_s \\
         \U^\epsilon_s 
    \end{array}\right)\right] \notag \\
    & \qquad +\widetilde{\e}\left\{ \left[\left(\begin{array}{cc}
         D_{x'}\frac{dh_{v}}{d\nu} & D_{y'}\frac{dh_{v}}{d\nu} 
    \end{array} \right) \left(s,\theta^{\epsilon,\delta}_s,\lr\left(\theta^{\epsilon,\delta}_s\right),u^{\epsilon,\delta}_s\right)\left(\widetilde{\theta^{\epsilon,\delta}_s}\right) \right] \left( \begin{array}{cc}
         \widetilde{\x^\epsilon_s}  \\
         \widetilde{\y^\epsilon_s} 
    \end{array}\right) \right\},\quad s\in[t+\epsilon,T], \label{lem:xypru_11'}
\end{align}
where 
\begin{align*}
    &X^{\epsilon,\delta}_s:=X_s + \delta \left(X^{\epsilon}_s-X_s\right),\quad Y^{\epsilon,\delta}_s:=Y_s + \delta \left(Y^{\epsilon}_s-Y_s\right),\quad u^{\epsilon,\delta}_s:=u_s + \delta \left(u^{\epsilon}_s-u_s\right),
\end{align*}
and $\theta^{\epsilon,\delta}_s:=\left( X^{\epsilon,\delta}_s,Y^{\epsilon,\delta}_s\right)$. By applying It\^o's formula for $\left(\p_s^\epsilon\right)^\top \x_s^\epsilon$ for $t+\epsilon\le s\le T$, taking expectation and using Fubini's theorem, we have
\begin{align}
    &\e \left[\left(\p_T^\epsilon\right)^\top \x_T^\epsilon - \left(\p_{t+\epsilon}^\epsilon\right)^\top \x_{t+\epsilon}^\epsilon\right] \notag \\
    =\ & \e\int_{t+\epsilon}^T \int_0^1   \left(\x_s^\epsilon\right)^\top f_1(s)^\top  \R_s^\epsilon +\left(\x_s^\epsilon\right)^\top  f_4(s)^\top \e\left[\R_s^\epsilon \right] +\left(\p_s^\epsilon\right)^\top  b_3(s)\U_s^\epsilon \notag\\
    &\qquad -\left(\x_s^\epsilon\right)^\top \left[\left(
    \begin{array}{ccc}
        h_{xx} & h_{xy} & h_{xv} \\
    \end{array}\right)\left(s,\theta^{\epsilon,\delta}_s,\lr\left({ \theta^{\epsilon,\delta}_s}\right),u^{\epsilon,\delta}_s\right)\right] \left(
    \begin{array}{c}
         \x^\epsilon_s  \\
         \y^\epsilon_s \\
         \U^\epsilon_s 
    \end{array}
    \right) \notag \\
    & \qquad -\widetilde{\e}\left\{\left(\x_s^\epsilon\right)^\top\left[\left(\begin{array}{cc}
        D_{x'}\frac{dh_{x}}{d\nu} & D_{y'}\frac{dh_{x}}{d\nu}
    \end{array}\right)\left(s,\theta^{\epsilon,\delta}_s,\lr\left({ \theta^{\epsilon,\delta}_s}\right),u^{\epsilon,\delta}_s\right)\left(\widetilde{ \theta^{\epsilon,\delta}_s}\right)\right] \left(\begin{array}{cc}
        \widetilde{\x^\epsilon_s}  \\
        \widetilde{\y^\epsilon_s} 
    \end{array}
    \right) \right\} \notag \\
    &\qquad -\widetilde{\e}\left\{\left(\x_s^\epsilon\right)^\top \left[\left(\begin{array}{ccc}
        D_{x'}\frac{dh_x}{d\nu} \\ 
        D_{x'}\frac{dh_y}{d\nu} \\
        D_{x'}\frac{dh_v}{d\nu}
    \end{array}\right)\left(s,\widetilde{\theta^{\epsilon,\delta}_s},\lr\left({ \theta^{\epsilon,\delta}_s}\right),\widetilde{u^{\epsilon,\delta}_s}\right)\left({ \theta^{\epsilon,\delta}_s}\right)\right]^\top  \left(\begin{array}{c}
        \widetilde{\x^\epsilon_s}  \\
        \widetilde{\y^\epsilon_s}  \\
        \widetilde{\U^\epsilon_s} 
    \end{array}
    \right) \right\} \notag \\
    & \qquad  - \he\widetilde{\e} \left\{ \left(\x_s^\epsilon\right)^\top \left[\left( \begin{array}{cc}
         D_{x''}D_{x'}\frac{d^2h}{d\nu^2} & D_{y''}D_{x'}\frac{d^2h}{d\nu^2} \\
    \end{array}\right) \left(s,\widetilde{\theta^{\epsilon,\delta}_s},\lr\left({ \theta^{\epsilon,\delta}_s}\right),\widetilde{u^{\epsilon,\delta}_s}\right)\left({ \theta^{\epsilon,\delta}_s},\widehat{{ \theta^{\epsilon,\delta}_s}}\right) \right] \left(\begin{array}{c}
         \widehat{\x^\epsilon_s}   \\
         \widehat{\y^\epsilon_s} 
    \end{array}\right) \right\} \notag \\
    & \qquad  - \widetilde{\e} \bigg\{ \left(\x_s^\epsilon\right)^\top \left[\left( \begin{array}{cc}
         D^2_{x'}\frac{dh}{d\nu} & D_{y'}D_{x'}\frac{dh}{d\nu} \\
    \end{array}\right) \left(s,\widetilde{\theta^{\epsilon,\delta}_s},\lr\left({ \theta^{\epsilon,\delta}_s}\right),\widetilde{u^{\epsilon,\delta}_s}\right)\left({ \theta^{\epsilon,\delta}_s}\right) \right] \left(\begin{array}{c}
         \x^\epsilon_s   \\
         \y^\epsilon_s 
    \end{array}\right) \bigg\}  d\delta ds \notag \\
    =\ & \e\int_{t+\epsilon}^T \int_0^1 \left(\R_s^\epsilon\right)^\top f_1(s)  \x_s^\epsilon +\left(\R_s^\epsilon\right)^\top  f_4(s) \e\left[\x_s^\epsilon \right] +\left(\p_s^\epsilon\right)^\top  b_3(s)\U_s^\epsilon \notag\\
    &\qquad -\left(\x_s^\epsilon\right)^\top \left[\left(
    \begin{array}{ccc}
        h_{xx} & h_{xy} & h_{xv} \\
    \end{array}\right)\left(s,\theta^{\epsilon,\delta}_s,\lr\left({ \theta^{\epsilon,\delta}_s}\right),u^{\epsilon,\delta}_s\right)\right] \left(
    \begin{array}{c}
         \x^\epsilon_s  \\
         \y^\epsilon_s \\
         \U^\epsilon_s 
    \end{array}
    \right) \notag \\
    & \qquad -\widetilde{\e}\left\{\left(\x_s^\epsilon\right)^\top\left[\left(\begin{array}{cc}
        D_{x'}\frac{dh_{x}}{d\nu} & D_{y'}\frac{dh_{x}}{d\nu}
    \end{array}\right)\left(s,\theta^{\epsilon,\delta}_s,\lr\left({ \theta^{\epsilon,\delta}_s}\right),u^{\epsilon,\delta}_s\right)\left(\widetilde{ \theta^{\epsilon,\delta}_s}\right)\right] \left(\begin{array}{cc}
        \widetilde{\x^\epsilon_s}  \\
        \widetilde{\y^\epsilon_s} 
    \end{array}
    \right) \right\} \notag \\
    &\qquad -\widetilde{\e}\left\{\left(\begin{array}{c}
        \x^\epsilon_s  \\
        \y^\epsilon_s  \\
        \U^\epsilon_s 
    \end{array}
    \right)^\top  \left[\left(\begin{array}{ccc}
        D_{x'}\frac{dh_x}{d\nu} \\ 
        D_{x'}\frac{dh_y}{d\nu} \\
        D_{x'}\frac{dh_v}{d\nu}
    \end{array}\right)\left(s,\theta^{\epsilon,\delta}_s,\lr\left({ \theta^{\epsilon,\delta}_s}\right),u^{\epsilon,\delta}_s\right)\left(\widetilde{ \theta^{\epsilon,\delta}_s}\right)\right] \widetilde{\x_s^\epsilon}  \right\} \notag \\
    & \qquad  - \he\widetilde{\e} \left\{ \left(\widetilde{\x_s^\epsilon}\right)^\top \left[\left( \begin{array}{cc}
         D_{x''}D_{x'}\frac{d^2h}{d\nu^2} & D_{y''}D_{x'}\frac{d^2h}{d\nu^2} \\
    \end{array}\right) \left(s,\theta^{\epsilon,\delta}_s,\lr\left({ \theta^{\epsilon,\delta}_s}\right),u^{\epsilon,\delta}_s\right)\left(\widetilde{ \theta^{\epsilon,\delta}_s},\widehat{{ \theta^{\epsilon,\delta}_s}}\right) \right] \left(\begin{array}{c}
         \widehat{\x^\epsilon_s}   \\
         \widehat{\y^\epsilon_s} 
    \end{array}\right) \right\} \notag \\
    & \qquad  - \widetilde{\e} \left\{ \left(\widetilde{\x_s^\epsilon}\right)^\top \left[\left( \begin{array}{cc}
         D^2_{x'}\frac{dh}{d\nu} & D_{y'}D_{x'}\frac{dh}{d\nu} \\
    \end{array}\right) \left(s,\theta^{\epsilon,\delta}_s,\lr\left({ \theta^{\epsilon,\delta}_s}\right),u^{\epsilon,\delta}_s \right)\left(\widetilde{ \theta^{\epsilon,\delta}_s}\right) \right] \left(\begin{array}{c}
         \widetilde{\x^\epsilon_s}   \\
         \widetilde{\y^\epsilon_s} 
    \end{array}\right) \right\}  d\delta ds. \label{lem:xypru_1'}
\end{align}
Similarly, by using It\^o's formula and Fubini's theorem for $\left(\R_s^\epsilon\right)^\top \y_s^\epsilon$, we have 
\begin{align}
    &\e\left[\left(\R_T^\epsilon\right)^\top \y_T^\epsilon -\left(\R_{t+\epsilon}^\epsilon\right)^\top \y_{t+\epsilon}^\epsilon \right] \notag \\
    =\ & \e\int_{t+\epsilon}^T \int_0^1 -\left(\R_s^\epsilon\right)^\top f_1(r) \x_s^\epsilon -\left(\R_s^\epsilon\right)^\top f_4(s) \e\left[ \x_s^\epsilon\right] -\left(\R_s^\epsilon\right)^\top f_7(s) \U_s^\epsilon \notag\\
    &\qquad - \left(\y_s^\epsilon\right)^\top \left[\left( \begin{array}{ccc}
         h_{yx} & h_{yy} & h_{yv}
    \end{array}\right)\left(s,\theta^{\epsilon,\delta}_s,\lr\left({ \theta^{\epsilon,\delta}_s}\right),u^{\epsilon,\delta}_s\right)\right] \left(\begin{array}{c}
         \x^\epsilon_s \\
         \y^\epsilon_s \\
         \U^\epsilon_s
    \end{array}\right) \notag \\
    & \qquad -\widetilde{\e}\left\{\left(\y_s^\epsilon\right)^\top \left[\left(\begin{array}{cc}
         D_{x'}\frac{dh_{y}}{d\nu} & D_{y'}\frac{dh_{y}}{d\nu} 
    \end{array} \right) \left(s,\theta^{\epsilon,\delta}_s,\lr\left(\theta^{\epsilon,\delta}_s\right),u^{\epsilon,\delta}_s\right)\left(\widetilde{\theta^{\epsilon,\delta}_s}\right) \right] \left( \begin{array}{cc}
         \widetilde{\x^\epsilon_s}  \\
         \widetilde{\y^\epsilon_s} 
    \end{array}\right) \right\}  \notag \\
    & \qquad - \widetilde{\e}\left\{\left(\begin{array}{c}
        \x^\epsilon_s  \\
        \y^\epsilon_s  \\
        \U^\epsilon_s 
    \end{array}
    \right)^\top \left[\left(\begin{array}{ccc}
        D_{y'}\frac{dh_x}{d\nu} \\ 
        D_{y'}\frac{dh_y}{d\nu} \\
        D_{y'}\frac{dh_v}{d\nu}
    \end{array}\right)\left(s,\theta^{\epsilon,\delta}_s,\lr\left({ \theta^{\epsilon,\delta}_s}\right),u^{\epsilon,\delta}_s\right)\left(\widetilde{ \theta^{\epsilon,\delta}_s}\right)\right] \widetilde{\y_s^\epsilon} \right\} \notag \\
    & \qquad  - \he\widetilde{\e} \left\{\left(\widetilde{\y_s^\epsilon}\right)^\top \left[\left( \begin{array}{cc}
         D_{x''}D_{y'}\frac{d^2h}{d\nu^2} & D_{y''}D_{y'}\frac{d^2h}{d\nu^2} \\
    \end{array}\right) \left(s,\theta^{\epsilon,\delta}_s,\lr\left({ \theta^{\epsilon,\delta}_s}\right),u^{\epsilon,\delta}_s\right)\left(\widetilde{ \theta^{\epsilon,\delta}_s},\widehat{{ \theta^{\epsilon,\delta}_s}}\right) \right] \left(\begin{array}{c}
         \widehat{\x^\epsilon_s}   \\
         \widehat{\y^\epsilon_s} 
    \end{array}\right) \right\} \notag \\
    & \qquad  - \widetilde{\e} \left\{ \left(\widetilde{\y_s^\epsilon}\right)^\top \left[\left( \begin{array}{cc}
         D_{x'}D_{y'}\frac{dh}{d\nu} & D_{y'}^2\frac{dh}{d\nu} \\
    \end{array}\right) \left(s,\theta^{\epsilon,\delta}_s,\lr\left({ \theta^{\epsilon,\delta}_s}\right),u^{\epsilon,\delta}_s\right)\left(\widetilde{ \theta^{\epsilon,\delta}_s}\right) \right] \left(\begin{array}{c}
         \widetilde{\x^\epsilon_s}   \\
         \widetilde{\y^\epsilon_s} 
    \end{array}\right) \right\} d\delta ds \label{lem:xypru_2'}. 
\end{align}
Plugging \eqref{lem:xypru_2'} with \eqref{lem:xypru_1'}, by using the last equality in \eqref{lem:xypru_11'} and the convexity condition \eqref{def:convex_h}, we know that
\begin{align}
    &\e \left[\left(\p_T^\epsilon\right)^\top \x_T^\epsilon  - \left(\p_{t+\epsilon}^\epsilon\right)^\top \x_{t+\epsilon}^\epsilon + \left(\R_T^\epsilon\right)^\top \y_T^\epsilon -\left(\R_{t+\epsilon}^\epsilon\right)^\top \y_{t+\epsilon}^\epsilon \right] \notag \\
    =\ & \e\int_{t+\epsilon}^T \int_0^1  \left(\U_s^\epsilon\right)^\top \left(b_3(s)^\top \p_s^\epsilon -f_7(s)^\top \R_s^\epsilon \right) \notag \\
    & -\left(\begin{array}{cc}
         \x_s^\epsilon  \\
         \y_s^\epsilon 
    \end{array}\right)^\top \left[\left(
    \begin{array}{ccc}
        h_{xx} & h_{xy} & h_{xv} \\
        h_{yx} & h_{yy} & h_{yv}
    \end{array}\right)\left(s,\theta^{\epsilon,\delta}_s,\lr\left({ \theta^{\epsilon,\delta}_s}\right),u^{\epsilon,\delta}_s\right)\right] \left(
    \begin{array}{c}
         \x^\epsilon_s  \\
         \y^\epsilon_s \\
         \U^\epsilon_s 
    \end{array}
    \right) \notag \\
    &  -\widetilde{\e}\left\{\left(\begin{array}{c}
         \x_s^\epsilon  \\
         \y_s^\epsilon 
    \end{array}\right)^\top\left[\left(\begin{array}{cc}
        D_{x'}\frac{dh_{x}}{d\nu} & D_{y'}\frac{dh_{x}}{d\nu} \\
        D_{x'}\frac{dh_{y}}{d\nu} & D_{y'}\frac{dh_{y}}{d\nu}
    \end{array}\right)\left(s,\theta^{\epsilon,\delta}_s,\lr\left({ \theta^{\epsilon,\delta}_s}\right),u^{\epsilon,\delta}_s\right)\left(\widetilde{ \theta^{\epsilon,\delta}_s}\right)\right] \left(\begin{array}{cc}
        \widetilde{\x^\epsilon_s}  \\
        \widetilde{\y^\epsilon_s} 
    \end{array}
    \right) \right\} \notag \\
    & -\widetilde{\e}\left\{\left(\begin{array}{c}
        \x^\epsilon_s  \\
        \y^\epsilon_s  \\
        \U^\epsilon_s 
    \end{array}
    \right)^\top  \left[\left(\begin{array}{cc}
        D_{x'}\frac{dh_x}{d\nu} & D_{y'}\frac{dh_x}{d\nu} \\ 
        D_{x'}\frac{dh_y}{d\nu} & D_{y'}\frac{dh_y}{d\nu} \\
        D_{x'}\frac{dh_v}{d\nu} & D_{y'}\frac{dh_v}{d\nu}
    \end{array}\right)\left(s,\theta^{\epsilon,\delta}_s,\lr\left({ \theta^{\epsilon,\delta}_s}\right),u^{\epsilon,\delta}_s\right)\left(\widetilde{ \theta^{\epsilon,\delta}_s}\right)\right] \left(\begin{array}{c}
         \widetilde{\x_s^\epsilon}   \\
         \widetilde{\y_s^\epsilon}  
    \end{array}\right) \right\} \notag \\
    &   - \he\widetilde{\e} \left\{ \left(\begin{array}{c}
         \widetilde{\x_s^\epsilon}  \\
         \widetilde{\y_s^\epsilon} 
    \end{array}\right)^\top \left[\left( \begin{array}{cc}
         D_{x''}D_{x'}\frac{d^2h}{d\nu^2} & D_{y''}D_{x'}\frac{d^2h}{d\nu^2} \\
         D_{x''}D_{y'}\frac{d^2h}{d\nu^2} & D_{y''}D_{y'}\frac{d^2h}{d\nu^2} 
    \end{array}\right) \left(s,\theta^{\epsilon,\delta}_s,\lr\left({ \theta^{\epsilon,\delta}_s}\right),u^{\epsilon,\delta}_s\right)\left(\widetilde{ \theta^{\epsilon,\delta}_s},\widehat{{ \theta^{\epsilon,\delta}_s}}\right) \right] \left(\begin{array}{c}
         \widehat{\x^\epsilon_s}   \\
         \widehat{\y^\epsilon_s} 
    \end{array}\right) \right\} \notag \\
    &   - \widetilde{\e} \bigg\{ \left[\left( \begin{array}{cc}
         D^2_{x'}\frac{dh}{d\nu} & D_{y'}D_{x'}\frac{dh}{d\nu} \\
         D_{x'}D_{y'}\frac{dh}{d\nu} & D_{y'}^2\frac{dh}{d\nu} 
    \end{array}\right) \left(s,\theta^{\epsilon,\delta}_s,\lr\left({ \theta^{\epsilon,\delta}_s}\right),u^{\epsilon,\delta}_s \right)\left(\widetilde{ \theta^{\epsilon,\delta}_s}\right) \right] \left(\begin{array}{c}
         \widetilde{\x^\epsilon_s}   \\
         \widetilde{\y^\epsilon_s} 
    \end{array}\right)^{\otimes 2} \bigg\}  d\delta ds \notag \\
    =\ & \e\int_{t+\epsilon}^T \int_0^1  -\left[\left(
    \begin{array}{ccc}
        h_{xx} & h_{xy} & h_{xv} \\
        h_{yx} & h_{yy} & h_{yv} \\
        h_{vx} & h_{vy} & h_{vv}
    \end{array}\right)\left(s,\theta^{\epsilon,\delta}_s,\lr\left({ \theta^{\epsilon,\delta}_s}\right),u^{\epsilon,\delta}_s\right)\right] \left(
    \begin{array}{c}
         \x^\epsilon_s  \\
         \y^\epsilon_s \\
         \U^\epsilon_s 
    \end{array}
    \right)^{\otimes 2} \notag \\
    & -2 \widetilde{\e}\left\{\left(\begin{array}{c}
        \x^\epsilon_s  \\
        \y^\epsilon_s  \\
        \U^\epsilon_s 
    \end{array}
    \right)^\top  \left[\left(\begin{array}{cc}
        D_{x'}\frac{dh_x}{d\nu} & D_{y'}\frac{dh_x}{d\nu} \\ 
        D_{x'}\frac{dh_y}{d\nu} & D_{y'}\frac{dh_y}{d\nu} \\
        D_{x'}\frac{dh_v}{d\nu} & D_{y'}\frac{dh_v}{d\nu}
    \end{array}\right)\left(s,\theta^{\epsilon,\delta}_s,\lr\left({ \theta^{\epsilon,\delta}_s}\right),u^{\epsilon,\delta}_s\right)\left(\widetilde{ \theta^{\epsilon,\delta}_s}\right)\right] \left(\begin{array}{c}
         \widetilde{\x_s^\epsilon}   \\
         \widetilde{\y_s^\epsilon}  
    \end{array}\right) \right\} \notag \\
    &   - \he\widetilde{\e} \left\{ \left(\begin{array}{c}
         \widetilde{\x_s^\epsilon}  \\
         \widetilde{\y_s^\epsilon} 
    \end{array}\right)^\top \left[\left( \begin{array}{cc}
         D_{x''}D_{x'}\frac{d^2h}{d\nu^2} & D_{y''}D_{x'}\frac{d^2h}{d\nu^2} \\
         D_{x''}D_{y'}\frac{d^2h}{d\nu^2} & D_{y''}D_{y'}\frac{d^2h}{d\nu^2} 
    \end{array}\right) \left(s,\theta^{\epsilon,\delta}_s,\lr\left({ \theta^{\epsilon,\delta}_s}\right),u^{\epsilon,\delta}_s\right)\left(\widetilde{ \theta^{\epsilon,\delta}_s},\widehat{{ \theta^{\epsilon,\delta}_s}}\right) \right] \left(\begin{array}{c}
         \widehat{\x^\epsilon_s}   \\
         \widehat{\y^\epsilon_s} 
    \end{array}\right) \right\} \notag \\
    &   - \widetilde{\e} \bigg\{ \left[\left( \begin{array}{cc}
         D^2_{x'}\frac{dh}{d\nu} & D_{y'}D_{x'}\frac{dh}{d\nu} \\
         D_{x'}D_{y'}\frac{dh}{d\nu} & D_{y'}^2\frac{dh}{d\nu} 
    \end{array}\right) \left(s,\theta^{\epsilon,\delta}_s,\lr\left({ \theta^{\epsilon,\delta}_s}\right),u^{\epsilon,\delta}_s \right)\left(\widetilde{ \theta^{\epsilon,\delta}_s}\right) \right] \left(\begin{array}{c}
         \widetilde{\x^\epsilon_s}   \\
         \widetilde{\y^\epsilon_s} 
    \end{array}\right)^{\otimes 2} \bigg\}  d\delta ds \notag \\
    \le\ & -2\lambda \int_{t+\epsilon}^T \int_0^1 \left\|\U_s^\epsilon\right\|_2^2 d\delta ds. \label{lem:xypru_3'}
\end{align}
By using the terminal conditions for $\p_T^\epsilon$ and $\y_T^\epsilon$, the initial conditions for $\x_{t+\epsilon}^\epsilon$ and $\R_{t+\epsilon}^\epsilon$, and then  applying Fubini's theorem and also the convexity conditions on $\varphi$ and $\Phi$ in \eqref{def:convex_Phi} and \eqref{def:convex_varphi}, we have
\begin{align*}
    &\e \left[\left(\p_T^\epsilon\right)^\top \x_T^\epsilon  - \left(\p_{t+\epsilon}^\epsilon\right)^\top \x_{t+\epsilon}^\epsilon + \left(\R_T^\epsilon\right)^\top \y_T^\epsilon -\left(\R_{t+\epsilon}^\epsilon\right)^\top \y_{t+\epsilon}^\epsilon \right] \\
    =\ & \e\int_0^1\bigg\{ \left(\x_T^\epsilon\right)^\top \Phi_{xx}\left(X^{\epsilon,\delta}_T, \lr\left(X^{\epsilon,\delta}_T\right)\right) \x^\epsilon_T + \widetilde{\e}\left[\left(\x_T^\epsilon\right)^\top D_{x'}\frac{d\Phi_x}{d\nu}\left(X^{\epsilon,\delta}_T, \lr\left(X^{\epsilon,\delta}_T\right)\right)  \left(\widetilde{X^{\epsilon,\delta}_T}\right) \widetilde{\x^\epsilon_T}\right] \\
    &\quad\qquad + \widetilde{\e}\bigg[\left(\x_T^\epsilon\right)^\top \left(D_{x'}\frac{d\Phi_x}{d\nu}\left(\widetilde{X^{\epsilon,\delta}_T}, \lr\left(X^{\epsilon,\delta}_T\right)\right) \left(X^{\epsilon,\delta}_T\right)\right)^\top \widetilde{\x^\epsilon_T} \bigg]  \\
    &\quad\qquad + \he\widetilde{\e}\bigg[\left(\x_T^\epsilon\right)^\top D_{x''}D_{x'}\frac{d^2\Phi}{d\nu^2}\left(\widetilde{X^{\epsilon,\delta}_T}, \lr\left(X^{\epsilon,\delta}_T\right)\right) \left(X^{\epsilon,\delta}_T,\widehat{X^{\epsilon,\delta}_T}\right)  \widehat{\x^\epsilon_T} \bigg]  \\
    &\quad\qquad + \widetilde{\e}\bigg[\left(\x_T^\epsilon\right)^\top D_{x'}^2\frac{d\Phi}{d\nu}\left(\widetilde{X^{\epsilon,\delta}_T}, \lr\left(X^{\epsilon,\delta}_T\right)\right) \left(X^{\epsilon,\delta}_T\right) \x^\epsilon_T \bigg] +\left(\y_{t+\epsilon}^\epsilon\right)^\top   \varphi_{yy} \left(Y^{\epsilon,\delta}_{t+\epsilon}\right) \y^\epsilon_{t+\epsilon} \bigg\}d\delta  \\
    & + \e\bigg\{ \left(\p_{t+\epsilon}^\epsilon\right)^\top  \int_t^{t+\epsilon} \frac{1}{\epsilon}\Big[ b_0(r) + b_1(r) X_r + b_2(r)\e\left[X_r\right]+b_3(r)u_r\Big] dr \\
    &\quad\qquad  - \left(\y_{t+\epsilon}^\epsilon\right)^\top \int_t^{t+\epsilon} \frac{1}{\epsilon}\varphi_{yy}\left(Y_r\right)f\left(r,\theta_r,\lr({ \theta_r}),u_r\right)dr \\
    &\quad\qquad +\left(\y_{t+\epsilon}^\epsilon\right)^\top\int_t^{t+\epsilon} \frac{1}{\epsilon}\bigg[f_2(r)^\top  R_r +f_5(r)^\top \e[R_r] -h_y \left(r,\theta_r,\lr({ \theta_r}),u_r\right)\\
    &\qquad\qquad\qquad\qquad\qquad\qquad { -\widetilde{\e}\left[D_{y'} \frac{dh}{d\nu}\left(r,\widetilde{\theta_r},\lr(\theta_r),\widetilde{u_r} \right)\left(\theta_r\right) \right]}\bigg] dr\bigg\}\\
    =\ & \e\int_0^1\bigg\{ \left(\x_T^\epsilon\right)^\top \Phi_{xx}\left(X^{\epsilon,\delta}_T, \lr\left(X^{\epsilon,\delta}_T\right)\right) \x^\epsilon_T + 2\widetilde{\e}\left[\left(\x_T^\epsilon\right)^\top D_{x'}\frac{d\Phi_x}{d\nu}\left(X^{\epsilon,\delta}_T, \lr\left(X^{\epsilon,\delta}_T\right)\right)  \left(\widetilde{X^{\epsilon,\delta}_T}\right) \widetilde{\x^\epsilon_T}\right] \\
    &\quad\qquad + \he\widetilde{\e}\bigg[\left(\widetilde{\x_T^\epsilon}\right)^\top D_{x''}D_{x'}\frac{d^2\Phi}{d\nu^2}\left(X^{\epsilon,\delta}_T, \lr\left(X^{\epsilon,\delta}_T\right)\right) \left(\widetilde{X^{\epsilon,\delta}_T},\widehat{X^{\epsilon,\delta}_T}\right)  \widehat{\x^\epsilon_T} \bigg]  \\
    &\quad\qquad + \widetilde{\e}\bigg[\left(\widetilde{\x_T^\epsilon}\right)^\top D_{x'}^2\frac{d\Phi}{d\nu}\left(X^{\epsilon,\delta}_T, \lr\left(X^{\epsilon,\delta}_T\right)\right) \left(\widetilde{X^{\epsilon,\delta}_T}\right) \widetilde{\x^\epsilon_T} \bigg] +\left(\y_{t+\epsilon}^\epsilon\right)^\top   \varphi_{yy} \left(Y^{\epsilon,\delta}_{t+\epsilon}\right) \y^\epsilon_{t+\epsilon} \bigg\}d\delta  \\
    & + \e\bigg\{ \left(\p_{t+\epsilon}^\epsilon\right)^\top  \int_t^{t+\epsilon} \frac{1}{\epsilon}\Big[ b_0(r) + b_1(r) X_r + b_2(r)\e\left[X_r\right]+b_3(r)u_r\Big] dr \\
    &\quad\qquad  - \left(\y_{t+\epsilon}^\epsilon\right)^\top \int_t^{t+\epsilon} \frac{1}{\epsilon}\varphi_{yy}\left(Y_r\right)f\left(r,\theta_r,\lr({ \theta_r}),u_r\right)dr \\
    &\quad\qquad +\left(\y_{t+\epsilon}^\epsilon\right)^\top\int_t^{t+\epsilon} \frac{1}{\epsilon}\bigg[f_2(r)^\top  R_r +f_5(r)^\top \e[R_r] -h_y \left(r,\theta_r,\lr({ \theta_r}),u_r\right)\\
    &\qquad\qquad\qquad\qquad\qquad\qquad { -\widetilde{\e}\left[D_{y'} \frac{dh}{d\nu}\left(r,\widetilde{\theta_r},\lr(\theta_r),\widetilde{u_r} \right)\left(\theta_r\right) \right]}\bigg] dr\bigg\}\\
    \geq\ & \e\bigg\{ \left(\p_{t+\epsilon}^\epsilon\right)^\top  \int_t^{t+\epsilon} \frac{1}{\epsilon}\Big[ b_0(r) + b_1(r) X_r + b_2(r)\e\left[X_r\right]+b_3(r)u_r\Big] dr \\
    &\quad  - \left(\y_{t+\epsilon}^\epsilon\right)^\top \int_t^{t+\epsilon} \frac{1}{\epsilon}\varphi_{yy}\left(Y_r\right)f\left(r,\theta_r,\lr({ \theta_r}),u_r\right)dr \\
    &\quad +\left(\y_{t+\epsilon}^\epsilon\right)^\top\int_t^{t+\epsilon} \frac{1}{\epsilon}\bigg[f_2(r)^\top  R_r +f_5(r)^\top \e[R_r] -h_y \left(r,\theta_r,\lr({ \theta_r}),u_r\right)\\
    &\qquad\qquad\qquad\qquad\qquad { -\widetilde{\e}\left[D_{y'} \frac{dh}{d\nu}\left(r,\widetilde{\theta_r},\lr(\theta_r),\widetilde{u_r} \right)\left(\theta_r\right) \right]}\bigg] dr\bigg\}.
\end{align*}
Substituting the last inequality back into \eqref{lem:xypru_3'}, by using Estimate \eqref{1order-FBSDE-0}, we know that for any $\epsilon\in(0,1)$, 
\begin{align}
    &2\lambda \int_{t+\epsilon}^T \int_0^1 \left\|\U_s^\epsilon\right\|_2^2 d\delta ds \notag \\
    \le\ & \e\bigg\{ -  \left(\p_{t+\epsilon}^\epsilon\right)^\top  \int_t^{t+\epsilon} \frac{1}{\epsilon}\Big[ b_0(r) + b_1(r) X_r + b_2(r)\e\left[X_r\right]+b_3(r)u_r\Big] dr \notag \\
    &\quad  + \left(\y_{t+\epsilon}^\epsilon\right)^\top \int_t^{t+\epsilon} \frac{1}{\epsilon}\varphi_{yy}\left(Y_r\right)f\left(r,\theta_r,\lr({ \theta_r}),u_r\right)dr \notag\\
    &\quad +\left(\y_{t+\epsilon}^\epsilon\right)^\top\int_t^{t+\epsilon} \frac{1}{\epsilon}\bigg[f_2(r)^\top  R_r +f_5(r)^\top \e[R_r] -h_y \left(r,\theta_r,\lr({ \theta_r}),u_r\right) \notag \\
    &\quad\qquad\qquad\qquad\qquad { -\widetilde{\e}\left[D_{y'} \frac{dh}{d\nu}\left(r,\widetilde{\theta_r},\lr(\theta_r),\widetilde{u_r} \right)\left(\theta_r\right) \right]}\bigg] dr\bigg\} \notag\\
    \le\ &  L\left\|\p_{t+\epsilon}^\epsilon\right\|_2 \cdot   \int_t^{t+\epsilon} \frac{1}{\epsilon}\Big( 1 + 2 \|X_r\|_2 + \|u_r\|_2\Big) dr \notag \\
    &\quad  + C(L)\left\|\y_{t+\epsilon}^\epsilon\right\|_2\cdot \int_t^{t+\epsilon} \frac{1}{\epsilon} \Big(1 +\left\|X_r\right\|_2 + \|Y_r\|_2 + \|u_r\|_2  + \|R_r\|_2 \Big)dr \notag\\
    \le\ &  C(L,T,\lambda)\left(1+\|\xi\|_2\right) \left(\left\|\p_{t+\epsilon}^\epsilon\right\|_2  + \left\|\y_{t+\epsilon}^\epsilon\right\|_2\right) \notag \\
    \le\ & \epsilon\left( \left\|\p_{t+\epsilon}^\epsilon\right\|^2_2+ \left\|\y_{t+\epsilon}^\epsilon\right\|^2_2\right)+\frac{C(L,T,\lambda)}{\epsilon} \left(   1+\|\xi\|^2_2\right). \label{lem:xypru_4'}
\end{align}
By using standard estimate for the equation of $\x^\epsilon$ and Estimate \eqref{1order-FBSDE-0}, we have 
\begin{align}
    \e\bigg[\sup_{t+\epsilon\le s\le T} \left|\x_s^\epsilon\right|^2\bigg]\le C(L,T,\lambda) \left(1+\|\xi\|_2^2+\int_{t+\epsilon}^T \left\|\U_s^\epsilon\right\|_2^2 ds \right); \label{lem:xypru_5'}
\end{align}
then, by using standard estimate for the equation of $\y^\epsilon$ and Estimate \eqref{lem:xypru_5'}, we have
\begin{align}
    \e\bigg[\sup_{t+\epsilon\le s\le T} \left|\y_s^\epsilon\right|^2\bigg]\le C(L,T,\lambda) \left(1+\|\xi\|_2^2+\int_{t+\epsilon}^T \left\|\U_s^\epsilon\right\|_2^2 ds \right). \label{lem:xypru_6'}
\end{align}
Similarly, from standard estimate for the equation of $\R^\epsilon$, by using Estimates \eqref{lem:xypru_5'} and \eqref{lem:xypru_6'} and also Estimate \eqref{1order-FBSDE-0}, we have 
\begin{align}
    \e\bigg[\sup_{t+\epsilon\le s\le T} \left|\R_s^\epsilon\right|^2\bigg]\le C(L,T,\lambda) \left(1+\|\xi\|_2^2+\int_{t+\epsilon}^T \left\|\U_s^\epsilon\right\|_2^2 ds \right); \label{lem:xypru_7'}
\end{align}
and finally, by using standard estimate for the equation of $\p^\epsilon$ and Estimates \eqref{lem:xypru_5'}-\eqref{lem:xypru_7'}, we know that
\begin{align}
    \e\bigg[\sup_{t+\epsilon\le s\le T} \left|\p_s^\epsilon\right|^2\bigg]\le C(L,T,\lambda) \left(1+\|\xi\|_2^2+\int_{t+\epsilon}^T \left\|\U_s^\epsilon\right\|_2^2 ds \right). \label{lem:xypru_8'}
\end{align}
Substituting \eqref{lem:xypru_5'} and \eqref{lem:xypru_8'} back into \eqref{lem:xypru_4'}, we have 
\begin{align}
    &2\lambda \int_{t+\epsilon}^T \left\|\U_s^\epsilon\right\|_2^2 ds \le \epsilon C(L,T,\lambda) \int_t^T \left\|\U_s^\epsilon\right\|_2^2 ds +C(L,T,\lambda)\left(1+\frac{1}{\epsilon} \right) \left(   1+\|\xi\|^2_2\right), \notag
\end{align}
and then, by choosing $\epsilon:= \frac{\lambda}{C(L,T,\lambda)}$, we have
\begin{align}
    \int_{t+\epsilon}^T \left\|\U_s^\epsilon\right\|_2^2 ds \le\ & C(L,T,\lambda)\left(   1+\|\xi\|^2_2\right). \notag
\end{align}
Substituting the last estimate back into \eqref{lem:xypru_5}-\eqref{lem:xypru_8}, we obtain 
\begin{align}
    \e\bigg[\sup_{t+\epsilon\le s\le T}\left|\x_s^\epsilon\right|^2+\sup_{t+\epsilon\le s\le T}\left|\y_s^\epsilon\right|^2+\sup_{t+\epsilon\le s\le T}\left|\p_s^\epsilon\right|^2+\sup_{t+\epsilon\le s\le T}\left|\R_s^\epsilon\right|^2\bigg]\le C \left( 1+\left\|\xi\right\|^2_2\right). \label{lem:xypru_9'}
\end{align}
By using the equality \eqref{lem:xypru_11'} and the convexity of $h$ in $v$, we have
\begin{align*}
    0=\ & \left(\U_s^\epsilon\right)^\top h_{vv} \left(s,\theta^{\epsilon,\delta}_s,\lr\left({ \theta^{\epsilon,\delta}_s}\right),u^{\epsilon,\delta}_s\right) \U^\epsilon_s + \left(\U_s^\epsilon\right)^\top b_3(s)^\top  \p^\epsilon_s -\left(\U_s^\epsilon\right)^\top f_7(s)^\top  \R^\epsilon_s \\
    &+ \left(\U_s^\epsilon\right)^\top\left[\left(\begin{array}{ccc}
         h_{vx} & h_{vy} 
    \end{array} \right)\left(s,\theta^{\epsilon,\delta}_s,\lr\left({ \theta^{\epsilon,\delta}_s}\right),u^{\epsilon,\delta}_s\right) \left(\begin{array}{c}
         \x^\epsilon_s \\
         \y^\epsilon_s 
    \end{array}\right)\right] \notag \\
    &  +\widetilde{\e}\left\{ \left(\U_s^\epsilon\right)^\top\left[\left(\begin{array}{cc}
         D_{x'}\frac{dh_{v}}{d\nu} & D_{y'}\frac{dh_{v}}{d\nu} 
    \end{array} \right) \left(r,\theta^{\epsilon,\delta}_r,\lr\left(\theta^{\epsilon,\delta}_r\right),u^{\epsilon,\delta}_r\right)\left(X^{\epsilon,\delta}_r\right) \right] \left( \begin{array}{cc}
         \widetilde{\x^\epsilon_r}  \\
         \widetilde{\y^\epsilon_r} 
    \end{array}\right) \right\}\\
    \geq\ & 2\lambda \left|\U_s^\epsilon\right|^2 - L \left|\U_s^\epsilon\right|\cdot \left(\left|\p_s^\epsilon\right| + \left|\R_s^\epsilon\right| +\left|\x_s^\epsilon\right| + \left|\y_s^\epsilon\right| +\left\|\x_s^\epsilon\right\|_2 { +\left\|\y_s^\epsilon\right\|_2 } \right),
\end{align*}
which implies that 
\begin{align}\label{lem:xypru_10'}
    \left|\U_s^\epsilon\right|  
    \le\ &  \frac{L}{2\lambda} \left(\left|\p_s^\epsilon\right| + \left|\R_s^\epsilon\right| +\left|\x_s^\epsilon\right| + \left|\y_s^\epsilon\right| +\left\|\x_s^\epsilon\right\|_2 { +\left\|\y_s^\epsilon\right\|_2 }\right).
\end{align}
From \eqref{lem:xypru_9'} and \eqref{lem:xypru_10'}, we obtain \eqref{lem:xypru_0'}. 

\subsection{Proof of Theorem~\ref{thm:convergence}}\label{App:thm:convergence}

For notational convenience, we simply drop the superscript ${t,\xi}$ on the processes. The proof of the well-posedness result follows from a similar way as in Subsection~\ref{subsec:FBSDE}, and here, we prove the estimates \eqref{lem:xypru_0} and \eqref{derivative_t} in order. 

{\bf Step 1: Proof of \eqref{lem:xypru_0}.} By applying It\^o's formula for $\p_s^\top \x_s$, taking expectation and using Fubini's theorem, we have
\begin{align}
    &\e \left[\p_T^\top \x_T - \p_t^\top \x_t\right] \notag \\
    =\ & \e\int_t^T \Bigg\{  \x_s^\top f_1(s)^\top  \R_s +\x_s^\top  f_4(s)^\top \e\left[R_s \right] +\p_s^\top  b_3(s)\U_s \notag\\
    &\quad\qquad -\left(\x_s\right)^\top \left[\left(
    \begin{array}{ccc}
        h_{xx} & h_{xy} & h_{xv} \\
    \end{array}\right)\left(s,\theta_s,\lr\left({ \theta_s}\right),u_s\right)\right] \left(
    \begin{array}{c}
         \x_s  \\
         \y_s \\
         \U_s 
    \end{array}
    \right) \notag \\
    & \quad\qquad -\widetilde{\e}\left\{\left(\x_s\right)^\top\left[\left(\begin{array}{cc}
        D_{x'}\frac{dh_{x}}{d\nu} & D_{y'}\frac{dh_{x}}{d\nu}
    \end{array}\right)\left(s,\theta_s,\lr\left({ \theta_s}\right),u_s\right)\left(\widetilde{ \theta_s}\right)\right] \left(\begin{array}{cc}
        \widetilde{\x_s}  \\
        \widetilde{\y_s} 
    \end{array}
    \right) \right\} \notag \\
    &\quad\qquad -\widetilde{\e}\left\{\left(\x_s\right)^\top \left[\left(\begin{array}{ccc}
        D_{x'}\frac{dh_x}{d\nu} \\ 
        D_{x'}\frac{dh_y}{d\nu} \\
        D_{x'}\frac{dh_v}{d\nu}
    \end{array}\right)\left(s,\widetilde{\theta_s},\lr\left({ \theta_s}\right),\widetilde{u_s}\right)\left({ \theta_s}\right)\right]^\top  \left(\begin{array}{c}
        \widetilde{\x_s}  \\
        \widetilde{\y_s}  \\
        \widetilde{\U_s} 
    \end{array}
    \right) \right\} \notag \\
    & \quad\qquad  - \he\widetilde{\e} \left\{ \left(\x_s\right)^\top \left[\left( \begin{array}{cc}
         D_{x''}D_{x'}\frac{d^2h}{d\nu^2} & D_{y''}D_{x'}\frac{d^2h}{d\nu^2} \\
    \end{array}\right) \left(s,\widetilde{\theta_s},\lr\left({ \theta_s}\right),\widetilde{u_s}\right)\left({ \theta_s},\widehat{{ \theta_s}}\right) \right] \left(\begin{array}{c}
         \widehat{\x_s}   \\
         \widehat{\y_s} 
    \end{array}\right) \right\} \notag \\
    & \quad\qquad  - \widetilde{\e} \bigg\{ \left(\x_s\right)^\top \left[\left( \begin{array}{cc}
         D^2_{x'}\frac{dh}{d\nu} & D_{y'}D_{x'}\frac{dh}{d\nu} \\
    \end{array}\right) \left(s,\widetilde{\theta_s},\lr\left({ \theta_s}\right),\widetilde{u_s}\right)\left({ \theta_s}\right) \right] \left(\begin{array}{c}
         \x_s   \\
         \y_s 
    \end{array}\right) \bigg\} \Bigg\}  ds \notag \\
    =\ & \e\int_t^T \Bigg\{ \R_s^\top f_1(s)  \x_s +\R_s^\top  f_4(s) \e\left[\x_s \right] +\p_s^\top  b_3(s)\U_s  \notag\\
    &\quad\qquad -\left(\x_s\right)^\top \left[\left(
    \begin{array}{ccc}
        h_{xx} & h_{xy} & h_{xv} \\
    \end{array}\right)\left(s,\theta_s,\lr\left({ \theta_s}\right),u_s\right)\right] \left(
    \begin{array}{c}
         \x_s  \\
         \y_s \\
         \U_s 
    \end{array}
    \right) \notag \\
    & \quad\qquad -\widetilde{\e}\left\{\left(\x_s\right)^\top\left[\left(\begin{array}{cc}
        D_{x'}\frac{dh_{x}}{d\nu} & D_{y'}\frac{dh_{x}}{d\nu}
    \end{array}\right)\left(s,\theta_s,\lr\left({ \theta_s}\right),u_s\right)\left(\widetilde{ \theta_s}\right)\right] \left(\begin{array}{cc}
        \widetilde{\x_s}  \\
        \widetilde{\y_s} 
    \end{array}
    \right) \right\} \notag \\
    &\quad\qquad -\widetilde{\e}\left\{\left(\begin{array}{c}
        \x_s  \\
        \y_s  \\
        \U_s 
    \end{array}
    \right)^\top  \left[\left(\begin{array}{ccc}
        D_{x'}\frac{dh_x}{d\nu} \\ 
        D_{x'}\frac{dh_y}{d\nu} \\
        D_{x'}\frac{dh_v}{d\nu}
    \end{array}\right)\left(s,\theta_s,\lr\left({ \theta_s}\right),u_s\right)\left(\widetilde{ \theta_s}\right)\right] \widetilde{\x_s}  \right\} \notag \\
    & \quad\qquad  - \he\widetilde{\e} \left\{ \left(\widetilde{\x_s}\right)^\top \left[\left( \begin{array}{cc}
         D_{x''}D_{x'}\frac{d^2h}{d\nu^2} & D_{y''}D_{x'}\frac{d^2h}{d\nu^2} \\
    \end{array}\right) \left(s,\theta_s,\lr\left({ \theta_s}\right),u_s\right)\left(\widetilde{ \theta_s},\widehat{{ \theta_s}}\right) \right] \left(\begin{array}{c}
         \widehat{\x_s}   \\
         \widehat{\y_s} 
    \end{array}\right) \right\} \notag \\
    & \quad\qquad  - \widetilde{\e} \left\{ \left(\widetilde{\x_s}\right)^\top \left[\left( \begin{array}{cc}
         D^2_{x'}\frac{dh}{d\nu} & D_{y'}D_{x'}\frac{dh}{d\nu} \\
    \end{array}\right) \left(s,\theta_s,\lr\left({ \theta_s}\right),u_s \right)\left(\widetilde{ \theta_s}\right) \right] \left(\begin{array}{c}
         \widetilde{\x_s}   \\
         \widetilde{\y_s} 
    \end{array}\right) \right\} \Bigg\} ds. \label{lem:xypru_1}
\end{align}
By using It\^o's formula and Fubini's theorem on the term $\R_s^\top \y_s$, we deduce that  
\begin{align}
    &\e\left[\R_T^\top \y_T -\R_t^\top \y_t \right] \notag \\
    =\ & \e\int_t^T \Bigg\{ -\R_s^\top f_1(r) \x_s -\R_s^\top f_4(s) \e\left[ \x_s\right] -\R_s^\top f_7(s) \U_s \notag\\
    &\quad\qquad - \left(\y_s\right)^\top \left[\left( \begin{array}{ccc}
         h_{yx} & h_{yy} & h_{yv}
    \end{array}\right)\left(s,\theta_s,\lr\left({ \theta_s}\right),u_s\right)\right] \left(\begin{array}{c}
         \x_s \\
         \y_s \\
         \U_s
    \end{array}\right) \notag \\
    & \quad\qquad -\widetilde{\e}\left\{\left(\y_s\right)^\top \left[\left(\begin{array}{cc}
         D_{x'}\frac{dh_{y}}{d\nu} & D_{y'}\frac{dh_{y}}{d\nu} 
    \end{array} \right) \left(s,\theta_s,\lr\left(\theta_s\right),u_s\right)\left(\widetilde{\theta_s}\right) \right] \left( \begin{array}{cc}
         \widetilde{\x_s}  \\
         \widetilde{\y_s} 
    \end{array}\right) \right\}  \notag \\
    & \quad\qquad - \widetilde{\e}\left\{\left(\begin{array}{c}
        \x_s  \\
        \y_s  \\
        \U_s 
    \end{array}
    \right)^\top \left[\left(\begin{array}{ccc}
        D_{y'}\frac{dh_x}{d\nu} \\ 
        D_{y'}\frac{dh_y}{d\nu} \\
        D_{y'}\frac{dh_v}{d\nu}
    \end{array}\right)\left(s,\theta_s,\lr\left({ \theta_s}\right),u_s\right)\left(\widetilde{ \theta_s}\right)\right] \left(\widetilde{\y_s}\right) \right\} \notag \\
    & \quad\qquad  - \he\widetilde{\e} \left\{\left(\widetilde{\y_s}\right)^\top \left[\left( \begin{array}{cc}
         D_{x''}D_{y'}\frac{d^2h}{d\nu^2} & D_{y''}D_{y'}\frac{d^2h}{d\nu^2} \\
    \end{array}\right) \left(s,\theta_s,\lr\left({ \theta_s}\right),u_s\right)\left(\widetilde{ \theta_s},\widehat{{ \theta_s}}\right) \right] \left(\begin{array}{c}
         \widehat{\x_s}   \\
         \widehat{\y_s} 
    \end{array}\right) \right\} \notag \\
    & \quad\qquad  - \widetilde{\e} \left\{ \left(\widetilde{\y_s}\right)^\top \left[\left( \begin{array}{cc}
         D_{x'}D_{y'}\frac{dh}{d\nu} & D_{y'}^2\frac{dh}{d\nu} \\
    \end{array}\right) \left(s,\theta_s,\lr\left({ \theta_s}\right),u_s\right)\left(\widetilde{ \theta_s}\right) \right] \left(\begin{array}{c}
         \widetilde{\x_s}   \\
         \widetilde{\y_s} 
    \end{array}\right) \right\} \Bigg\} ds. \label{lem:xypru_2}
\end{align}
Plugging \eqref{lem:xypru_2} with \eqref{lem:xypru_1}, by using the last equality in \eqref{FBSDEs:high} and the convexity condition \eqref{def:convex_h}, we know that
\begin{align}
    &\e \left[\p_T^\top \x_T  - \p_t^\top \x_t + \R_T^\top \y_T -\R_t^\top \y_t \right] \notag \\
    =\ & \e\int_t^T \Bigg\{  \U_s^\top \left(b_3(s)^\top \p_s -f_7(s)^\top \R_s \right) \notag \\
    &\quad -\left(\begin{array}{cc}
         \x_s  \\
         \y_s 
    \end{array}\right)^\top \left[\left(
    \begin{array}{ccc}
        h_{xx} & h_{xy} & h_{xv} \\
        h_{yx} & h_{yy} & h_{yv}
    \end{array}\right)\left(s,\theta_s,\lr\left({ \theta_s}\right),u_s\right)\right] \left(
    \begin{array}{c}
         \x_s  \\
         \y_s \\
         \U_s 
    \end{array}
    \right) \notag \\
    & \quad -\widetilde{\e}\left\{\left(\begin{array}{c}
         \x_s  \\
         \y_s 
    \end{array}\right)^\top\left[\left(\begin{array}{cc}
        D_{x'}\frac{dh_{x}}{d\nu} & D_{y'}\frac{dh_{x}}{d\nu} \\
        D_{x'}\frac{dh_{y}}{d\nu} & D_{y'}\frac{dh_{y}}{d\nu}
    \end{array}\right)\left(s,\theta_s,\lr\left({ \theta_s}\right),u_s\right)\left(\widetilde{ \theta_s}\right)\right] \left(\begin{array}{cc}
        \widetilde{\x_s}  \\
        \widetilde{\y_s} 
    \end{array}
    \right) \right\} \notag \\
    &\quad -\widetilde{\e}\left\{\left(\begin{array}{c}
        \x_s  \\
        \y_s  \\
        \U_s 
    \end{array}
    \right)^\top  \left[\left(\begin{array}{cc}
        D_{x'}\frac{dh_x}{d\nu} & D_{y'}\frac{dh_x}{d\nu} \\ 
        D_{x'}\frac{dh_y}{d\nu} & D_{y'}\frac{dh_y}{d\nu} \\
        D_{x'}\frac{dh_v}{d\nu} & D_{y'}\frac{dh_v}{d\nu}
    \end{array}\right)\left(s,\theta_s,\lr\left({ \theta_s}\right),u_s\right)\left(\widetilde{ \theta_s}\right)\right] \left(\begin{array}{c}
         \widetilde{\x_s}   \\
         \widetilde{\y_s}  
    \end{array}\right) \right\} \notag \\
    & \quad  - \he\widetilde{\e} \left\{ \left(\begin{array}{c}
         \widetilde{\x_s}  \\
         \widetilde{\y_s} 
    \end{array}\right)^\top \left[\left( \begin{array}{cc}
         D_{x''}D_{x'}\frac{d^2h}{d\nu^2} & D_{y''}D_{x'}\frac{d^2h}{d\nu^2} \\
         D_{x''}D_{y'}\frac{d^2h}{d\nu^2} & D_{y''}D_{y'}\frac{d^2h}{d\nu^2} 
    \end{array}\right) \left(s,\theta_s,\lr\left({ \theta_s}\right),u_s\right)\left(\widetilde{ \theta_s},\widehat{{ \theta_s}}\right) \right] \left(\begin{array}{c}
         \widehat{\x_s}   \\
         \widehat{\y_s} 
    \end{array}\right) \right\} \notag \\
    & \quad  - \widetilde{\e} \bigg\{ \left[\left( \begin{array}{cc}
         D^2_{x'}\frac{dh}{d\nu} & D_{y'}D_{x'}\frac{dh}{d\nu} \\
         D_{x'}D_{y'}\frac{dh}{d\nu} & D_{y'}^2\frac{dh}{d\nu} 
    \end{array}\right) \left(s,\theta_s,\lr\left({ \theta_s}\right),u_s \right)\left(\widetilde{ \theta_s}\right) \right] \left(\begin{array}{c}
         \widetilde{\x_s}   \\
         \widetilde{\y_s} 
    \end{array}\right)^{\otimes 2} \bigg\}  \Bigg\} ds \notag \\
    =\ & \e\int_t^T \Bigg\{  -\left[\left(
    \begin{array}{ccc}
        h_{xx} & h_{xy} & h_{xv} \\
        h_{yx} & h_{yy} & h_{yv} \\
        h_{vx} & h_{vy} & h_{vv}
    \end{array}\right)\left(s,\theta_s,\lr\left({ \theta_s}\right),u_s\right)\right] \left(
    \begin{array}{c}
         \x_s  \\
         \y_s \\
         \U_s 
    \end{array}
    \right)^{\otimes 2} \notag \\
    &\quad -2 \widetilde{\e}\left\{\left(\begin{array}{c}
        \x_s  \\
        \y_s  \\
        \U_s 
    \end{array}
    \right)^\top  \left[\left(\begin{array}{cc}
        D_{x'}\frac{dh_x}{d\nu} & D_{y'}\frac{dh_x}{d\nu} \\ 
        D_{x'}\frac{dh_y}{d\nu} & D_{y'}\frac{dh_y}{d\nu} \\
        D_{x'}\frac{dh_v}{d\nu} & D_{y'}\frac{dh_v}{d\nu}
    \end{array}\right)\left(s,\theta_s,\lr\left({ \theta_s}\right),u_s\right)\left(\widetilde{ \theta_s}\right)\right] \left(\begin{array}{c}
         \widetilde{\x_s}   \\
         \widetilde{\y_s}  
    \end{array}\right) \right\} \notag \\
    & \quad  - \he\widetilde{\e} \left\{ \left(\begin{array}{c}
         \widetilde{\x_s}  \\
         \widetilde{\y_s} 
    \end{array}\right)^\top \left[\left( \begin{array}{cc}
         D_{x''}D_{x'}\frac{d^2h}{d\nu^2} & D_{y''}D_{x'}\frac{d^2h}{d\nu^2} \\
         D_{x''}D_{y'}\frac{d^2h}{d\nu^2} & D_{y''}D_{y'}\frac{d^2h}{d\nu^2} 
    \end{array}\right) \left(s,\theta_s,\lr\left({ \theta_s}\right),u_s\right)\left(\widetilde{ \theta_s},\widehat{{ \theta_s}}\right) \right] \left(\begin{array}{c}
         \widehat{\x_s}   \\
         \widehat{\y_s} 
    \end{array}\right) \right\} \notag \\
    & \quad  - \widetilde{\e} \bigg\{ \left[\left( \begin{array}{cc}
         D^2_{x'}\frac{dh}{d\nu} & D_{y'}D_{x'}\frac{dh}{d\nu} \\
         D_{x'}D_{y'}\frac{dh}{d\nu} & D_{y'}^2\frac{dh}{d\nu} 
    \end{array}\right) \left(s,\theta_s,\lr\left({ \theta_s}\right),u_s \right)\left(\widetilde{ \theta_s}\right) \right] \left(\begin{array}{c}
         \widetilde{\x_s}   \\
         \widetilde{\y_s} 
    \end{array}\right)^{\otimes 2} \bigg\}  \Bigg\}  ds \notag \\
    \le\ & -2\lambda \int_t^T \left\|\U_s\right\|_2^2 ds. \label{lem:xypru_3}
\end{align}
By using the terminal conditions for $\p_T$ and $\y_T$, the initial conditions for $\x_t$ and $\R_t$, and then  applying Fubini's theorem and also the convexity conditions on $\varphi$ and $\Phi$ in \eqref{def:convex_Phi} and \eqref{def:convex_varphi}, we have
\begin{align*}
    &\e \left[\p_T^\top \x_T - \p_t^\top \x_t + \R_T^\top \y_T -\R_t^\top \y_t \right] \notag \\
    =\ & \e\bigg\{ \x_T^\top  \Phi_{xx}\left(X_T, \lr\left(X_T\right) \right) \x_T +\widetilde{\e}\bigg[\x_T^\top  D_{x'}\frac{d\Phi_x}{d\nu}\left(X_T, \lr\left(X_T\right) \right)\left(\widetilde{X_T}\right) \widetilde{\x_T}\bigg] \notag \\
    &\quad +\widetilde{\e}\bigg[\x_T^\top\left(D_{x'}\frac{d\Phi_x}{d\nu} \left(\widetilde{X_T}, \lr\left(X_T\right)\right)\left(X_T\right)\right)^\top \widetilde{\x_T}  \bigg] +  \widetilde{\e}\left[\x_T^\top D_{x'}^2\frac{d\Phi}{d\nu} \left(\widetilde{X_T}, \lr\left(X_T\right)\right)\left(X_T\right) \x_T \right] \notag \\
    &\quad +  \he\widetilde{\e}\left[\x_T^\top D_{x''}D_{x'}\frac{d^2\Phi}{d\nu^2} \left(\widetilde{X_T}, \lr\left(X_T\right)\right)\left(X_T,\widehat{X_T}\right) \widehat{\x_T} \right] + \y_t^\top   \varphi_{yy} \left(Y_t\right) \y_t \bigg\}\\
    & + \e\bigg\{ \p_t^\top  \Big[ b_0(t) + b_1(t) \xi + b_2(t)\e\left[\xi\right]+b_3(t)u_t\Big]  \\
    &\quad\qquad  - \y_t^\top \bigg[\varphi_{yy}\left(Y_t\right)f\left(t,\xi, Y_t,\lr({ \xi,Y_t}),u_t\right) -f_2(t)^\top  R_t -f_5(t)^\top \e[R_t]  \\
    &\qquad\qquad\qquad +h_y \left(t,\xi,Y_t,\lr({ \xi, Y_t}),u_t\right) { +\widetilde{\e}\left[D_{y'} \frac{dh}{d\nu}\left(t,\widetilde{\xi},\widetilde{Y_t},\lr(\xi, Y_t),\widetilde{u_t} \right)\left(\xi,Y_t\right) \right]}\bigg]  \bigg\}\\
    =\ & \e\bigg\{ \x_T^\top  \Phi_{xx}\left(X_T, \lr\left(X_T\right) \right) \x_T +2 \widetilde{\e}\bigg[\x_T^\top  D_{x'}\frac{d\Phi_x}{d\nu}\left(X_T, \lr\left(X_T\right) \right)\left(\widetilde{X_T}\right) \widetilde{\x_T}\bigg] \notag \\
    &\quad +  \he\widetilde{\e}\left[\widetilde{\x_T}^\top D_{x''}D_{x'}\frac{d^2\Phi}{d\nu^2} \left(X_T, \lr\left(X_T\right)\right)\left(\widetilde{X_T},\widehat{X_T}\right) \widehat{\x_T} \right] \\
    &\quad +  \widetilde{\e}\left[\widetilde{\x_T}^\top D_{x'}^2\frac{d\Phi}{d\nu} \left(X_T, \lr\left(X_T\right)\right)\left(\widetilde{X_T}\right) \widetilde{\x_T} \right] + \y_t^\top   \varphi_{yy} \left(Y_t\right) \y_t  \bigg\}\\
    & + \e\bigg\{ \p_t^\top  \Big[ b_0(t) + b_1(t) \xi + b_2(t)\e\left[\xi\right]+b_3(t)u_t\Big]  \\
    &\quad\qquad  - \y_t^\top \bigg[\varphi_{yy}\left(Y_t\right)f\left(t,\xi, Y_t,\lr({ \xi,Y_t}),u_t\right) -f_2(t)^\top  R_t -f_5(t)^\top \e[R_t]  \\
    &\qquad\qquad\qquad +h_y \left(t,\xi,Y_t,\lr({ \xi, Y_t}),u_t\right) { +\widetilde{\e}\left[D_{y'} \frac{dh}{d\nu}\left(t,\widetilde{\xi},\widetilde{Y_t},\lr(\xi, Y_t),\widetilde{u_t} \right)\left(\xi,Y_t\right) \right]}\bigg]  \bigg\}\\
    \geq\ &  \e\bigg\{ \p_t^\top  \Big[ b_0(t) + b_1(t) \xi + b_2(t)\e\left[\xi\right]+b_3(t)u_t\Big]  \\
    &\quad  - \y_t^\top \bigg[\varphi_{yy}\left(Y_t\right)f\left(t,\xi, Y_t,\lr({ \xi,Y_t}),u_t\right) -f_2(t)^\top  R_t -f_5(t)^\top \e[R_t]  \\
    &\qquad\qquad +h_y \left(t,\xi,Y_t,\lr({ \xi, Y_t}),u_t\right) { +\widetilde{\e}\left[D_{y'} \frac{dh}{d\nu}\left(t,\widetilde{\xi},\widetilde{Y_t},\lr(\xi, Y_t),\widetilde{u_t} \right)\left(\xi,Y_t\right) \right]}\bigg]  \bigg\}.
\end{align*}
Substituting the last inequality back into \eqref{lem:xypru_3}, by using Estimate \eqref{1order-FBSDE-0}, we know that for any $\epsilon\in(0,1)$,  
\begin{align}
    &2\lambda \int_t^T \left\|\U_s\right\|_2^2 ds \notag\\
    \le\ & -\e\bigg\{ \p_t^\top  \Big[ b_0(t) + b_1(t) \xi + b_2(t)\e\left[\xi\right]+b_3(t)u_t\Big] \notag \\
    &\qquad  - \y_t^\top \bigg[\varphi_{yy}\left(Y_t\right)f\left(t,\xi, Y_t,\lr({ \xi,Y_t}),u_t\right) -f_2(t)^\top  R_t -f_5(t)^\top \e[R_t] \notag \\
    &\quad\qquad\qquad +h_y \left(t,\xi,Y_t,\lr({ \xi, Y_t}),u_t\right) { +\widetilde{\e}\left[D_{y'} \frac{dh}{d\nu}\left(t,\widetilde{\xi},\widetilde{Y_t},\lr(\xi, Y_t),\widetilde{u_t} \right)\left(\xi,Y_t\right) \right]}\bigg]  \bigg\} \notag\\
    \le\ & L\left\|\p_t\right\|_2\cdot  \left(1+2\|\xi\|_2 + \left\|u_t\right\|_2 \right) \notag\\
    & + C(L) \left\|\y_t\right\|_2 \cdot \left(   1+\|\xi\|_2+\left\|Y_t\right\|_2+\left\|u_t\right\|_2 + \left\|R_t\right\|_2 \right) \notag\\
    \le\ & \epsilon\left( \left\|\p_t\right\|^2_2+ \left\|\y_t\right\|^2_2\right)+\frac{C(L)}{\epsilon} \left(   1+\|\xi\|^2_2+\left\|Y_t\right\|^2_2 +\left\|u_t\right\|^2_2 + \left\|R_t\right\|^2_2 \right) \notag \\
    \le\ & \epsilon\left( \left\|\p_t\right\|^2_2+ \left\|\y_t\right\|^2_2\right)+\frac{C(L,T,\lambda)}{\epsilon} \left(   1+\|\xi\|^2_2\right). \label{lem:xypru_4}
\end{align}
By using standard estimate for the equation of $\x$ and Estimate \eqref{1order-FBSDE-0}, we have 
\begin{align}
    \e\bigg[\sup_{t\le s\le T} \left|\x_s\right|^2\bigg]\le C(L,T,\lambda) \left(1+\|\xi\|_2^2+\int_t^T \left\|\U_s\right\|_2^2 ds \right); \label{lem:xypru_5}
\end{align}
then, by using standard estimate for the equation of $\y$ and Estimate \eqref{lem:xypru_5}, we have
\begin{align}
    \e\bigg[\sup_{t\le s\le T} \left|\y_s\right|^2\bigg]\le C(L,T,\lambda) \left(1+\|\xi\|_2^2+\int_t^T \left\|\U_s\right\|_2^2 ds \right). \label{lem:xypru_6}
\end{align}
Similarly, from standard estimate for the equation of $\R$, by using Estimates \eqref{lem:xypru_5} and \eqref{lem:xypru_6} and also Estimate \eqref{1order-FBSDE-0}, we have 
\begin{align}
    \e\bigg[\sup_{t\le s\le T} \left|\R_s\right|^2\bigg]\le C(L,T,\lambda) \left(1+\|\xi\|_2^2+\int_t^T \left\|\U_s\right\|_2^2 ds \right); \label{lem:xypru_7}
\end{align}
and finally, by using standard estimate for the equation of $\p$ and Estimates \eqref{lem:xypru_5}-\eqref{lem:xypru_7}, we know that
\begin{align}
    \e\bigg[\sup_{t\le s\le T} \left|\p_s\right|^2\bigg]\le C(L,T,\lambda) \left(1+\|\xi\|_2^2+\int_t^T \left\|\U_s\right\|_2^2 ds \right). \label{lem:xypru_8}
\end{align}
Substituting \eqref{lem:xypru_5} and \eqref{lem:xypru_8} back into \eqref{lem:xypru_4}, we have 
\begin{align}
    &2\lambda \int_t^T \left\|\U_s\right\|_2^2 ds \le \epsilon C(L,T,\lambda) \int_t^T \left\|\U_s\right\|_2^2 ds +C(L,T,\lambda)\left(1+\frac{1}{\epsilon} \right) \left(   1+\|\xi\|^2_2\right), \notag
\end{align}
and then, by choosing $\epsilon:= \frac{\lambda}{C(L,T,\lambda)}$, we have
\begin{align}
    \int_t^T \left\|\U_s\right\|_2^2 ds \le\ & C(L,T,\lambda)\left(   1+\|\xi\|^2_2\right). \notag
\end{align}
Substituting the last estimate back into \eqref{lem:xypru_5}-\eqref{lem:xypru_8}, we obtain 
\begin{align}
    \e\bigg[\sup_{t\le s\le T}\left|\x_s\right|^2+\sup_{t\le s\le T}\left|\y_s\right|^2+\sup_{t\le s\le T}\left|\p_s\right|^2+\sup_{t\le s\le T}\left|\R_s\right|^2\bigg]\le C \left( 1+\left\|\xi\right\|^2_2\right). \label{lem:xypru_9}
\end{align}
By using the last equality in \eqref{FBSDEs:high} and the convexity of $h$ in $v$, we have
\begin{align*}
    0=\ & \U_s^\top h_{vv} \left(s,\theta_s,\lr\left({ \theta_s}\right),u_s\right) \U_s + \U_s^\top b_3(s)^\top  \p_s - \U_s^\top f_7(s)^\top  \R_s \\
    &+ \U_s^\top\left[\left(\begin{array}{ccc}
         h_{vx} & h_{vy} 
    \end{array} \right)\left(s,\theta_s,\lr\left({ \theta_s}\right),u_s\right) \left(\begin{array}{c}
         \x_s \\
         \y_s 
    \end{array}\right)\right] \notag \\
    &  +\widetilde{\e}\left\{ \U_s \left[\left(\begin{array}{cc}
         D_{x'}\frac{dh_{v}}{d\nu} & D_{y'}\frac{dh_{v}}{d\nu} 
    \end{array} \right) \left(r,\theta_r,\lr\left(\theta_r\right),u_r\right)\left(X_r\right) \right] \left( \begin{array}{cc}
         \widetilde{\x_r}  \\
         \widetilde{\y_r} 
    \end{array}\right) \right\}\\
    \geq\ & 2\lambda \left|\U_s\right|^2 - L \left|\U_s\right|\cdot \left(\left|\p_s\right| + \left|\R_s\right| +\left|\x_s\right| + \left|\y_s\right| +\left\|\x_s\right\|_2 { +\left\|\y_s\right\|_2}  \right),
\end{align*}
which implies that 
\begin{align}\label{lem:xypru_10}
    \left|\U_s\right|  
    \le\ &  \frac{L}{2\lambda} \left(\left|\p_s\right| + \left|\R_s\right| +\left|\x_s\right| + \left|\y_s\right| +\left\|\x_s\right\|_2 { +\left\|\y_s\right\|_2} \right).
\end{align}
From \eqref{lem:xypru_9} and \eqref{lem:xypru_10}, we obtain \eqref{lem:xypru_0}.\\

{\bf Step 2: Proof of \eqref{derivative_t}.} From \eqref{def:x_s^epsilon}, \eqref{def:de x^epsilon} and the equations \eqref{FBSDEs:high} and \eqref{lem:xypru_11'}, we can write the following equations for the processes $\left(\de \x^\epsilon,\de \y^\epsilon,\de \p^\epsilon,\de \R^\epsilon,\de \U^\epsilon\right)$ as 
\begin{align*}
    &\de \x^\epsilon_s= \zeta_X^\epsilon + \int_{t+\epsilon}^s \left[b_1(r) \de \x^\epsilon_r + b_2(r)\e\left[\de \x^\epsilon_r\right]+b_3(r)\de \U^\epsilon_r\right] dr, \\
    &\de \y^\epsilon_s= g_1 \de\x^\epsilon_T+ g_2 \e\left[\de\x^\epsilon_T\right]  + \int_s^T  \Big[f_1(r) \de\x^\epsilon_r + f_2(r) \de\y^\epsilon_r + f_4(r)\e\left[\de\x^\epsilon_r\right] \\
    &\qquad\qquad\qquad\qquad\qquad\qquad\qquad\qquad + f_5(r)\e\left[\de\y^\epsilon_r\Big] + f_7(r) \de\U^\epsilon_r   \right] dr,\\
    &\de \p^\epsilon_s= - g_1^\top \de \R^\epsilon_T - g_2^\top \e\left[\de \R^\epsilon_T\right] +\Phi_{xx}\left(X_T, \lr\left(X_T\right) \right) \de\x^\epsilon_T \\
    &\quad\qquad +\widetilde{\e}\bigg[D_{x'}\frac{d\Phi_x}{d\nu}\left(X_T, \lr\left(X_T\right) \right)\left(\widetilde{X_T}\right) \widetilde{\de\x^\epsilon_T}\bigg]\\
    &\quad\qquad +\widetilde{\e}\bigg[\left(D_{x'}\frac{d\Phi_x}{d\nu} \left(\widetilde{X_T}, \lr\left(X_T\right)\right)\left(X_T\right)\right)^\top \widetilde{\de\x^\epsilon_T}  \bigg]\\
    &\quad\qquad +\he\widetilde{\e}\left[D_{x''}D_{x'}\frac{d^2\Phi}{d\nu^2} \left(\widetilde{X_T}, \lr\left(X_T\right)\right)\left(X_T,\widehat{X_T}\right) \widehat{\de\x^\epsilon_T} \right] \notag \\
    &\quad\qquad +\widetilde{\e}\left[D_{x'}^2\frac{d\Phi}{d\nu} \left(\widetilde{X_T}, \lr\left(X_T\right)\right)\left(X_T\right) \de\x^\epsilon_T \right] +\zeta_P^\epsilon  \notag \\
    &\quad\qquad + \int_s^T \Bigg\{  b_1(r)^\top  \de\p^\epsilon_r + b_2(r)^\top \e\left[\de\p^\epsilon_r\right] - f_1(r)^\top  \de\R^\epsilon_r - f_4(r)^\top \e\left[\de\R^\epsilon_r\right] \notag \\
    &\qquad\qquad\qquad +\left[\left(
    \begin{array}{ccc}
        h_{xx} & h_{xy} & h_{xv} 
    \end{array}\right)\left(r,\theta_r,\lr\left({ \theta_r}\right),u_r\right)\right] \left(
    \begin{array}{c}
         \de \x^\epsilon_r  \\
         \de \y^\epsilon_r \\
         \de \U^\epsilon_r 
    \end{array}
    \right) \notag \\
    & \qquad\qquad\qquad +\widetilde{\e}\left\{\left[\left(\begin{array}{cc}
        D_{x'}\frac{dh_{x}}{d\nu} & D_{y'}\frac{dh_{x}}{d\nu}
    \end{array}\right)\left(r,\theta_r,\lr\left({ \theta_r}\right),u_r\right)\left(\widetilde{ \theta_r}\right)\right] \left(\begin{array}{cc}
        \de \widetilde{\x^\epsilon_r}  \\
        \de \widetilde{\y^\epsilon_r} 
    \end{array}
    \right) \right\}  \notag\\
    &\qquad\qquad\qquad +\widetilde{\e}\left\{\left[\left(\begin{array}{ccc}
        D_{x'}\frac{dh_x}{d\nu} \\ 
        D_{x'}\frac{dh_y}{d\nu} \\
        D_{x'}\frac{dh_v}{d\nu}
    \end{array}\right)\left(r,\widetilde{\theta_r},\lr\left({ \theta_r}\right),\widetilde{u_r}\right)\left({ \theta_r}\right)\right]^\top  \left(\begin{array}{c}
        \de \widetilde{\x^\epsilon_r}  \\
        \de \widetilde{\y^\epsilon_r}  \\
        \de \widetilde{\U^\epsilon_r} 
    \end{array}
    \right) \right\}  \notag\\
    & \qquad\qquad\qquad  + \he\widetilde{\e} \left\{ \left[\left( \begin{array}{cc}
         D_{x''}D_{x'}\frac{d^2h}{d\nu^2} & D_{y''}D_{x'}\frac{d^2h}{d\nu^2} \\
    \end{array}\right) \left(r,\widetilde{\theta_r},\lr\left({ \theta_r}\right),\widetilde{u_r}\right)\left({ \theta_r},\widehat{{ \theta_r}}\right) \right] \left(\begin{array}{c}
         \de \widehat{\x^\epsilon_r}   \\
         \de \widehat{\y^\epsilon_r} 
    \end{array}\right) \right\}  \notag\\
    & \qquad\qquad\qquad  + \widetilde{\e} \bigg\{ \left[\left( \begin{array}{cc}
         D^2_{x'}\frac{dh}{d\nu} & D_{y'}D_{x'}\frac{dh}{d\nu} \\
    \end{array}\right) \left(r,\widetilde{\theta_r},\lr\left({ \theta_r}\right),\widetilde{u_r}\right)\left({ \theta_r}\right) \right] \left(\begin{array}{c}
         \de \x^\epsilon_r   \\
         \de \y^\epsilon_r 
    \end{array}\right) \bigg\} + \eta^\epsilon_P(r) \Bigg\} dr,\\
    & \de\R^\epsilon_s= - \varphi_{yy} \left(Y_{t+\epsilon}\right) \de\y^\epsilon_{t+\epsilon} +\zeta_R^\epsilon  \\
    &\quad\qquad +\int_{t+\epsilon}^s \Bigg\{ f_2(r)^\top  \de \R^\epsilon_r { + f_5(r)^\top  \e\left[\de \R^\epsilon_r\right]}  - \left[\left( \begin{array}{ccc}
         h_{yx} & h_{yy} & h_{yv}
    \end{array}\right)\left(r,\theta_r,\lr\left({ \theta_r}\right),u_r\right)\right] \left(\begin{array}{c}
         \de \x^\epsilon_r \\
         \de \y^\epsilon_r \\
         \de \U^\epsilon_r
    \end{array}\right) \notag \\
    & \qquad\qquad\qquad -\widetilde{\e}\left\{ \left[\left(\begin{array}{cc}
         D_{x'}\frac{dh_{y}}{d\nu} & D_{y'}\frac{dh_{y}}{d\nu} 
    \end{array} \right) \left(r,\theta_r,\lr\left(\theta_r\right),u_r\right)\left(\widetilde{\theta_r}\right) \right] \left( \begin{array}{cc}
         \de \widetilde{\x^\epsilon_r}  \\
         \de \widetilde{\y^\epsilon_r} 
    \end{array}\right) \right\}  \notag\\
    & \qquad\qquad\qquad - \widetilde{\e}\left\{\left[\left(\begin{array}{ccc}
        D_{y'}\frac{dh_x}{d\nu} \\ 
        D_{y'}\frac{dh_y}{d\nu} \\
        D_{y'}\frac{dh_v}{d\nu}
    \end{array}\right)\left(r,\widetilde{\theta_r},\lr\left({ \theta_r}\right),\widetilde{u_r}\right)\left({ \theta_r}\right)\right]^\top  \left(\begin{array}{c}
        \de \widetilde{\x^\epsilon_r}  \\
        \de \widetilde{\y^\epsilon_r}  \\
        \de \widetilde{\U^\epsilon_r} 
    \end{array}
    \right) \right\}  \notag\\
    & \qquad\qquad\qquad  - \he\widetilde{\e} \left\{ \left[\left( \begin{array}{cc}
         D_{x''}D_{y'}\frac{d^2h}{d\nu^2} & D_{y''}D_{y'}\frac{d^2h}{d\nu^2} \\
    \end{array}\right) \left(r,\widetilde{\theta_r},\lr\left({ \theta_r}\right),\widetilde{u_r}\right)\left({ \theta_r},\widehat{{ \theta_r}}\right) \right] \left(\begin{array}{c}
         \de \widehat{\x^\epsilon_r}   \\
         \de \widehat{\y^\epsilon_r} 
    \end{array}\right) \right\}  \notag\\
    & \qquad\qquad\qquad  - \widetilde{\e} \bigg\{ \left[\left( \begin{array}{cc}
         D_{x'}D_{y'}\frac{dh}{d\nu} & D_{y'}^2\frac{dh}{d\nu} \\
    \end{array}\right) \left(r,\widetilde{\theta_r},\lr\left({ \theta_r}\right),\widetilde{u_r}\right)\left({ \theta_r}\right) \right] \left(\begin{array}{c}
         \de \x^\epsilon_r   \\
         \de \y^\epsilon_r 
    \end{array}\right) \bigg\} -\eta_R^\epsilon(r) \Bigg\} dr, \notag \\ 
    &0=b_3(s)^\top  \de \p^\epsilon_s -f_7(s)^\top  \de \R^\epsilon_s + \left[\left(\begin{array}{ccc}
         h_{vx} & h_{vy} & h_{vv} 
    \end{array} \right)\left(s,\theta_s,\lr\left({ \theta_s}\right),u_s\right) \left(\begin{array}{c}
         \de \x^\epsilon_s \\
         \de \y^\epsilon_s \\
         \de \U^\epsilon_s 
    \end{array}\right)\right] \notag \\
    & \qquad +\widetilde{\e}\left\{ \left[\left(\begin{array}{cc}
         D_{x'}\frac{dh_{v}}{d\nu} & D_{y'}\frac{dh_{v}}{d\nu} 
    \end{array} \right) \left(s,\theta_s,\lr\left(\theta_s\right),u_s\right)\left(\widetilde{\theta_s}\right) \right] \left( \begin{array}{cc}
         \widetilde{\x^\epsilon_s}  \\
         \widetilde{\y^\epsilon_s} 
    \end{array}\right) \right\} + \eta_u^\epsilon(s),\quad s\in[t+\epsilon,T],
\end{align*}
where
\begin{align*}
    \zeta_X^\epsilon:=\ & -\int_t^{t+\epsilon} \frac{1}{\epsilon}\Big[b\big|^{\left(r,X_r,\lr(X_r),u_r\right)}_{\left(t,\xi,\lr(\xi),u_t\right) }\Big] dr -\int_t^{t+\epsilon} \left\{ b_1(r) \x_r + b_2(r)\e\left[\x_r\right]+b_3(r)\U_r\right\} dr,\\
    \zeta_P^\epsilon:=\ & \int_0^1 \left[\Phi_{xx}\big|^{\left(X^{\epsilon,\delta}_T, \lr\left(X_T\right)\right)}_{\left(X_T, \lr\left(X_T\right) \right)}\right]\x^\epsilon_T d\delta + \int_0^1 \widetilde{\e}\left[\left(D_{x'}\frac{d\Phi_x}{d\nu}\bigg|^{\left(X^{\epsilon,\delta}_T, \lr\left(X^{\epsilon,\delta}_T\right)\right)  \left(\widetilde{X^{\epsilon,\delta}_T}\right)}_{\left(X_T, \lr\left(X_T\right)\right)  \left(\widetilde{X_T}\right)}\right) \widetilde{\x^\epsilon_T}\right] d\delta \\
    & +\int_0^1 \widetilde{\e}\left[\left(D_{x'}\frac{d\Phi_x}{d\nu}\bigg|^{\left(\widetilde{X^{\epsilon,\delta}_T}, \lr\left(X^{\epsilon,\delta}_T\right)\right) \left(X^{\epsilon,\delta}_T\right)}_{\left(\widetilde{X_T}, \lr\left(X_T\right)\right) \left(X_T\right)}\right)^\top \widetilde{\x^\epsilon_T} \right]d\delta  \\
    & +\int_0^1 \he\widetilde{\e}\left[\left(D_{x''}D_{x'}\frac{d^2\Phi}{d\nu^2}\bigg|^{\left(\widetilde{X^{\epsilon,\delta}_T}, \lr\left(X^{\epsilon,\delta}_T\right)\right) \left(X^{\epsilon,\delta}_T,\widehat{X^{\epsilon,\delta}_T}\right)}_{\left(\widetilde{X_T}, \lr\left(X_T\right)\right) \left(X_T,\widehat{X^{\epsilon,\delta}_T}\right)}  \right)\widehat{\x^\epsilon_T} \right]d\delta  \\
    & +\int_0^1 \widetilde{\e}\left[\left(D_{x'}^2\frac{d\Phi}{d\nu}\bigg|^{\left(\widetilde{X^{\epsilon,\delta}_T}, \lr\left(X^{\epsilon,\delta}_T\right)\right) \left(X^{\epsilon,\delta}_T\right)}_{\left(\widetilde{X_T}, \lr\left(X_T\right)\right) \left(X_T\right)} \right) \x^\epsilon_T \right]d\delta,  \\
    \zeta_R^\epsilon:=\ & \int_0^1 \left[\varphi_{yy}\big|^{Y^{\epsilon,\delta}_{t+\epsilon}}_{Y_{t+\epsilon}} \right] \y^\epsilon_{t+\epsilon} d\delta + \varphi_{yy} \left(Y_{t+\epsilon}\right) \y_{t+\epsilon} - \varphi_{yy} \left(Y_t\right) \y_t\\
    & + \frac{1}{\epsilon}\int_t^{t+\epsilon}\left[ \varphi_{yy}\left(Y_r\right)f\left(r,X_r,Y_r,\lr\left({ X_r,Y_r}\right),u_r\right) -\varphi_{yy} \left(Y_t\right) f\left(t,\xi,Y_t,\lr({ \xi,Y_t}),u_t\right) \right]  dr\\
    & -\frac{1}{\epsilon}\int_t^{t+\epsilon} \Bigg\{f_2(r)^\top  R_r -f_2(t)^\top  R_t + f_5(r)^\top  \e[R_r] -f_5(t)^\top \e [R_t]  \\
    &\quad\qquad\qquad -h_y\big|^{\left(r,X_r,Y_r,\lr({ X_r,Y_r}),u_r\right)}_{\left(t,\xi,Y_t,\lr({ \xi,Y_t}),u_t\right)} { - \widetilde{\e} \left[D_{y'}\frac{dh}{d\nu}\bigg|^{\left(r,\widetilde{X_r},\widetilde{Y_r},\lr\left({ X_r,Y_r}\right),\widetilde{u_r}\right)\left({ X_r,Y_r}\right)}_{\left(t,\widetilde{\xi},\widetilde{Y_t},\lr\left( \xi,Y_t\right),\widetilde{u_t}\right)\left({ \xi,Y_t}\right)} \right]} \Bigg\} dr \\
    & -\int_t^{t+\epsilon} \Bigg\{ f_2(r)^\top  \R_r { + f_5(r)^\top  \e\left[\R_r\right]} - \left[\left( \begin{array}{ccc}
         h_{yx} & h_{yy} & h_{yv}
    \end{array}\right)\left(r,\theta_r,\lr\left({ \theta_r}\right),u_r\right)\right] \left(\begin{array}{c}
         \x_r \\
         \y_r \\
         \U_r
    \end{array}\right) \notag \\
    & \qquad\qquad -\widetilde{\e}\left\{ \left[\left(\begin{array}{cc}
         D_{x'}\frac{dh_{y}}{d\nu} & D_{y'}\frac{dh_{y}}{d\nu} 
    \end{array} \right) \left(r,\theta_r,\lr\left(\theta_r\right),u_r\right)\left(\widetilde{\theta_r}\right) \right] \left( \begin{array}{cc}
         \widetilde{\x_r}  \\
         \widetilde{\y_r} 
    \end{array}\right) \right\}  \notag\\
    & \qquad\qquad - \widetilde{\e}\left\{\left[\left(\begin{array}{ccc}
        D_{y'}\frac{dh_x}{d\nu} \\ 
        D_{y'}\frac{dh_y}{d\nu} \\
        D_{y'}\frac{dh_v}{d\nu}
    \end{array}\right)\left(r,\widetilde{\theta_r},\lr\left({ \theta_r}\right),\widetilde{u_r}\right)\left({ \theta_r}\right)\right]^\top  \left(\begin{array}{c}
        \widetilde{\x_r}  \\
        \widetilde{\y_r}  \\
        \widetilde{\U_r} 
    \end{array}
    \right) \right\}  \notag\\
    & \qquad\qquad  - \he\widetilde{\e} \left\{ \left[\left( \begin{array}{cc}
         D_{x''}D_{y'}\frac{d^2h}{d\nu^2} & D_{y''}D_{y'}\frac{d^2h}{d\nu^2} \\
    \end{array}\right) \left(r,\widetilde{\theta_r},\lr\left({ \theta_r}\right),\widetilde{u_r}\right)\left({ \theta_r},\widehat{{ \theta_r}}\right) \right] \left(\begin{array}{c}
         \widehat{\x_r}   \\
         \widehat{\y_r} 
    \end{array}\right) \right\}  \notag\\
    & \qquad\qquad  - \widetilde{\e} \left\{ \left[\left( \begin{array}{cc}
         D_{x'}D_{y'}\frac{dh}{d\nu} & D_{y'}^2\frac{dh}{d\nu} \\
    \end{array}\right) \left(r,\widetilde{\theta_r},\lr\left({ \theta_r}\right),\widetilde{u_r}\right)\left({ \theta_r}\right) \right] \left(\begin{array}{c}
         \x_r   \\
         \y_r 
    \end{array}\right) \right\} \Bigg\} dr,\\
    \eta^\epsilon_P(r):=\ & \int_0^1  \Bigg\{ \left[\left(
    \begin{array}{ccc}
        h_{xx} & h_{xy} & h_{xv} 
    \end{array}\right)(r,\cdot)\Big|^{\left(\theta^{\epsilon,\delta}_r,\lr\left({ \theta^{\epsilon,\delta}_r}\right),u^{\epsilon,\delta}_r\right)}_{\left(\theta_r,\lr\left({ \theta_r}\right),u_r\right)} \right] \left(
    \begin{array}{c}
         \x^\epsilon_r  \\
         \y^\epsilon_r \\
         \U^\epsilon_r 
    \end{array}
    \right) \notag \\
    & \quad +\widetilde{\e}\left\{\left[\left(\begin{array}{cc}
        D_{x'}\frac{dh_{x}}{d\nu} & D_{y'}\frac{dh_{x}}{d\nu}
    \end{array}\right)(r,\cdot)(\cdot)\Big|^{\left(\theta^{\epsilon,\delta}_r,\lr\left({ \theta^{\epsilon,\delta}_r}\right),u^{\epsilon,\delta}_r\right)\left(\widetilde{ \theta^{\epsilon,\delta}_r}\right)}_{\left(\theta_r,\lr\left({ \theta_r}\right),u_r\right)\left(\widetilde{ \theta_r}\right)} \right] \left(\begin{array}{cc}
        \widetilde{\x^\epsilon_r}  \\
        \widetilde{\y^\epsilon_r} 
    \end{array}
    \right) \right\}  \notag\\
    &\quad +\widetilde{\e}\left\{\left[\left(\begin{array}{ccc}
        D_{x'}\frac{dh_x}{d\nu} \\ 
        D_{x'}\frac{dh_y}{d\nu} \\
        D_{x'}\frac{dh_v}{d\nu}
    \end{array}\right)(r,\cdot)(\cdot) \bigg|^{\left(\widetilde{\theta^{\epsilon,\delta}_r},\lr\left({ \theta^{\epsilon,\delta}_r}\right),\widetilde{u^{\epsilon,\delta}_r}\right)\left({ \theta^{\epsilon,\delta}_r}\right)}_{\left(\widetilde{\theta_r},\lr\left({ \theta_r}\right),\widetilde{u_r}\right)\left({ \theta_r}\right)} \right]^\top  \left(\begin{array}{c}
        \widetilde{\x^\epsilon_r}  \\
        \widetilde{\y^\epsilon_r}  \\
        \widetilde{\U^\epsilon_r} 
    \end{array}
    \right) \right\}  \notag\\
    & \quad  + \he\widetilde{\e} \left\{ \left[\left( \begin{array}{cc}
         D_{x''}D_{x'}\frac{d^2h}{d\nu^2} & D_{y''}D_{x'}\frac{d^2h}{d\nu^2} \\
    \end{array}\right)(r,\cdot)(\cdot)\bigg|^{\left(\widetilde{\theta^{\epsilon,\delta}_r},\lr\left({ \theta^{\epsilon,\delta}_r}\right),\widetilde{u^{\epsilon,\delta}_r}\right)\left({ \theta^{\epsilon,\delta}_r},\widehat{{ \theta^{\epsilon,\delta}_r}}\right)}_{\left(\widetilde{\theta_r},\lr\left({ \theta_r}\right),\widetilde{u_r}\right)\left({ \theta_r},\widehat{{ \theta_r}}\right)}  \right] \left(\begin{array}{c}
         \widehat{\x^\epsilon_r}   \\
         \widehat{\y^\epsilon_r} 
    \end{array}\right) \right\}  \notag\\
    & \quad  + \widetilde{\e} \left\{ \left[\left( \begin{array}{cc}
         D^2_{x'}\frac{dh}{d\nu} & D_{y'}D_{x'}\frac{dh}{d\nu} \\
    \end{array}\right)(r,\cdot)(\cdot)\Big|^{\left(\widetilde{\theta^{\epsilon,\delta}_r},\lr\left({ \theta^{\epsilon,\delta}_r}\right),\widetilde{u^{\epsilon,\delta}_r}\right)\left({ \theta^{\epsilon,\delta}_r}\right) }_{\left(\widetilde{\theta_r},\lr\left({ \theta_r}\right),\widetilde{u_r}\right)\left({ \theta_r}\right) } \right] \left(\begin{array}{c}
         \x^\epsilon_r   \\
         \y^\epsilon_r 
    \end{array}\right) \right\} \Bigg\} d\delta, \\
    \eta_R^\epsilon(r):=\ & \int_0^1 \Bigg\{ \left[\left( \begin{array}{ccc}
         h_{yx} & h_{yy} & h_{yv}
    \end{array}\right)(r,\cdot)\Big|^{\left(\theta^{\epsilon,\delta}_r,\lr\left({ \theta^{\epsilon,\delta}_r}\right),u^{\epsilon,\delta}_r\right)}_{\left(\theta_r,\lr\left({ \theta_r}\right),u_r\right)} \right] \left(\begin{array}{c}
         \x^\epsilon_r \\
         \y^\epsilon_r \\
         \U^\epsilon_r
    \end{array}\right) \notag \\
    & \quad + \widetilde{\e}\left\{ \left[\left(\begin{array}{cc}
         D_{x'}\frac{dh_{y}}{d\nu} & D_{y'}\frac{dh_{y}}{d\nu} 
    \end{array} \right)(r,\cdot)(\cdot) \Big|^{\left(\theta^{\epsilon,\delta}_r,\lr\left(\theta^{\epsilon,\delta}_r\right),u^{\epsilon,\delta}_r\right)\left(\widetilde{\theta^{\epsilon,\delta}_r}\right)}_{\left(\theta_r,\lr\left(\theta_r\right),u_r\right)\left(\widetilde{\theta_r}\right)}  \right] \left( \begin{array}{cc}
         \widetilde{\x^\epsilon_r}  \\
         \widetilde{\y^\epsilon_r} 
    \end{array}\right) \right\}  \notag\\
    & \quad + \widetilde{\e}\left\{\left[\left(\begin{array}{ccc}
        D_{y'}\frac{dh_x}{d\nu} \\ 
        D_{y'}\frac{dh_y}{d\nu} \\
        D_{y'}\frac{dh_v}{d\nu}
    \end{array}\right)(r,\cdot)(\cdot)\bigg|^{\left(\widetilde{\theta^{\epsilon,\delta}_r},\lr\left({ \theta^{\epsilon,\delta}_r}\right),\widetilde{u^{\epsilon,\delta}_r}\right)\left({ \theta^{\epsilon,\delta}_r}\right)}_{\left(\widetilde{\theta_r},\lr\left({ \theta_r}\right),\widetilde{u_r}\right)\left({ \theta_r}\right)} \right]^\top  \left(\begin{array}{c}
        \widetilde{\x^\epsilon_r}  \\
        \widetilde{\y^\epsilon_r}  \\
        \widetilde{\U^\epsilon_r} 
    \end{array}
    \right) \right\}  \notag\\
    & \quad  + \he\widetilde{\e} \left\{ \left[\left( \begin{array}{cc}
         D_{x''}D_{y'}\frac{d^2h}{d\nu^2} & D_{y''}D_{y'}\frac{d^2h}{d\nu^2} \\
    \end{array}\right) (r,\cdot)(\cdot) \Big|^{\left(\widetilde{\theta^{\epsilon,\delta}_r},\lr\left({ \theta^{\epsilon,\delta}_r}\right),\widetilde{u^{\epsilon,\delta}_r}\right)\left({ \theta^{\epsilon,\delta}_r},\widehat{{ \theta^{\epsilon,\delta}_r}}\right)}_{\left(\widetilde{\theta_r},\lr\left({ \theta_r}\right),\widetilde{u_r}\right)\left({ \theta_r},\widehat{{ \theta_r}}\right)}   \right] \left(\begin{array}{c}
         \widehat{\x^\epsilon_r}   \\
         \widehat{\y^\epsilon_r} 
    \end{array}\right) \right\}  \notag\\
    & \quad  + \widetilde{\e} \left\{ \left[\left( \begin{array}{cc}
         D_{x'}D_{y'}\frac{dh}{d\nu} & D_{y'}^2\frac{dh}{d\nu} \\
    \end{array}\right) (r,\cdot)(\cdot) \Big|^{\left(\widetilde{\theta^{\epsilon,\delta}_r},\lr\left({ \theta^{\epsilon,\delta}_r}\right),\widetilde{u^{\epsilon,\delta}_r}\right)\left({ \theta^{\epsilon,\delta}_r}\right)}_{\left(\widetilde{\theta_r},\lr\left({ \theta_r}\right),\widetilde{u_r}\right)\left({ \theta_r}\right)}  \right] \left(\begin{array}{c}
         \x^\epsilon_r   \\
         \y^\epsilon_r 
    \end{array}\right) \right\} \Bigg\} d\delta,\\
    \eta_u^\epsilon(s):=\ & \left[\left(\begin{array}{ccc}
         h_{vx} & h_{vy} & h_{vv} 
    \end{array} \right)(s,\cdot)\Big|^{\left(\theta^{\epsilon,\delta}_s,\lr\left({ \theta^{\epsilon,\delta}_s}\right),u^{\epsilon,\delta}_s\right)}_{\left(\theta_s,\lr\left({ \theta_s}\right),u_s\right)}  \left(\begin{array}{c}
         \x^\epsilon_s \\
         \y^\epsilon_s \\
         \U^\epsilon_s 
    \end{array}\right)\right] \notag \\
    & +\widetilde{\e}\left\{ \left[\left(\begin{array}{cc}
         D_{x'}\frac{dh_{v}}{d\nu} & D_{y'}\frac{dh_{v}}{d\nu} 
    \end{array} \right) (s,\cdot)(\cdot)\Big|^{\left(\theta^{\epsilon,\delta}_s,\lr\left(\theta^{\epsilon,\delta}_s\right),u^{\epsilon,\delta}_s\right)\left(\widetilde{\theta^{\epsilon,\delta}_s}\right)}_{\left(\theta_s,\lr\left(\theta_s\right),u_s\right)\left(\widetilde{\theta_s}\right)}  \right] \left( \begin{array}{cc}
         \widetilde{\x^\epsilon_s}  \\
         \widetilde{\y^\epsilon_s} 
    \end{array}\right) \right\}.
\end{align*}
Here, recall that 
\begin{align*}
    &X^{\epsilon,\delta}_s:=X_s + \delta \left(X^{\epsilon}_s-X_s\right),\quad Y^{\epsilon,\delta}_s:=Y_s + \delta \left(Y^{\epsilon}_s-Y_s\right),\quad u^{\epsilon,\delta}_s:=u_s + \delta \left(u^{\epsilon}_s-u_s\right),
\end{align*}
and $\theta^{\epsilon,\delta}_s:=\left( X^{\epsilon,\delta}_s,Y^{\epsilon,\delta}_s\right)$.

\noindent\textbf{(Step 2(a))} We first show that the following estimates hold:
\begin{equation}\label{zeta}
    \lim_{\epsilon\to 0}\ \bigg[\left\|\zeta_X^\epsilon\right\|_2^2+\left\|\zeta_P^\epsilon\right\|_2^2+\left\|\zeta_R^\epsilon\right\|_2^2 + \sup_{t+\epsilon\le s\le T}\left\|\eta_u^\epsilon(s)\right\|_2^2 + \int_{t+\epsilon}^{T} \left(\left\|\eta_P^\epsilon(s)\right\|_2^2 + \left\|\eta_R^\epsilon(s)\right\|_2^2 \right) ds\bigg]=0.  
\end{equation}
Indeed, from Assumption (B), Estimates \eqref{1order-FBSDE-0}, \eqref{lem:estimate_s_1}, \eqref{lem:estimate_s_2} and \eqref{lem:xypru_0}, we see
\begin{align*}
    \e\left[\left|\zeta_X^\epsilon\right|^2\right]\le\ & C(L) \frac{1}{\epsilon} \int_t^{t+\epsilon} \Big\|b\left(r,X_r,\lr(X_r),u_r\right)-b\left(t,\xi,\lr(\xi),u_t\right) \Big\|_2^2 dr \\
    &+C(L)\epsilon \int_t^{t+\epsilon} \Big\| b_1(r) \x_r + b_2(r)\e\left[\x_r\right]+b_3(r)\U_r \Big\|_2^2 dr\\
    \le\ & C(L) \frac{1}{\epsilon} \int_t^{t+\epsilon} \left(|r-t|\left(1+\|X_r\|_2^2+ \|u_r\|_2^2\right)+\|X_r-\xi\|_2^2 +\|u_r-u_t\|_2^2 \right) dr \\
    &+C(L)\epsilon \int_t^{t+\epsilon} \left( \|\x_r\|_2^2 + \|\U_r\|_2^2 \right) dr\\
    \le\ & C(L,T,\lambda) \left( 1+\left\|\xi\right\|^2_2\right) \epsilon,
\end{align*}
from which we know that 
\begin{equation}\label{zeta_X}
    \lim_{\epsilon\to 0}\ \left\|\zeta_X^\epsilon\right\|_2^2 =0.
\end{equation}
For the term $\zeta_R^\epsilon$, by using Assumption (B), Estimates \eqref{1order-FBSDE-0}, \eqref{lem:estimate_s_1}, \eqref{lem:estimate_s_2} and \eqref{lem:xypru_0},
\begin{align}
    \left\|\zeta_R^\epsilon\right\|_2^2 \le\ & C(L) \bigg\{ \left\|\y_{t+\epsilon} - \y_t\right\|_2^2 +\epsilon \int_t^{t+\epsilon} \left(  \|\R_r\|_2^2  +\|\x_r\|_2^2 + \|\y_r\|_2^2 + \|\U_r\|_2^2 \right) dr \notag \\
    &\qquad +\frac{1}{\epsilon}\int_t^{t+\epsilon} \Big[\left(1+\|\xi\|_2^2+\|Y_t\|_2^2+\|u_t\|_2^2+\|R_t\|_2^2\right)|r-t| \notag \\
    &\quad\qquad\qquad\qquad +\|X_r-\xi\|_2^2+\|Y_r-Y_t\|_2^2 +\|u_r-u_t\|_2^2+ \|R_r-R_t\|_2^2 \Big] dr \notag \\
    &\qquad+\e\left[\int_0^1 \left|\varphi_{yy} \left(Y^{\epsilon,\delta}_{t+\epsilon}\right)-\varphi_{yy} \left(Y_{t+\epsilon}\right)\right|^2 \cdot \left|\y^\epsilon_{t+\epsilon}\right|^2 d\delta \right] \notag \\
    &\qquad + \e\left[\sup_{t\le r\le t+\epsilon}\left|\varphi_{yy}\left(Y_r\right) -\varphi_{yy} \left(Y_t\right) \right|^2 \cdot \left( |\y_t|^2+ \left|f\left(t,\xi,Y_t,\lr({ \xi,Y_t}),u_t\right)\right|^2\right) \right] \bigg\} \notag \\
    \le\ & C(L,T,\lambda) \bigg\{ \left\|\y_{t+\epsilon} - \y_t\right\|_2^2 + \left(1+\|\xi\|_2^2\right)\epsilon \notag \\
    &\qquad\qquad +\e\left[\int_0^1 \left|\varphi_{yy} \left(Y^{\epsilon,\delta}_{t+\epsilon}\right)-\varphi_{yy} \left(Y_{t+\epsilon}\right)\right|^2 \cdot \left|\y^\epsilon_{t+\epsilon}\right|^2 d\delta \right] \notag \\
    &\qquad\qquad + \e\left[\sup_{t\le r\le t+\epsilon}\left|\varphi_{yy}\left(Y_r\right) -\varphi_{yy} \left(Y_t\right) \right|^2 \cdot \left( |\y_t|^2+ \left|f\left(t,\xi,Y_t,\lr({ \xi,Y_t}),u_t\right)\right|^2\right) \right] \bigg\}. \label{zeta_R-1}
\end{align}
For the first term of the right-hand side of the last inequality, since
\begin{align*}
    \y_{t+\epsilon} - \y_t= -\int_{t}^{t+\epsilon}  \left[f_1(r) \x_r + f_2(r) \y_r  + f_4(r) \e\left[ \x_r\right] + f_5(r) \e\left[ \y_r\right] + f_7(r) \U_r \right] dr,
\end{align*}
then, by using \eqref{lem:xypru_0}, we know that 
\begin{align}\label{zeta_R-2}
    \left\|\y_{t+\epsilon} - \y_t\right\|_2^2\le C(L,T,\lambda)\left(1+\|\xi\|_2^2\right)\epsilon^2. 
\end{align}
For the last two terms of the right-hand side of \eqref{zeta_R-1}, in view of the dominated convergence theorem, also noting that Estimates \eqref{lem:xypru_0'} and \eqref{lem:xypru_0}, we know that  
\begin{align}
    \lim_{\epsilon\to 0}\ \bigg\{&\e\left[\int_0^1 \left|\varphi_{yy} \left(Y^{\epsilon,\delta}_{t+\epsilon}\right)-\varphi_{yy} \left(Y_{t+\epsilon}\right)\right|^2 \cdot \left|\y^\epsilon_{t+\epsilon}\right|^2 d\delta \right] \notag \\
    &+ \e\left[\sup_{t\le r\le t+\epsilon}\left|\varphi_{yy}\left(Y_r\right) -\varphi_{yy} \left(Y_t\right) \right|^2 \cdot \left( |\y_t|^2+ \left|f\left(t,\xi,Y_t,\lr({ \xi,Y_t}),u_t\right)\right|^2\right) \right] \bigg\}=0.  \label{zeta_R-3}
\end{align}
From \eqref{zeta_R-1}, \eqref{zeta_R-2} and \eqref{zeta_R-3}, we have 
\begin{equation}\label{zeta_R}
    \lim_{\epsilon\to 0}\ \left\|\zeta_R^\epsilon\right\|_2^2 =0.
\end{equation}
Similar as the proof of \eqref{zeta_R}, we can also obtain
\begin{equation*}
    \lim_{\epsilon\to 0}\ \bigg[\left\|\zeta_P^\epsilon\right\|_2^2+ \sup_{t+\epsilon\le s\le T}\left\|\eta_u^\epsilon(s)\right\|_2^2 + \int_{t+\epsilon}^{T} \left(\left\|\eta_P^\epsilon(s)\right\|_2^2 + \left\|\eta_R^\epsilon(s)\right\|_2^2 \right) ds\bigg]=0.  
\end{equation*}
%From the last estimate together with \eqref{zeta_X} and \eqref{zeta_R}, we obtain \eqref{zeta}.

\noindent\textbf{(Step 2(b))} We now prove \eqref{derivative_t}.\\
Similar as in \eqref{lem:xypru_1'}-\eqref{lem:xypru_3'} and \eqref{lem:xypru_1}-\eqref{lem:xypru_3}, by applying It\^o's formula for $\left(\de\p_s^\epsilon\right)^\top \de\x_s^\epsilon$ and $\left(\de\R_s^\epsilon\right)^\top \de\y_s^\epsilon$, taking expectation and using Fubini's theorem and the convexity condition \eqref{def:convex_h}, we have
\begin{align}
    &\e \left[\left(\de\p_T^\epsilon\right)^\top \de\x_T^\epsilon  - \left(\de\p_{t+\epsilon}^\epsilon\right)^\top \de\x_{t+\epsilon}^\epsilon + \left(\de\R_T^\epsilon\right)^\top \de\y_T^\epsilon -\left(\de\R_{t+\epsilon}^\epsilon\right)^\top \de\y_{t+\epsilon}^\epsilon \right] \notag \\
    \le\ & -2\lambda \int_{t+\epsilon}^T  \left\|\de\U_s^\epsilon\right\|_2^2  ds + \int_{t+\epsilon}^T \left(\left\|\de\x_s^\epsilon\right\|_2\cdot \left\|\eta_P^\epsilon(s)\right\|_2 +\left\|\de\y_s^\epsilon\right\|_2\cdot \left\|\eta_R^\epsilon(s)\right\|_2\right) ds. \label{lem:xypru_3''}
\end{align}
By using the terminal conditions for $\de\p_T^\epsilon$ and $\de\y_T^\epsilon$, the initial conditions for $\de\x_{t+\epsilon}^\epsilon$ and $\de\R_{t+\epsilon}^\epsilon$, and then applying again Fubini's theorem and also the convexity conditions on $\varphi$ and $\Phi$ in \eqref{def:convex_Phi} and \eqref{def:convex_varphi}, we have
\begin{align*}
    &\e \left[\left(\de\p_T^\epsilon\right)^\top \de\x_T^\epsilon  - \left(\de\p_{t+\epsilon}^\epsilon\right)^\top \de\x_{t+\epsilon}^\epsilon + \left(\de\R_T^\epsilon\right)^\top \de\y_T^\epsilon -\left(\de\R_{t+\epsilon}^\epsilon\right)^\top \de\y_{t+\epsilon}^\epsilon \right]\\
    \geq\ & \e\left[\left(\de\x_T^\epsilon\right)^\top \zeta_P^\epsilon - \left(\de\p_{t+\epsilon}^\epsilon\right)^\top \zeta_X^\epsilon - \left(\de\y_{t+\epsilon}^\epsilon\right)^\top \zeta_R^\epsilon\right].
\end{align*}
Substituting the last estimate into \eqref{lem:xypru_3''}, we know that for any $\epsilon\in(0,1)$, 
\begin{align}
    &2\lambda \int_{t+\epsilon}^T  \left\|\de\U_s^\epsilon\right\|_2^2  ds\notag\\
    \le\ & \left\|\de\x_T^\epsilon\right\|_2\cdot \left\|\zeta_P^\epsilon\right\|_2 + \left\|\de\p_{t+\epsilon}^\epsilon\right\|_2 \cdot \left\|\zeta_X^\epsilon\right\|_2 + \left\|\de\y_{t+\epsilon}^\epsilon\right\|_2 \cdot  \left\|\zeta_R^\epsilon\right\|_2 \notag\\
    &+ \int_{t+\epsilon}^T \left(\left\|\de\x_T^\epsilon\right\|_2\cdot \left\|\eta_P^\epsilon(s)\right\|_2 +\left\|\de\y_T^\epsilon\right\|_2\cdot \left\|\eta_R^\epsilon(s)\right\|_2  \right) ds \notag\\
    \le\ &\epsilon \bigg(\sup_{t+\epsilon\le s\le T}\left\|\de\x_s^\epsilon\right\|_2^2 + \sup_{t+\epsilon\le s\le T}\left\|\de\p_{s}^\epsilon\right\|_2^2 + \sup_{t+\epsilon\le s\le T} \left\|\de\y_{s}^\epsilon\right\|_2^2 \bigg) \notag\\
    &+ \frac{C(L,T)}{\epsilon} \left[\left\|\zeta_X^\epsilon\right\|_2^2+\left\|\zeta_P^\epsilon\right\|_2^2+\left\|\zeta_R^\epsilon\right\|_2^2 + \int_{t+\epsilon}^{T} \left(\left\|\eta_P^\epsilon(s)\right\|_2^2 + \left\|\eta_R^\epsilon(s)\right\|_2^2 \right) ds\right]. \label{lem:xypru_4''}
\end{align}
By using standard estimate for the equation of $\de\x^\epsilon$, we have 
\begin{align}
    \e\bigg[\sup_{t+\epsilon\le s\le T} \left|\de\x_s^\epsilon\right|^2\bigg]\le C(L,T) \left(\left\|\zeta_X^\epsilon\right\|_2^2+\int_{t+\epsilon}^T \left\|\de\U_s^\epsilon\right\|_2^2 ds \right); \label{lem:xypru_5''}
\end{align}
then, by using standard estimate for the equation of $\de\y^\epsilon$ and Estimate \eqref{lem:xypru_5''}, we have
\begin{align}
    \e\bigg[\sup_{t+\epsilon\le s\le T} \left|\de\y_s^\epsilon\right|^2\bigg]\le C(L,T) \left(\left\|\zeta_X^\epsilon\right\|_2^2+\int_{t+\epsilon}^T \left\|\de\U_s^\epsilon\right\|_2^2 ds \right). \label{lem:xypru_6''}
\end{align}
Similarly, from standard estimate for the equation of $\de\R^\epsilon$, by using Estimates \eqref{lem:xypru_5''} and \eqref{lem:xypru_6''}, we have 
\begin{align}
    \e\bigg[\sup_{t+\epsilon\le s\le T} \left|\de\R_s^\epsilon\right|^2\bigg]\le C(L,T,\lambda) \left(\left\|\zeta_X^\epsilon\right\|_2^2+\left\|\zeta_R^\epsilon\right\|_2^2 +\int_{t+\epsilon}^T \left(\left\|\de\U_s^\epsilon\right\|_2^2 +\left\|\eta_R^\epsilon(s)\right\|_2^2\right) ds \right); \label{lem:xypru_7''}
\end{align}
and finally, by using standard estimate for the equation of $\de\p^\epsilon$ and Estimates \eqref{lem:xypru_5}-\eqref{lem:xypru_7}, we know that
\begin{align}
    &\e\bigg[\sup_{t+\epsilon\le s\le T} \left|\de\p_s^\epsilon\right|^2\bigg]\notag\\
    \le\ & C(L,T,\lambda) \left(\left\|\zeta_X^\epsilon\right\|_2^2+\left\|\zeta_R^\epsilon\right\|_2^2 +\left\|\zeta_P^\epsilon\right\|_2^2+\int_{t+\epsilon}^T \left(\left\|\eta_R^\epsilon(s)\right\|_2^2+\left\|\de\U_s^\epsilon\right\|_2^2 + \left\|\eta_P^\epsilon (s) \right\|_2^2 \right) ds \right). \label{lem:xypru_8''}
\end{align}
Substituting \eqref{lem:xypru_5''} and \eqref{lem:xypru_8''} back into \eqref{lem:xypru_4''}, we have 
\begin{align*}
    &2\lambda \int_{t+\epsilon}^T  \left\|\de\U_s^\epsilon\right\|_2^2  ds\notag\\
    \le\ &\epsilon C(L,T,\lambda) \int_{t+\epsilon}^T \left\|\de\U_s^\epsilon\right\|_2^2  ds \notag\\
    &+ C(L,T,\lambda)\left(1+\frac{1}{\epsilon}\right) \left[\left\|\zeta_X^\epsilon\right\|_2^2+\left\|\zeta_P^\epsilon\right\|_2^2+\left\|\zeta_R^\epsilon\right\|_2^2 + \int_{t+\epsilon}^{T} \left(\left\|\eta_P^\epsilon(s)\right\|_2^2 + \left\|\eta_R^\epsilon(s)\right\|_2^2 \right) ds\right],
\end{align*}
and then, by choosing $\epsilon:= \frac{\lambda}{C(L,T,\lambda)}$, we have
\begin{align}
    \int_t^T \left\|\de\U_s^\epsilon\right\|_2^2 ds \le\ & C(L,T,\lambda) \left[\left\|\zeta_X^\epsilon\right\|_2^2+\left\|\zeta_P^\epsilon\right\|_2^2+\left\|\zeta_R^\epsilon\right\|_2^2 + \int_{t+\epsilon}^{T} \left(\left\|\eta_P^\epsilon(s)\right\|_2^2 + \left\|\eta_R^\epsilon(s)\right\|_2^2 \right) ds\right]. \notag
\end{align}
Substituting the last estimate back into \eqref{lem:xypru_5}-\eqref{lem:xypru_8}, we obtain 
\begin{align}
    &\e\bigg[\sup_{t\le s\le T}\left|\de\x_s^\epsilon\right|^2+\sup_{t\le s\le T}\left|\de\y_s^\epsilon\right|^2+\sup_{t\le s\le T}\left|\de\p_s^\epsilon\right|^2+\sup_{t\le s\le T}\left|\de\R_s^\epsilon\right|^2\bigg] \notag\\
    \le\ & C(L,T,\lambda) \left[\left\|\zeta_X^\epsilon\right\|_2^2+\left\|\zeta_P^\epsilon\right\|_2^2+\left\|\zeta_R^\epsilon\right\|_2^2 + \int_{t+\epsilon}^{T} \left(\left\|\eta_P^\epsilon(s)\right\|_2^2 + \left\|\eta_R^\epsilon(s)\right\|_2^2 \right) ds\right]. \label{lem:xypru_9''}
\end{align}
Now we use the condition  
\begin{align*}
    &0=b_3(s)^\top  \de \p^\epsilon_s -f_7(s)^\top  \de \R^\epsilon_s + \left[\left(\begin{array}{ccc}
         h_{vx} & h_{vy} & h_{vv} 
    \end{array} \right)\left(s,\theta_s,\lr\left({ \theta_s}\right),u_s\right) \left(\begin{array}{c}
         \de \x^\epsilon_s \\
         \de \y^\epsilon_s \\
         \de \U^\epsilon_s 
    \end{array}\right)\right] \notag \\
    & \qquad +\widetilde{\e}\left\{ \left[\left(\begin{array}{cc}
         D_{x'}\frac{dh_{v}}{d\nu} & D_{y'}\frac{dh_{v}}{d\nu} 
    \end{array} \right) \left(s,\theta_s,\lr\left(\theta_s\right),u_s\right)\left(\widetilde{\theta_s}\right) \right] \left( \begin{array}{cc}
         \widetilde{\x^\epsilon_s}  \\
         \widetilde{\y^\epsilon_s} 
    \end{array}\right) \right\} + \eta_u^\epsilon(s),
\end{align*}
and the convexity of $h$ in $v$ to get
\begin{align}\label{lem:xypru_10''}
    \left|\de\U_s^\epsilon\right|  
    \le\ &  \frac{L}{2\lambda} \left(\left|\de\p_s^\epsilon\right| + \left|\de\R_s^\epsilon\right| +\left|\de\x_s^\epsilon\right| + \left|\de\y_s^\epsilon\right| +\left\|\de\x_s^\epsilon\right\|_2 { +\left\|\de\y_s^\epsilon\right\|_2} \right)+\frac{1}{2\lambda} \left|\eta_u^\epsilon(s)\right|.
\end{align}
From \eqref{lem:xypru_9''}, \eqref{lem:xypru_10''} and the estimate \eqref{zeta}, we obtain \eqref{derivative_t}.

\subsection{Proof of Lemma~\ref{lem:tmux}}\label{App:lem:tmux}

The proof of the existence and uniqueness result follows a similar approach as in Subsection~\ref{subsec:FBSDE}, so we omit it here. Moreover, if $\Theta^{t,x,\mu}$ is a solution of \eqref{FBSDEs:x}, then, we see easily that the processes $\Theta^{t,x,\mu}\big|_{x=\xi}$ satisfy the forward-backward equations for $\Theta^{t,\xi}$. Then, from the uniqueness result, we have $\Theta^{t,x,\mu}\big|_{x=\xi}=\Theta^{t,\xi}$. Here, we only prove \eqref{tmux-0}; the proof of \eqref{tmux-0'} follows a similar approach, using the estimate \eqref{1order-FBSDE-0}. For notational convenience, we denote by $\de x:=x_2-x_1$, and define $\de \Theta:= \Theta^{t,x_2,\mu}-\Theta^{t,x_1,\mu}$, which satisfy the following equations: for $s\in[t,T]$, 
\begin{align}
    &\de X_s= x_2-x_1+\int_t^s \left[b_1(r) \de X_r +b_3(r) \de u_r\right] dr, \notag \\
    &\de Y_s= g_1 \de X_T  + \int_s^T  \left[f_1(r) \de X_r + f_2(r) \de Y_r  + f_7(r) \de u_r \right] dr, \notag \\
    &\de P_s= - g_1^\top \de R_T  +\Phi_x\left(\cdot, \lr\left(X^{t,\xi}_T\right)\right)\Big|^{X^{t,x_2,\mu}_T}_{X^{t,x_1,\mu}_T}  +{\e}\left[D_{x'}\frac{d\Phi}{d\nu} \left({X^{t,\xi}_T}, \lr\left(X^{t,\xi}_T\right)\right)\left(\cdot\right)\Big|^{X^{t,x_2,\mu}_T}_{X^{t,x_1,\mu}_T} \right]  \notag  \\
    &\quad\qquad + \int_s^T \bigg\{  b_1(r)^\top  \de P_r - f_1(r)^\top  \de R_r +h_x\left(r, \cdot, \lr\left( X^{t,\xi}_r,Y^{t,\xi}_r\right), \cdot \right)\Big|^{\left(X^{t,x_2,\mu}_r, Y^{t,x_2,\mu}_r,u^{t,x_2,\mu}_r\right)}_{\left(X^{t,x_1,\mu}_r, Y^{t,x_1,\mu}_r,u^{t,x_1,\mu}_r\right)}  \notag  \\
    &\qquad\qquad\qquad  + {\e} \bigg[D_{x'}\frac{dh}{d\nu} \left(r, X^{t,\xi}_r, Y^{t,\xi}_r , \lr\left( X^{t,\xi}_r, Y^{t,\xi}_r \right),  u^{t,\xi}_r \right)\left(\cdot\right)\Big|^{\theta^{t,x_2,\mu}_r}_{\theta^{t,x_1,\mu}_r} \bigg]  \bigg\} dr,  \notag \\
    &\de R_s= - \varphi_y \left(\cdot\right)\big|^{Y^{t,x_2,\mu}_t}_{Y^{t,x_1,\mu}_t} +\int_t^s \Bigg\{ f_2(r)^\top  \de R_r -h_y \left(r, \cdot, \lr\left( X^{t,\xi}_r, Y^{t,\xi}_r\right),  \cdot \right)\Big|^{\left(X^{t,x_2,\mu}_r, Y^{t,x_2,\mu}_r,u^{t,x_2,\mu}_r\right)}_{\left(X^{t,x_1,\mu}_r, Y^{t,x_1,\mu}_r,u^{t,x_1,\mu}_r\right)} \notag\\
    &\qquad\qquad\qquad\qquad\qquad\qquad  - {\e} \bigg[D_{y'}\frac{dh}{d\nu} \left(r, X^{t,\xi}_r, Y^{t,\xi}_r , \lr\left( X^{t,\xi}_r, Y^{t,\xi}_r \right),  u^{t,\xi}_r  \right)\left(\cdot\right)\Big|^{\theta^{t,x_2,\mu}_r}_{\theta^{t,x_1,\mu}_r} \bigg] \Bigg\} dr,  \notag \\
    &b_3(s)^\top  \de P_s -f_7(s)^\top \de R_s + h_v\left(r, \cdot, \lr\left( X^{t,\xi}_r, Y^{t,\xi}_r\right),  \cdot \right)\Big|^{\left(X^{t,x_2,\mu}_r, Y^{t,x_2,\mu}_r,u^{t,x_2,\mu}_r\right)}_{\left(X^{t,x_1,\mu}_r, Y^{t,x_1,\mu}_r,u^{t,x_1,\mu}_r\right)}  =0. \label{tmux-5}
\end{align}
By computing the derivative of $\dd_r\left(\de P_r^\top \de X_r\right)$ and $\dd_r\left(\de R_r^\top \de Y_r\right)$ and integrating $r$ from $t$ to $T$, and using the last equation of \eqref{tmux-5}, we have 
\begin{align}
    &\de P_T^\top \de X_T - \de P_t^\top \de x + \de R_T^\top \de Y_T - \de R_t^\top \de Y_t \notag \\
    =\ & \int_t^T \bigg\{ \left[\de P_r^\top b_3(r) -\de R_r^\top f_7(r) \right] \de u_r - \bigg[h_x\left(r, \cdot, \lr\left( X^{t,\xi}_r, Y^{t,\xi}_r\right),  \cdot \right)\Big|^{\left(X^{t,x_2,\mu}_r, Y^{t,x_2,\mu}_r,u^{t,x_2,\mu}_r\right)}_{\left(X^{t,x_1,\mu}_r, Y^{t,x_1,\mu}_r,u^{t,x_1,\mu}_r\right)} \bigg]^\top \de X_r \notag \\
    &\quad\qquad - \bigg[h_y \left(r, \cdot, \lr\left( X^{t,\xi}_r, Y^{t,\xi}_r\right),  \cdot \right)\Big|^{\left(X^{t,x_2,\mu}_r, Y^{t,x_2,\mu}_r,u^{t,x_2,\mu}_r\right)}_{\left(X^{t,x_1,\mu}_r, Y^{t,x_1,\mu}_r,u^{t,x_1,\mu}_r\right)} \bigg]^\top \de Y_r \notag \\
    &\quad\qquad  - {\e} \bigg[D_{x'}\frac{dh}{d\nu} \left(r, X^{t,\xi}_r, Y^{t,\xi}_r , \lr\left( X^{t,\xi}_r, Y^{t,\xi}_r \right),  u^{t,\xi}_r \right)\left(\cdot\right)\Big|^{\theta^{t,x_2,\mu}_r}_{\theta^{t,x_1,\mu}_r}  \bigg]^\top \de X_r \notag\\
    &\quad\qquad  - {\e} \bigg[D_{y'}\frac{dh}{d\nu} \left(r, X^{t,\xi}_r, Y^{t,\xi}_r , \lr\left( X^{t,\xi}_r, Y^{t,\xi}_r \right),  u^{t,\xi}_r \right)\left(\cdot\right)\Big|^{\theta^{t,x_2,\mu}_r}_{\theta^{t,x_1,\mu}_r}  \bigg]^\top \de Y_r \bigg\} dr \notag \\
    =\ & \int_t^T \bigg\{  - \bigg[h_v \left(r, \cdot, \lr\left( X^{t,\xi}_r, Y^{t,\xi}_r\right),  \cdot \right)\Big|^{\left(X^{t,x_2,\mu}_r, Y^{t,x_2,\mu}_r,u^{t,x_2,\mu}_r\right)}_{\left(X^{t,x_1,\mu}_r, Y^{t,x_1,\mu}_r,u^{t,x_1,\mu}_r\right)} \bigg]^\top \de u_r \notag \\
    &\quad\qquad - \bigg[h_x\left(r, \cdot, \lr\left( X^{t,\xi}_r, Y^{t,\xi}_r\right),  \cdot \right)\Big|^{\left(X^{t,x_2,\mu}_r, Y^{t,x_2,\mu}_r,u^{t,x_2,\mu}_r\right)}_{\left(X^{t,x_1,\mu}_r, Y^{t,x_1,\mu}_r,u^{t,x_1,\mu}_r\right)} \bigg]^\top \de X_r \notag \\
    &\quad\qquad - \bigg[h_y \left(r, \cdot, \lr\left( X^{t,\xi}_r, Y^{t,\xi}_r\right),  \cdot \right)\Big|^{\left(X^{t,x_2,\mu}_r, Y^{t,x_2,\mu}_r,u^{t,x_2,\mu}_r\right)}_{\left(X^{t,x_1,\mu}_r, Y^{t,x_1,\mu}_r,u^{t,x_1,\mu}_r\right)} \bigg]^\top \de Y_r \notag \\
    &\quad\qquad  - {\e} \bigg[D_{x'}\frac{dh}{d\nu} \left(r, X^{t,\xi}_r, Y^{t,\xi}_r , \lr\left( X^{t,\xi}_r, Y^{t,\xi}_r \right),  u^{t,\xi}_r \right)\left(\cdot\right)\Big|^{\theta^{t,x_2,\mu}_r}_{\theta^{t,x_1,\mu}_r}  \bigg]^\top \de X_r \notag\\
    &\quad\qquad  - {\e} \bigg[D_{y'}\frac{dh}{d\nu} \left(r, X^{t,\xi}_r, Y^{t,\xi}_r , \lr\left( X^{t,\xi}_r, Y^{t,\xi}_r \right),  u^{t,\xi}_r \right)\left(\cdot\right)\Big|^{\theta^{t,x_2,\mu}_r}_{\theta^{t,x_1,\mu}_r}  \bigg]^\top \de Y_r \bigg\} dr. \notag 
\end{align}
Then, by using the convexity of $h$ in $(x,y,v)$ and also the convexity of $\frac{dh}{d\nu}(s,x,y,\varrho,v)(x',y')$ in $(x',y')$ (see Assumption (A3)),  we have
\begin{align}\label{tmux-1}
    &\de P_T^\top \de X_T + \de R_T^\top \de Y_T - \de R_t^\top \de Y_t \le -2\lambda \int_t^T |\de u_r|^2   dr. 
\end{align}
On the other hand, by using the terminal conditions for $\de P_T$ and $\de Y_T$ and the initial condition for $\de R_t$, and then using the convexity of $\Phi$ and $\varphi$, we have 
\begin{align}
    &\de P_T^\top \de X_T - \de P_t^\top \de x + \de R_T^\top \de Y_T - \de R_t^\top \de Y_t \notag \\
    =\ & \left[\Phi_x\left(X^{t,x_2,\mu}_T, \lr\left(X^{t,\xi}_T\right) \right) -\Phi_x\left(X^{t,x_1,\mu}_T, \lr\left(X^{t,\xi}_T\right) \right) \right]^\top \de X_T  \notag \\
    &+{\e}\left[D_{x'}\frac{d\Phi}{d\nu} \left({X^{t,\xi}_T}, \lr\left(X^{t,\xi}_T\right)\right)\left(X^{t,x_2,\mu}_T\right) - D_{x'}\frac{d\Phi}{d\nu} \left({X^{t,\xi}_T}, \lr\left(X^{t,\xi}_T\right)\right)\left(X^{t,x_1,\mu}_T\right) \right]^\top \de X_T \notag \\
    &  + \left[\varphi_y \left(Y^{t,x_2,\mu}_t\right) -\varphi_y \left(Y^{t,x_1,\mu}_t\right)\right]^\top \de Y_t - \de P_t^\top \de x \notag\\
    \geq\ & - \de P_t^\top \de x. \label{tmux-2}
\end{align}
Combining \eqref{tmux-1} and \eqref{tmux-2}, we know that for any $\epsilon\in(0,1)$,
\begin{align}
    2\lambda\int_t^T |\de u_r|^2 dr \le \de P_t^\top \de x \le \epsilon |\de P_t|^2 + \frac{C(L)}{\epsilon} |\de x|^2. \label{tmux-3}
\end{align}
From the ODEs for $\de X_s$, $\de Y_s$, $\de R_s$ and $\de P_s$, we know that 
\begin{align}\label{tmux-4}
    \sup_{t\le r\le T}|\de X_r|^2 + \sup_{t\le r\le T}|\de Y_r|^2 + \sup_{t\le r\le T}|\de R_r|^2 +\sup_{t\le r\le T}|\de P_r|^2  \le C(L,T) \left( |\de x|^2+\int_t^T |\de u_r|^2 dr\right). 
\end{align}
Substituting \eqref{tmux-4} into \eqref{tmux-3} and choosing $\epsilon$ small enough, we have 
\begin{align*}
    \int_t^T |\de u_r|^2 dr \le C(L,T,\lambda) |\de x|^2. 
\end{align*}
By substituting the last estimate back into \eqref{tmux-4}, and then using the last equation of \eqref{tmux-5} (and the convexity of $h$ in $v$), we finally obtain \eqref{tmux-0}. Then, as a consequence of Theorem~\ref{thm:value_regu_mu}, we obtain $D_x\frac{dV}{d\nu}(t,\mu)(\cdot)=P^{t,\cdot,\mu}_t,\ \mu-a.e.$.

\subsection{Proof of Lemma~\ref{lem:D_x}}\label{App:lem:D_x}
The derivative $D_x\Theta^{t,x,\mu}=\left(D_x X^{t,x,\mu},D_x Y^{t,x,\mu},D_x P^{t,x,\mu},D_x R^{t,x,\mu},D_x u^{t,x,\mu}\right)$ can be characterized as the solution of the following system of ODEs:
\begin{align}
            &D_x X^{t,x,\mu}_s= I_n +\int_t^s \left[b_1(r) D_x X^{t,x,\mu}_r +b_3(r) D_x u^{t,x,\mu}_r\right]   dr, \notag  \\
            &D_x Y^{t,x,\mu}_s= g_1 D_x X^{t,x,\mu}_T + \int_s^T  \left[f_1(r) D_x X^{t,x,\mu}_r +f_2(r) D_x Y^{t,x,\mu}_r +f_7(r) D_x u^{t,x,\mu}_r \right] dr,  \notag \\
            &D_x P^{t,x,\mu}_s= - g_1^\top D_x R^{t,x,\mu}_T +\Phi_{xx}\left(X^{t,x,\mu}_T, \lr\left(X^{t,\xi}_T\right) \right)D_x X^{t,x,\mu}_T  \notag \\
            &\quad\qquad\qquad +{\e}\left[D^2_{x'}\frac{d\Phi}{d\nu} \left({X^{t,\xi}_T}, \lr\left(X^{t,\xi}_T\right)\right)\left(X^{t,x,\mu}_T\right) D_x X^{t,x,\mu}_T \right]  \notag  \\
            &\quad\qquad\qquad + \int_s^T \Bigg\{  b_1(r)^\top  D_x P^{t,x,\mu}_r - f_1(r)^\top  D_x R^{t,x,\mu}_r  \notag \\
            &\qquad\qquad\qquad +\left[\left(\begin{array}{ccc}
                 h_{xx} & h_{xy} & h_{xv}
            \end{array}\right)\left(r,  \theta^{t,x,\mu}_r, \lr\left( \theta^{t,\xi}_r\right),  u^{t,x,\mu}_r  \right)\right] \left(\begin{array}{c}
                 D_x X^{t,x,\mu}_r  \\
                 D_x Y^{t,x,\mu}_r  \\
                 D_x u^{t,x,\mu}_r
            \end{array}\right)  \notag \\
            &\qquad\qquad\qquad  + {\e} \bigg\{ \left[\left(\begin{array}{cc}
                 D^2_{x'}\frac{dh}{d\nu} &  D_{y'}D_{x'}\frac{dh}{d\nu}
            \end{array}\right)\left(r, \theta^{t,\xi}_r,\lr\left( \theta^{t,\xi}_r\right),  u^{t,\xi}_r  \right)\left( \theta^{t,x,\mu}_r\right)\right] \left(\begin{array}{c}
                 D_x X^{t,x,\mu}_r   \\
                 D_x Y^{t,x,\mu}_r 
            \end{array}\right)  \bigg\}  \Bigg\} dr,  \notag \\
            &D_x R^{t,x,\mu}_s= - \varphi_{yy} \left(Y^{t,x,\mu}_t\right)D_x Y^{t,x,\mu}_t +\int_t^s \Bigg\{ f_2(r)^\top D_x R^{t,x,\mu}_r  \notag \\
            &\qquad\qquad\qquad -\left[\left(\begin{array}{ccc}
                 h_{yx} & h_{yy} & h_{yv}
            \end{array}\right)\left(r,  \theta^{t,x,\mu}_r, \lr\left( \theta^{t,\xi}_r\right),  u^{t,x,\mu}_r  \right)\right] \left(\begin{array}{c}
                 D_x X^{t,x,\mu}_r  \\
                 D_x Y^{t,x,\mu}_r  \\
                 D_x u^{t,x,\mu}_r
            \end{array}\right)  \notag  \\
            &\qquad\qquad\qquad  -{\e} \bigg\{ \left[\left(\begin{array}{cc}
                 D_{x'}D_{y'}\frac{dh}{d\nu} &  D^2_{y'}\frac{dh}{d\nu}
            \end{array}\right)\left(r, \theta^{t,\xi}_r,\lr\left( \theta^{t,\xi}_r\right),  u^{t,\xi}_r  \right)\left( \theta^{t,x,\mu}_r\right)\right] \left(\begin{array}{c}
                 D_x X^{t,x,\mu}_r   \\
                 D_x Y^{t,x,\mu}_r 
            \end{array}\right)  \bigg\} \Bigg\} dr,  \notag \\
            &0= b_3(s)^\top  D_x P^{t,x,\mu}_s -f_7(s)^\top  D_x R^{t,x,\mu}_s \notag \\
            &\qquad +\left[\left(\begin{array}{ccc}
                 h_{vx} & h_{vy} & h_{vv}
            \end{array}\right)\left(r,  \theta^{t,x,\mu}_r, \lr\left( \theta^{t,\xi}_r\right),  u^{t,x,\mu}_r  \right)\right] \left(\begin{array}{c}
                 D_x X^{t,x,\mu}_r  \\
                 D_x Y^{t,x,\mu}_r  \\
                 D_x u^{t,x,\mu}_r
            \end{array}\right),\quad s\in[t,T]. \label{FBSDEs:Dx}
\end{align}
The well-posedness result for System \eqref{FBSDEs:Dx} follows a similar approach as in Subsection~\ref{subsec:FBSDE}; and by computing the derivatives of $\dd_s\left(\left(D_x P^{t,x,\mu}_s\right)^\top D_x X^{t,x,\mu}_s\right)$ and $\dd_s\left(\left(D_x R^{t,x,\mu}_s\right)^\top D_x Y^{t,x,\mu}_s\right)$ and using the convexity assumptions on the cost coefficients, we can obtain \eqref{tmux_Dx}. The proof of the differentiability of $\Theta^{t,x,\mu}$ in $x$ also requires the estimate for the convergence of the term $\frac{1}{\epsilon}\left[\Theta^{t,x+\epsilon y,\mu}_s - \Theta^{t,x,\mu}_s\right]-D_x \Theta^{t,x,\mu}_s \ y$ as $\epsilon$ goes to $0$, uniformly in $s\in[t,T]$ and $x,y\in\brn$, which follows a similar way as in the proof of Theorem~\ref{thm:convergence}. 

\subsection{Proof of Lemma~\ref{Lem:D_xi}}\label{App:Lem:D_xi}
The forward-backward systems for $D_\eta  \Theta^{t,\xi}$ and $D_\eta  \Theta^{t,x,\xi}$ can be derived by formally differentiating the equations satisfied by  $\Theta^{t,\xi+\epsilon\eta}$ and  $\Theta^{t,x,\lr(\xi+\epsilon\eta)}$ with respect to $\epsilon$ at $\epsilon=0$. A rigorous proof requires proving the well-posedness of these two systems in order and then establishing the convergence of the corresponding difference quotients. To keep the presentation concise, we only write down the system for $D_\eta  \Theta^{t,x,\xi}$; the system for $D_\eta  \Theta^{t,\xi}$ is omitted, as its structure is analogous. Note that the coefficients in the displayed system depend explicitly on $D_\eta \Theta^{t,\xi}$. The system is given by
\begin{align}
    &D_\eta X^{t,x,\xi}_s= \int_t^s \left\{ b_1(r) D_\eta X^{t,x,\xi}_r + b_2(r)\e\left[D_\eta X^{t,\xi}_r\right]+b_3(r)D_\eta u^{t,x,\xi}_r\right\} dr, \notag \\
    &D_\eta Y^{t,x,\xi}_s= \int_s^T  \Big\{ f_1(r) D_\eta X^{t,x,\xi}_r + f_2(r) D_\eta Y^{t,x,\xi}_r + f_4(r)\e\left[D_\eta X^{t,\xi}_r\right] + f_5(r)\e\left[D_\eta Y^{t,\xi}_r\right]  \notag \\
    &\quad\qquad\qquad\qquad +f_7(r)D_\eta u^{t,x,\xi}_r\Big\} dr +g_1 D_\eta X^{t,x,\xi}_T+ g_2 \e\left[D_\eta X^{t,\xi}_T\right], \notag \\
    & D_\eta P^{t,x,\xi}_s= - g_1^\top D_\eta R^{t,x,\xi}_T - g_2^\top \e\left[ D_\eta R^{t,\xi}_T\right] +\Phi_{xx}\left(X^{t,x,\mu}_T, \lr\left(X^{t,\xi}_T\right) \right)D_\eta X^{t,x,\xi}_T  \notag \\
    &\qquad + \e\bigg\{\bigg[D_{x'}\frac{d\Phi_x}{d\nu}\left(X^{t,x,\mu}_T, \lr\left(X^{t,\xi}_T\right) \right)\left(X^{t,\xi}_T\right)    +\left(D_{x'}\frac{d\Phi_x}{d\nu} \left({X^{t,\xi}_T}, \lr\left(X^{t,\xi}_T\right)\right)\left(X^{t,x,\mu}_T\right)\right)^\top \bigg] D_\eta X^{t,\xi}_T  \bigg\}  \notag  \\
    &\qquad +{\e}\widetilde{\e}\left[D_{x''}D_{x'}\frac{d^2\Phi}{d\nu^2} \left({X^{t,\xi}_T}, \lr\left(X^{t,\xi}_T\right)\right)\left(X^{t,x,\mu}_T,\widetilde{X^{t,\xi}_T}\right) \widetilde{ D_\eta X^{t,\xi}_T}  \right]  \notag  \\
    &\qquad +{\e}\left[D^2_{x'}\frac{d\Phi}{d\nu} \left({X^{t,\xi}_T}, \lr\left(X^{t,\xi}_T\right)\right)\left(X^{t,x,\mu}_T\right) D_\eta X^{t,x,\xi}_T \right]   \notag \\
    &\qquad  + \int_s^T \Bigg\{  b_1(r)^\top  D_\eta P^{t,x,\xi}_r + b_2(r)^\top \e\left[D_\eta P^{t,\xi}_r\right] - f_1(r)^\top  D_\eta R^{t,x,\xi}_r - f_4(r)^\top \e\left[D_\eta R^{t,\xi}_r\right] \notag \\
    &\qquad\qquad +\left[\left(\begin{array}{ccc}
         h_{xx} & h_{xy} & h_{xv}   
    \end{array}\right) \left(r, \theta^{t,x,\mu}_r, \lr\left( \theta^{t,\xi}_r\right),  u^{t,x,\mu}_r  \right) \right]\left(\begin{array}{c}
         D_\eta X^{t,x,\xi}_r  \\
         D_\eta Y^{t,x,\xi}_r  \\
         D_\eta u^{t,x,\xi}_r
    \end{array}\right)   \notag \\
    &\qquad\qquad + {\e} \left\{\left[\left(\begin{array}{cc}
         D_{x'}\frac{dh_x}{d\nu}  & D_{y'}\frac{dh_x}{d\nu} 
    \end{array}\right)\left(r, \theta^{t,x,\mu}_r, \lr\left( \theta^{t,\xi}_r\right),  u^{t,x,\mu}_r \right)\left(\theta^{t,\xi}_r\right)\right] \left(\begin{array}{c}
        D_\eta X^{t,\xi}_r  \\
        D_\eta Y^{t,\xi}_r
    \end{array}\right) \right\}  \notag \\
    &\qquad\qquad + \e\left\{\left[\left(\begin{array}{ccc}
        D_{x'}\frac{dh_x}{d\nu} \\ 
        D_{x'}\frac{dh_y}{d\nu} \\
        D_{x'}\frac{dh_v}{d\nu}
    \end{array}\right)\left(r,\theta^{t,\xi}_r,\lr\left( \theta^{t,\xi}_r\right),u^{t,\xi}_r\right)\left( \theta^{t,x,\mu}_r\right)\right]^\top  \left(\begin{array}{c}
        D_\eta X^{t,\xi}_r  \\
        D_\eta Y^{t,\xi}_r  \\
        D_\eta u^{t,\xi}_r
    \end{array}
    \right) \right\}  \notag\\
    &\qquad\qquad  +  \e\widetilde{\e} \left\{ \left[\left(\begin{array}{cc}
         D_{x''}D_{x'}\frac{d^2h}{d\nu^2} & D_{y''}D_{x'}\frac{d^2h}{d\nu^2}  
    \end{array}\right) \left(r,\theta^{t,\xi}_r,\lr\left(\theta^{t,\xi}_r\right), u^{t,\xi}_r\right)\left(\theta^{t,x,\mu}_r,\widetilde{\theta^{t,\xi}_r}\right) \right]  \left(\begin{array}{cc}
         \widetilde{D_\eta X^{t,\xi}_r}  \\
         \widetilde{D_\eta Y^{t,\xi}_r} 
    \end{array} \right) \right\} \notag \\
    &\qquad\qquad  +  \e \left\{ \left[\left(\begin{array}{cc}
         D^2_{x'}\frac{dh}{d\nu} & D_{y'}D_{x'}\frac{dh}{d\nu}  
    \end{array}\right) \left(r,\theta^{t,\xi}_r,\lr\left(\theta^{t,\xi}_r\right),u^{t,\xi}_r\right)\left(\theta^{t,x,\mu}_r\right) \right]  \left(\begin{array}{cc}
         D_\eta X^{t,x,\xi}_r   \\
         D_\eta Y^{t,x,\xi}_r
    \end{array} \right) \right\}  \Bigg\} dr,  \notag \\
    & D_\eta R^{t,x,\xi}_s= - \varphi_{yy} \left(Y^{t,x,\mu}_t\right) D_\eta Y^{t,x,\xi}_t +\int_t^s \Bigg\{ f_2(r)^\top  D_\eta R^{t,x,\xi}_r + f_5(r)^\top \e\left[D_\eta R^{t,\xi}_r\right]  \notag  \\
    &\qquad\qquad -\left[\left(\begin{array}{ccc}
         h_{yx} & h_{yy} & h_{yv}   
    \end{array}\right) \left(r, \theta^{t,x,\mu}_r, \lr\left( \theta^{t,\xi}_r\right),  u^{t,x,\mu}_r  \right) \right]\left(\begin{array}{c}
         D_\eta X^{t,x,\xi}_r  \\
         D_\eta Y^{t,x,\xi}_r  \\
         D_\eta u^{t,x,\xi}_r
    \end{array}\right)   \notag \\
    &\qquad\qquad - {\e} \left\{\left[\left(\begin{array}{cc}
         D_{x'}\frac{dh_y}{d\nu}  & D_{y'}\frac{dh_y}{d\nu} 
    \end{array}\right)\left(r, \theta^{t,x,\mu}_r, \lr\left( \theta^{t,\xi}_r\right),  u^{t,x,\mu}_r \right)\left(\theta^{t,\xi}_r\right)\right] \left(\begin{array}{c}
         D_\eta X^{t,\xi}_r  \\
         D_\eta Y^{t,\xi}_r
    \end{array}\right) \right\}  \notag \\
    &\qquad\qquad - \e\left\{\left[\left(\begin{array}{ccc}
        D_{y'}\frac{dh_x}{d\nu} \\ 
        D_{y'}\frac{dh_y}{d\nu} \\
        D_{y'}\frac{dh_v}{d\nu}
    \end{array}\right)\left(r,\theta^{t,\xi}_r,\lr\left( \theta^{t,\xi}_r\right),u^{t,\xi}_r\right)\left( \theta^{t,x,\mu}_r\right)\right]^\top  \left(\begin{array}{c}
        D_\eta X^{t,\xi}_r  \\
        D_\eta Y^{t,\xi}_r  \\
        D_\eta u^{t,\xi}_r
    \end{array}
    \right) \right\}  \notag\\
    &\qquad\qquad  -  \e\widetilde{\e} \left\{ \left[\left(\begin{array}{cc}
         D_{x''}D_{y'}\frac{d^2h}{d\nu^2} & D_{y''}D_{y'}\frac{d^2h}{d\nu^2}  
    \end{array}\right) \left(r,\theta^{t,\xi}_r,\lr\left(\theta^{t,\xi}_r\right), u^{t,\xi}_r\right)\left(\theta^{t,x,\mu}_r,\widetilde{\theta^{t,\xi}_r}\right) \right]  \left(\begin{array}{cc}
         \widetilde{D_\eta X^{t,\xi}_r}  \\
         \widetilde{D_\eta Y^{t,\xi}_r} 
    \end{array} \right) \right\} \notag \\
    &\qquad\qquad  -  \e \left\{ \left[\left(\begin{array}{cc}
         D_{x'}D_{y'}\frac{dh}{d\nu} & D^2_{y'}\frac{dh}{d\nu}  
    \end{array}\right) \left(r,\theta^{t,\xi}_r,\lr\left(\theta^{t,\xi}_r\right),u^{t,\xi}_r\right)\left(\theta^{t,x,\mu}_r\right) \right]  \left(\begin{array}{cc}
         D_\eta X^{t,x,\xi}_r   \\
         D_\eta Y^{t,x,\xi}_r
    \end{array} \right) \right\} \Bigg\} dr, \notag  \\
    &0=b_3(s)^\top D_\eta P^{t,x,\xi}_s -f_7(s)^\top  D_\eta R^{t,x,\xi}_s  \notag \\
    &\qquad\qquad +\left[\left(\begin{array}{ccc}
         h_{vx} & h_{vy} & h_{vv}   
    \end{array}\right) \left(s, \theta^{t,x,\mu}_s, \lr\left( \theta^{t,\xi}_s\right),  u^{t,x,\mu}_s  \right) \right]\left(\begin{array}{c}
         D_\eta X^{t,x,\xi}_s  \\
         D_\eta Y^{t,x,\xi}_s  \\
         D_\eta u^{t,x,\xi}_s
    \end{array}\right)  \notag \\
    &\qquad\qquad + {\e} \left\{\left[\left(\begin{array}{cc}
         D_{x'}\frac{dh_v}{d\nu}  & D_{y'}\frac{dh_v}{d\nu} 
    \end{array}\right)\left(s, \theta^{t,x,\mu}_s, \lr\left( \theta^{t,\xi}_s\right),  u^{t,x,\mu}_s \right)\left(\theta^{t,\xi}_s\right)\right] \left(\begin{array}{c}
         D_\eta X^{t,\xi}_s  \\
         D_\eta Y^{t,\xi}_s
    \end{array}\right) \right\}, \label{FBSDEs:Deta}
\end{align}
The well-posedness follows by the same argument as in Subsection~\ref{subsec:FBSDE}; and the convergence of the difference quotients 
\[
\frac{1}{\epsilon}\big(\Theta_s^{t,\xi+\epsilon\eta}-\Theta_s^{t,\xi}\big) \longrightarrow D_\eta \Theta_s^{t,\xi}
\quad\text{and}\quad
\frac{1}{\epsilon}\big(\Theta_s^{t,x,\xi+\epsilon\eta}-\Theta_s^{t,x,\xi}\big) \longrightarrow D_\eta \Theta_s^{t,x,\xi}
\]
is established in a similar way as that for \eqref{derivative_t}. Moreover, 
%similar as in the proof of Lemma~\ref{lem:tmux},
by using the initial condition $D_\eta X_t^{t,\xi}=\eta$, It\^o's formula and Fubini's theorem and standard estimates for forward and backward equations, we can obtain the following estimate
\begin{align*}
        \e\bigg[&\sup_{t\le s\le T} \left|\left(D_\eta X^{t,\xi}_s,D_\eta Y^{t,\xi}_s,D_\eta P^{t,\xi}_s,D_\eta R^{t,\xi}_s,D_\eta u^{t,\xi}_s\right)\right|^2 \notag \\
        &+\sup_{t\le s\le T} \left|\left(D_\eta X^{t,x,\xi}_s,D_\eta Y^{t,x,\xi}_s,D_\eta P^{t,x,\xi}_s,D_\eta R^{t,x,\xi}_s,D_\eta u^{t,x,\xi}_s\right)\right|^2 \bigg] \le  C(L,T,\lambda)\|\eta\|_2^2.  %\label{tmux_Deta}
\end{align*}
The important relation \eqref{relation:D_xi} then follows by writing down the forward-backward equations for
\begin{align*}
    D_x \Theta^{t,x,\lr(\xi)} \big|_{x=\xi}\,\eta + D_\eta \Theta^{t,x,\xi} \big|_{x=\xi}
\end{align*}
and comparing with the unique system for $D_\eta \Theta^{t,\xi}$. With \eqref{FBSDEs:Deta} and the identity \eqref{relation:D_xi}, we obtain the following forward-backward ODEs for the derivative $D_{x'}\frac{d\Theta^{t,x,\mu}_s}{d\nu}(x')$:
\begin{align}
    &D_{x'}\frac{dX^{t,x,\mu}_s}{d\nu}(x')= \int_t^s \bigg\{ b_1(r) D_{x'}\frac{dX^{t,x,\mu}_r}{d\nu}(x') + b_2(r)\bigg(D_x X^{t,x',\mu}_r + \e\bigg[D_{x'}\frac{dX^{t,x,\mu}_r}{d\nu}(x')\Big|_{x=\xi}\bigg]\bigg) \notag \\
    &\qquad\qquad\qquad\qquad\qquad +b_3(r)D_{x'}\frac{du^{t,x,\mu}_r}{d\nu}(x')\bigg\} dr, \notag \\
    &D_{x'}\frac{dY^{t,x,\mu}_s}{d\nu}(x')= g_1 D_{x'}\frac{dX^{t,x,\mu}_T}{d\nu}(x')+ g_2 \bigg(D_x X^{t,x',\mu}_T +  \e\bigg[D_{x'}\frac{dX^{t,x,\mu}_T}{d\nu}(x')\Big|_{x=\xi}\bigg] \bigg) \notag \\
    &\qquad\qquad\qquad\qquad + \int_s^T  \bigg\{ f_1(r) D_{x'}\frac{dX^{t,x,\mu}_r}{d\nu}(x') + f_2(r) D_{x'}\frac{dY^{t,x,\mu}_r}{d\nu}(x') \notag \\
    &\qquad\qquad\qquad\qquad\qquad + f_4(r)\bigg(D_x X^{t,x',\mu}_r + \e\bigg[D_{x'}\frac{dX^{t,x,\mu}_r}{d\nu}(x')\Big|_{x=\xi}\bigg]\bigg) \notag \\
    &\qquad\qquad\qquad\qquad\qquad + f_5(r)\bigg(D_x Y^{t,x',\mu}_r + \e\bigg[D_{x'}\frac{dY^{t,x,\mu}_r}{d\nu}(x')\Big|_{x=\xi}\bigg]\bigg) +f_7(r)D_{x'}\frac{du^{t,x,\mu}_r}{d\nu}(x') \bigg\} dr, \notag \\
    & D_{x'}\frac{dP^{t,x,\mu}_s}{d\nu}(x')= - g_1^\top D_{x'}\frac{dR^{t,x,\mu}_T}{d\nu}(x') - g_2^\top \bigg(D_x R^{t,x',\mu}_T + \e\bigg[D_{x'}\frac{dR^{t,x,\mu}_T}{d\nu}(x')\Big|_{x=\xi}\bigg]\bigg) \notag \\
    &\qquad +\bigg\{ \Phi_{xx}\left(X^{t,x,\mu}_T, \lr\left(X^{t,\xi}_T\right) \right) + {\e}\left[D^2_{x'}\frac{d\Phi}{d\nu} \left({X^{t,\xi}_T}, \lr\left(X^{t,\xi}_T\right)\right)\left(X^{t,x,\mu}_T\right) \right]\bigg\} D_{x'}\frac{dX^{t,x,\mu}_T}{d\nu}(x')  \notag \\
    &\qquad + \bigg\{ D_{x'}\frac{d\Phi_x}{d\nu}\left(X^{t,x,\mu}_T, \lr\left(X^{t,\xi}_T\right) \right)\left(X^{t,x',\mu}_T\right)  + \left(D_{x'}\frac{d\Phi_x}{d\nu} \left({X^{t,x',\mu}_T}, \lr\left(X^{t,\xi}_T\right)\right)\left(X^{t,x,\mu}_T\right)\right)^\top \notag \\
    &\qquad\qquad + \e\left[D_{x''}D_{x'}\frac{d^2\Phi}{d\nu^2} \left({X^{t,\xi}_T}, \lr\left(X^{t,\xi}_T\right)\right)\left(X^{t,x,\mu}_T, X^{t,x',\mu}_T\right) \right]\bigg\} D_x X^{t,x',\mu}_T   \notag \\
    &\qquad + \e\bigg\{ \bigg[D_{x'}\frac{d\Phi_x}{d\nu}\left(X^{t,x,\mu}_T, \lr\left(X^{t,\xi}_T\right) \right)\left(X^{t,\xi}_T\right) + \left(D_{x'}\frac{d\Phi}{d\nu} \left({X^{t,\xi}_T}, \lr\left(X^{t,\xi}_T\right)\right)\left(X^{t,x,\mu}_T\right)\right)^\top  \notag \\
    &\quad\qquad\qquad + \widetilde{\e}\bigg[D_{x''}D_{x'}\frac{d^2\Phi}{d\nu^2} \left(\widetilde{X^{t,\xi}_T}, \lr\left(X^{t,\xi}_T\right)\right)\left(X^{t,x,\mu}_T,X^{t,\xi}_T\right)  \bigg]\bigg] D_{x'}\frac{dX^{t,x,\mu}_T}{d\nu}(x')\Big|_{x=\xi}  \bigg\} \notag \\
    &\qquad  + \int_s^T \Bigg\{  b_1(r)^\top  D_{x'}\frac{dP^{t,x,\mu}_r}{d\nu}(x') + b_2(r)^\top \bigg(D_x P^{t,x',\mu}_r  + \e\bigg[D_{x'}\frac{dP^{t,x,\mu}_r}{d\nu}(x')\Big|_{x=\xi}\bigg] \bigg) \notag \\
    &\qquad\qquad  - f_1(r)^\top  D_{x'}\frac{dR^{t,x,\mu}_r}{d\nu}(x') - f_4(r)^\top \bigg(D_x R^{t,x',\mu}_r  + \e\bigg[D_{x'}\frac{dR^{t,x,\mu}_r}{d\nu}(x')\Big|_{x=\xi}\bigg] \bigg) \notag \\
    &\qquad\qquad +\left[\left(\begin{array}{ccc}
         h_{xx} & h_{xy} & h_{xv}   
    \end{array}\right) \left(r, \theta^{t,x,\mu}_r, \lr\left( \theta^{t,\xi}_r\right),  u^{t,x,\mu}_r  \right) \right]\left(\begin{array}{c}
         D_{x'}\frac{dX^{t,x,\mu}_r}{d\nu}(x')  \\
         D_{x'}\frac{dY^{t,x,\mu}_r}{d\nu}(x')  \\
         D_{x'}\frac{du^{t,x,\mu}_r}{d\nu}(x')
    \end{array}\right)  \notag \\
    & \qquad\qquad  +  \e \left[\left(\begin{array}{cc}
         D^2_{x'}\frac{dh}{d\nu} & D_{y'}D_{x'}\frac{dh}{d\nu}  
    \end{array}\right) \left(r,\theta^{t,\xi}_r,\lr\left(\theta^{t,\xi}_r\right),u^{t,\xi}_r\right)\left(\theta^{t,x,\mu}_r\right) \right] \left(\begin{array}{cc}
         D_{x'}\frac{dX^{t,x,\mu}_r}{d\nu}(x')   \\
         D_{x'}\frac{dY^{t,x,\mu}_r}{d\nu}(x')
    \end{array} \right) \notag  \\
    &\qquad\qquad + \left[\left(\begin{array}{cc}
         D_{x'}\frac{dh_x}{d\nu}  & D_{y'}\frac{dh_x}{d\nu} 
    \end{array}\right)\left(r, \theta^{t,x,\mu}_r, \lr\left( \theta^{t,\xi}_r\right),  u^{t,x,\mu}_r \right)\left(\theta^{t,x',\mu}_r\right)\right] \left(\begin{array}{c}
         D_x X^{t,x',\mu}_r   \\
         D_x Y^{t,x',\mu}_r
    \end{array}\right)  \notag \\
    & \qquad\qquad + \left[\left(\begin{array}{ccc}
        D_{x'}\frac{dh_x}{d\nu} \\ 
        D_{x'}\frac{dh_y}{d\nu} \\
        D_{x'}\frac{dh_v}{d\nu}
    \end{array}\right)\left(r,\theta^{t,x',\mu}_r,\lr\left( \theta^{t,\xi}_r\right),u^{t,x',\mu}_r\right)\left( \theta^{t,x,\mu}_r\right)\right]^\top  \left(\begin{array}{c}
        D_x X^{t,x',\mu}_r  \\
        D_x Y^{t,x',\mu}_r  \\
        D_x u^{t,x',\mu}_r
    \end{array}
    \right)  \notag\\
    & \qquad\qquad  +  \e \left[\left(\begin{array}{cc}
         D_{x''}D_{x'}\frac{d^2h}{d\nu^2} & D_{y''}D_{x'}\frac{d^2h}{d\nu^2}  
    \end{array}\right) \left(r,\theta^{t,\xi}_r,\lr\left(\theta^{t,\xi}_r\right), u^{t,\xi}_r\right)\left(\theta^{t,x,\mu}_r,\theta^{t,x',\mu}_r\right) \right]  \left(\begin{array}{cc}
         D_x X^{t,x',\mu}_r\\
         D_x Y^{t,x',\mu}_r
    \end{array} \right) \notag \\
    &\qquad\qquad + {\e} \left\{\left[\left(\begin{array}{cc}
         D_{x'}\frac{dh_x}{d\nu}  & D_{y'}\frac{dh_x}{d\nu} 
    \end{array}\right)\left(r, \theta^{t,x,\mu}_r, \lr\left( \theta^{t,\xi}_r\right),  u^{t,x,\mu}_r \right)\left(\theta^{t,\xi}_r\right)\right] \left(\begin{array}{c}
        D_{x'}\frac{dX^{t,x,\mu}_r}{d\nu}(x')\Big|_{x=\xi}  \\
        D_{x'}\frac{dY^{t,x,\mu}_r}{d\nu}(x')\Big|_{x=\xi} 
    \end{array}\right) \right\} \notag \\
    & \qquad\qquad + \e\left\{\left[\left(\begin{array}{ccc}
        D_{x'}\frac{dh_x}{d\nu} \\ 
        D_{x'}\frac{dh_y}{d\nu} \\
        D_{x'}\frac{dh_v}{d\nu}
    \end{array}\right)\left(r,\theta^{t,\xi}_r,\lr\left( \theta^{t,\xi}_r\right),u^{t,\xi}_r\right)\left( \theta^{t,x,\mu}_r\right)\right]^\top  \left(\begin{array}{c}
        D_{x'}\frac{dX^{t,x,\mu}_r}{d\nu}(x')\Big|_{x=\xi}  \\
        D_{x'}\frac{dY^{t,x,\mu}_r}{d\nu}(x')\Big|_{x=\xi}  \\
        D_{x'}\frac{du^{t,x,\mu}_r}{d\nu}(x')\Big|_{x=\xi}  
    \end{array}
    \right) \right\}  \notag\\
    & \qquad\qquad  +  \e \bigg\{\widetilde{\e} \left[\left(\begin{array}{cc}
         D_{x''}D_{x'}\frac{d^2h}{d\nu^2} & D_{y''}D_{x'}\frac{d^2h}{d\nu^2}  
    \end{array}\right) \left(r,\widetilde{\theta^{t,\xi}_r},\lr\left(\theta^{t,\xi}_r\right), \widetilde{u^{t,\xi}_r}\right)\left(\theta^{t,x,\mu}_r,\theta^{t,\xi}_r\right) \right]\notag\\
    &\quad\qquad\qquad\qquad \bigg(\begin{array}{cc}
         D_{x'}\frac{dX^{t,x,\mu}_r}{d\nu}(x')\Big|_{x=\xi}  \\
         D_{x'}\frac{dY^{t,x,\mu}_r}{d\nu}(x')\big|_{x=\xi} 
    \end{array} \bigg) \bigg\}  \Bigg\} dr, \notag \\
    & D_{x'}\frac{dR^{t,x,\mu}_s}{d\nu}(x')= - \varphi_{yy} \left(Y^{t,x,\mu}_t\right) D_{x'}\frac{dY^{t,x,\mu}_t}{d\nu}(x') \notag \\
    &\qquad +\int_t^s \Bigg\{ f_2(r)^\top  D_{x'}\frac{dR^{t,x,\mu}_r}{d\nu}(x') + f_5(r)^\top \bigg(D_x R^{t,x',\mu}_r + \e\bigg[D_{x'}\frac{dR^{t,x,\mu}_r}{d\nu}(x')\Big|_{x=\xi}\bigg]\bigg) \notag  \\
    &\qquad\qquad -\left[\left(\begin{array}{ccc}
         h_{yx} & h_{yy} & h_{yv}   
    \end{array}\right) \left(r, \theta^{t,x,\mu}_r, \lr\left( \theta^{t,\xi}_r\right),  u^{t,x,\mu}_r  \right) \right]\left(\begin{array}{c}
         D_{x'}\frac{dX^{t,x,\mu}_r}{d\nu}(x')  \\
         D_{x'}\frac{dY^{t,x,\mu}_r}{d\nu}(x')  \\
         D_{x'}\frac{du^{t,x,\mu}_r}{d\nu}(x')
    \end{array}\right) \notag  \\
    &\qquad\qquad  -  \e\left[\left(\begin{array}{cc}
         D_{x'}D_{y'}\frac{dh}{d\nu} & D^2_{y'}\frac{dh}{d\nu}  
    \end{array}\right) \left(r,\theta^{t,\xi}_r,\lr\left(\theta^{t,\xi}_r\right),u^{t,\xi}_r\right)\left(\theta^{t,x,\mu}_r\right) \right]  \left(\begin{array}{cc}
         D_{x'}\frac{dX^{t,x,\mu}_r}{d\nu}(x')   \\
         D_{x'}\frac{dY^{t,x,\mu}_r}{d\nu}(x')
    \end{array} \right) \notag \\
    &\qquad\qquad - \left[\left(\begin{array}{cc}
         D_{x'}\frac{dh_y}{d\nu}  & D_{y'}\frac{dh_y}{d\nu} 
    \end{array}\right)\left(r, \theta^{t,x,\mu}_r, \lr\left( \theta^{t,\xi}_r\right),  u^{t,x,\mu}_r \right)\left(\theta^{t,x',\mu}_r\right)\right] \left(\begin{array}{c}
         D_x X^{t,x',\mu}_r\\
         D_x Y^{t,x',\mu}_r
    \end{array}\right) \notag  \\
    & \qquad\qquad  -  \e\left[\left(\begin{array}{cc}
         D_{x''}D_{y'}\frac{d^2h}{d\nu^2} & D_{y''}D_{y'}\frac{d^2h}{d\nu^2}  
    \end{array}\right) \left(r,\theta^{t,\xi}_r,\lr\left(\theta^{t,\xi}_r\right), u^{t,\xi}_r\right)\left(\theta^{t,x,\mu}_r,\theta^{t,x',\mu}_r\right) \right]  \left(\begin{array}{cc}
         D_x X^{t,x',\mu}_r \\
         D_x Y^{t,x',\mu}_r 
    \end{array} \right) \notag \\
    & \qquad\qquad - \left[\left(\begin{array}{ccc}
        D_{y'}\frac{dh_x}{d\nu} \\ 
        D_{y'}\frac{dh_y}{d\nu} \\
        D_{y'}\frac{dh_v}{d\nu}
    \end{array}\right)\left(r,\theta^{t,x',\mu}_r,\lr\left( \theta^{t,\xi}_r\right),u^{t,x',\mu}_r\right)\left( \theta^{t,x,\mu}_r\right)\right]^\top  \left(\begin{array}{c}
        D_x X^{t,x',\mu}_r \\
        D_x Y^{t,x',\mu}_r \\
        D_x u^{t,x',\mu}_r
    \end{array}
    \right)   \notag\\
    &\qquad\qquad - \e \left\{\left[\left(\begin{array}{cc}
         D_{x'}\frac{dh_y}{d\nu}  & D_{y'}\frac{dh_y}{d\nu} 
    \end{array}\right)\left(r, \theta^{t,x,\mu}_r, \lr\left( \theta^{t,\xi}_r\right),  u^{t,x,\mu}_r \right)\left(\theta^{t,\xi}_r\right)\right] \left(\begin{array}{c}
         D_{x'}\frac{dX^{t,x,\mu}_r}{d\nu}(x')\Big|_{x=\xi}  \\
         D_{x'}\frac{dY^{t,x,\mu}_r}{d\nu}(x')\Big|_{x=\xi} 
    \end{array}\right) \right\} \notag \\
    & \qquad\qquad - \e\left\{\left[\left(\begin{array}{ccc}
        D_{y'}\frac{dh_x}{d\nu} \\ 
        D_{y'}\frac{dh_y}{d\nu} \\
        D_{y'}\frac{dh_v}{d\nu}
    \end{array}\right)\left(r,\theta^{t,\xi}_r,\lr\left( \theta^{t,\xi}_r\right),u^{t,\xi}_r\right)\left( \theta^{t,x,\mu}_r\right)\right]^\top  \left(\begin{array}{c}
        D_{x'}\frac{dX^{t,x,\mu}_r}{d\nu}(x')\Big|_{x=\xi}  \\
        D_{x'}\frac{dY^{t,x,\mu}_r}{d\nu}(x')\Big|_{x=\xi}  \\
        D_{x'}\frac{du^{t,x,\mu}_r}{d\nu}(x')\Big|_{x=\xi}  
    \end{array}
    \right) \right\}  \notag\\
    & \qquad\qquad  -  \e \bigg\{ \widetilde{\e}\left[\left(\begin{array}{cc}
         D_{x''}D_{y'}\frac{d^2h}{d\nu^2} & D_{y''}D_{y'}\frac{d^2h}{d\nu^2}  
    \end{array}\right) \left(r,\widetilde{\theta^{t,\xi}_r},\lr\left(\theta^{t,\xi}_r\right), \widetilde{u^{t,\xi}_r}\right)\left(\theta^{t,x,\mu}_r,\theta^{t,\xi}_r\right) \right] \notag\\
    &\qquad\qquad\qquad\qquad \bigg(\begin{array}{cc}
         D_{x'}\frac{dX^{t,x,\mu}_r}{d\nu}(x')\Big|_{x=\xi}  \\
         D_{x'}\frac{dY^{t,x,\mu}_r}{d\nu}(x')\big|_{x=\xi} 
    \end{array} \bigg) \bigg\} \notag  \Bigg\} dr, \notag \\
    &0=b_3(s)^\top D_{x'}\frac{dP^{t,x,\mu}_s}{d\nu}(x') -f_7(s)^\top  D_{x'}\frac{dR^{t,x,\mu}_s}{d\nu}(x') \notag  \\
    &\qquad +\left[\left(\begin{array}{ccc}
         h_{vx} & h_{vy} & h_{vv}   
    \end{array}\right) \left(s, \theta^{t,x,\mu}_s, \lr\left( \theta^{t,\xi}_s\right),  u^{t,x,\mu}_s  \right) \right]\left(\begin{array}{c}
         D_{x'}\frac{dX^{t,x,\mu}_s}{d\nu}(x')  \\
         D_{x'}\frac{dY^{t,x,\mu}_s}{d\nu}(x')  \\
         D_{x'}\frac{du^{t,x,\mu}_s}{d\nu}(x')
    \end{array}\right)  \notag \\
    &\qquad + \left[\left(\begin{array}{cc}
         D_{x'}\frac{dh_v}{d\nu}  & D_{y'}\frac{dh_v}{d\nu} 
    \end{array}\right)\left(s, \theta^{t,x,\mu}_s, \lr\left( \theta^{t,\xi}_s\right),  u^{t,x,\mu}_s \right)\left(\theta^{t,x',\mu}_s\right)\right] \left(\begin{array}{c}
         D_x X^{t,x',\mu}_s  \\
         D_x Y^{t,x',\mu}_s
    \end{array}\right) \notag \\
    &\qquad + {\e} \left\{\left[\left(\begin{array}{cc}
         D_{x'}\frac{dh_v}{d\nu}  & D_{y'}\frac{dh_v}{d\nu} 
    \end{array}\right)\left(s, \theta^{t,x,\mu}_s, \lr\left( \theta^{t,\xi}_s\right),  u^{t,x,\mu}_s \right)\left(\theta^{t,\xi}_s\right)\right] \left(\begin{array}{c}
         D_{x'}\frac{dX^{t,x,\mu}_s}{d\nu}(x')\Big|_{x=\xi}  \\
         D_{x'}\frac{dY^{t,x,\mu}_s}{d\nu}(x')\Big|_{x=\xi} 
    \end{array}\right) \right\}. \label{FBSDEs:Dmu}
\end{align}
The proof of \eqref{relation:D_eta-D_mu} consists of deriving the forward-backward equations for
$\e\left[D_{x'}\frac{d\Theta^{t,x,\mu}_s}{d\nu}\left({\xi}\right) {\eta} \right]$
and then applying Fubini's theorem, and using the uniqueness result of System \eqref{FBSDEs:Deta}. The system \eqref{FBSDEs:Dmu} involves the processes $D_x\Theta^{t,x',\mu}$. Using the estimate \eqref{tmux_Dx} for the latter, we obtain the desired bound \eqref{tmux_Dmu}.

\subsection{Proof of Proposition~\ref{prop:U_x-mu}}\label{App:prop:U_x-mu}
Estimate \eqref{prop:U_x-mu_2'} is a direct consequence of \eqref{tmux-0'}. The proof of the differentiability and the relations \eqref{prop:U_x-mu_1} are direct consequences of Lemmas~\ref{lem:D_x} and \ref{Lem:D_xi}. Then, the estimate \eqref{prop:U_x-mu_2} is a consequence of \eqref{tmux_Dx} and \eqref{tmux_Dmu}. From \eqref{value_equation}, we can also write that 
\begin{align*}
        V(t,\mu)=\int_\brn \left[\int_t^T h\left(s,\theta_s^{t,x,\mu},  \theta_s^{t,\cdot,\mu}\#\mu ,u^{t,x,\mu}_s\right) ds + \Phi\left(X_T^{t,x,\mu}, X_T^{t,\cdot,\mu} \# \mu \right)+\varphi\left(Y_t^{t,x,\mu}\right)\right]\mu(dx),
\end{align*}
where $\theta_s^{t,\cdot,\mu}\#\mu$ represents the push-forward of $\mu$ by the map $x\mapsto \theta_s^{t,x,\mu}$. Then, with the differentiable properties in Lemmas~\ref{lem:D_x} and \ref{Lem:D_xi}, we can explicitly compute that 
\begin{align*}
        &\frac{dV}{d\nu}(t,\mu)(x)\\
        =\ & \int_t^T h\left(s,\theta_s^{t,x,\mu},  \theta_s^{t,\cdot,\mu}\#\mu ,u^{t,x,\mu}_s\right) ds + \Phi\left(X_T^{t,x,\mu}, X_T^{t,\cdot,\mu} \# \mu \right)+\varphi\left(Y_t^{t,x,\mu}\right)\\
        &+ \int_\brn \bigg\{\int_t^T \bigg[\left[\left(\begin{array}{ccc}
         h_{x}^\top & h_{y}^\top & h_{v}^\top   
        \end{array}\right) (\star) \right]\left(\begin{array}{c}
         \frac{dX_s^{t,x',\mu}}{d\nu}(x)  \\
         \frac{dY_s^{t,x',\mu}}{d\nu}(x)  \\
         \frac{du_s^{t,x',\mu}}{d\nu}(x)
        \end{array}\right)  + \frac{dh}{d\nu}(\star)\left(\theta_s^{t,x,\mu}\right)\\
        &\qquad\qquad\qquad + \int_\brn \left[\left(\begin{array}{ccc}
         \left(D_{x'}\frac{dh}{d\nu}\right)^\top & \left(D_{y'}\frac{dh}{d\nu}\right)^\top 
        \end{array}\right) (\star)\left(\theta_s^{t,x'',\mu}\right) \right]\left(\begin{array}{c}
         \frac{dX_s^{t,x'',\mu}}{d\nu}(x)  \\
         \frac{dY_s^{t,x'',\mu}}{d\nu}(x)  
        \end{array}\right) \mu(dx'') \bigg] ds \\
        &\quad\qquad + \Phi_x\left(X_T^{t,x',\mu}, X_T^{t,\cdot,\mu} \# \mu \right)^\top \frac{dX_T^{t,x',\mu}}{d\nu}(x) + \frac{d\Phi}{d\nu}\left(X_T^{t,x',\mu}, X_T^{t,\cdot,\mu} \# \mu \right)\left(X_T^{t,x,\mu}\right) \\
        &\quad\qquad + \int_\brn \bigg(D_{x'}\frac{d\Phi}{d\nu}\left(X_T^{t,x',\mu}, X_T^{t,\cdot,\mu} \# \mu \right)\left(X_T^{t,x'',\mu}\right)\bigg)^\top  \frac{dX_T^{t,x'',\mu}}{d\nu}(x) \mu(dx'') \\
        &\quad\qquad +\varphi_y\left(Y_t^{t,x',\mu}\right)^\top  \frac{dY_t^{t,x',\mu}}{d\nu}(x) \bigg\} \mu(dx'),
\end{align*}
where $(\star):= \left(s,\theta_s^{t,x',\mu},  \theta_s^{t,\cdot,\mu}\#\mu ,u^{t,x',\mu}_s\right)$. By differentiating this formula with respect to $x$, we obtain the explicit formulation of $D_x\frac{dV}{d\nu}$; and by using Estimates \eqref{tmux_Dx} and \eqref{tmux_Dmu}, we obtain \eqref{prop:DdV_x}.

\subsection{Proof of Lemma~\ref{lem:cirly_X_mu}}\label{App:lem:cirly_X_mu}
The derivative $\left(\x^{t,x,\mu},\y^{t,x,\mu},\p^{t,x,\mu},\R^{t,x,\mu},\U^{t,x,\mu}\right)$ can be characterized as the solution of the following forward-backward system of ODEs: 
\begin{align}
    &\x^{t,x,\mu}_s=-\left\{b_0(t)+b_1(t) x + b_2(t)\overline{\mu}+b_3(t)u^{t,x,\mu}_t\right\} \notag \\
    &\qquad\qquad +\int_t^s \left\{ b_1(r) \x^{t,x,\mu}_r + b_2(r)\e\left[\x^{t,\xi}_r\right]+b_3(r)\U^{t,x,\mu}_r\right\}   dr, \notag \\
    &\y^{t,x,\mu}_s= g_1 \x^{t,x,\mu}_T+ g_2 \e\left[ \x^{t,\xi}_T\right]  + \int_s^T  \Big[f_1(r) \x^{t,x,\mu}_r + f_2(r) \y^{t,x,\mu}_r + f_4(r) \e\left[ \x^{t,\xi}_r\right] \notag\\
    &\quad\qquad\qquad\qquad\qquad\qquad\qquad\qquad\qquad + f_5(r) \e\left[ \y^{t,\xi}_r\right] + f_7(r) \U^{t,x,\mu}_r  \Big] dr, \notag \\
    & \p^{t,x,\mu}_s= - g_1^\top \R^{t,x,\mu}_T - g_2^\top \e\left[ \R^{t,\xi}_T\right]  +\Phi_{xx}\left(X^{t,x,\mu}_T, \lr\left(X^{t,\xi}_T\right) \right) \x^{t,x,\mu}_T \notag \\
    &\qquad +\e\bigg[D_{x'}\frac{d\Phi_x}{d\nu}\left(X^{t,x,\mu}_T, \lr\left(X^{t,\xi}_T\right) \right)\left({X^{t,\xi}_T}\right) {\x^{t,\xi}_T}+\left(D_{x'}\frac{d\Phi_x}{d\nu} \left({X^{t,\xi}_T}, \lr\left(X^{t,\xi}_T\right)\right)\left(X^{t,x,\mu}_T\right)\right)^\top {\x^{t,\xi}_T}  \bigg] \notag \\
    &\qquad +\e\widetilde{\e}\left[D_{x''}D_{x'}\frac{d^2\Phi}{d\nu^2} \left({X^{t,\xi}_T}, \lr\left(X^{t,\xi}_T\right)\right)\left(X^{t,x,\mu}_T,\widetilde{X^{t,\xi}_T}\right) \widetilde{\x^{t,\xi}_T} \right] \notag \\
    &\qquad +{\e}\left[D_{x'}^2\frac{d\Phi}{d\nu} \left({X^{t,\xi}_T}, \lr\left(X^{t,\xi}_T\right)\right)\left(X^{t,x,\mu}_T\right) \x^{t,x,\mu}_T \right] \notag \\
    &\qquad + \int_s^T \Bigg\{  b_1(r)^\top  \p^{t,x,\mu}_r + b_2(r)^\top \e\left[\p^{t,\xi}_r\right] - f_1(r)^\top  \R^{t,x,\mu}_r - f_4(r)^\top \e\left[\R^{t,\xi}_r\right] \notag \\
    &\quad\qquad +\left[\left(\begin{array}{ccc}
         h_{xx} & h_{xy} & h_{xv}
    \end{array} \right)\left(r,\theta^{t,x,\mu}_r,\lr\left(\theta^{t,\xi}_r\right),u^{t,x,\mu}_r\right) \right] \left(\begin{array}{c}
         \x^{t,x,\mu}_r  \\
         \y^{t,x,\mu}_r  \\
         \U^{t,x,\mu}_r
    \end{array}\right) \notag \\
    & \quad\qquad +{\e}\left\{\left[\left(\begin{array}{cc}
         D_{x'}\frac{dh_x}{d\nu} &  D_{y'}\frac{dh_x}{d\nu}
    \end{array}\right)\left(r,\theta^{t,x,\mu}_r,\lr\left(\theta^{t,\xi}_r\right),u^{t,x,\mu}_r\right)\left({\theta^{t,\xi}_r}\right) \right] \left(\begin{array}{cc}
         {\x^{t,\xi}_r}  \\
         {\y^{t,\xi}_r} 
    \end{array}\right) \right\} \notag \\
    & \quad\qquad +{\e}\left\{\left[\left(\begin{array}{c}
         D_{x'}\frac{dh_x}{d\nu} \\
         D_{x'}\frac{dh_y}{d\nu} \\
         D_{x'}\frac{dh_v}{d\nu}
    \end{array}\right)\left(r,{\theta^{t,\xi}_r},\lr\left(\theta^{t,\xi}_r\right),{u^{t,\xi}_r}\right)\left(\theta^{t,x,\mu}_r\right) \right] \left(\begin{array}{cc}
         {\x^{t,\xi}_r}  \\
         {\y^{t,\xi}_r}  \\
         {\U^{t,\xi}_r}
    \end{array}\right) \right\} \notag \\
    & \quad\qquad  +  \e\widetilde{\e} \left\{ \left[\left(\begin{array}{cc}
         D_{x''}D_{x'}\frac{d^2h}{d\nu^2} & D_{y''}D_{x'}\frac{d^2h}{d\nu^2}  
    \end{array}\right) \left(r,{\theta^{t,\xi}_r},\lr\left(\theta^{t,\xi}_r\right),{u^{t,\xi}_r}\right)\left(\theta^{t,x,\mu}_r,\widetilde{\theta^{t,\xi}_r}\right) \right]  \left(\begin{array}{cc}
         \widetilde{\x^{t,\xi}_r}   \\
         \widetilde{\y^{t,\xi}_r}  
    \end{array} \right) \right\} \notag \\
    & \quad\qquad  +  {\e} \left\{ \left[\left(\begin{array}{cc}
         D^2_{x'}\frac{dh}{d\nu} & D_{y'}D_{x'}\frac{dh}{d\nu}  
    \end{array}\right) \left(r,{\theta^{t,\xi}_r},\lr\left(\theta^{t,\xi}_r\right),{u^{t,\xi}_r}\right)\left(\theta^{t,x,\mu}_r\right) \right]  \left(\begin{array}{cc}
         \x^{t,x,\mu}_r   \\
         \y^{t,x,\mu}_r  
    \end{array} \right) \right\} \Bigg\} dr, \notag \\
    & \R^{t,x,\mu}_s= - \varphi_{yy} \left(Y^{t,x,\mu}_t\right) \left(\y^{t,x,\mu}_t - f\left(t,x,Y^{t,x,\mu}_t,\lr\left(\xi,Y^{t,\xi}_t\right),u^{t,x,\mu}_t\right) \right) - f_2(t)^\top  R^{t,x,\mu}_t   - f_5(t)^\top  \e\left[R^{t,\xi}_t\right]\notag \\
    &\qquad + h_y \left(t,x,Y^{t,x,\mu}_t,\lr\left({ \xi, Y^{t,\xi}_t}\right),u^{t,x,\mu}_t\right)  +{\e}\left[D_{y'} \frac{dh}{d\nu}\left(t,{\xi},{Y^{t,\xi}_t},\lr\left(\xi,Y^{t,\xi}_t\right),{u^{t,\xi}_t} \right)\left(x,Y^{t,x,\mu}_t\right) \right] \notag \\
    &\qquad +\int_t^s \Bigg\{ f_2(r)^\top  \R^{t,x,\mu}_r  + f_5(r)^\top  \e\left[\R^{t,\xi}_r\right] - \left[\left( \begin{array}{ccc}
         h_{yx} & h_{yy} & h_{yv}
    \end{array}\right)\left(r,\theta^{t,x,\mu}_r,\lr\left({ \theta^{t,\xi}_r}\right),u^{t,x,\mu}_r\right)\right] \left(\begin{array}{c}
         \x^{t,x,\mu}_r \\
         \y^{t,x,\mu}_r \\
         \U^{t,x,\mu}_r
    \end{array}\right) \notag \\
    & \quad\qquad -{\e}\left\{ \left[\left(\begin{array}{cc}
         D_{x'}\frac{dh_{y}}{d\nu} & D_{y'}\frac{dh_{y}}{d\nu} 
    \end{array} \right) \left(r,\theta^{t,x,\mu}_r,\lr\left(\theta^{t,\xi}_r\right),u^{t,x,\mu}_r\right)\left({\theta^{t,\xi}_r}\right) \right] \left( \begin{array}{cc}
         {\x^{t,\xi}_r}  \\
         {\y^{t,\xi}_r} 
    \end{array}\right) \right\}  \notag\\
    & \quad\qquad - {\e}\left\{\left[\left(\begin{array}{ccc}
        D_{y'}\frac{dh_x}{d\nu} \\ 
        D_{y'}\frac{dh_y}{d\nu} \\
        D_{y'}\frac{dh_v}{d\nu}
    \end{array}\right)\left(r,{\theta^{t,\xi}_r},\lr\left({ \theta^{t,\xi}_r}\right),{u^{t,\xi}_r}\right)\left( \theta^{t,x,\mu}_r\right)\right]^\top  \left(\begin{array}{c}
        {\x^{t,\xi}_r}  \\
        {\y^{t,\xi}_r}  \\
        {\U^{t,\xi}_r} 
    \end{array}
    \right) \right\}  \notag\\
    & \quad\qquad  - \e\widetilde{\e} \left\{ \left[\left( \begin{array}{cc}
         D_{x''}D_{y'}\frac{d^2h}{d\nu^2} & D_{y''}D_{y'}\frac{d^2h}{d\nu^2} \\
    \end{array}\right) \left(r,{\theta^{t,\xi}_r},\lr\left({ \theta^{t,\xi}_r}\right),{u^{t,\xi}_r}\right)\left( \theta^{t,x,\mu}_r,\widetilde{ \theta^{t,\xi}_r}\right) \right] \left(\begin{array}{c}
         \widetilde{\x^{t,\xi}_r}   \\
         \widetilde{\y^{t,\xi}_r} 
    \end{array}\right) \right\}  \notag\\
    & \quad\qquad  - {\e} \left\{ \left[\left( \begin{array}{cc}
         D_{x'}D_{y'}\frac{dh}{d\nu} & D_{y'}^2\frac{dh}{d\nu} \\
    \end{array}\right) \left(r,{\theta^{t,\xi}_r},\lr\left({ \theta^{t,\xi}_r}\right),{u^{t,\xi}_r}\right)\left( \theta^{t,x,\mu}_r\right) \right] \left(\begin{array}{c}
         \x^{t,x,\mu}_r   \\
         \y^{t,x,\mu}_r 
    \end{array}\right) \right\} \Bigg\} dr, \notag \\
    &0=b_3(s)^\top  \p^{t,x,\mu}_s -f_7(s)^\top  \R^{t,x,\mu}_s + \left[\left(\begin{array}{ccc}
         h_{vx} & h_{vy} & h_{vv} 
    \end{array} \right)\left(s,\theta^{t,x,\mu}_s,\lr\left({ \theta^{t,\xi}_s}\right),u^{t,x,\mu}_s\right) \left(\begin{array}{c}
         \x^{t,x,\mu}_s \\
         \y^{t,x,\mu}_s \\
         \U^{t,x,\mu}_s 
    \end{array}\right)\right] \notag \\
    & \qquad +{\e}\left\{ \left[\left(\begin{array}{cc}
         D_{x'}\frac{dh_{v}}{d\nu} & D_{y'}\frac{dh_{v}}{d\nu} 
    \end{array} \right) \left(s,\theta^{t,x,\mu}_s,\lr\left(\theta^{t,\xi}_s\right),u^{t,x,\mu}_s\right)\left({\theta^{t,\xi}_s}\right) \right] \left( \begin{array}{cc}
         {\x^{t,\xi}_s}  \\
         {\y^{t,\xi}_s} 
    \end{array}\right) \right\}. \label{FBSDEs:high_txmu}
\end{align}
System \eqref{FBSDEs:high_txmu} is structurally similar to \eqref{FBSDEs:high}, except that its coefficients depend on $\theta^{t,x,\mu}$, $\lr\left(\theta^{t,\xi}\right)$, and the solution of \eqref{FBSDEs:high}. Since the well-posedness and convergence follow by the same argument as in Theorem~\ref{thm:convergence}, we omit the details. The estimate \eqref{lem:xypru_0_txmu} then follows from \eqref{tmux-0} and \eqref{lem:xypru_0}.

\section{Proof of some statements in Section~\ref{sec:feedback}}\label{App:sec:feedback}

\subsection{Proof of Lemma~\ref{lem:der_h}}\label{App:lem:der_h}
The continuity and differentiability of $\mathbf{h}$ in $(x,v)$ follow directly from Assumption (A2). Specifically, we have
\begin{align*}
    \mathbf{h}_x (s,x,m,v)= h_x\left(\star \right) +F_1(s)^\top h_y \left(\star \right) +\sum_{j=1}^n \sigma^j_1(s)^\top F_1(s)^\top h_{z^j} \left(\star \right); \qquad \mathbf{h}_v (s,x,m,v)=  h_v\left(\star \right),
\end{align*}
where
\begin{align*}
    (\star):=\left(s,x, F(s,x),F_1(s)\sigma(s,x,m), \left(I_n\otimes F(s,\cdot)\otimes F_1(s)\sigma(s,\cdot,m)\right)\#m, v\right).
\end{align*}
We now compute the linear functional derivative of $\mathbf{h}$ with respect to $m$. For a linearly functionally differentiable function $k:\pr_2(\brn)\to\mathbb{R}$ such that $x'\mapsto \frac{dk}{d\nu}(m)(x')$ is differentiable, and a push-forward map $\phi:\brn\times\pr_2(\brn)\to\brn$ that is also linearly functionally differentiable, define $K(m):=k(\phi(\cdot,m)\#m)$. Then the chain rule gives
\begin{align*}
    \frac{dK}{d\nu}(m)(x')=\ & \frac{dk}{d\nu}(\phi(\cdot,m)\#m)(\phi(x',m)) \\
    &+ \int_\brn \left(\frac{d\phi}{d\nu}(x'',m)(x')\right)^\top D\frac{dk}{d\nu}(\phi(\cdot,m)\#m)(\phi(x'',m))   m(dx'').
\end{align*}
With this formula, by using \eqref{def:hh} and the linearity of $\sigma$ in Assumption (A1), we can explicitly compute that  
\begin{align*}
    &D_{x'}\frac{d \mathbf{h}}{d\nu} (s,x,m,v)(x')=  \sum_{j=1}^n \sigma^j_2(s)^\top F_1(s)^\top h_{z^j} \left(\star \right) + D_{x'}\frac{d h}{d\nu}\left(\star \right)\left(x',F(s,x'),F_1(s)\sigma(s,x',m)\right)\\
    &\qquad\qquad\qquad\qquad\qquad + F_1(s)^\top D_{y'}\frac{d h}{d\nu}\left(\star \right)\left(x',F(s,x'),F_1(s)\sigma(s,x',m)\right) \\
    &\qquad\qquad\qquad\qquad\qquad + \sum_{j=1}^n \sigma^j_1(s)^\top F_1(s)^\top D_{{z'}^j}\frac{d h}{d\nu}\left(\star \right)\left(x',F(s,x'),F_1(s)\sigma(s,x',m)\right)\\
    &\qquad\qquad\qquad\qquad\qquad + \sum_{j=1}^n \sigma^j_2(s)^\top F_1(s)^\top \int_\brn D_{{z'}^j}\frac{d h}{d\nu}\left(\star \right)\left(x'',F(s,x''),F_1(s)\sigma(s,x'',m)\right) m(dx'').
\end{align*}
The above formulas, together with the boundedness and Lipschitz continuity of the coefficients, imply the claimed Lipschitz continuity of $\mathbf{h}_x$, $\mathbf{h}_v$, and $D_{x'}\frac{d\mathbf{h}}{d\nu}$. It remains to prove the convexity. From the definition of $\mathbf{h}$ and the convexity of $h$ in Assumption (A3), by using Fubini's theorem and the above expressions of the derivatives, we see that, for any $s\in[0,T]$, $x^1,x^2\in\brn$, $v^1,v^2\in\brd$ and $\xi^1,\xi^2\in L^2(\Omega,\f,\mathbb{P};\brn)$,
\begin{align*}
    &\mathbf{h}\left(s,x^2,\lr\left(\xi^2\right),v^2\right)-\mathbf{h}\left(s,x^1,\lr\left(\xi^1\right),v^1\right)\\
    =\ & h\left(s,x^2,F\left(s,x^2\right),F_1(s)\sigma\left(s,x^2,\lr\left(\xi^2\right)\right),\lr\left(\xi^2,F\left(s,\xi^2\right),F_1(s)\sigma\left(s,\xi^2,\lr\left(\xi^2\right)\right)\right),v^2\right)\\
    & -h\left(s,x^1,F\left(s,x^1\right),F_1(s)\sigma\left(s,x^1,\lr\left(\xi^1\right)\right),\lr\left(\xi^1,F\left(s,\xi^1\right),F_1(s)\sigma\left(s,\xi^1,\lr\left(\xi^1\right)\right)\right),v^1\right)\\
    \geq\ & h_v\left(\dagger\right)^\top \left(v^2-v^1\right) + h_x\left(\dagger\right)^\top \left(x^2-x^1\right) +h_y\left(\dagger\right)^\top F_1(s)\left(x^2-x^1\right) \\
    &+\sum_{j=1}^n h_{z^j}\left(\dagger\right)^\top F_1(s) \left[\sigma^j_1(s) \left(x^2-x^1\right) + \sigma^j_2(s) \e\left(\xi^2-\xi^1\right)\right]\\
    &+\e\bigg[\left(D_{x'}\frac{dh}{d\nu}\left(\dagger\right)\left(\xi^1,F\left(s,\xi^1\right),F_1(s)\sigma\left(s,\xi^1,\lr\left(\xi^1\right)\right)\right)\right)^\top \left(\xi^2-\xi^1\right) \\
    &\qquad + \left(D_{y'}\frac{dh}{d\nu}\left(\dagger\right)\left(\xi^1,F\left(s,\xi^1\right),F_1(s)\sigma\left(s,\xi^1,\lr\left(\xi^1\right)\right)\right)\right)^\top F_1(s)\left(\xi^2-\xi^1\right)\\
    &\qquad + \sum_{j=1}^n \left(D_{{z'}^j}\frac{dh}{d\nu}\left(\dagger\right)\left(\xi^1,F\left(s,\xi^1\right),F_1(s)\sigma\left(s,\xi^1,\lr\left(\xi^1\right)\right)\right)\right)^\top F_1(s) \sigma^j_1(s)\left(\xi^2-\xi^1\right) \\
    &\qquad + \sum_{j=1}^n \left(D_{{z'}^j}\frac{dh}{d\nu}\left(\dagger\right)\left(\xi^1,F\left(s,\xi^1\right),F_1(s)\sigma\left(s,\xi^1,\lr\left(\xi^1\right)\right)\right)\right)^\top F_1(s)\sigma^j_2(s) \e\left[\xi^2-\xi^1\right] \bigg] \\
    & +\lambda\left|v^2-v^1\right|^2\\
    =\ & \mathbf{h}_v \left(s,x^1,\lr\left(\xi^1\right),v^1\right)^\top \left(v^2-v^1\right) + \mathbf{h}_x \left(s,x^1,\lr\left(\xi^1\right),v^1\right)^\top \left(x^2-x^1\right) \\
    &+ \e\left[\left(D_{x'}\frac{d \mathbf{h}}{d\nu}\left(s,x^1,\lr\left(\xi^1\right),v^1\right)\left(\xi^1\right)\right)^\top \left(\xi^2-\xi^1\right)\right] +\lambda\left|v^2-v^1\right|^2,
\end{align*}
where
\begin{align*}
    (\dagger):=\left(s,x^1,F\left(s,x^1\right),F_1(s)\sigma\left(s,x^1,\lr\left(\xi^1\right)\right),\lr\left(\xi^1,F\left(s,\xi^1\right),F_1(s)\sigma\left(s,\xi^1,\lr\left(\xi^1\right)\right)\right),v^1\right).
\end{align*}
This proves the first convexity assertion \eqref{convex:h}. The convexity of $x'\mapsto \frac{d\mathbf{h}}{d\nu}(s,x,m,v)(x')$ follows similarly from the corresponding convexity of $\frac{dh}{d\nu}$ in Assumption (A3), and the proof is complete.

\subsection{Proof of Proposition~\ref{prop:relation:PQR}}\label{App:prop:relation:PQR}
Using the relations between the derivatives of $\mathbf{h}$ and $h$ established in the proof of Lemma~\ref{lem:der_h}, the BSDE \eqref{BSDE:PQ} can be rewritten as
\begin{align}
        \mathbf{P}^{t,\xi}_s=\ & -\int_s^T \mathbf{Q}^{t,\xi}_r dB_r +\Phi_x\left(X^{t,\xi}_T, \lr\left(X^{t,\xi}_T\right) \right) +\widetilde{\e}\left[D_{x'}\frac{d\Phi}{d\nu} \left(\widetilde{X^{t,\xi}_T}, \lr\left(X^{t,\xi}_T\right)\right)\left(X^{t,\xi}_T\right) \right] \notag \\
        &+ \int_s^T \bigg\{ b_1(r)^\top \mathbf{P}^{t,\xi}_r + \sum_{j=1}^n \sigma^j_1(r)^\top \mathbf{Q}^{t,\xi,j}_r +b_2(r)^\top \e\left[\mathbf{P}^{t,\xi}_r\right] +\sigma^j_2(r)^\top \e\left[\mathbf{Q}^{t,\xi,j}_r\right] \notag \\
        &\qquad \qquad + h_x\left(r,\theta^{t,\xi}_r,\lr\left(\theta^{t,\xi}_r\right), u^{t,\xi}_r \right) +F_1(r)^\top h_y \left(r,\theta^{t,\xi}_r,\lr\left(\theta^{t,\xi}_r\right), u^{t,\xi}_r \right) \notag \\
        &\qquad \qquad +\sum_{j=1}^n \sigma^j_1(r)^\top F_1(r)^\top h_{z^j} \left(r,\theta^{t,\xi}_r, \lr\left(\theta^{t,\xi}_r\right), u^{t,\xi}_r \right) \notag \\
        &\qquad \qquad + \widetilde{\e} \bigg[\sum_{j=1}^n \sigma^j_2(r)^\top F_1(r)^\top \bigg(h_{z^j} \left(r,\widetilde{\theta^{t,\xi}_r}, \lr\left(\theta^{t,\xi}_r\right),\widetilde{u^{t,\xi}_r} \right) \notag \\
        &\qquad\qquad\qquad\qquad\qquad\qquad\qquad + \he\left[D_{{z'}^j}\frac{dh}{d\nu} \left(r,\widetilde{\theta^{t,\xi}_r}, \lr\left(\theta^{t,\xi}_r\right), \widetilde{u^{t,\xi}_r}\right)\left(\widehat{\theta^{t,\xi}_r}\right) \right] \bigg) \notag \\
        &\qquad\qquad\qquad + D_{x'}\frac{dh}{d\nu} \left(r,\widetilde{\theta^{t,\xi}_r}, \lr\left(\theta^{t,\xi}_r\right), \widetilde{u^{t,\xi}_r}\right)\left(\theta^{t,\xi}_r\right) \notag \\
        &\qquad\qquad\qquad + F_1(r)^\top D_{y'}\frac{dh}{d\nu} \left(r,\widetilde{\theta^{t,\xi}_r}, \lr\left(\theta^{t,\xi}_r\right), \widetilde{u^{t,\xi}_r}\right)\left(\theta^{t,\xi}_r\right)  \notag \\
        &\qquad\qquad\qquad + \sum_{j=1}^n \sigma^j_1(r)^\top F_1(r)^\top D_{{z'}^j}\frac{dh}{d\nu} \left(r,\widetilde{\theta^{t,\xi}_r}, \lr\left(\theta^{t,\xi}_r\right), \widetilde{u^{t,\xi}_r}\right)\left(\theta^{t,\xi}_r\right) \bigg] \bigg\} dr. \label{BSDE_other}
\end{align}
Similarly, the optimality condition \eqref{optimality_condition} can also be written as
\begin{align}
    b_3(s)^\top \mathbf{P}^{t,\xi}_s  + h_v\Big(s,\theta^{t,\xi}_s,\lr\left(\theta^{t,\xi}_s\right), u^{t,\xi}_s \Big)  =0. \label{coupling_condition} 
\end{align}
On the other hand, applying It\^o's formula to $P^{t,\xi}_s + F_1(s)^\top R^{t,\xi}_s$, and using the relations in \eqref{relation:f-F} together with the BSDE for $(P^{t,\xi},Q^{t,\xi},R^{t,\xi})$ in \eqref{FBSDE:MFTC}, we obtain
\begin{align*}
    &d\left(P^{t,\xi}_s+F_1(s)^\top R^{t,\xi}_s\right)\\
    =\ & - \bigg\{ b_1(s)^\top \left( P^{t,\xi}_s + F_1(s)^\top  R^{t,\xi}_s \right) + \sum_{j=1}^n \sigma^j_1(s)^\top  Q^{t,\xi,j}_s \\
    &\qquad + \e\bigg[ b_2(s)^\top \left(P^{t,\xi}_s + F_1(s)^\top R^{t,\xi}_s\right) + \sum_{j=1}^n \sigma^j_2(s)^\top  Q^{t,\xi,j}_s \bigg] \\
    &\qquad  +h_x\left(s,\theta^{t,\xi}_s,\lr\left(\theta^{t,\xi}_s\right),u^{t,\xi}_s\right) + F_1(s)^\top h_y \left(s,\theta^{t,\xi}_s,\lr\left(\theta^{t,\xi}_s\right),u^{t,\xi}_s\right) \\
    &\qquad + \widetilde{\e} \bigg[D_{x'}\frac{dh}{d\nu} \left(s,\widetilde{\theta^{t,\xi}_s},\lr\left(\theta^{t,\xi}_s\right),\widetilde{u^{t,\xi}_s}\right)\left(\theta^{t,\xi}_s\right) \\
    &\qquad\qquad + F_1(s)^\top D_{y'}\frac{dh}{d\nu} \left(s,\widetilde{\theta^{t,\xi}_s},\lr\left(\theta^{t,\xi}_s\right),\widetilde{u^{t,\xi}_s}\right)\left(\theta^{t,\xi}_s\right) \bigg]  \bigg\} dr +\sum_{j=1}^n \mathcal{A}^{j}_s dB^j_s \\
    =\ & - \bigg\{ b_1(s)^\top \left( P^{t,\xi}_s + F_1(s)^\top  R^{t,\xi}_s \right) + b_2(s)^\top \e\bigg[ \left(P^{t,\xi}_s + F_1(s)^\top R^{t,\xi}_s\right)\bigg] \\
    &\qquad + \sum_{j=1}^n \sigma^j_1(s)^\top  \mathcal{A}^{j}_s + \sum_{j=1}^n \sigma^j_2(s)^\top  \e \left[\mathcal{A}^{j}_s\right] \\
    &\qquad  +h_x\left(s,\theta^{t,\xi}_s,\lr\left(\theta^{t,\xi}_s\right),u^{t,\xi}_s\right) + F_1(s)^\top h_y \left(s,\theta^{t,\xi}_s,\lr\left(\theta^{t,\xi}_s\right),u^{t,\xi}_s\right) \\
    &\qquad + \sum_{j=1}^n \sigma^j_1(s)^\top F_1(s)^\top h_{z^j} \left(s,\theta^{t,\xi}_s,\lr\left(\theta^{t,\xi}_s\right),u^{t,\xi}_s\right) \\
    &\qquad +  \widetilde{\e} \bigg\{ \sum_{j=1}^n \sigma^j_2(s)^\top F_1(s)^\top  \bigg(h_{z^j} \left(s,\widetilde{\theta^{t,\xi}_s},\lr\left(\theta^{t,\xi}_s\right), \widetilde{u^{t,\xi}_s} \right) \\
    &\quad\qquad\qquad\qquad\qquad\qquad\qquad + \he\bigg[D_{{z'}^j}\frac{dh}{d\nu} \left(s,\widetilde{\theta^{t,\xi}_s},\lr\left(\theta^{t,\xi}_s\right),\widetilde{u^{t,\xi}_s}\right)\left(\widehat{\theta^{t,\xi}_s}\right) \bigg] \bigg) \\
    &\qquad\qquad + D_{x'}\frac{dh}{d\nu} \left(s,\widetilde{\theta^{t,\xi}_s},\lr\left(\theta^{t,\xi}_s\right),\widetilde{u^{t,\xi}_s}\right)\left(\theta^{t,\xi}_s\right) + F_1(s)^\top D_{y'}\frac{dh}{d\nu} \left(s,\widetilde{\theta^{t,\xi}_s},\lr\left(\theta^{t,\xi}_s\right),\widetilde{u^{t,\xi}_s}\right)\left(\theta^{t,\xi}_s\right) \\
    &\qquad\qquad + \sum_{j=1}^n \sigma^j_1(s)^\top  F_1(s)^\top D_{{z'}^j}\frac{dh}{d\nu} \left(s,\widetilde{\theta^{t,\xi}_s},\lr\left(\theta^{t,\xi}_s\right),\widetilde{u^{t,\xi}_s}\right)\left(\theta^{t,\xi}_s\right)\bigg\} \bigg\} ds +\sum_{j=1}^n \mathcal{A}^{j}_s dB^j_s,
\end{align*}
where
\begin{align*}
    \mathcal{A}^{j}_s:=Q^{t,\xi,j}_s - F_1(s)^\top h_{z^j} \left(s,\theta^{t,\xi}_s,\lr\left(\theta^{t,\xi}_s\right),u^{t,\xi}_s\right) - \widetilde{\e} \bigg[F_1(s)^\top D_{{z'}^j}\frac{dh}{d\nu} \left(s,\widetilde{\theta^{t,\xi}_s},\lr\left(\theta^{t,\xi}_s\right),\widetilde{u^{t,\xi}_s}\right)\left(\theta^{t,\xi}_s\right) \bigg].
\end{align*}
We also have the terminal condition
\begin{align*}
    P^{t,\xi}_T+F_1(T)^\top R^{t,\xi}_T=\Phi_x\left(X^{t,\xi}_T, \lr\left(X^{t,\xi}_T\right) \right) +\widetilde{\e}\left[D_{x'}\frac{d\Phi}{d\nu} \left(\widetilde{X^{t,\xi}_T}, \lr\left(X^{t,\xi}_T\right)\right)\left(X^{t,\xi}_T\right)  \right]. 
\end{align*}
Therefore, we see that the processes 
\begin{align*}
    \left(X^{t,\xi}_s,\quad P^{t,\xi}_s+F_1(s)^\top R^{t,\xi}_s,\quad \mathcal{A}_s \right),\quad t\le s\le T,
\end{align*}
also satisfy the FBSDEs for $\left(X^{t,\xi}_s, \mathbf{P}^{t,\xi}, \mathbf{Q}^{t,\xi} \right)$ defined by the SDE in \eqref{cost^v'}, the BSDE in \eqref{BSDE_other} and the coupling condition \eqref{coupling_condition} (or the BSDE in \eqref{BSDE:PQ} and the coupling condition \eqref{optimality_condition}). Then, from the uniqueness result for this system (see \cite[Section 6]{AB10} for instance), we obtain \eqref{relation_PQR-PQ}. 

\section{Proof of some statements in Section~\ref{sec:flow}}\label{App:sec:flow}

\subsection{Proof of Proposition~\ref{prop:s&n}}\label{App:prop:s&n}
Noting that under Assumptions (A1)-(A3), the equations for $\Pi^{t,\xi}_s$ can also be written as a system of FBSDEs defined on $s\in[r,T]$, with the initial condition for $X^{t,\xi}$ being $X_r^{t,\xi}$, and the initial condition for $R^{t,\xi}$ being 
\begin{align*}
    R^{t,\xi}_r= \ & - \varphi_y \left(Y^{t,\xi}_t\right) +\int_t^r \bigg\{ f_2(w)^\top  R^{t,\xi}_w  +f_5(w)^\top  \e\left[R^{t,\xi}_w\right]  -h_y \left(w,\theta^{t,\xi}_w,\lr\left({ \theta^{t,\xi}_w}\right),u^{t,\xi}_w\right) \\
    &\quad\qquad\qquad\qquad\qquad -\widetilde{\e} \bigg[D_{y'}\frac{dh}{d\nu} \left(w,\widetilde{\theta^{t,\xi}_w},\lr\left({ \theta^{t,\xi}_w}\right),\widetilde{u^{t,\xi}_w}\right)\left({ \theta^{t,\xi}_w}\right) \bigg]\bigg\} dw \\
    &+ \sum_{j=1}^n \int_t^r \bigg\{ f^j_3(w)^\top  R^{t,\xi}_w  +f^j_6(w)^\top  \e\left[R^{t,\xi}_w\right]  -h_{z^j} \left(w,\theta^{t,\xi}_w,\lr\left({ \theta^{t,\xi}_w}\right),u^{t,\xi}_w\right)\\
    &\quad\qquad\qquad -\widetilde{\e} \bigg[D_{{z'}^j}\frac{dh}{d\nu} \left(w,\widetilde{\theta^{t,\xi}_w},\lr\left({ \theta^{t,\xi}_w}\right),\widetilde{u^{t,\xi}_w}\right)\left({ \theta^{t,\xi}_w}\right) \bigg]\bigg\} dB^j_w;
\end{align*}
while in the equations for $\Pi^{r,X^{t,\xi}_r}$ defined on $s\in[t,T]$, the initial condition for $X^{r,X^{t,\xi}_r}$ is  $X_r^{t,\xi}$, and the initial condition for $R^{r,X^{t,\xi}_r}$ is $- \varphi_y \left(Y^{r,X^{t,\xi}_r}_r\right)$. Then, in view of the uniqueness result of the System for $\Pi^{r,X^{t,\xi}_r}$, the flow property \eqref{flow_property} holds if and only if the following relation holds:
\begin{align*}
    & \varphi_y \left(Y^{t,\xi}_r\right) - \varphi_y \left(Y^{t,\xi}_t\right)  \\
    =\ & - \int_t^r \bigg\{ f_2(w)^\top  R^{t,\xi}_w  +f_5(w)^\top  \e\left[R^{t,\xi}_w\right]  -h_y \left(w,\theta^{t,\xi}_w,\lr\left({ \theta^{t,\xi}_w}\right),u^{t,\xi}_w\right) \\
    &\quad\qquad -\widetilde{\e} \bigg[D_{y'}\frac{dh}{d\nu} \left(w,\widetilde{\theta^{t,\xi}_w},\lr\left({ \theta^{t,\xi}_w}\right),\widetilde{u^{t,\xi}_w}\right)\left({ \theta^{t,\xi}_w}\right) \bigg]\bigg\} dw\\
    &- \sum_{j=1}^n \int_t^r \bigg\{ f^j_3(w)^\top  R^{t,\xi}_w  +f^j_6(w)^\top  \e\left[R^{t,\xi}_w\right]  -h_{z^j} \left(w,\theta^{t,\xi}_w,\lr\left({ \theta^{t,\xi}_w}\right),u^{t,\xi}_w\right)\\
    &\quad\qquad\qquad -\widetilde{\e} \bigg[D_{{z'}^j}\frac{dh}{d\nu} \left(w,\widetilde{\theta^{t,\xi}_w},\lr\left({ \theta^{t,\xi}_w}\right),\widetilde{u^{t,\xi}_w}\right)\left({ \theta^{t,\xi}_w}\right) \bigg]\bigg\} dB^j_w.
\end{align*}
On the other hand, by using It\^o's formula for $\varphi\left(Y^{t,\xi}_w\right)$, we can also write  
\begin{align}
    &\varphi_y \left(Y^{t,\xi}_r\right) - \varphi_y \left(Y^{t,\xi}_t\right) \notag\\
    =\ & \int_t^r \left\{ -\varphi_{yy} \left(Y^{t,\xi}_w\right) f\left(w,\theta^{t,\xi}_w,\lr\left(\theta^{t,\xi}_w\right), u^{t,\xi}_w \right) + \frac{1}{2}\text{Tr}\left[\varphi_{yyy} \left(Y^{t,\xi}_w\right)Z^{t,\xi}_w\left(Z^{t,\xi}_w\right)^\top \right] \right\} dw \notag \\
    &+ \sum_{j=1}^n \int_t^r \varphi_{yy} \left(Y^{t,\xi}_w\right) Z^{t,\xi,j}_w dB^j_w. \label{Ito_R}
\end{align}
Therefore, we obtain the equivalence of the validity of the flow property \eqref{flow_property} and that of Conditions \eqref{condition:1}-\eqref{condition:2}. Suppose that \eqref{condition:1}-\eqref{condition:2} hold, then, by using \eqref{Ito_R} and the SDE for $R^{t,\xi}$, we see that
\begin{align*}
    R^{t,\xi}_s= - \varphi_y \left(Y^{t,\xi}_s\right);
\end{align*}
also, from the flow property \eqref{flow_property} and Theorem~\ref{thm:value_regu_mu}, we know that 
\begin{align*}
    P^{t,\xi}_s=P^{s,X_s^{t,\xi}}_s=D_{x}\frac{dV}{d\nu}\left(s,\lr\left(X_s^{t,\xi}\right)\right)\left(X_s^{t,\xi}\right),
\end{align*}
from which we obtain \eqref{feedback_RP}.

\subsection{Proof of Proposition~\ref{prop:flow:cinti:t}}\label{App:prop:flow:cinti:t}
By using the SDE for $X^{t,\xi}_s$ and the first estimate in \eqref{lem:kappa_0}, we know that 
\begin{align}
    \e\left[\left|X^{t,\xi}_s-\xi\right|^2\right]\le\ &  2\e\int_t^s \left[(s-t)\left|b\left(r,X^{t,\xi}_r,\lr\left(X^{t,\xi}_r\right),u^{t,\xi}_r \right)\right|^2 + \left|\sigma\left(r,X^{t,\xi}_r,\lr\left(X^{t,\xi}_r\right)\right)\right|^2 \right]dr \notag \\
    \le\ & C(L,T)(s-t) \left(1 + \int_t^s\left\|u^{t,\xi}_r\right\|_2^2 dr +  \left\|X^{t,\xi}\right\|_{\sr_\f^2}^2\right) \notag \\
    \le\ & C(L,T,\lambda)\left(1+\|\xi\|_2^2\right) (s-t). \label{prop:conti_t_1}
\end{align}
From the flow property \eqref{flow_property}, we have the fact that
\begin{align}
    Y^{t,\xi}_s - Y^{t,\xi}_t %=\ & Y^{t,\xi}_s - Y^{s,\xi}_s + Y^{s,\xi}_s -  Y^{t,\xi}_t \notag \\
    =\ & Y^{t,\xi}_s - Y^{s,\xi}_s + \e\left[ Y^{s,\xi}_s -  Y^{t,\xi}_t \Big| \xi\right] \notag \\
    =\ & Y^{t,\xi}_s - Y^{s,\xi}_s + \e \left[ Y^{s,\xi}_s - Y^{t,\xi}_s +  Y^{t,\xi}_s  -  Y^{t,\xi}_t \Big| \xi\right] \notag \\
    =\ & Y^{s,X^{t,\xi}_s}_s - Y^{s,\xi}_s + \e \left[ Y^{s,\xi}_s - Y^{s,X^{t,\xi}_s}_s \Big| \xi\right] + \e\left[ Y^{t,\xi}_s  -  Y^{t,\xi}_t \Big| \xi\right]. \label{prop:conti_t_2}
\end{align}
By using the second estimate in \eqref{lem:kappa_0} (i.e. the Lipschitz-continuity of the process $Y^{s,\xi}$ in its initial condition) and also the estimate \eqref{prop:conti_t_1}, we have
\begin{align}
    &\e\left[\left|Y^{s,X^{t,\xi}_s}_s - Y^{s,\xi}_s\right|^2\right],\quad  \e\left[\left|\e \left[ Y^{s,\xi}_s - Y^{s,X^{t,\xi}_s}_s \Big| \xi\right]\right|^2 \right] \notag\\
    \le\ & C(L,T,\lambda) \left\|X^{t,\xi}_s -\xi \right\|_2^2 \le C(L,T,\lambda)\left(1+\|\xi\|_2^2\right) (s-t). \label{prop:conti_t_3} 
\end{align}
From the BSDE of $Y^{t,\xi}_s$, the growth condition for the coefficient $f$ and the boundedness estimate for the processes $\Pi^{t,\xi}$ in \eqref{lem:kappa_0}, we know that
\begin{align}
    \e\left[\left|\e\left[ Y^{t,\xi}_s  -  Y^{t,\xi}_t \Big| \xi\right]\right|^2\right] =\ & \e\left[\left|\e\left[ - \int_t^s f\left(r,\theta^{t,\xi}_r,\lr\left(\theta^{t,\xi}_r\right),u^{t,\xi}_r \right)dr + \int_t^s Z^{t,\xi}_r dB_r \Big| \xi\right]\right|^2 \right] \notag \\
    =\ & \e\left[\left|\e\left[ - \int_t^s f\left(r,\theta^{t,\xi}_r,\lr\left(\theta^{t,\xi}_r\right),u^{t,\xi}_r \right)dr\Big| \xi\right]\right|^2 \right] \notag \\
    \le \ & \e\left[ \left|\int_t^s f\left(r,\theta^{t,\xi}_r,\lr\left(\theta^{t,\xi}_r\right),u^{t,\xi}_r \right)dr\right|^2  \right] \notag \\    
    \le\ & (s-t)\e\left[ \int_t^s \left|f\left(r,\theta^{t,\xi}_r,\lr\left(\theta^{t,\xi}_r\right),u^{t,\xi}_r \right) \right|^2dr  \right] \notag \\
    \le\ & C(L,T,\lambda)\left(1+\|\xi\|_2^2\right)(s-t). \label{prop:conti_t_4}
\end{align}
Combining \eqref{prop:conti_t_2}-\eqref{prop:conti_t_4}, we obtain the following continuity of $Y^{t,\xi}_s$ in time:
\begin{align*}
    \e\left[\left|Y^{t,\xi}_s - Y^{t,\xi}_t \right|^2\right] \le C(L,T,\lambda)\left(1+\|\xi\|_2^2\right)(s-t). 
\end{align*}
In a similar way, we can also obtain the estimate for $P^{t,\xi}_s-P^{t,\xi}_t$; and by using the first relation of \eqref{feedback_RP} and the Lipschitz-continuity of $\varphi_y$, we obtain the estimate for $R^{t,\xi}_s-R^{t,\xi}_t$, which altogether imply \eqref{prop:conti_t_0}. From Theorem~\ref{thm:value_regu_mu}, we have the following relation
\begin{align*}
    D_{x}\frac{dV}{d\nu}(s,\lr(\xi))(\xi) - D_{x}\frac{dV}{d\nu}(t,\lr(\xi))(\xi)=P^{s,\xi}_s - P^{t,\xi}_t = \e \left[ P^{s,\xi}_s - P^{t,\xi}_s +  P^{t,\xi}_s  -  P^{t,\xi}_t \Big| \xi\right].
\end{align*}
Then, by using a similar approach as in \eqref{prop:conti_t_3}-\eqref{prop:conti_t_4}, we know that 
\begin{align*}
    \left\|D_{x}\frac{dV}{d\nu}(s,\lr(\xi))(\xi) - D_{x}\frac{dV}{d\nu}(t,\lr(\xi))(\xi) \right\|_2  \le C(L,T,\lambda)\left(1+\|\xi\|_2 \right)(s-t)^{\frac{1}{2}}.
\end{align*}
Similar as in \eqref{FBSDEs:x} and Lemma~\ref{lem:tmux}, we can also study the system for the processes $\Pi^{t,x,\mu}_s$ (whose initial condition is $x$ and the coefficients are dependent on the law $\lr\left(\theta^{t,\xi}_s\right)$) to obtain the continuity estimate \eqref{prop:conti_t_0'}. The boundedness and continuity estimates for this system follow from a similar method as that for the processes $\Pi^{t,\xi}$, except that we also use the convexity of the function $\frac{dh}{d\nu}(s,x,y,z,\varrho,v)(\cdot)$. Then, under Assumption (A2'), we can proceed to study the system for the processes $D_x\Pi^{t,x,\mu}$ (see \cite[Section 4]{AB} for the case with controlled McKean-Vlasov SDEs) to obtain \eqref{prop:conti_t_0''}, whose proof is omitted here. 

\subsection{Proof of Proposition~\ref{prop:regu_U}}\label{App:prop:regu_U}
Under Assumption (A2'), the G\^ateaux derivative of $\theta^{t,\xi}$ in $\xi$ along some direction $\eta$, denoted by $D_\eta \theta^{t,\xi}:=\left(D_\eta X^{t,\xi},D_\eta Y^{t,\xi},D_\eta Z^{t,\xi}\right)$, can be characterized as the solution of the following FBSDEs:
\begin{equation}\label{System:XYZ_D_xi}
    \left\{
        \begin{aligned}
            &D_\eta X^{t,\xi}_s= \eta +\int_t^s \Bigg\{ \left[\left(\begin{array}{ccc}
                 B_x & B_y & B_z
            \end{array}\right)\left(r, \theta^{t,\xi}_r,\lr\left(\theta^{t,\xi}_r\right)\right) \right] \left(\text{\small $\begin{array}{c}
                 D_\eta X^{t,\xi}_r  \\
                 D_\eta Y^{t,\xi}_r  \\
                 D_\eta Z^{t,\xi}_r
            \end{array}$}\right)  \\
            &\qquad\qquad\qquad + \widetilde{\e}\Bigg[ \left[\left(\begin{array}{ccc}
                 D_{x'}\frac{dB}{d\nu} & D_{y'}\frac{dB}{d\nu} & D_{z'}\frac{dB}{d\nu} 
            \end{array}\right)\left(r, \theta^{t,\xi}_r,\lr\left(\theta^{t,\xi}_r\right)\right)\left(\widetilde{\theta^{t,\xi}_r}\right) \right] \Bigg(\text{\footnotesize $\begin{array}{c}
                  \widetilde{D_\eta X^{t,\xi}_r}  \\
                  \widetilde{D_\eta Y^{t,\xi}_r}\\
                  \widetilde{D_\eta Z^{t,\xi}_r}
            \end{array}$}\Bigg) \Bigg] \Bigg\} dr  \\
            &\qquad +\int_t^s \bigg\{ \sigma_x\left(r, X^{t,\xi}_r, \lr\left(X^{t,\xi}_r\right) \right)D_\eta X^{t,\xi}_r + \widetilde{\e}\left[D_{x'}\frac{d\sigma}{d\nu}\left(r, X^{t,\xi}_r, \lr\left(X^{t,\xi}_r\right)\right)\left(\widetilde{X^{t,\xi}_r}\right) \widetilde{D_\eta X^{t,\xi}_r} \right]  \bigg\} dB_r,\\
            &D_\eta Y^{t,\xi}_s=-\int_s^T D_\eta Z^{t,\xi}_r dB_r + g_1 D_\eta X^{t,\xi}_T + g_2\e \left[D_\eta X^{t,\xi}_T\right] \\
            &\qquad + \int_s^T \Bigg\{ \left[\left(\begin{array}{ccc}
                 F_x & F_y & F_z
            \end{array}\right)\left(r, \theta^{t,\xi}_r,\lr\left(\theta^{t,\xi}_r\right)\right) \right] \left(\text{\small $\begin{array}{c}
                 D_\eta X^{t,\xi}_r  \\
                 D_\eta Y^{t,\xi}_r  \\
                 D_\eta Z^{t,\xi}_r
            \end{array}$}\right)  \\
            &\qquad\qquad\qquad + \widetilde{\e}\Bigg[ \left[\left(\begin{array}{ccc}
                 D_{x'}\frac{dF}{d\nu} & D_{y'}\frac{dF}{d\nu} & D_{z'}\frac{dF}{d\nu} 
            \end{array}\right)\left(r, \theta^{t,\xi}_r,\lr\left(\theta^{t,\xi}_r\right)\right)\left(\widetilde{\theta^{t,\xi}_r}\right) \right] \Bigg(\text{\footnotesize $\begin{array}{c}
                  \widetilde{D_\eta X^{t,\xi}_r}  \\
                  \widetilde{D_\eta Y^{t,\xi}_r}\\
                  \widetilde{D_\eta Z^{t,\xi}_r}
            \end{array}$}\Bigg) \Bigg] \Bigg\} dr.
        \end{aligned}
    \right.
\end{equation}
The proof consists of two steps: establishing the well-posedness of the FBSDEs \eqref{System:XYZ_D_xi}, and proving the $L^2$-convergence $\frac{1}{\epsilon}\left(\theta_s^{t,\xi+\epsilon\eta}-\theta_s^{t,\xi}\right) \longrightarrow  D_\eta \theta_s^{t,\xi}$. Both steps rely on It\^o's formula and standard SDE/BSDE estimates, as in the proof of Subsection~\ref{subsec:FBSDE}. We omit the details and refer the reader to \cite[Theorem 6.1]{AB10} and \cite[Theorem 3.2]{AB11} for similar arguments. Following the same argument as in Lemma~\ref{lem:D_x}, one obtains the FBSDE characterization of $D_x \theta^{t,x,\mu}$. Then, using $D_\eta \theta^{t,\xi}$ and $D_x \theta^{t,x,\mu}$, the equations for the linear functional derivative $D_{x'}\frac{d\theta_s^{t,x,\mu}}{d\nu}(x')$ are obtained in the same manner as in Lemma~\ref{Lem:D_xi}; see System \eqref{FBSDEs:Dmu} for the first-order case, and \cite[Subsection 4.1]{AB11} for the second-order MFG setting. As in Proposition~\ref{prop:U_x-mu}, these derivatives imply the differentiability of the map $(x,\mu)\mapsto U(t,x,\mu)$, with
\begin{align*}
    U_x(t,x,\mu)=D_x Y^{t,x,\mu}_t,\qquad D_{x'}\frac{dU}{d\nu}(t,x,\mu)(x')= D_{x'} \frac{dY^{t,x,\mu}_t}{d\nu}(x').
\end{align*}
Under the stronger Assumption (A2''), we can further establish the well-posedness of the second-order derivatives $D^2_x \theta^{t,x,\mu}_s$ and $D^2_{x'}\frac{d\theta_s^{t,x,\mu}}{d\nu}(x')$, which yield the second-order derivatives
\begin{align*}
    U_{xx}(t,x,\mu)=D^2_x Y^{t,x,\mu}_t,\qquad D^2_{x'}\frac{dU}{d\nu}(t,x,\mu)(x')= D^2_{x'} \frac{dY^{t,x,\mu}_t}{d\nu}(x');
\end{align*}
see, e.g., \cite[Subsection 4.2]{AB11}. Moreover, following the argument in the proof of Proposition~\ref{prop:flow:cinti:t}, we can show the continuity in $s$ of $D_x Y_s^{t,x,\mu}$, $D_{x'}\frac{dY_s^{t,x,\mu}}{d\nu}(x')$, $D^2_x Y^{t,x,\mu}_s$, and $D^2_{x'}\frac{dY_s^{t,x,\mu}}{d\nu}(x')$.

\subsection{Proof of Lemma~\ref{convex_h}}\label{App:convex_h}
From \eqref{Condition:h}, we can compute that 
\begin{align}
    \frac{dh}{d\nu}(s,x,y,z,\varrho,v)(x',y',z')=\ &  \frac{dh^0}{d\nu}(s,x,\varrho_x,v)(x')+y^\top \Psi f_4(s) x' \notag \\
    &- \bigg[\overline{\varrho_y}^\top \left(f_5(s)^\top \Psi + \Psi f_5(s) \right) + \sum_{j=1}^n (z^j)^\top f^j_6(s)^\top \Psi \bigg] y', \label{add_2}
\end{align}
which is convex in $x'$ and linear in $y'$ and independent of $z'$. Therefore, $\frac{dh}{d\nu}(s,x,y,z,\varrho,v)(\cdot)$ is convex. Moreover, for any $s\in[0,T]$, $x^1,x^2,y^1,y^2\in\brn$, $v^1,v^2\in\brd$, $z^1,z^2\in\brnn$ and $ \zeta^1,\zeta^2\in L^2(\Omega,\f,\mathbb{P};\brn\times\brn\times\brnn)$, 
\begin{align*}
    &h\left(s,x^2,y^2,z^2,{ \lr\left(\zeta^2\right)},v^2\right)-h\left(s,x^1,y^1,z^1,{ \lr\left(\zeta^1\right)},v^1\right)\\
    & - h_x\left(s,x^1,y^1,z^1,{ \lr\left(\zeta^1\right)},v^1\right)^\top \left(x^2-x^1\right)-h_v\left(s,x^1,y^1,z^1,{ \lr\left(\zeta^1\right)},v^1\right)^\top \left(v^2-v^1\right)\\
    &-h_y\left(s,x^1,y^1,z^1,{ \lr\left(\zeta^1\right)},v^1\right)^\top \left(y^2-y^1\right)-\sum_{j=1}^n h_{z^j}\left(s,x^1,y^1,z^1,{ \lr\left(\zeta^1\right)},v^1\right)^\top \left(z^{2,j}-z^{1,j}\right)\\
    &-\e\bigg[\left(D_{x'}\frac{dh}{d\nu}\left(s,x^1,y^1,z^1, \lr\left(\zeta^1\right),v^1\right)\left(\zeta^1\right)\right)^\top \left(\zeta_x^2-\zeta_x^1\right)\\
    &\qquad + \left(D_{y'}\frac{dh}{d\nu}\left(s,x^1,y^1,z^1, \lr\left(\zeta^1\right),v^1\right)\left(\zeta^1\right)\right)^\top \left(\zeta_y^2-\zeta_y^1\right)\\
    &\qquad +\sum_{j=1}^n \left(D_{{z'}^j}\frac{dh}{d\nu}\left(s,x^1,y^1,z^1, \lr\left(\zeta^1\right),v^1\right)\left(\zeta^1\right)\right)^\top \left(\zeta_{z^j}^2-\zeta_{z^j}^1\right) \bigg] \\
    \geq\ & (\lambda+\lambda_v)\left|v^2-v^1\right|^2 + \lambda_x \left|x^2-x^1\right|^2 + \lambda_m \left|\e\left[\zeta^2_x-\zeta^1_x\right]\right|^2 + \frac{1}{2}\sum_{j=1}^n \left(z^{j,2}-z^{j,1}\right)^\top \Psi \left(z^{j,2}-z^{j,1}\right)  \\
    &- \frac{1}{2}\left(y^2-y^1\right)^\top \left(f_2(s)^\top \Psi  + \Psi f_2(s) \right) \left(y^2-y^1\right) - \frac{1}{2}\e\left[\zeta^2_y-\zeta^1_y\right]^\top \left(f_5(s)^\top \Psi  + \Psi f_5(s) \right) \e\left[\zeta^2_y-\zeta^1_y\right] \\
    & - \sum_{j=1}^n  \left(y^2-y^1\right)^\top \Psi f^j_3(s) \left(z^{j,2}-z^{j,1}\right)  - \sum_{j=1}^n  \e\left[\zeta^2_y-\zeta^1_y\right]^\top \Psi f^j_6(s) \left(z^{j,2}-z^{j,1}\right) \\
    & - \left(y^2-y^1\right)^\top \Psi f_1(s) \left(x^2-x^1\right) - \left(y^2-y^1\right)^\top \Psi f_4(s)\e\left[\zeta^2_x-\zeta^1_x\right]  - \left(y^2-y^1\right)^\top \Psi f_7(s)\left(v^2-v^1\right) \\
    \geq\ & \lambda \left|v^2-v^1\right|^2,
\end{align*}
where the first inequality comes from the convexity of $h^0$ in \eqref{convex_h^0}, and the last inequality is a direct consequence of the fact that the matrix \eqref{matrix} is positive semi-definite. This verifies that Assumption (D) is sufficient for the convexity of $h$ required in (A3).

%%%%%%%%%%%%%%%%%%%%%%%%%%%%%%%%%%%%%%%%%%%%%%%%%%%%%%%%%%%%%%%%%%%%
\footnotesize
\bibliographystyle{acm}
\bibliography{references}
\phantomsection\addcontentsline{toc}{section}{References}\tolerance=500

\end{document}